
\documentclass{aic}

\usepackage{bbm}
\usepackage[numbers,sort&compress]{natbib}

\newtheorem{theorem}{Theorem}

\newtheorem{lemma}[theorem]{Lemma}
\newtheorem{corollary}[theorem]{Corollary}
\newtheorem{conjecture}[theorem]{Conjecture}

\let\leq\leqslant

\let\geq\geqslant

\DeclareMathOperator{\lb}{{\mu}}

\usepackage{subfig}

\graphicspath{{figures/}}

\makeatletter
\def\old@comma{,}
\catcode`\,=13
\def,{%
  \ifmmode%
    \old@comma\discretionary{}{}{}%
  \else%
    \old@comma%
  \fi%
}
\makeatother

\sloppy

\aicAUTHORdetails{%
  title = {Large Independent Sets in Triangle-free Cubic Graphs: Beyond Planarity}, 
  author = {Wouter Cames van Batenburg, Jan Goedgebeur, and Gwena\"el Joret},
  plaintextauthor = {Wouter Cames van Batenburg, Jan Goedgebeur, Gwenael Joret},
    %
    %
    %
    %
    %
   %
}   

\aicEDITORdetails{%
   year={2020},
   number={7},
   received={5 December 2019},   
   published={10 July 2020},  
   doi={10.19086/aic.13667},      
}   

\begin{document}

\begin{frontmatter}[classification=text]

\title{Large Independent Sets in Triangle-free Cubic Graphs: Beyond Planarity} 

\author[wouter]{Wouter Cames van Batenburg\thanks{Supported by an ARC grant from the Wallonia-Brussels Federation of Belgium.}}
\author[jan]{Jan Goedgebeur\thanks{Supported by a Postdoctoral Fellowship of the Research Foundation Flanders (FWO).}}
\author[gwen]{Gwena\"el Joret\thanks{Supported by an ARC grant from the Wallonia-Brussels Federation of Belgium.}}

\begin{abstract}
Every $n$-vertex planar triangle-free graph with maximum degree at most $3$ has an independent set of size at least $\frac{3}{8}n$. 
This was first conjectured by Albertson, Bollob\'as and Tucker, and was later proved by Heckman and Thomas. 
Fraughnaugh and Locke conjectured that the planarity requirement could be relaxed into just forbidding a few specific nonplanar subgraphs: 
They described a family $\mathcal{F}$ of six nonplanar graphs (each of order at most $22$) and conjectured that every $n$-vertex triangle-free graph with maximum degree at most $3$ having no subgraph isomorphic to a member of $\mathcal{F}$ has an independent set of size at least $\frac{3}{8}n$. 
In this paper, we prove this conjecture. 

As a corollary, we obtain that every $2$-connected $n$-vertex triangle-free graph with maximum degree at most $3$ has an independent set of size at least $\frac{3}{8}n$, with the exception of the six graphs in $\mathcal{F}$. 
This confirms a conjecture made independently by Bajnok and Brinkmann, and by Fraughnaugh and Locke.
\end{abstract}
\end{frontmatter}


\section{Introduction}\label{sec:intro}

All graphs in this paper are undirected, finite, and simple. 
A subset $X$ of vertices of a graph $G$ is {\em independent} if no two vertices in $X$ are adjacent in $G$. 
The {\em independence number} of $G$ is the maximum size of an independent set in $G$, denoted $\alpha(G)$. 
The graph $G$ is said to be {\em subcubic} if $G$ has maximum degree at most $3$, and {\em cubic} if $G$ is $3$-regular.  

One of the first results about independent sets in triangle-free subcubic graphs is the following theorem of Staton~\cite{S79} from 1979. 

\begin{theorem}[Staton~\cite{S79}]\label{th:Staton}
Let $G$ be a triangle-free subcubic $n$-vertex graph. 
Then, $\alpha(G) \geq \frac{5}{14}n$.
\end{theorem}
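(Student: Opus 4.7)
The plan is to induct on $n = |V(G)|$, with small cases checked directly. For the inductive step, I take a minimum counterexample $G$ and derive a contradiction.

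\emph{Reducing to the cubic case.} If some vertex $v$ has $\deg(v) \le 1$, placing $v$ in the independent set and applying the inductive hypothesis to $G - N[v]$ gives $\alpha(G) \ge 1 + \frac{5(n-2)}{14} > \frac{5n}{14}$, a contradiction. If $v$ has degree $2$ with neighbors $u_1, u_2$, triangle-freeness forces $u_1 u_2 \notin E(G)$, so $\{u_1, u_2\}$ is independent. The naive reduction $\alpha(G) \ge 2 + \alpha(G - (N[u_1] \cup N[u_2]))$ (removing at most $7$ vertices) just falls short of $\frac{5n}{14}$. I would close this gap by sub-casing on the structure of the second neighborhood of $v$: shared neighbors of $u_1, u_2$ (other than $v$) or low-degree vertices in $N(u_1) \cup N(u_2)$ shrink the number of vertices removed, and the remaining case, where $u_1, u_2$ are both cubic with otherwise disjoint neighborhoods, is handled by the alternative independent triple $\{v, x_1, x_2\}$ with $x_1, x_2$ the two further neighbors of $u_1$ (which is independent by triangle-freeness).

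\emph{The cubic case.} With $G$ cubic, pick $v$ with neighbors $a, b, c$; triangle-freeness makes $\{a,b,c\}$ independent. Let $N_2 := (N(a) \cup N(b) \cup N(c)) \setminus \{v\}$, so $|N_2| \le 6$. Including $\{a,b,c\}$ in the independent set and inducting on $G - N[\{a,b,c\}]$ yields
\[
\alpha(G) \ge 3 + \frac{5(n - 4 - |N_2|)}{14},
\]
which beats $\frac{5n}{14}$ whenever $|N_2| \le 4$. Choosing $v$ to minimize $|N_2(v)|$, I may assume every vertex has second neighborhood of size $5$ or $6$.

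\emph{The crux: $|N_2| \in \{5, 6\}$.} Here the simple reduction is insufficient and the main difficulty lies. Triangle-freeness forbids edges between the two further neighbors of any single vertex in $\{a,b,c\}$, so every edge inside $G[N_2]$ runs between different groups and creates a $4$-cycle through $v$. This bounds the number of edges in $G[N_2]$ and forces $\alpha(G[N_2])$ to be large; I would then use the alternative independent set $\{v\} \cup S$, where $S$ is a suitably large independent subset of $N_2$, controlling $N[\{v\} \cup S]$ so the residual graph is big enough for induction to succeed. The most constrained configurations, where this swap is still too weak, are highly rigid and correspond to a finite list of small local patterns; these are ruled out by direct combinatorial case analysis. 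I expect this final case analysis to be the main technical obstacle.
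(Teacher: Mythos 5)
This theorem is not proved in the paper: it is Staton's result, quoted with a citation (the paper points to Heckman--Thomas~\cite{HT01} for a short proof), so there is no in-paper argument to compare against. Judged on its own terms, your proposal has a genuine gap, and it is not merely that the final case analysis is ``the main technical obstacle'': the method itself cannot close. The inductive hypothesis you carry is the plain bound $\alpha(H)\ge\frac{5}{14}|V(H)|$, so every local swap must produce an independent set $S$ with $14|S|\ge 5|N[S]|$, i.e.\ $|N[S]|\le 2.8|S|$. In a cubic graph each vertex of $S$ sends $3$ edges out of $S$, so this forces many vertices of $N(S)\setminus S$ to have two or three neighbours in $S$, i.e.\ forces many short cycles. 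In a cubic triangle-free graph of girth at least $7$, the ball $B_2(v)$ is a tree on $10$ vertices; its largest independent set is $\{v\}\cup N_2$ of size $7$, with $|N[\{v\}\cup N_2]|=22$, and $14\cdot 7=98<110=5\cdot 22$. Every subset does proportionally worse. So for high-girth cubic graphs \emph{no} bounded-radius local configuration satisfies your required inequality: the ``crux'' case $|N_2|\in\{5,6\}$ is not a finite list of rigid patterns but contains an infinite family on which the entire strategy fails. The same problem already appears in your degree-$2$ step: the fallback set $\{v,x_1,x_2\}$ has $|N[\{v,x_1,x_2\}]|$ up to $9$ in the generic (girth $\ge 6$, all-cubic) situation, and $14\cdot 3=42<45=5\cdot 9$.

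This is exactly why all known proofs (Staton's original, Heckman--Thomas, and the present paper's machinery for the stronger $\frac{3}{8}$ bound) strengthen the induction hypothesis so that vertices of degree less than $3$ and extra components earn credit --- compare the paper's potential $\lb(G)=\frac{6|V(G)|-|E(G)|-\lambda}{12}$ and its use of critical graphs and independence packings. With such a hypothesis, deleting $N[S]$ creates low-degree vertices in the residual graph that pay back the deficit your plain count cannot absorb. A small additional slip: an edge between $x\in N(a)$ and $y\in N(b)$ inside $N_2$ creates the $5$-cycle $vaxyb$, not a $4$-cycle through $v$; $4$-cycles through $v$ correspond to shared second neighbours, i.e.\ to $|N_2|<6$. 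To repair the proof you would need to replace the target $\frac{5}{14}n$ by a weighted statement of the Heckman--Thomas type and redo the reductions against that potential.
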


Different proofs of this result have appeared in the literature, see in particular Heckman and Thomas~\cite{HT01} for a short proof. 
The bound is best possible, as witnessed by the two cubic graphs on $14$ vertices in Figure~\ref{fig:forbidden_graphs} (top left and top center). 
In fact, these are the {\em only} tight examples among connected graphs~\cite{BB98,H08}. 
This suggests that a better bound might hold for connected triangle-free subcubic graphs when $n$ is not too small, and indeed Fraughnaugh and Locke~\cite{FL95} proved the following result in 1995.

\begin{theorem}[Fraughnaugh and Locke~\cite{FL95}]\label{th:FL}
Let $G$ be a connected triangle-free subcubic $n$-vertex graph. 
Then, $\alpha(G) \geq \frac{11}{30}n - \frac{2}{15}$.
\end{theorem}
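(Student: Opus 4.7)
The plan is to prove this by strong induction on $n$. For the base case, one verifies the inequality directly for all connected triangle-free subcubic graphs on at most some fixed small number of vertices; up to $n = 14$, Theorem~\ref{th:Staton} already gives $\alpha(G) \geq \lceil 5n/14 \rceil$, which matches or exceeds $\lceil (11n-4)/30 \rceil$, and for slightly larger $n$ the base case can be completed by finite inspection (the bound is in fact tight at $n = 14$ on the two Heckman--Thomas graphs).

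For the inductive step, I adopt the ``minimum counterexample'' framework. Let $G$ be a connected triangle-free subcubic graph on $n$ vertices with $30\alpha(G) < 11n - 4$, and suppose $n$ is minimum. I would search for a \emph{reducible configuration}: a vertex subset $S \subseteq V(G)$ and an independent set $I_0 \subseteq S$ satisfying $30|I_0| \geq 11|S|$, such that no neighbour of $I_0$ lies outside $S$. If every component $H$ of $G - S$ has enough vertices to apply the inductive hypothesis, one obtains $\alpha(H) \geq (11|V(H)| - 4)/30$ for each component; the components that are too small for this bound to be useful are handled via Theorem~\ref{th:Staton} or by direct inspection. Summing over components and combining with $I_0$ contradicts the failure of the bound on $G$, provided the number of components of $G - S$ is controlled, which is where the $-2/15$ additive slack is spent.

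A natural sequence of reducible configurations should then be exploited. Low-degree vertices (degree $0$, $1$, or $2$) can be absorbed by short local reductions, so that we may assume $G$ is cubic or very close to cubic. Next, induced short cycles give further reductions: a $4$-cycle $v_1 v_2 v_3 v_4$ together with its external neighbours typically yields $|S| \leq 8$ with an independent set of size at least $3$ inside (taking the pair $\{v_1, v_3\}$ plus a suitable external vertex), for a ratio $\geq 3/8 > 11/30$; similar reductions apply to $5$-cycles with appropriate attachment patterns. After eliminating all short-cycle configurations, one is reduced to cubic graphs of girth at least $6$, in which larger gadgets such as a vertex together with its second neighbourhood can be harvested with the required ratio.

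The main obstacle, and the heart of the proof, is the final residual case: showing that no large connected triangle-free cubic graph can simultaneously avoid \emph{all} the reducible configurations identified above. Because the ratio $11/30$ is tight at $n = 14$, there is essentially no room for wastefulness in any reduction, and the final case analysis must be carried out quite precisely. I expect the argument to terminate through a local analysis around a chosen vertex: assuming girth at least $6$, the ball of radius $2$ contains at least $10$ vertices arranged in a very restricted pattern, and either this pattern closes up into one of the small exceptional graphs already handled by the base case, or it exposes one of the reducible configurations above, contradicting the minimality of the counterexample.
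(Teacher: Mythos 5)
Your outline reproduces the general shape of Fraughnaugh and Locke's original argument, but as written it is a plan rather than a proof: the step you yourself call ``the main obstacle, and the heart of the proof'' --- showing that a minimum counterexample cannot avoid every reducible configuration --- is where essentially all of the work lies, and it is left entirely open. Two concrete problems. First, the accounting is too coarse. Applying the inductive hypothesis $\alpha(H) \ge \frac{11|V(H)|-4}{30}$ to each component $H$ of $G-S$ costs an extra $\frac{4(c-1)}{30}$ when $G-S$ has $c$ components, so the condition $30|I_0| \ge 11|S|$ is insufficient for every reduction with $c \ge 2$. The tight construction of Figure~\ref{fig:FL_construction} (a tree of $B_8$'s and $F_{11}$'s) shows that reductions whose removal disconnects the graph into several components are unavoidable; e.g.\ cutting out an internal $B_8$ ($|S|=8$, $\alpha(B_8)=3$, $c=4$) would require $|I_0| \ge \frac{100}{30} > 3$, so a flat per-component penalty cannot absorb the loss, and one is forced into a degree-weighted potential in the spirit of $\lb$. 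Second, the assertion that once low-degree vertices and short cycles are eliminated a second-neighbourhood gadget ``can be harvested with the required ratio'' is unsubstantiated: no such configuration is exhibited, and producing a complete list of configurations that are simultaneously independent-dominated, ratio-correct, and component-safe is precisely the content of the twenty-page case analysis in~\cite{FL95}. (Your base case via Theorem~\ref{th:Staton} is fine --- indeed $\frac{5n}{14} \ge \frac{11n-4}{30}$ for all $n \le 42$ --- but that is the easy part.)

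For comparison, the paper does not prove Theorem~\ref{th:FL} along these lines at all. It deduces the sharper Theorem~\ref{th:trianglefreeconnected} from the main technical Theorem~\ref{th:main}: take an independence packing of $G$ into connected critical pieces, group each piece that is not a copy of $F_{11}$ together with the adjacent $F_{11}$ pieces, and check $\alpha(H_i) \ge \frac{11}{30}|V(H_i)|$ for each group using $\alpha \ge \lb$ (with the listed exceptional values for bad and forbidden pieces). That route outsources all case analysis to Theorem~\ref{th:main}; yours would have to redo the Fraughnaugh--Locke analysis from scratch, and at present it does not.
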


\begin{figure}
\captionsetup[subfigure]{labelformat=empty}
\centering
   \subfloat[$F_{14}^{(1)}$]{\label{fig:F14_1}\includegraphics[width=0.179\textwidth]{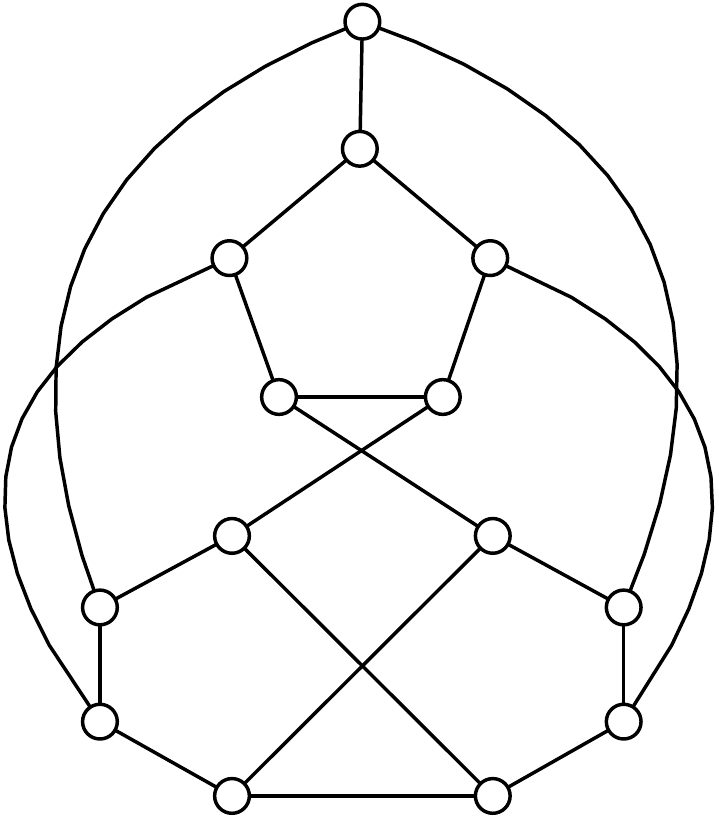}} \qquad
   \subfloat[$F_{14}^{(2)}$]{\label{fig:F14_2}\includegraphics[width=0.173\textwidth]{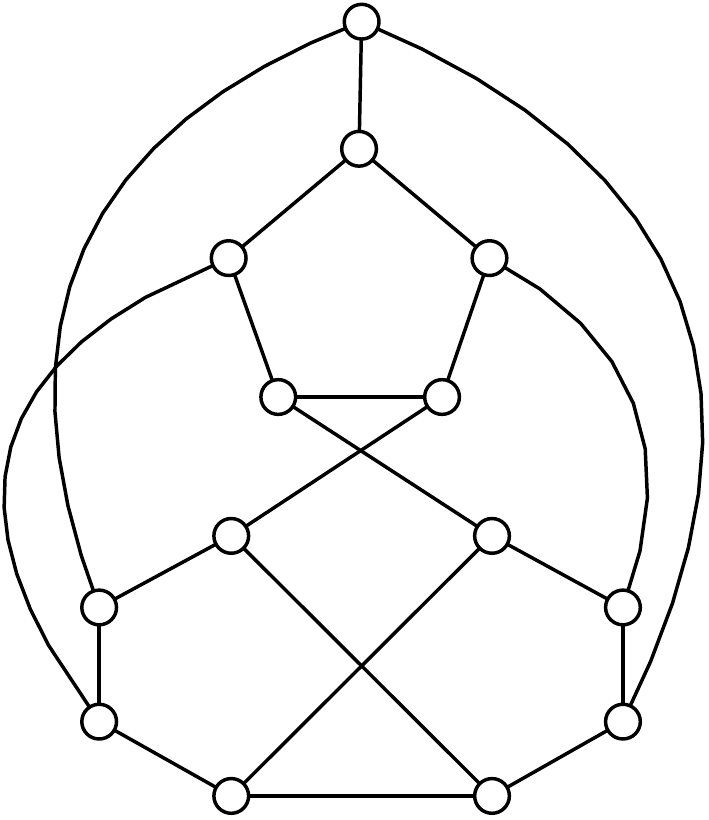}} \qquad
   \subfloat[$F_{22}$]{\label{fig:F22}\includegraphics[width=0.251\textwidth]{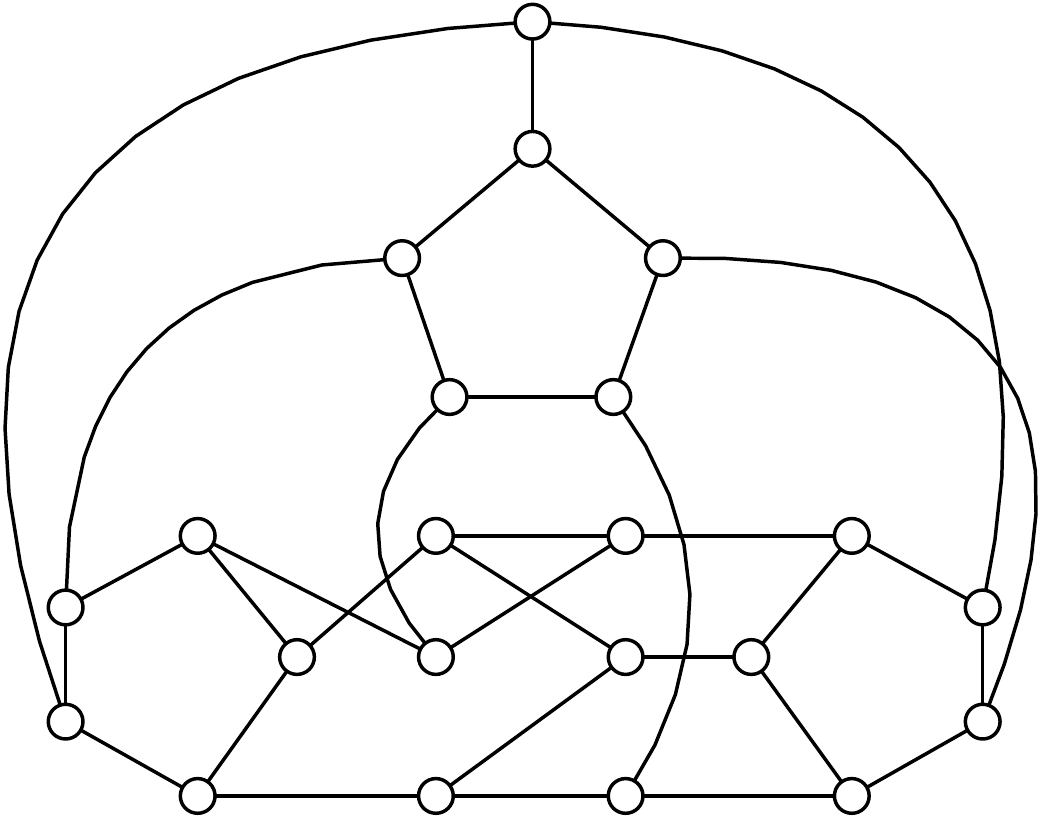}} \\
   \subfloat[$F_{11}$]{\label{fig:F11}\includegraphics[width=0.19\textwidth]{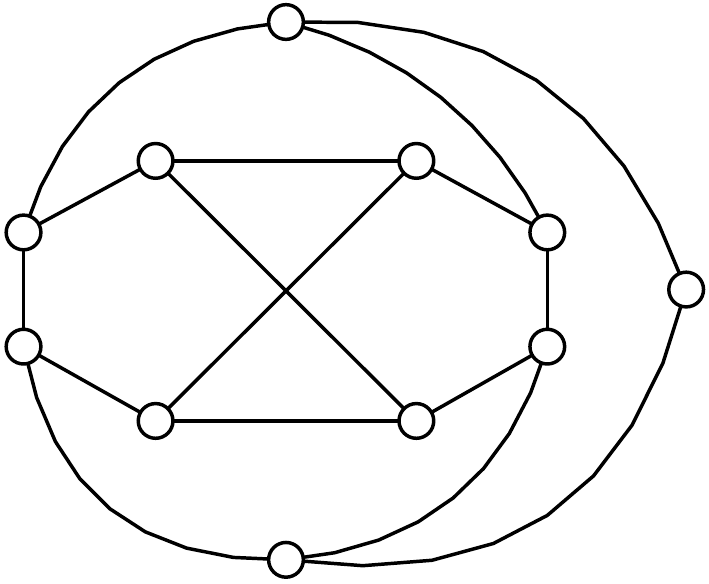}} \qquad
   \subfloat[$F_{19}^{(1)}$]{\label{fig:F19_1}\includegraphics[width=0.29\textwidth]{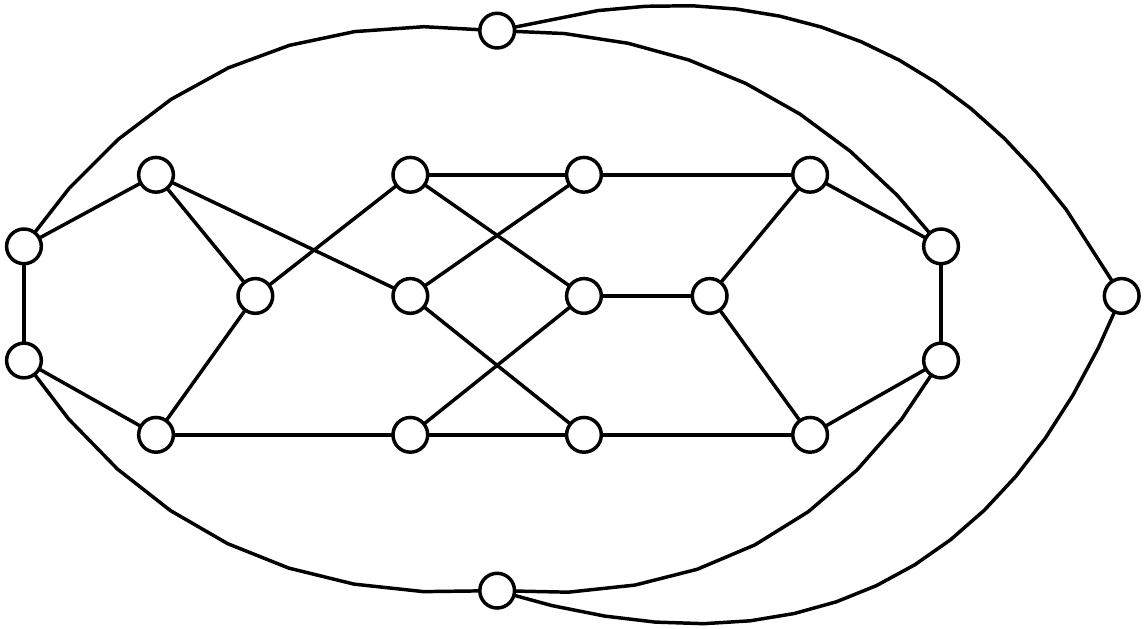}} \qquad
   \subfloat[$F_{19}^{(2)}$]{\label{fig:F19_2}\includegraphics[width=0.29\textwidth]{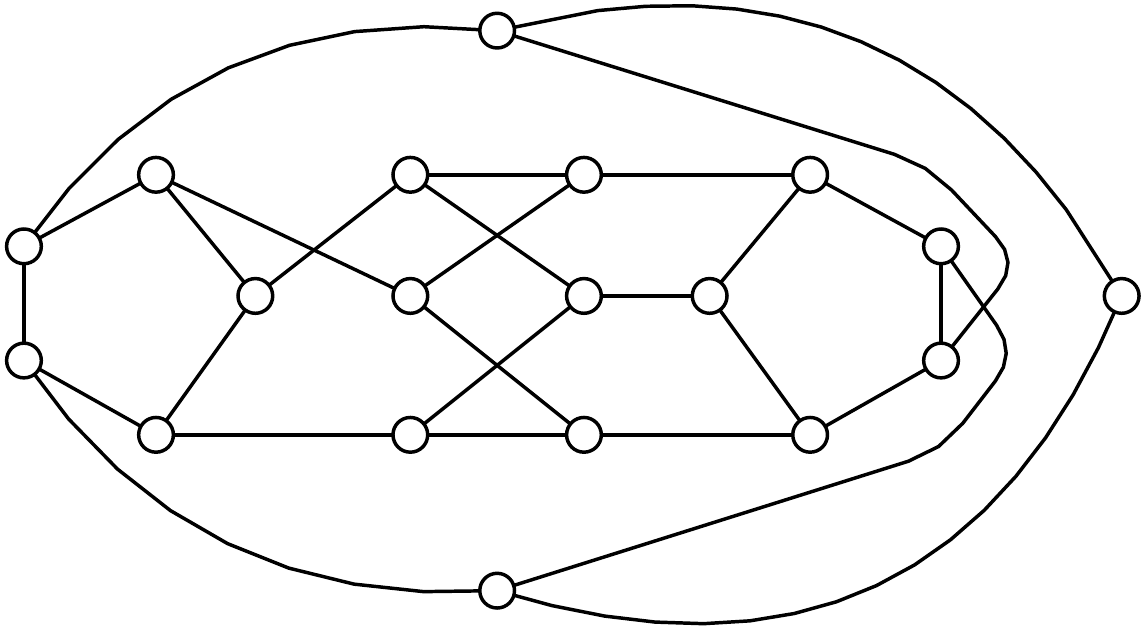}}   
\caption{The graphs $F_{14}^{(1)}$, $F_{14}^{(2)}$, $F_{22}$, $F_{11}$, $F_{19}^{(1)}$ and $F_{19}^{(2)}$.}
\label{fig:forbidden_graphs} 
\end{figure}

The factor $\frac{11}{30}$ is best possible, as shown by a construction of Fraughnaugh and Locke~\cite{FL95} illustrated in Figure~\ref{fig:FL_construction}. 
Note that this construction is far from being $2$-connected, thus it is natural to wonder whether the bound could be improved further under some extra connectivity assumption. 
In fact, already in 1986 Locke~\cite{L86} conjectured that there are only finitely many $3$-connected triangle-free cubic $n$-vertex graphs $G$ with $\alpha(G) < \frac{3}{8}n$. 
Fraughnaugh and Locke~\cite{FL95} made a similar conjecture under the assumption of $2$-connectivity: 

\begin{conjecture}[Fraughnaugh and Locke~\cite{FL95}]\label{conj:FL_2c} 
Let $G$ be a $2$-connected triangle-free subcubic $n$-vertex graph. 
Then, $\alpha(G) \geq \frac{3}{8}n - \frac{1}{4}$. 
\end{conjecture}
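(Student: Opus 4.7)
My plan is to derive the stated conjecture as a corollary of the main theorem of the paper, namely the Fraughnaugh--Locke forbidden-subgraph conjecture: every $n$-vertex triangle-free subcubic graph $G$ with no subgraph isomorphic to a member of $\mathcal{F}$ satisfies $\alpha(G) \geq \frac{3}{8}n$. Given a $2$-connected triangle-free subcubic graph $G$ on $n$ vertices, I would split into two cases. If $G$ contains no subgraph isomorphic to a member of $\mathcal{F}$, the main theorem immediately yields $\alpha(G) \geq \frac{3}{8}n > \frac{3}{8}n - \frac{1}{4}$, and we are done.

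Otherwise $G$ contains some $F \in \mathcal{F}$ as a subgraph, and the key intermediate claim is that in fact $G \cong F$. Three of the members of $\mathcal{F}$, namely $F_{14}^{(1)}$, $F_{14}^{(2)}$, and $F_{22}$, have even order and are cubic; in these cases every vertex of $F$ already attains the maximum allowed degree inside $F$, so $G$ can have no further edges incident to $V(F)$ and, by connectivity, no vertices outside $V(F)$, which forces $G=F$. The remaining three graphs $F_{11}$, $F_{19}^{(1)}$, $F_{19}^{(2)}$ have odd order and therefore contain vertices of degree $2$ that could in principle be extended in $G$. For these I would carry out a finite case analysis: enumerate the degree-$2$ vertices of each such $F$ and exhaustively examine all ways to attach further edges or vertices while preserving $2$-connectivity, subcubicity, and triangle-freeness, with the aim of showing that any proper extension either still contains a cubic member of $\mathcal{F}$ (and is then handled by the previous case) or else admits an explicit independent set of the desired size. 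Once $G \cong F$ is established, the bound $\alpha(F) \geq \frac{3}{8}|V(F)| - \frac{1}{4}$ is verified directly for each of the six graphs by exhibiting an appropriate independent set; all six should turn out to be tight examples of the conjecture.

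The real obstacle lies not in this deduction but in establishing the main theorem. Once the forbidden-subgraph result is available, the conjecture follows by a relatively routine case analysis, though the non-cubic cases demand some patience. Proving the main theorem itself is the substantive contribution of the paper: handling a list of six exceptional nonplanar configurations of order up to $22$, rather than just the planar setting of Heckman and Thomas, will likely require a substantially more intricate structural or discharging argument than in \cite{HT01}, most plausibly supported by computer verification to rule out pathological local configurations.
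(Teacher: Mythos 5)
Your overall route is the same as the paper's: the statement is deduced from the main forbidden-subgraph theorem (Theorem~\ref{th:goal}) by arguing that a $2$-connected $G$ containing some $F\in\mathcal{F}$ as a subgraph must in fact be isomorphic to $F$, and then checking the six graphs directly. Your treatment of the three cubic members $F_{14}^{(1)}$, $F_{14}^{(2)}$, $F_{22}$ is exactly the paper's argument and is fine.

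The gap is in your handling of $F_{11}$, $F_{19}^{(1)}$, $F_{19}^{(2)}$. You propose to ``exhaustively examine all ways to attach further edges or vertices,'' claiming that any proper extension either contains a cubic member of $\mathcal{F}$ or admits an explicit independent set of the required size. As stated this is not a finite case analysis: if $F$ is a proper subgraph of $G$, then $G-V(F)$ can be an arbitrary, arbitrarily large triangle-free subcubic graph, so there is no bounded list of extensions to enumerate, and neither of your two claimed outcomes is forced for a general extension. The observation that closes this case in one line is that each of $F_{11}$, $F_{19}^{(1)}$, $F_{19}^{(2)}$ has exactly \emph{one} vertex of degree less than $3$ (see Table~\ref{table:parameters}, where $n_2=1$ for these graphs). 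Every other vertex of $F$ already has degree $3$, so no extra edge can be added inside $V(F)$, and any edge leaving $V(F)$ must be incident to that single degree-$2$ vertex $v$. Hence if $F$ were a proper subgraph of a connected $G$, the vertex $v$ would separate $V(F)-\{v\}$ from $V(G)-V(F)$ and would be a cutvertex, contradicting $2$-connectivity; so $G\cong F$. A further small slip: after establishing $G\cong F$, only $F_{14}^{(1)}$, $F_{14}^{(2)}$, $F_{22}$ are tight for the bound $\tfrac{3}{8}n-\tfrac{1}{4}$ (with $\alpha=5$, resp.\ $8$); for the other three one has $\alpha(F_{11})=4>\tfrac{3\cdot 11}{8}-\tfrac14$ and $\alpha(F_{19}^{(i)})=7>\tfrac{3\cdot 19}{8}-\tfrac14$, so they satisfy the inequality strictly rather than with equality. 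This last point does not affect the validity of the deduction.
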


Bajnok and Brinkmann~\cite{BB98} investigated this conjecture using a computer search. 
While they found no counterexample in the range they considered, they also noted that they only found six $2$-connected triangle-free subcubic $n$-vertex graphs $G$ with $\alpha(G) < \frac{3}{8}n$, namely the graphs  $F_{11}$, $F_{14}^{(1)}$, $F_{14}^{(2)}$, $F_{19}^{(1)}$, $F_{19}^{(2)}$, $F_{22}$ from Figure~\ref{fig:forbidden_graphs}, which will be called the {\em forbidden graphs}.  
They conjectured that there are no other such graphs. 

\begin{conjecture}[Bajnok and Brinkmann~\cite{BB98}]\label{conj:BB}
There are exactly six 2-connected triangle-free subcubic $n$-vertex graphs $G$ with $\alpha(G) < \frac{3}{8}n$, namely $F_{11}$, $F_{14}^{(1)}$, $F_{14}^{(2)}$, $F_{19}^{(1)}$, $F_{19}^{(2)}$, $F_{22}$.
\end{conjecture}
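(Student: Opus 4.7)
The plan is to deduce Conjecture~\ref{conj:BB} as a short corollary of the stronger statement announced in the abstract, which is the main result of the paper: every triangle-free subcubic $n$-vertex graph $G$ with no subgraph isomorphic to a member of $\mathcal{F} = \{F_{11}, F_{14}^{(1)}, F_{14}^{(2)}, F_{19}^{(1)}, F_{19}^{(2)}, F_{22}\}$ satisfies $\alpha(G) \geq \frac{3}{8}n$. Call this the \emph{main theorem}. The bulk of the work lies in establishing the main theorem; Conjecture~\ref{conj:BB} then follows via a short structural argument.

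To prove the main theorem, I would argue by strong induction on $n$ with a minimum counterexample $G$: triangle-free, subcubic, no subgraph in $\mathcal{F}$, yet $\alpha(G) < \frac{3}{8}n$. Standard reductions show $G$ is connected and rule out cut vertices, vertices of degree at most $1$, and short cycles other than pentagons: in each case one removes a small piece, applies induction to the smaller graph to obtain an independent set of size at least $\frac{3}{8}(n-k)$, and extends it by a careful choice inside the deleted region. These reductions strip $G$ down to a highly-connected ``core'' structure; the heart of the proof is then a weighted counting / discharging argument in the spirit of Heckman and Thomas~\cite{HT01}, sharp enough to reach $\frac{3}{8}$ rather than $\frac{5}{14}$. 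The main obstacle is this tight discharging step: the six graphs of $\mathcal{F}$ are precisely those configurations where all obvious local moves are blocked, so the case analysis must be exactly tight and will almost certainly require computer verification at several points (unsurprisingly, since $\mathcal{F}$ was found by computer search in \cite{BB98}).

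For the deduction of Conjecture~\ref{conj:BB}, let $G$ be a $2$-connected triangle-free subcubic $n$-vertex graph with $\alpha(G) < \frac{3}{8}n$. By the contrapositive of the main theorem, $G$ contains some $F \in \mathcal{F}$ as a subgraph. I claim $G = F$. Since $G$ is subcubic, every vertex $v$ of $F$ with $\deg_F(v) = 3$ has all its $G$-neighbours inside $V(F)$ and cannot be incident to any edge of $G$ outside $E(F)$, so the only edges of $G$ leaving $V(F)$ must be incident to vertices in $D(F) := \{v \in V(F) : \deg_F(v) \leq 2\}$, with at most $3 - \deg_F(v)$ such edges at each $v \in D(F)$.

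When $F \in \{F_{14}^{(1)}, F_{14}^{(2)}, F_{22}\}$ is cubic, $D(F) = \varnothing$, and since $G$ is connected this already forces $V(G) = V(F)$ and $E(G) = E(F)$, hence $G = F$. For each remaining $F \in \{F_{11}, F_{19}^{(1)}, F_{19}^{(2)}\}$, a direct inspection of the small set $D(F)$ combined with $2$-connectivity rules out any proper extension: if $V(G) \setminus V(F) \neq \varnothing$, then the edges crossing the cut between $V(F)$ and $V(G) \setminus V(F)$ all land in $D(F)$ and are few enough that one can either identify a small vertex cut in $G$, contradicting $2$-connectivity, or apply the main theorem to $G - V(F)$ and combine with an independent set in $F$ suitably shifted along the boundary to obtain $\alpha(G) \geq \frac{3}{8}n$, a contradiction. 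In all cases $G = F$. Finally, one verifies by direct computation that each $F_i \in \mathcal{F}$ indeed satisfies $\alpha(F_i) < \frac{3}{8}|V(F_i)|$, completing the proof.
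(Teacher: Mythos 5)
Your reduction of Conjecture~\ref{conj:BB} to the main theorem is correct in outline and is exactly the route the paper takes: verify that each member of $\mathcal{F}$ is $2$-connected, triangle-free, subcubic and has $\alpha < \frac{3}{8}n$, then show that a $2$-connected triangle-free subcubic graph containing some $F\in\mathcal{F}$ as a subgraph must be isomorphic to $F$. Your argument for the three cubic members is right. For $F_{11}$, $F_{19}^{(1)}$, $F_{19}^{(2)}$ the paper's observation is sharper and simpler than your ``identify a small vertex cut or apply the main theorem to $G-V(F)$'' alternative: each of these three graphs has a \emph{unique} vertex of degree less than $3$, so if $F$ were a proper subgraph of $G$ then all edges leaving $V(F)$ would be incident to that one vertex, making it a cutvertex and contradicting $2$-connectivity outright. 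Your fallback of applying the main theorem to $G-V(F)$ is never needed, and as stated it is not clear how it would close the case.

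The genuine gap is the main theorem itself, which you only gesture at. The sketch you give --- minimum counterexample, local reductions, then ``a weighted counting / discharging argument in the spirit of Heckman and Thomas, sharp enough to reach $3/8$,'' with computer verification --- is not how the paper proceeds, and there is no reason to believe it would succeed: the whole difficulty is precisely that the naive induction does not close at the ratio $\frac{3}{8}$. The paper instead proves a much stronger technical statement (Theorem~\ref{th:main}) for connected \emph{critical} graphs, replacing $\frac{3}{8}n$ by the potential $\lb(G)=\frac{1}{12}\left(6|V(G)|-|E(G)|-\lambda\right)$ (with $\lambda$ the number of components), which rewards vertices of degree less than $3$ and thereby makes the induction go through. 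This in turn forces the explicit identification of two exceptional families (bad graphs and dangerous graphs) for which the strengthened bound fails or is tight, the machinery of independence packings, and a long structural analysis of a minimum counterexample: near-$3$-connectivity, exclusion of bad subgraphs, exclusion of degree-$2$ vertices, exclusion of $4$-, $6$- and then $5$-cycles, and finally an analysis of a shortest even cycle. No discharging and no computer verification appear in that proof. Without some such strengthened induction hypothesis your outline does not constitute a proof, so the proposal as written establishes only the (easy) deduction step of Conjecture~\ref{conj:BB}.
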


\begin{figure}
\centering
\includegraphics[width=0.5\textwidth]{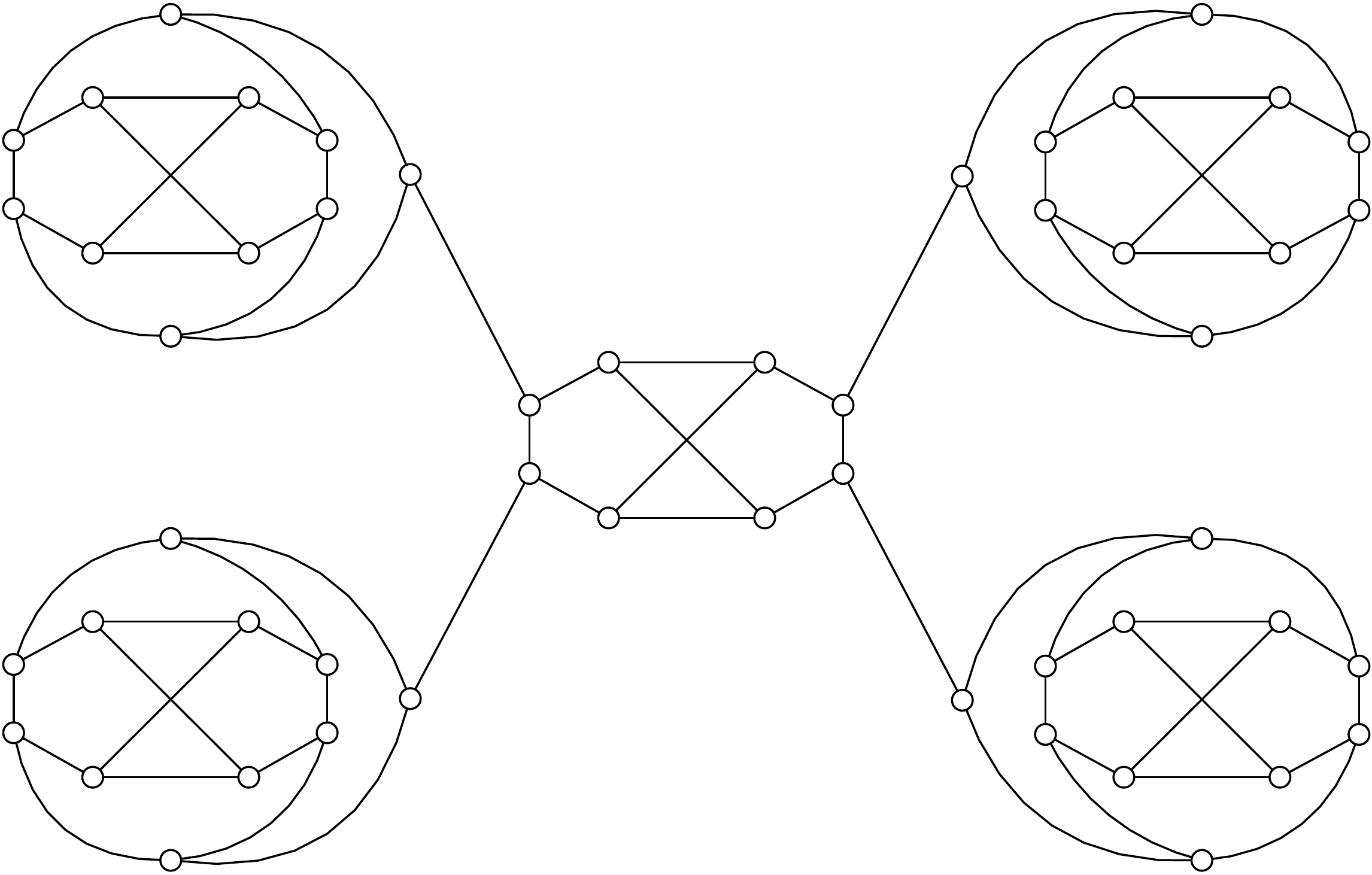}
\caption{Construction of connected triangle-free subcubic $n$-vertex graphs $G$ with $\alpha(G) = \frac{11}{30}n - \frac{1}{15}$: 
Start with a $4$-regular tree, replace each internal node with a copy of $B_8$ (see Figure~\ref{fig:bad_graphs})  and each leaf with a copy of $F_{11}$ (see Figure~\ref{fig:forbidden_graphs}), and link them using the degree-2 vertices. 
One can verify that for the resulting graph $G$, $\alpha(G)$ is three times the number of internal nodes plus four times the number of leaves. 
The construction is illustrated starting with a $4$-leaf star.}
\label{fig:FL_construction}
\end{figure}

Fraughnaugh and Locke~\cite{FL95}, aware of the six graphs found in~\cite{BB98}, formulated the following closely related conjecture.

\begin{conjecture}[Fraughnaugh and Locke~\cite{FL95}]\label{conj:FL}
If $G$ is a triangle-free subcubic $n$-vertex graph containing none of $F_{11}$, $F_{14}^{(1)}$, $F_{14}^{(2)}$, $F_{19}^{(1)}$, $F_{19}^{(2)}$, $F_{22}$ as a subgraph, then $\alpha(G) \geq \frac{3}{8}n$.  
\end{conjecture}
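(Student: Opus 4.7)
The natural strategy is induction on $n$. Let $G$ be a counterexample with $n=|V(G)|$ minimum: $G$ is triangle-free and subcubic, contains no member of $\mathcal{F}:=\{F_{11},F_{14}^{(1)},F_{14}^{(2)},F_{19}^{(1)},F_{19}^{(2)},F_{22}\}$ as a subgraph, yet $\alpha(G)<\frac{3}{8}n$. The goal is to derive a contradiction.

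I would begin with routine minimality observations: $G$ is connected, has minimum degree at least $2$, and the degree-$2$ vertices are sparsely distributed. These follow from standard local fixes such as deleting a pendant vertex together with its neighbor, or suppressing a degree-$2$ vertex by identifying its two neighbors, either of which produces a smaller graph still satisfying the hypotheses---unless it introduces a triangle or a forbidden subgraph, in which case one can usually extract an independent set of the desired size directly from the local structure and the inductive conclusion on the rest.

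The engine of the proof is a list of \emph{reducible configurations}: subgraphs $H \subseteq G$ for which one exhibits an independent set $I_H \subseteq V(H)$ and a deletion set $D_H \supseteq I_H$ satisfying $|I_H| \geq \frac{3}{8}|D_H|$, such that $G' := G - D_H$ is triangle-free, subcubic, and still contains no member of $\mathcal{F}$. By induction $\alpha(G') \geq \frac{3}{8}|V(G')|$, and combining a witness for this with $I_H$ yields an independent set of size at least $\frac{3}{8}n$ in $G$, contradicting the choice of $G$. Designing this list of reducibles is the technical heart of the proof; the key hazard is that deleting $D_H$ may create a new copy of some graph in $\mathcal{F}$ in $G'$ that was not present in $G$, so every reduction must be verified carefully against all six forbidden graphs. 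Natural candidates include short paths of degree-$2$ vertices, isolated $4$-cycles and $5$-cycles with few external attachments, and subgraphs that resemble pieces of the Fraughnaugh--Locke construction (such as the gadget $B_{8}$) but are not yet one of the forbidden graphs themselves.

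Having accumulated enough reducibles, I would case-split on the girth of $G$ and on the local structure of a shortest cycle. Triangle-freeness forces girth at least $4$, and one treats the girth-$4$, girth-$5$, and girth-$\geq 6$ cases in turn. In each case, the absence of reducible configurations constrains the graph so tightly that, by unfolding the neighborhood of the shortest cycle a few steps outward, one forces $G$ to contain a member of $\mathcal{F}$---a contradiction. This final case analysis is the principal obstacle: since the graphs in $\mathcal{F}$ range from $11$ to $22$ vertices, the unfolding has to reach several layers deep, with many sub-branches to rule out. I expect large parts of this endgame to be verified by computer, by enumerating small triangle-free subcubic graphs of bounded radius around a chosen vertex and matching them against a library of reducibility certificates; this aligns with the computational expertise of the authors.
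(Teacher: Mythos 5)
Your proposal is a strategy outline rather than a proof: the list of reducible configurations, which you yourself identify as ``the technical heart,'' is never exhibited, and the endgame is deferred to an unspecified computer search. But beyond incompleteness there is a structural reason the plan as stated would not close. The induction hypothesis $\alpha(G')\geq\frac{3}{8}|V(G')|$ is too weak to drive a deletion-based argument, because there are infinitely many connected (even $3$-connected, girth-$5$) graphs meeting the bound with equality --- the bad graphs $B_8, B_{16}^{(1)}, B_{16}^{(2)},\dots$ and the constructions built from $B_{16}^{(2)}$ described in the paper. Consequently the induction has \emph{no slack anywhere}: every reduction must satisfy $|I_H|\geq\frac{3}{8}|D_H|$ exactly, which by integrality essentially forces $|D_H|$ to be a multiple of $8$ with $|I_H|=\frac{3}{8}|D_H|$, and it must do so even when $G-D_H$ degenerates into one of the tight examples. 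You give no mechanism for recognizing or handling that situation. The paper's resolution --- and the one idea your sketch is missing --- is to strengthen the induction hypothesis to $\alpha(G)\geq\lb(G)$ where $\lb(G)=\frac{1}{12}(6|V(G)|-|E(G)|-\lambda)$ rewards vertices of degree less than $3$, to restrict attention to connected \emph{critical} graphs, and to isolate explicitly the exceptional families (bad graphs, where $\alpha=\lb-\frac{1}{12}$, and dangerous graphs, where the further $+\frac{1}{12}$ improvement fails). All the local arguments are then run against this finer potential via ``independence packings,'' not against $\frac{3}{8}n$.

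A second, more technical gap: your reductions only ever \emph{delete} a vertex set $D_H$. Several of the essential steps in any known proof of this type (including the paper's Lemmas on adjacent degree-$2$ vertices and on adjacent $5$-cycles, and Heckman--Thomas's planar argument) require \emph{adding} an edge between two outer neighbors $w_i,w_j$ of the deleted configuration, precisely to force every maximum independent set of the reduced graph to avoid one of them; this is what converts ``$\alpha(G')\geq\cdots$'' into ``$\alpha(G)\geq\alpha(G')+|I_H|$.'' Edge addition creates the genuine danger that the reduced graph contains a forbidden graph, a bad subgraph, or a dangerous subgraph that was not present in $G$, and handling that is where most of the paper's length goes (e.g.\ the emergence of $F_{22}$ from $B_{16}^{(1)}$ plus a new edge). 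A pure vertex-deletion scheme cannot reproduce these steps, and your proposal does not acknowledge the issue. So the approach, as described, is not merely unfinished; it would need to be redesigned around a stronger potential and around reductions that modify edges before the case analysis you envision could even begin.
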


This conjecture implies Conjecture~\ref{conj:BB}, because if $G$ is $2$-connected and contains some graph $F$ among these six graphs as a subgraph then $G$ must be isomorphic to $F$. 
(Indeed, this is clear if $F$ is one of the three cubic graphs $F_{14}^{(1)}$, $F_{14}^{(2)}$, $F_{22}$, and if $F$ is one of the three subcubic graphs $F_{11}$, $F_{19}^{(1)}$, $F_{19}^{(2)}$ this is because they have a unique vertex of degree less than $3$, which would be a cutvertex of $G$ if $F$ were a proper subgraph of $G$.) 

Observe also that Conjecture~\ref{conj:FL} implies in particular that $\alpha(G) \geq \frac{3}{8}n$ holds for every planar triangle-free subcubic $n$-vertex graph $G$, since none of $F_{11}$, $F_{14}^{(1)}$, $F_{14}^{(2)}$, $F_{19}^{(1)}$, $F_{19}^{(2)}$, $F_{22}$ is planar. 
This was conjectured by Albertson, Bollob\'as, Tucker~\cite{ABT76} in 1976 and was still an open problem when the two papers~\cite{BB98,FL95} appeared. 
The conjecture was eventually proved by Heckman and Thomas~\cite{HT06} in 2006:

\begin{theorem}[Heckman and Thomas~\cite{HT06}]\label{th:planar}
Let $G$ be a triangle-free subcubic $n$-vertex planar graph on $n$ vertices. Then, $\alpha(G) \geq \frac{3}{8}n$. 
\end{theorem}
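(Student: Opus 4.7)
The plan is to prove a weighted strengthening of the theorem by strong induction on $n$. The natural strengthening has the form
\[
\alpha(G) \geq \tfrac{3}{8}\, n(G) + c_1 n_1(G) + c_2 n_2(G),
\]
where $n_i(G)$ denotes the number of degree-$i$ vertices, and the constants $c_1, c_2 > 0$ are chosen small enough so that the inequality reduces to $\alpha(G) \ge \frac{3}{8}n$ on cubic graphs, but large enough to absorb the defect created by the low-degree vertices that appear after local reductions. Proving such a weighted version is essential: peeling off a configuration typically lowers the degree of some leftover vertices, and without the $c_i n_i$ bonus the inductive bound cannot be pushed back up to $\frac{3}{8}$.

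First I would dispose of the easy structural reductions on a minimum counterexample $G$. Disconnected components and cutvertices are split off and handled by induction. A vertex of degree at most $1$ is deleted. Two adjacent degree-$2$ vertices can be contracted while maintaining triangle-freeness, planarity and subcubicity. After these reductions I may assume $G$ is $2$-connected and that its degree-$2$ vertices form an independent set; in particular $G$ embeds in the plane with every face of length $\geq 4$.

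The core step is a discharging argument using Euler's formula. Assign each vertex $v$ charge $\deg(v) - 4$ and each face $f$ charge $\ell(f) - 4$. A direct computation from $\sum_v \deg(v) = \sum_f \ell(f) = 2m$ and $n - m + f = 2$ shows that the total charge equals $-8$. Triangle-freeness forces every initial face charge to be nonnegative, while subcubicity forces every vertex charge to be at most $-1$, so charge must flow from long faces to vertices and to short faces. I would design local rules in which long faces transfer a prescribed amount of their surplus to adjacent $4$- and $5$-faces and to the degree-$3$ vertices that sit next to degree-$2$ vertices. The goal is to show that after redistribution every face and every vertex has nonnegative charge \emph{unless} $G$ contains one of a short list of \emph{reducible configurations}: small local patterns that can be replaced by a strictly smaller triangle-free subcubic planar graph $G'$ such that any independent set of $G'$ meeting the weighted inductive bound extends to one in $G$ meeting the bound for $G$. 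Since the total charge is negative, some reducible configuration must occur in $G$, contradicting its minimality.

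The main obstacle, and the heart of any implementation of this plan, is simultaneously tuning three things: the constants $c_1, c_2$, the list of discharging rules, and the list of reducible configurations. The delicate cases are $4$-faces sharing an edge, $5$-faces incident to multiple degree-$2$ vertices, and long alternating walks through degree-$2$ vertices; each such local situation needs an ad hoc independent-set extension argument whose $\frac{3}{8}$-defect must be paid exactly by the charge routed in by the discharging rules. Getting the arithmetic to close across every configuration is the real work, and it is where planarity is used in an essential way: without Euler's formula the global charge identity disappears and the forbidden-subgraph framework from Conjecture~\ref{conj:FL} becomes necessary.
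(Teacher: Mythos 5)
The statement you are proving is not actually proved in this paper: it is the Heckman--Thomas theorem, quoted from~\cite{HT06}, and within this paper it also follows as an immediate corollary of Theorem~\ref{th:goal} because none of the six forbidden graphs is planar. So the relevant comparison is with the Heckman--Thomas argument, which is a minimum-counterexample structural induction on a potential function essentially identical to $\lb(G)$ here, with an explicit exceptional family (their ``difficult'' graphs, the analogue of the bad/dangerous graphs above) and with planarity used to exclude bad subgraphs via $K_5$-minors --- not a discharging argument.

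Your proposal has a genuine gap, and it is fatal as stated. The inductive invariant $\alpha(G) \geq \tfrac{3}{8}n + c_1 n_1(G) + c_2 n_2(G)$ with $c_2>0$ is \emph{false} for planar triangle-free subcubic graphs: the graph $B_8$ of Figure~\ref{fig:bad_graphs} is planar, $2$-connected, triangle-free, subcubic, has four degree-$2$ vertices forming an independent set (indeed a matching), and satisfies $\alpha(B_8)=3=\tfrac{3}{8}\cdot 8$ exactly, so the claimed inequality would force $c_2\leq 0$. The same holds for every bad graph, an infinite family, all of which survive every reduction you list (they are $2$-connected, critical, and have no adjacent degree-$2$ vertices). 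Any correct strengthening must therefore single out these graphs as exceptions and prove that they are essentially the only ones --- this is exactly the content of Theorem~\ref{th:main} and of the corresponding theorem in~\cite{HT06}, and it is where almost all of the work lies. Separately, even granting a corrected invariant, your plan defers the entire substance of the proof: the discharging rules, the list of reducible configurations, and the verification that every configuration's independence defect is paid by the routed charge are all left unspecified, and you acknowledge that ``getting the arithmetic to close across every configuration is the real work.'' A proof plan that identifies the hard part without executing it, and whose stated induction hypothesis is refuted by an explicit $8$-vertex example, cannot be accepted as a proof.
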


The goal of this paper is to show that the conjectures mentioned above are true:

\begin{theorem}[Main theorem]
\label{th:goal}
Let $G$ be a triangle-free subcubic $n$-vertex graph containing none of $F_{11}$, $F_{14}^{(1)}$, $F_{14}^{(2)}$, $F_{19}^{(1)}$, $F_{19}^{(2)}$, $F_{22}$ as a subgraph. Then, $\alpha(G) \geq \frac{3}{8}n$.  
\end{theorem}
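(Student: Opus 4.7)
The plan is a minimum-counterexample argument on $n$, aiming to show that any triangle-free subcubic $G$ with $\alpha(G)<\frac{3}{8}n$ necessarily contains one of the six forbidden graphs. First I would establish that a smallest such counterexample $G$ is highly structured: $G$ is connected, cubic (a vertex of degree at most $2$ can be removed at cost $\frac{3}{8}$ using induction), and has no small edge- or vertex-cut, since a cut lets one split $G$ into strictly smaller triangle-free subcubic forbidden-free pieces and combine their independent sets additively. It is probably cleanest to prove a stronger weighted inequality, in the spirit of Heckman and Thomas's treatment of the planar case, so that the inductive bookkeeping handles the various reductions uniformly and interacts nicely with a discharging argument.

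The engine of the proof should be a catalogue of \emph{reducible configurations}: small subgraphs $H$ such that whenever $H$ appears in $G$ in a specified manner, one can delete a well-chosen piece to obtain a smaller triangle-free subcubic forbidden-free graph $G'$ and lift an independent set of $G'$ to one in $G$ losing at most $\frac{3}{8}$ per deleted vertex. Natural candidates are $4$-cycles with various attachments, pairs of short cycles sharing vertices or edges, and $5$-cycles with prescribed neighbourhood structure. Many of these reductions should already appear, explicitly or implicitly, in the Heckman--Thomas discharging proof of Theorem~\ref{th:planar}; the work is to check that they continue to succeed once planarity is replaced by the weaker hypothesis that none of the six specific non-planar subgraphs occurs (so that no reduction accidentally creates a forbidden subgraph in $G'$). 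Theorem~\ref{th:FL} is also useful as a safety net: whenever a reduction leaves a connected remainder large enough that $\frac{11}{30}n'-\frac{2}{15}\geq\frac{3}{8}n'$, we can bypass induction entirely.

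The main obstacle is the final identification step: showing that an \emph{irreducible} $G$ must be isomorphic to one of the six graphs in $\mathcal{F}$. This reduces to classifying the triangle-free subcubic graphs satisfying a long list of forbidden local patterns, and the risk is that the list is too permissive, admitting graphs outside $\mathcal{F}$ and forcing one to enlarge the catalogue of reductions in a delicate loop with new cases. I expect the cleanest way to close this case analysis, as also suggested by the origin of the six forbidden graphs in the computer search of Bajnok and Brinkmann~\cite{BB98}, is to combine a careful hand-analysis around short cycles with a computer-assisted enumeration of the small irreducible cores, verifying exhaustively that exactly $F_{11}$, $F_{14}^{(1)}$, $F_{14}^{(2)}$, $F_{19}^{(1)}$, $F_{19}^{(2)}$ and $F_{22}$ remain.
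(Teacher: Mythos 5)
Your overall strategy (minimum counterexample, a strengthened degree-weighted induction hypothesis in the spirit of Heckman--Thomas, structural analysis around short cycles) is indeed the route the paper takes, but as written the proposal has a vacuous step and two genuine gaps. The vacuous step: the inequality $\frac{11}{30}n'-\frac{2}{15}\geq\frac{3}{8}n'$ is equivalent to $-\frac{n'}{120}\geq\frac{2}{15}$, which holds for no positive $n'$, so Theorem~\ref{th:FL} is never a ``safety net'' --- the target ratio $\frac{3}{8}$ strictly exceeds $\frac{11}{30}$. Similarly, deleting the closed neighbourhood of a degree-$\le 2$ vertex removes up to $3$ vertices while gaining only $1$ in the independent set, and $1<\frac{9}{8}$, so degree-$2$ vertices cannot be ``removed at cost $\frac{3}{8}$'' against the unweighted bound; this is precisely why the weighted potential is indispensable and why eliminating degree-$2$ vertices occupies several pages of the paper rather than one line.

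The first genuine gap is that you have not identified the exceptional families for which the strengthened inequality \emph{fails}. The natural strengthening $\alpha(G)\geq\mu(G)$ (with $\mu$ weighting low-degree vertices more heavily) is false for the infinite family of \emph{bad} graphs ($B_8$ and its $8$-augmentations), and the further strengthening $\alpha(G)\geq\mu(G)+\frac{1}{12}$ needed to make the $2$-cut and degree-$2$ reductions close is false for the infinite family of \emph{dangerous} graphs. Without pinning down these families exactly and proving their properties (Lemmas~\ref{le:badproperties}--\ref{le:outdegree4bad}), every reduction in your catalogue is at risk of landing on an exceptional smaller graph where the induction hypothesis returns too little; this is exactly the ``delicate loop'' you mention, and resolving it is the main content of Theorem~\ref{th:main}, not a detail. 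The second gap is the final identification step: you give no bound on the size of an ``irreducible core,'' so a computer enumeration of small irreducible graphs cannot be exhaustive and does not constitute a proof (the Bajnok--Brinkmann search was likewise only a finite verification). The paper instead closes the argument by hand: after showing the minimum counterexample is cubic, $3$-connected, and has girth at least $7$ (using Bondy--Locke's classification of triangle-free cubic graphs edge-$2$-covered by $5$-cycles to kill the girth-$5$ case), it derives a contradiction from a shortest even cycle, with no appeal to enumeration.
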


\begin{corollary}\label{cor:twoconnected}
Let $G$ be a $2$-connected triangle-free subcubic $n$-vertex graph. 
Then, $\alpha(G) \geq \frac{3}{8}n - \frac{1}{4}$. 
Moreover, $\alpha(G) \geq \frac{3}{8}n$ if $G$ is not isomorphic to any of $F_{11}$, $F_{14}^{(1)}$, $F_{14}^{(2)}$, $F_{19}^{(1)}$, $F_{19}^{(2)}$, $F_{22}$.  
\end{corollary}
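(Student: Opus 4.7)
The plan is to deduce the corollary directly from Theorem~\ref{th:goal}, treating separately the case in which $G$ is isomorphic to one of the six forbidden graphs.

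Suppose first that $G$ is not isomorphic to any member of $\mathcal{F} = \{F_{11}, F_{14}^{(1)}, F_{14}^{(2)}, F_{19}^{(1)}, F_{19}^{(2)}, F_{22}\}$. I would argue that $G$ does not contain any $F \in \mathcal{F}$ as a subgraph, using the observation already made in the paragraph following Conjecture~\ref{conj:FL}. If $F$ is one of the three cubic graphs $F_{14}^{(1)}, F_{14}^{(2)}, F_{22}$, then every vertex of $F$ is already saturated in $G$ (since $G$ is subcubic), so $V(F)$ is a union of connected components of $G$; as $G$ is connected, $V(F) = V(G)$, and with every vertex saturated we must have $G = F$, a contradiction. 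If instead $F$ is one of $F_{11}, F_{19}^{(1)}, F_{19}^{(2)}$, the same saturation argument shows that the unique vertex $v$ of $F$ of degree less than $3$ is the only vertex of $V(F)$ incident with edges of $E(G) \setminus E(F)$; hence either $F = G$, or $v$ separates $V(F) \setminus \{v\}$ from $V(G) \setminus V(F)$ in $G$, contradicting $2$-connectivity. Thus $G$ is $\mathcal{F}$-free, and Theorem~\ref{th:goal} yields $\alpha(G) \ge \frac{3}{8}n > \frac{3}{8}n - \frac{1}{4}$, which handles both inequalities in this case.

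It remains to verify the weaker bound $\alpha(G) \ge \frac{3}{8}n - \frac{1}{4}$ when $G \in \mathcal{F}$. Substituting the orders of the forbidden graphs, this amounts to showing $\alpha(F_{11}) \ge 4$, $\alpha(F_{14}^{(i)}) \ge 5$ for $i \in \{1,2\}$, $\alpha(F_{19}^{(i)}) \ge 7$ for $i \in \{1,2\}$, and $\alpha(F_{22}) \ge 8$. Each of these is an immediate finite check: for the two graphs on $14$ vertices the bound $\alpha \ge 5$ follows already from Staton's Theorem~\ref{th:Staton}, and for the remaining four graphs one simply exhibits an independent set of the required size (such sets are also implicit in the computer search of Bajnok and Brinkmann~\cite{BB98}, who identified these graphs precisely because $\alpha < \frac{3}{8}n$ but $\alpha$ is nonetheless close to $\frac{3}{8}n$).

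Putting the two cases together yields the $\frac{3}{8}n - \frac{1}{4}$ bound for every $2$-connected triangle-free subcubic $n$-vertex graph $G$, with equality possible only when $G \in \mathcal{F}$; and the strict improvement $\alpha(G) \ge \frac{3}{8}n$ whenever $G \notin \mathcal{F}$. The only nontrivial content is Theorem~\ref{th:goal} itself; the reduction above is essentially bookkeeping, and I expect no genuine obstacle beyond being careful about the subgraph-versus-isomorphism distinction in the argument that $2$-connectivity upgrades ``$\mathcal{F}$-subgraph-free'' to ``not isomorphic to a member of $\mathcal{F}$''.
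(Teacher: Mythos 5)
Your proposal is correct and follows exactly the route the paper intends (the paper leaves the corollary's proof implicit, relying on the remark after Conjecture~\ref{conj:FL} that a $2$-connected graph containing a forbidden graph as a subgraph must be isomorphic to it, together with Theorem~\ref{th:goal} and the values $\alpha(F_{11})=4$, $\alpha(F_{14}^{(i)})=5$, $\alpha(F_{19}^{(i)})=7$, $\alpha(F_{22})=8$ recorded in Table~\ref{table:parameters}). Your saturation/cutvertex argument and the finite checks are exactly the needed bookkeeping.
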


Both Theorem~\ref{th:goal} and Corollary~\ref{cor:twoconnected} are best possible in the following strong sense: There exist infinitely many $3$-connected girth $5$ graphs $G$ (containing none of $F_{11}$, $F_{14}^{(1)}$, $F_{14}^{(2)}$, $F_{19}^{(1)}$, $F_{19}^{(2)}$, $F_{22}$ as a subgraph) attaining $\alpha(G)= \frac{3}{8}n$. 
There are many ways to obtain such graphs, for example by taking a large collection of disjoint copies of the graph $B_{16}^{(2)}$ (see Figure~\ref{fig:bad_graphs}), and adding edges between degree-$2$ vertices of distinct copies, in such a way that no triangles or $4$-cycles are created and the resulting graph is $3$-connected. 
This is sufficient because $B_{16}^{(2)}$ has $16$ vertices and a size-$6$ maximum independent set avoiding all of its degree-$2$ vertices, which implies that the resulting graph satisfies $\alpha(G)= \frac{3}{8}n$.
 
Let us also mention a consequence for subcubic graphs with girth at least $6$. 
For $g \geq 3$, let
\[
i(g) := \inf \left\{ \frac{\alpha(G)}{|V(G)|}: G \textrm{ subcubic graph with girth at least } g  \right\}. 
\]
Then $i(3)=\frac{1}{4}$ and $i(4)=i(5)=\frac{5}{14}$, as follows from the discussion above. 
For $g=6$, the best known lower bound was $i(6) \geq \frac{11}{30}$, due to Pirot and Sereni~\cite{PS19}. 
Since $F_{11}$, $F_{14}^{(1)}$, $F_{14}^{(2)}$, $F_{19}^{(1)}$, $F_{19}^{(2)}$, $F_{22}$ each have a $5$-cycle, Theorem~\ref{th:goal} implies that $i(6) \geq \frac{3}{8}$. 

\begin{corollary}\label{cor:girth6}
Let $G$ be a triangle-free subcubic $n$-vertex graph with girth at least $6$. 
Then, $\alpha(G) \geq \frac{3}{8}n$. 
\end{corollary}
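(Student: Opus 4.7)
The plan is to apply Theorem~\ref{th:goal} directly to $G$. The girth-at-least-$6$ hypothesis forbids every cycle of length at most $5$ in $G$; in particular, $G$ contains no $5$-cycle. Each of the six forbidden graphs $F_{11}$, $F_{14}^{(1)}$, $F_{14}^{(2)}$, $F_{19}^{(1)}$, $F_{19}^{(2)}$, $F_{22}$ is easily seen to contain a $5$-cycle by inspection of Figure~\ref{fig:forbidden_graphs}, and cycles are preserved under passing to a supergraph. Therefore $G$ cannot contain any of these six graphs as a subgraph, the hypothesis of Theorem~\ref{th:goal} is satisfied, and we conclude $\alpha(G) \geq \frac{3}{8}n$.

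There is essentially no hard step here: the entire content beyond invoking Theorem~\ref{th:goal} is the finite check that each forbidden graph has a $5$-cycle, which is routine. The substantive work in this paper is thus concentrated in Theorem~\ref{th:goal} itself; Corollary~\ref{cor:girth6} is merely a convenient extraction, exactly as the paragraph preceding its statement already foreshadows.
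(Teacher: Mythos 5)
Your proposal is correct and is exactly the argument the paper gives (in the sentence just before the corollary): each of the six forbidden graphs contains a $5$-cycle, so a graph of girth at least $6$ contains none of them as a subgraph, and Theorem~\ref{th:goal} applies directly.
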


This is not far from best possible; there exist precisely three cubic girth $7$ graphs on $26$ vertices~\cite{HouseofGraphs}, two of which certify that $i(6)\leq i(7) \leq  \frac{3}{8} + \frac{1}{104}$. 
Using the generator for cubic graphs called \textit{minibaum}~\cite{brinkmann96}, we determined that there are no other graphs among the cubic graphs of girth at least 6 up to 40 vertices which meet or improve this bound. Using a modified version of the generator \textit{geng}~\cite{nauty-website,mckay_14}, we determined that there are no other graphs among the subcubic graphs of girth at least 6 up to 31 vertices which meet or improve this bound.
We also note that $\lim_{g\to +\infty} i(g)$ is a much studied quantity, which is known to be between 0.44533 (Cs{\'o}ka~\cite{C16}) and 0.454 (Balogh, Kostochka, and Liu~\cite{BKL17}).

\subsection*{Proof approach} 
Theorem~\ref{th:goal} is proved by induction on the size of the graph. 
In order to help the induction go through, we need to prove a slightly stronger version of the theorem. 
Before stating it, we introduce a few definitions. 

An edge $e$ of a graph $G$ is called \emph{critical} if $\alpha(G-e)> \alpha(G)$. A graph is called critical if each of its edges is critical. (Note that, in particular, $K_1$ is critical.)

For a graph $G$, we define 
$$\lb(G):= \frac{6|V(G)| -|E(G)| -\lambda}{12},$$ 
where $\lambda$ denotes the number of components of $G$. 
Equivalently, 
$$\lb(G)=\frac{1}{24}\left(9 n_3 +10 n_2 + 11 n_1 + 12 n_0  -2 \lambda \right),$$
where $n_i$ denotes the number of vertices of degree $i$ in $G$.

In order to state our main technical theorem, we also need to introduce two families of exceptional graphs. 
The first one is defined in terms of an {\em $8-$augmentation} operation, defined as follows. 
A {\em corner} of a subcubic graph $G$ is a triplet $(a, b, c)$ of vertices such that $a$ has degree $3$, $b$ and $c$ have degree $2$,  $ab, bc \in E(G)$ and $ac \notin E(G)$, and the unique neighbors of the three vertices outside $\{a, b, c\}$ are pairwise distinct; these neighbors are called the {\em interface} vertices.  
An $8-$augmentation of $G$ consists in replacing the three vertices of a corner of $G$ with a gadget on $11$ vertices as in Figure~\ref{fig:8-augmentation}. 
The size-$3$ matching between the three interface vertices and the three leftmost vertices of the gadget is chosen arbitrarily, thus different choices can produce different graphs. 
The $8-$augmentation operation was introduced explicitly in~\cite{HT06} and it also appears implicitly in one of the proofs in~\cite{FL95}.   

\begin{figure}[h]
  \centering
    \includegraphics[width=0.7\textwidth]{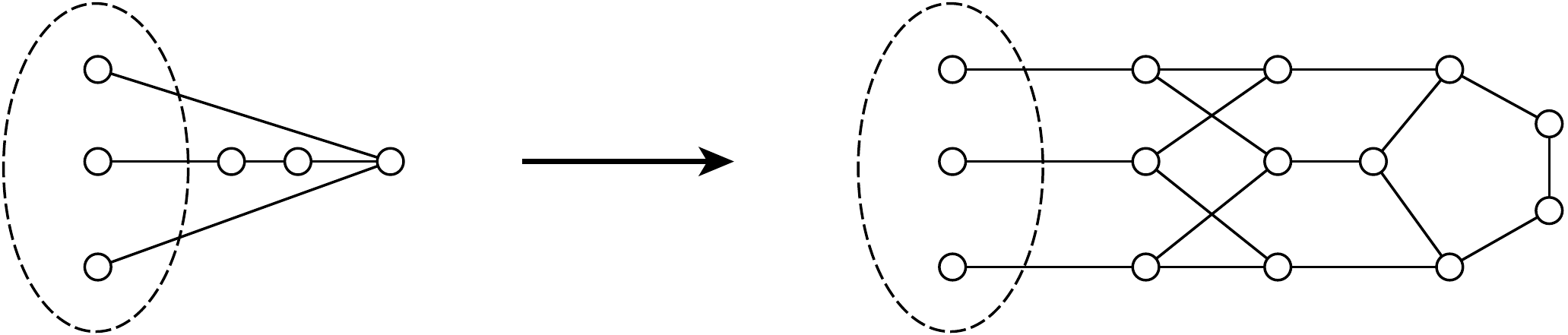}
    \caption{The $8-$augmentation operation.}
    \label{fig:8-augmentation}
\end{figure}

{\em Bad} graphs are defined inductively as follows: The graph $B_8$ (defined in Figure~\ref{fig:bad_graphs}) is bad, and every $8-$augmentation of a bad graph is bad. 
The three smallest bad graphs are illustrated in Figure~\ref{fig:bad_graphs}. 

In order to prove Theorem~\ref{th:goal}, it is enough to consider the case where $G$ is connected and critical.  
Ideally, we would like to show that $\alpha(G) \geq \lb(G)$ holds under these hypotheses; the fact that vertices of degree less than $3$ contribute more to the lower bound helps the induction go through. 
However, this is not true because of bad graphs: An $n$-vertex bad graph $G$ is connected, critical, has four degree-$2$ vertices, and yet $\alpha(G) = \frac{3}{8}n$, that is, bad graphs are tight examples for Theorem~\ref{th:goal}. 
In terms of $\lb(G)$, bad graphs $G$ satisfy $\alpha(G) = \lb(G) - \frac{1}{12}$. 
As it turns out, bad graphs form an exceptional family of graphs, $\alpha(G) \geq \lb(G)$ will always hold for non-bad graphs, as stated in Theorem~\ref{th:main} below.

\begin{figure}
\captionsetup[subfigure]{labelformat=empty}
\centering
   \subfloat[$B_{8}$]{\label{fig:B8}\includegraphics[width=0.19\textwidth]{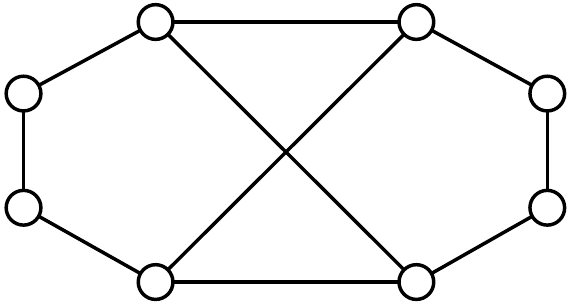}} \\[1ex]
   \subfloat[$B_{16}^{(1)}$]{\label{fig:B16_1}\includegraphics[width=0.33\textwidth]{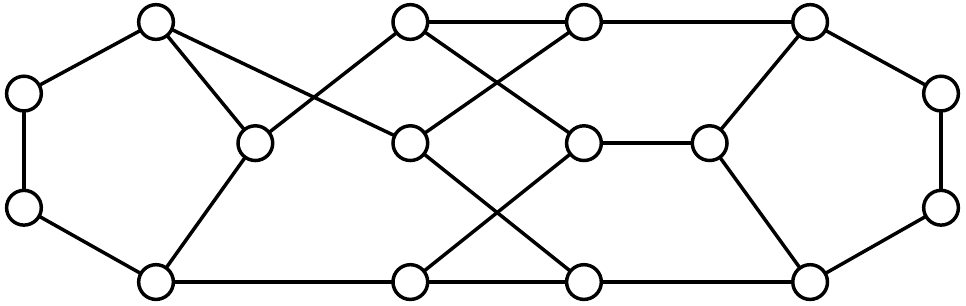}} 
   \quad \quad \quad \quad \quad \quad \quad \quad 
   \subfloat[$B_{16}^{(2)}$]{\label{fig:B16_2}\includegraphics[width=0.33\textwidth]{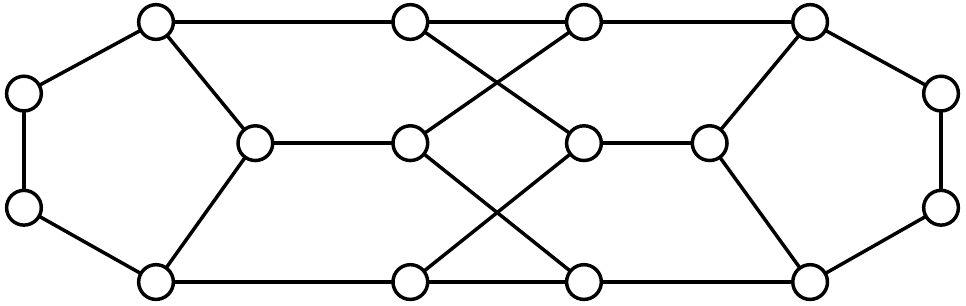}}      
\caption{The bad graphs $B_{8}$, $B_{16}^{(1)}$, and $B_{16}^{(2)}$.  
\label{fig:bad_graphs}}
\end{figure}

While bad graphs need to be treated separately, there is a second family of graphs that needs to be singled out.  
Suppose that a triangle-free subcubic graph $D$ has a vertex $u$ of degree $2$, both of whose neighbors $v_1,v_2$ have degree $3$. Suppose further that the graph $H:=D - \left\{u,v_1,v_2\right\}$ has exactly one cut-edge $e$, and let $D_1,D_2$ be the components of $H - e$.  Then we will call $D$ the {\em sum of $D_1$ and $D_2$} if for each $i\in \left\{1,2\right\}$,
\begin{itemize}
\item $v_i$ has exactly two neighbors in $H$;
\item $D_i$ has precisely five vertices of degree $2$.
\item $V(D_i)$ contains precisely two vertices that have degree $2$ in $D$, and these vertices are adjacent.
\end{itemize} 

A {\em join} of two bad graphs $H_1$ and $H_2$ consists in taking the disjoint union of $H_1$ and $H_2$, choosing a corner $(a_i, b_i, c_i)$ of $H_i$ for $i=1,2$, then removing the vertices $b_1, b_2, c_2$, and finally adding the edges $c_1a_2$ and $a_1y_2$, where $y_2$ is the neighbor of $c_2$ in $H_2$ that is distinct from $b_2$. 
(For instance, starting with two copies of $B_8$, one obtains the top left graph in Figure~\ref{fig:dangerous}.) 

{\em Dangerous graphs} are defined inductively as follows:
\begin{itemize}
  \item $C_5$ is dangerous; 
  \item every graph which is the sum of two dangerous graphs is also dangerous;
  \item every $8$-augmentation of a dangerous graph is dangerous; 
  \item every join of two bad graphs is dangerous. 
\end{itemize}

We remark that dangerous graphs correspond essentially to the `difficult graphs' in~\cite{HT06}.\footnote{To be precise, difficult graphs are defined in~\cite{HT06} using only the first three items from the definition of dangerous graphs.} 
Figure~\ref{fig:dangerous} shows the five dangerous graphs that can be obtained using the sum operation on two copies of $C_5$. 
While these five graphs are critical, we note that in general dangerous graphs need not be. 
(However, in the proofs we will mostly deal with dangerous graphs that are also critical.)

\begin{figure}
\centering
\includegraphics[width=1\textwidth]{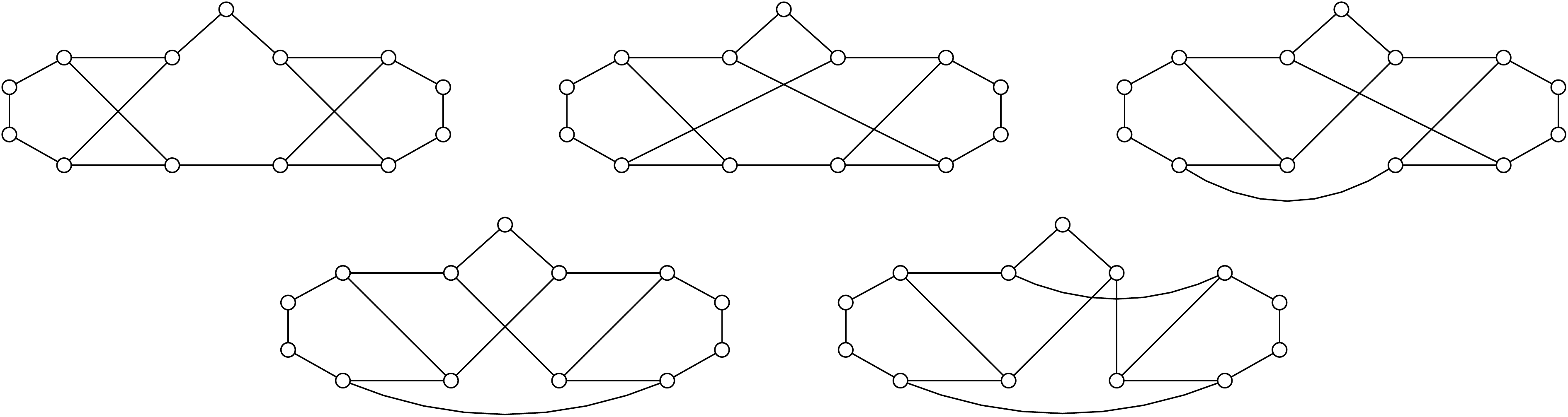}
\caption{Five dangerous graphs.}
\label{fig:dangerous}
\end{figure}

\begin{theorem}[Main technical theorem]\label{th:main}
Let $G$ be a connected critical triangle-free subcubic graph which is not isomorphic to any of $F_{11}$, $F_{14}^{(1)}$, $F_{14}^{(2)}$, $F_{19}^{(1)}$, $F_{19}^{(2)}$, $F_{22}$. 
Then 
\begin{itemize}
	\item $\alpha(G) = \lb(G) - \frac{1}{12}$ if $G$ is bad \\[0.1ex]
	\item $\alpha(G) \geq \lb(G)$ otherwise.  
\end{itemize}
Furthermore, if $G$ has at least three vertices of degree 2, $G$ is not dangerous, and $G$ has no bad subgraph, then $\alpha(G) \geq \lb(G) + \frac{1}{12}$. 
\end{theorem}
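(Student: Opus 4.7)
The plan is to proceed by induction on $|V(G)|$ applied to a minimum counterexample $G$. The base cases consist of all small connected critical triangle-free subcubic graphs (covering at least up to $22$ vertices, so that $C_5$, $B_8$, $B_{16}^{(1)}$, $B_{16}^{(2)}$, the six forbidden graphs, and the small dangerous graphs are handled explicitly); these can be verified by direct computation or computer enumeration, which also certifies that bad and dangerous graphs in the small range are exactly the tight instances encountered.

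For the inductive step I would exploit criticality heavily. Since $\alpha(G-e) > \alpha(G)$ for every edge $e$, there is no ``slack'' in the graph: the positions of degree-$2$ vertices, the arrangement of short cycles, and the possible small edge-cuts are all strongly constrained. I would then search for a \emph{reducible configuration}, that is, a small subgraph $S$ which can be excised or locally modified to produce a strictly smaller connected critical triangle-free subcubic graph $G'$, such that $\lb(G)-\lb(G')$ matches the number of vertices added to an independent set of $G'$ when lifting back to $G$. The potential $\lb(G)=(6|V(G)|-|E(G)|-\lambda)/12$ is calibrated precisely to make this accounting work: a degree-$3$ vertex contributes $9/24$, a degree-$2$ vertex $10/24$, and so on, matching the typical gains one can achieve when a reducible configuration is processed. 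Natural reductions to try, roughly in order, are eliminating vertices of degree at most $1$, handling two adjacent degree-$2$ vertices, handling an isolated degree-$2$ vertex together with its two degree-$3$ neighbours (possibly adding a shortcut edge between the newly exposed vertices), undoing an $8$-augmentation at a corner, and splitting $G$ along a small edge-cut (in particular along the cut-edge defining the sum operation).

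The two exceptional families complicate the bookkeeping. A reduction that produces a bad graph $G'$ loses $\frac{1}{12}$ in the inductive bound, so in such a situation one has to either choose a different reduction or show that $G$ itself is bad. Since $8$-augmentation and the sum operation are precisely the operations generating the dangerous family, the ``not dangerous'' hypothesis of the stronger clause must be arranged to propagate to $G'$; when it fails to propagate, $G$ itself turns out to be dangerous. The contribution going beyond the planar theorem of Heckman and Thomas is that whenever none of the above reductions applies, $G$ must be isomorphic to one of the six non-planar graphs $F_{11},F_{14}^{(1)},F_{14}^{(2)},F_{19}^{(1)},F_{19}^{(2)},F_{22}$; establishing this residual structure theorem is the heart of the new argument.

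The principal obstacle I expect is the combinatorial explosion of the case analysis. Each local configuration around a degree-$2$ vertex or a small edge-cut branches into many subcases, and in each subcase one must track whether $G'$ is bad, dangerous, or one of the forbidden $F_{\cdot}$'s, and whether the stronger $+\frac{1}{12}$ conclusion is preserved under the reduction. For this reason I would lean on computer-checked verification of the finite base cases and on an enumeration of the small bad and dangerous graphs, so that the human argument can concentrate on the generic reductions and on showing that the truly ``irreducible'' critical graphs are exactly the members of $\mathcal{F}$.
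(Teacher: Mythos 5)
Your proposal is a plan, not a proof: it correctly identifies the general shape of the argument (minimum counterexample, heavy use of criticality, local reductions calibrated against the potential $\lb$, special treatment of the bad and dangerous families), and this does match the paper's strategy in spirit. But essentially all of the mathematical content is deferred. The entire difficulty of the theorem lies in the case analysis you explicitly postpone, and several of the ideas needed to make that analysis go through are absent from your outline.

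Concretely, what is missing. First, the accounting device: the paper works with \emph{independence packings} (decompositions of $G - V(G_0)$ into connected critical pieces with additive independence numbers) and a counting lemma that bounds $\alpha(G) - \lb(G)$ in terms of the number of dangerous, non-dangerous, and small pieces and the number of crossing edges. Without some such mechanism your statement that ``$\lb$ is calibrated precisely to make this accounting work'' has no content — the calibration only works because one can control $m_i \geq 3$ for non-trivial pieces via connectivity, and $m_i \geq 5$ for dangerous pieces in the cubic case. Second, your claimed endgame — that every ``irreducible'' critical graph is one of the six members of $\mathcal{F}$ — is not how the argument can be closed, and is not what the paper does. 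The paper instead drives a minimum counterexample to be $3$-connected, cubic, of girth at least $7$, and free of bad and dangerous subgraphs, and then derives the final contradiction from a shortest even cycle (together with the Bondy--Locke characterization of the dodecahedron to kill $5$-cycles). The forbidden graphs do not appear as a terminal residual class; they surface as exceptional outcomes inside specific intermediate lemmas (the $2$-edge-cutset reduction, the elimination of bad subgraphs, the elimination of $6$-cycles, and the adjacent-$5$-cycle lemma). Third, the step ``show that $G$ has no bad subgraph'' is itself a multi-page argument requiring one to track exactly four edges leaving the bad subgraph and to analyze the identifications among their outer endpoints; nothing in your outline addresses why a bad subgraph in a larger critical graph forces one of the forbidden graphs. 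A finite computer check of small cases cannot substitute for these unboundedly large structural arguments.
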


Theorem~\ref{th:goal} follows from Theorem~\ref{th:main}, as we now explain.

\begin{proof}[Proof of Theorem~\ref{th:goal}]
The proof is by induction. 
We may assume that $G$ is connected, since otherwise we are done by induction. 
We are similarly done by induction if $\alpha(G-e)=\alpha(G)$ holds for some edge $e\in E(G)$. 
Thus we may suppose that $G$ is critical. 
If $G$ is bad then it is easily checked from the inductive definition of bad graphs that $\lb(G) = \frac{3}{8}n + \frac{1}{12}$, and thus $\alpha(G) = \lb(G) - \frac{1}{12} = \frac{3}{8}n$ by Theorem~\ref{th:main}.  
If $G$ is not bad, then Theorem~\ref{th:main} gives
\[
\alpha(G) \geq \lb(G) \geq \frac{3}{8}n - \frac{1}{12}, 
\]
which can be rewritten as $3(8\alpha(G)- 3n) \geq -2$. 
However, since $\alpha(G)$ and $n$ are both integers, it follows that $3(8\alpha(G)- 3n) \geq 0$, i.e.\ $\alpha(G) \geq \frac{3}{8}n$, as desired. 
\end{proof}

Here is a very brief outline of the proof of Theorem~\ref{th:main}.  
Arguing by contradiction, we consider a minimum counterexample $G$ to Theorem~\ref{th:main}. 
The fact that $G$ is connected and critical implies that $G$ is $2$-connected. 
We first show that $G$ is almost $3$-connected in the following sense: 
If $X$ is cutset of $G$ of size $2$, then $G-X$ has exactly two components, one of which is isomorphic to $K_1$ or $K_2$. 

Next, we prove that no subgraph of $G$ is bad. 
Our goal after that is to show that there is no vertex of degree $2$ in $G$. 
To do so, we analyze the degrees of the two neighbors of an hypothetical degree-$2$ vertex. 
First, we show that they cannot both have degree $2$, then that they cannot both have degree $3$, and finally that they cannot have degree $2$ and $3$. 

At this point, we conclude that $G$ is cubic and in fact $3$-connected. Successively, we prove that $G$ has no $4$-cycle and no $6$-cycle. Then we show that $G$ has no $5$-cycle either. 

The fact that $G$ has girth at least $7$ implies in turn that no subgraph of $G$ is dangerous. 
We then derive a final contradiction by studying the local structure around a shortest even cycle in $G$ (which exists because $G$ is cubic), exploiting heavily the nonexistence of bad or dangerous subgraphs. 

We note that some of the steps in the outline of the proof above appear also in~\cite{FL95} or in~\cite{HT06}. 
However, the proofs are different and typically longer in our setting.   
To give just one illustration of this phenomenon, when trying to rule out the existence of a bad subgraph in $G$, one first shows that each of the four degree-$2$ vertices in a bad subgraph $B$ needs to send an edge out of $B$, and that $G-V(B)$ is connected. 
This directly implies that $G$ has a $K_5$-minor, so in the planar case we can just stop there. 
In our setting however we need to keep studying the local structure around $B$ in order to eventually find one of the forbidden subgraphs.

\subsection*{Additional results} 

Using Theorem~\ref{th:main} we recover Theorem~\ref{th:FL} of Fraughnaugh and Locke~\cite{FL95} but with a slight improvement of the additive constant, matching exactly the bound of the construction in Figure~\ref{fig:FL_construction}. 
This answers a small question from~\cite{FL95}.

\begin{theorem}\label{th:trianglefreeconnected}
Let $G$ be a connected triangle-free subcubic $n$-vertex graph. Then, $\alpha(G) \geq \frac{11}{30}n - \frac{1}{15}$, 
unless $G$ is isomorphic to $F_{14}^{(1)}$ or $F_{14}^{(2)}$, in which case $\alpha(G)= \frac{11}{30}n - \frac{2}{15}$. 
\end{theorem}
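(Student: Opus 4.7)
Writing $\phi(H) := 30\alpha(H) - 11|V(H)|$, the target inequality is equivalent to $\phi(G) \geq -2$ whenever $G \not\cong F_{14}^{(i)}$; the exceptional case is handled by the direct computation $\phi(F_{14}^{(i)}) = -4$. My plan is to iteratively remove non-critical edges from $G$ to produce a critical spanning subgraph $G'$ with $\alpha(G') = \alpha(G)$. Each component $G'_i$ of $G'$ is itself critical, connected, triangle-free, and subcubic, so Theorem~\ref{th:main} applies to it. Writing $d_i$ for the number of $G$-edges joining $V(G'_i)$ to its complement, additivity yields $\phi(G) = \sum_i \phi(G'_i)$.

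The heart of the argument is the local inequality $\phi(G'_i) \geq d_i - 2$ for every $i$. Observe first that no component is isomorphic to $F_{14}^{(j)}$: cubicity of $F_{14}^{(j)}$ would make every vertex of $G'_i$ cubic in $G$, closing $V(G'_i)$ under $G$-edges; connectedness of $G$ would then force $G = G'_i \cong F_{14}^{(j)}$, contradicting our assumption. The local inequality is then verified case by case using Theorem~\ref{th:main}:
\begin{itemize}
\item[(a)] If $G'_i \in \{F_{11}, F_{19}^{(j)}\}$, direct computation gives $\phi(G'_i) \in \{-1, +1\}$ and the unique sub-cubic vertex yields $d_i \leq 1$.
\item[(b)] If $G'_i \cong F_{22}$, cubicity forces $d_i = 0$ and $\phi(G'_i) = -2$, tight.
\item[(c)] If $G'_i$ is bad, then $\alpha(G'_i) = 3n_i/8$ (from $\mu(G'_i) = 3n_i/8 + 1/12$ by an inductive check), so $\phi(G'_i) = n_i/4 \geq 2$; bad graphs have exactly four degree-$2$ vertices, so $d_i \leq 4$.
\item[(d)] Otherwise $\alpha(G'_i) \geq \mu(G'_i)$ rearranges to $\phi(G'_i) \geq (8n_i - 5m_i - 5)/2$. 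Combining with $d_i \leq 3n_i - 2m_i$ (subcubicity of $G$) and $m_i \leq 3n_i/2$ (triangle-free subcubic) gives $\phi(G'_i) - (d_i - 2) \geq 2n_i - m_i - 1 \geq n_i/2 - 1$, which is nonnegative for $n_i \geq 2$; the remaining case $G'_i \cong K_1$ has $\phi(K_1) = 19$.
\end{itemize}

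Finally, let $\widetilde G$ denote the multigraph on $\{G'_1, \ldots, G'_k\}$ whose edges are the inter-component edges of $G$. Connectedness of $G$ makes $\widetilde G$ connected, so $|E(\widetilde G)| = \tfrac{1}{2}\sum_i d_i \geq k - 1$. The local inequality therefore yields
\[
\phi(G) = \sum_i \phi(G'_i) \geq \sum_i (d_i - 2) = 2|E(\widetilde G)| - 2k \geq -2,
\]
as required; equality holds precisely for the Fraughnaugh--Locke construction (a tree of $B_8$'s and $F_{11}$'s).

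The main obstacle is case~(c): it rests on the structural fact that every bad graph has exactly four vertices of degree $2$, which must be extracted carefully from the inductive $8$-augmentation definition. The remaining cases reduce to arithmetic checks naturally dictated by the main technical theorem.
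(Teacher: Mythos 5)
Your proof is correct, and while it rests on the same two pillars as the paper's argument (an independence packing of $G$ plus the per-component estimates supplied by Theorem~\ref{th:main}), the way you combine the components is genuinely different. The paper proves the sharper Theorem~\ref{th:trianglefreeconnected_extended}: it first performs a ``$30$-augmentation'' reduction to eliminate $G_{41}$ subgraphs, then absorbs each $F_{11}$ component of the packing into an adjacent non-$F_{11}$ component and shows every resulting group $H_i$ satisfies $\alpha(H_i)\geq \tfrac{11}{30}|V(H_i)|$; this local-absorption scheme yields the exact characterization of all extremal graphs (the families $\mathcal{T}$ and $\mathcal{T}^-$) but requires the preliminary reduction and a somewhat delicate case analysis of the groups. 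You instead charge each component a deficit of $2$ and recover $2(k-1)$ from the connectivity of the quotient multigraph, which delivers exactly the stated bound $\phi(G)\geq -2$ with minimal bookkeeping, at the cost of not identifying the extremal graphs (your closing claim that equality holds ``precisely'' for the Fraughnaugh--Locke construction is plausible but not proved by your argument; fortunately it is not needed). Your local inequalities $\phi(G_i')\geq d_i-2$ all check out: the degree counts for $F_{11}$, $F_{19}^{(j)}$, $F_{22}$ and bad graphs are established in Table~\ref{table:parameters} and Lemma~\ref{le:badproperties}, and the $F_{14}^{(j)}$ components are correctly excluded by cubicity. One cosmetic slip: in case (d) the correct intermediate quantity is $\phi(G_i')-(d_i-2)\geq \tfrac{1}{2}(2n_i-m_i-1)$ rather than $2n_i-m_i-1$; since only nonnegativity is used, the conclusion is unaffected.
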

 
Theorem~\ref{th:trianglefreeconnected} is proved in Section~\ref{se:fraughnaughlocke}. 

In Section~\ref{sec:triangles}, we point out that another simple application of Theorem~\ref{th:main} gives a variant of Theorem~\ref{th:goal} where triangles are allowed. 
The lower bound on $\alpha(G)$ is then decreased by some function of the maximum number of disjoint triangles in the graph and of the maximum number of bad or `almost bad' disjoint subgraphs; see Corollary~\ref{cor:connectedwithtriangles} for the precise statement. 

\subsection*{Paper organization} 

In Section~\ref{sec:prelim} we review some standard results about critical graphs. 
In Section~\ref{sec:bad_dangerous} we consider properties of bad graphs and dangerous graphs. 
Section~\ref{sec:proof}, which is the bulk of this paper, is devoted to the proof of Theorem~\ref{th:main}.  
The proof of Theorem~\ref{th:trianglefreeconnected} is given in Section~\ref{se:fraughnaughlocke}. 
The case where triangles are allowed is briefly discussed in Section~\ref{sec:triangles}. 
Finally, we conclude the paper in Section~\ref{sec:conclusion} with some open problems.

\section{Critical graphs}
\label{sec:prelim}

In this section we introduce a few standard lemmas about critical graphs that will be needed in our proofs. 
We omit the proofs here, these results appear e.g.\ in Lov\'asz's textbook {\em Combinatorial problems and exercises}~\cite{L93} and are easy to prove (with the exception of Lemma~\ref{le:gluing_critical} whose proof is a bit tedious).

A \emph{clique cutset} is a cutset that induces a complete graph.
\begin{lemma}\label{le:nocliquecutset}
A connected critical graph has no clique cutset.
\end{lemma}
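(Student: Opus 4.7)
My plan is to argue by contradiction. Suppose $G$ is connected and critical and pick a clique cutset of minimum size, say $K$. Write $V(G)\setminus K = A \sqcup B$ into two nonempty parts with no edges between them, and set $G_A := G[A \cup K]$, $G_B := G[B \cup K]$. The minimality of $K$ ensures that every $v \in K$ has a neighbor in both $A$ and $B$, since otherwise $K \setminus \{v\}$ would already be a (strictly smaller) clique cutset.

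The key structural fact is that, because $K$ is a clique, every independent set of $G$ meets $K$ in at most one vertex. This yields the decomposition
\[
\alpha(G) \;=\; \max\!\Bigl(\,m_\emptyset,\;\max_{v\in K} m_v\,\Bigr), \qquad \text{where} \qquad m_\emptyset := \alpha(G_A-K) + \alpha(G_B-K),
\]
and $m_v := 1 + \alpha(G_A - N[v]) + \alpha(G_B - N[v])$ for $v \in K$. An entirely analogous formula governs $\alpha(G-e)$ for any edge $e$ of $G$. My strategy is then to exhibit an edge $e$ whose removal leaves every term $m_\emptyset, m_v$ unchanged; that would force $\alpha(G-e)=\alpha(G)$, contradicting criticality.

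Concretely, I would pick any $v_0 \in K$ together with any neighbor $u \in A$ of $v_0$ (which exists by minimality of $K$), and set $e := v_0 u$. The clique property of $K$ implies $v_0 \in N[w]$ for every $w \in K$, so the vertex $v_0$ (and hence the edge $e$) is already absent from each subgraph $G_A - N[w]$; similarly $e$ is absent from $G_A - K$, because $v_0 \in K$. Hence each of the quantities $\alpha((G_A-e)-N[w])$ and $\alpha((G_A-e)-K)$ appearing in the decomposition of $\alpha(G-e)$ equals its counterpart in $\alpha(G)$, so $\alpha(G-e) = \alpha(G)$, yielding the required contradiction. I do not foresee any significant obstacle: the argument reduces to the one-line verification above, preceded only by the routine minimality reduction.
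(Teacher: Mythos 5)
Your decomposition of $\alpha(G)$ is correct: since $K$ is a clique, every independent set meets it in at most one vertex, and $\alpha(G)=\max\bigl(m_\emptyset,\max_{v\in K}m_v\bigr)$ with the terms as you define them. The gap is in the final step. The ``entirely analogous formula'' for $\alpha(G-e)$ must use closed neighborhoods computed in $G-e$, not in $G$, and for $e=v_0u$ we have $N_{G-e}[v_0]=N_G[v_0]\setminus\{u\}$. Consequently the term indexed by $v_0$ in the decomposition of $\alpha(G-e)$ is
\[
m'_{v_0} \;=\; 1+\alpha\bigl(G[(A\setminus N_G(v_0))\cup\{u\}]\bigr)+\alpha\bigl(G[B\setminus N_G(v_0)]\bigr),
\]
which can exceed $m_{v_0}$ by one: deleting $e$ frees $u$ to sit in an independent set together with $v_0$. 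Your argument only checks that the \emph{edge} $e$ is absent from the subgraphs $G_A-N[w]$, but for $w=v_0$ the \emph{subgraph itself} changes (it acquires the vertex $u$). So all you can conclude is $\alpha(G-e)\le\alpha(G)+1$, which is no contradiction.

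The flaw is not cosmetic: your argument uses criticality only in the last line, so if it were valid it would show that $\alpha(G-v_0u)=\alpha(G)$ in \emph{every} graph with a minimal clique cutset $K\ni v_0$ and $u\in A\cap N(v_0)$. That is false. Take $G=P_4$ with vertices $a,u_0,v,b$ in order, $K=\{u_0\}$, $A=\{a\}$, $B=\{v,b\}$: then $\alpha(G)=2$ but $\alpha(G-u_0a)=3$, realized by $\{a,u_0,b\}$ --- exactly the term $m'_{u_0}=m_{u_0}+1$ that your argument overlooks. A correct proof needs an additional idea, e.g.\ exploiting that for \emph{every} edge $e$ a maximum independent set of $G-e$ contains both endpoints of $e$, and playing independent sets on the two sides of the cut against each other; the fix is not just a patch to your last line.
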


In particular, the above lemma implies the following: 

\begin{lemma}\label{le:twoconnected1}
Every connected critical graph distinct from $K_1$ and $K_2$ is $2$-connected.
\end{lemma}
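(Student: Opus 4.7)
The plan is to derive this as an almost immediate corollary of Lemma~\ref{le:nocliquecutset}. The key observation is that a single vertex, viewed as an induced subgraph, forms a complete graph (namely $K_1$), so a cut vertex is nothing other than a clique cutset of size one. This means that once the exceptional small cases have been excluded, the statement reduces in a single step to the previous lemma.

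Concretely, I would first note that the only connected graphs on at most two vertices are $K_1$ and $K_2$ themselves, so the hypothesis $G \notin \{K_1, K_2\}$ together with connectedness forces $|V(G)| \geq 3$. I would then argue by contradiction: suppose $G$ is not $2$-connected. Since $G$ is connected and has at least three vertices, the only way this can happen is that $G$ possesses a cut vertex $v$. But then the singleton $\{v\}$ is a cutset of $G$ that induces $K_1$, which is a complete graph, and hence $\{v\}$ is a clique cutset. This directly contradicts Lemma~\ref{le:nocliquecutset}, and we conclude that $G$ is $2$-connected.

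I do not anticipate any genuine obstacle: the entire argument is the one-line observation that $\{v\} \cong K_1$ is a clique, which turns the nonexistence of clique cutsets into the nonexistence of cut vertices. The only mild subtlety is handling the convention around very small graphs, which is precisely why $K_1$ and $K_2$ are flagged as exceptions in the statement — under the standard definition, $2$-connectedness implicitly requires at least three vertices, so these two trivial cases have to be set aside separately rather than arising from any deeper structural issue.
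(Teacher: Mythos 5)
Your proposal is correct and is exactly the paper's intended derivation: the paper introduces Lemma~\ref{le:twoconnected1} with the phrase ``In particular, the above lemma implies the following,'' i.e.\ a cut vertex is a clique cutset of size one, so Lemma~\ref{le:nocliquecutset} applies once the at-most-two-vertex cases are excluded. Your handling of the small exceptional cases is also the right (and only) subtlety.
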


Subdividing an edge twice keeps criticality: 

\begin{lemma}\label{le:oddsubdivision}
Let $G'$ be a graph with an edge $ad$. Let $G$ be the graph obtained from $G'$ by replacing the edge $ad$ with a path $abcd$, where $b,c$ are two new vertices. Then $\alpha(G)=\alpha(G')+1$. Moreover, $G$ is critical if and only if $G'$ is. 
\end{lemma}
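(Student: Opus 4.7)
The plan is to prove the two claims in turn: first the identity $\alpha(G) = \alpha(G') + 1$, then the criticality equivalence.

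For $\alpha(G) = \alpha(G') + 1$, the inequality $\alpha(G) \geq \alpha(G') + 1$ follows by taking a maximum independent set $S$ of $G'$ and extending it: add $c$ if $a \in S$ (then $d \notin S$), or $b$ otherwise. For the converse, let $T$ be a maximum independent set of $G$; since $bc$ is an edge, $|T \cap \{b, c\}| \leq 1$. If $T$ contains neither $b$ nor $c$, then $T \subseteq V(G')$, and $T$ is either already independent in $G'$, or $\{a, d\} \subseteq T$ and $T \setminus \{a\}$ is independent in $G'$; in both situations $|T| \leq \alpha(G') + 1$. If instead $b \in T$ (symmetrically for $c$), then $a \notin T$, and $T \setminus \{b\}$ is independent in $G'$, again yielding $|T| \leq \alpha(G') + 1$.

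For the criticality equivalence I would partition $E(G)$ into three classes and analyze each. For an old edge $e \in E(G') \setminus \{ad\}$, the graph $G - e$ is obtained from $G' - e$ by the same double subdivision of $ad$; applying the first part of the lemma to $(G' - e,\, G - e)$ gives $\alpha(G - e) = \alpha(G' - e) + 1$, so $e$ is critical in $G$ if and only if $e$ is critical in $G'$. For $ab$ (and symmetrically $cd$), removing $ab$ leaves $b$ as a pendant with neighbor $c$; the standard identity $\alpha(H) = \alpha(H - \{u, v\}) + 1$ for a pendant $v$ with neighbor $u$ gives $\alpha(G - ab) = \alpha(G' - ad) + 1$, so $ab$ is critical in $G$ iff $ad$ is critical in $G'$. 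Finally, removing $bc$ makes $b$ and $c$ both pendants (attached to $a$ and $d$ respectively), so the pendant identity applied twice yields $\alpha(G - bc) = \alpha(G' - a - d) + 2$. Thus $bc$ is critical in $G$ if and only if $\alpha(G' - a - d) = \alpha(G')$, i.e., some maximum independent set of $G'$ avoids both $a$ and $d$.

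The main obstacle is to verify this last condition whenever $G'$ is critical. Suppose for contradiction that every maximum independent set of $G'$ contains $a$ or $d$, and pick a neighbor $x \neq d$ of $a$ (such an $x$ exists when $a$ has degree at least $2$; the symmetric argument handles $d$, and the degenerate case where $\{a, d\}$ is its own component falls outside the intended application of the lemma). Criticality of the edge $ax$ in $G'$ produces an independent set $T \subseteq V(G')$ of $G' - ax$ with $|T| = \alpha(G') + 1$, and such a $T$ must contain both $a$ and $x$. Then $T \setminus \{a\}$ is a maximum independent set of $G'$, and by our assumption must contain $d$. But then $\{a, d\} \subseteq T$, and since $ad \in E(G' - ax)$, we contradict the independence of $T$ in $G' - ax$. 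Combining the three edge classes gives the equivalence.
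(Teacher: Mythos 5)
The paper does not prove this lemma: it is one of the standard facts about critical graphs whose proofs are explicitly omitted in Section~2 (with a pointer to Lov\'asz's textbook), so there is no in-paper argument to compare yours against. Your proof is correct. The count $\alpha(G)=\alpha(G')+1$ is handled properly in both directions; the reduction of criticality to three edge classes is sound (old edges via the first part of the lemma applied to $G'-e$, the edges $ab$ and $cd$ via the pendant-vertex identity giving $\alpha(G-ab)=\alpha(G'-ad)+1$, and $bc$ via two applications giving $\alpha(G-bc)=\alpha(G'-a-d)+2$); and your final argument that a critical $G'$ admits a maximum independent set avoiding both $a$ and $d$ is the standard exchange argument and is carried out correctly. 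Your caveat about the degenerate case is not merely cosmetic, and you are right to flag it: if $\{a,d\}$ is a $K_2$ component of $G'$, then $G'$ can be critical while $G$ acquires a $P_4$ component whose middle edge is not critical ($\alpha(P_4-bc)=\alpha(P_4)=2$), so the lemma as literally stated does need that exclusion. In every application in the paper the component containing $ad$ has at least three vertices (e.g.\ the graphs fed into Lemma~\ref{le:gluing_critical} are required to be distinct from $K_1$ and $K_2$), so the exclusion is harmless there.
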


The above lemma is a special case of the following {\em gluing} operation for critical graphs. 
This operation explains how all non $3$-connected critical graphs are built from smaller critical graphs, see Figure~\ref{fig:gluing} for an illustration. 
Recall that if $\{x, y\}$ is a $2$-cutset of a critical graph, then $x$ and $y$ are not adjacent, by Lemma~\ref{le:nocliquecutset}.

\begin{lemma}\label{le:gluing_critical}
Let $G_0, G_1$ be two connected critical graphs distinct from $K_1$ and $K_2$ on disjoint vertex sets. 
Let $xy \in E(G_0)$ and let $v\in V(G_1)$. 
Let $G$ be obtained by first taking the disjoint union of $G_0 - xy$ and $G_1 - v$ and then making each neighbor of $v$ in $G_1$ adjacent to exactly one of $x,y$, in such a way that $x, y$ are each chosen at least once. 
Then $G$ is critical and $\alpha(G) = \alpha(G_0) + \alpha(G_1)$. 

Conversely, if $G$ is a connected critical graph and $\{x, y\}$ is a $2$-cutset of $G$, then $G$ can be obtained from two connected critical graphs $G_0, G_1$ distinct from $K_1,K_2$  in this way.    
That is, $G - \{x, y\}$ has exactly two components, and they can be denoted $C_0, C_1$ in such a way that every vertex of $C_1$ has at most one neighbor in $\{x, y\}$, and that the graphs $G_0$ and $G_1$ are critical, where $G_0$ is obtained from $G[V(C_0) \cup \{x,y\}]$ by adding the edge $xy$, and $G_1$ is obtained from $G[V(C_1) \cup \{x,y\}]$ by identifying $x$ and $y$. 
\end{lemma}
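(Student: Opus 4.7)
The plan is to handle the synthesis direction first (establishing both the $\alpha$-identity and the criticality of $G$), and then tackle the converse.

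For the identity $\alpha(G) = \alpha(G_0) + \alpha(G_1)$, the upper bound is a cut argument. Given an independent set $I$ of $G$, set $I_0 = I \cap V(G_0)$ and $I_1 = I \cap (V(G_1) \setminus \{v\})$. If $\{x, y\} \not\subseteq I_0$, then $I_0$ is independent in $G_0$ and $I_1$ in $G_1$, giving $|I| \leq \alpha(G_0) + \alpha(G_1)$. Otherwise the new edges force $I_1 \cap N_{G_1}(v) = \emptyset$, so $I_1 \cup \{v\}$ is independent in $G_1$ while $I_0 \setminus \{y\}$ is independent in $G_0$, yielding the same bound. For the matching lower bound, the criticality of $xy$ in $G_0$ yields a maximum independent set $T_0$ of $G_0 - xy$ of size $\alpha(G_0) + 1$ containing both $x$ and $y$. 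The requirement that each of $x, y$ is chosen at least once forces $v$ to have a neighbor $u$ in $G_1$; applying the criticality of $G_1$ to the edge $vu$ gives an independent set of $G_1 - vu$ of size $\alpha(G_1) + 1$ containing both $v$ and $u$, and removing $u$ yields a maximum independent set of $G_1$ of the form $\{v\} \cup I_1$ with $I_1 \cap N_{G_1}(v) = \emptyset$. The only edges of $G$ between $V(G_0)$ and $V(G_1) \setminus \{v\}$ run between $\{x, y\}$ and $N_{G_1}(v)$, so $T_0 \cup I_1$ is independent in $G$ and has size $\alpha(G_0) + \alpha(G_1)$.

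With the identity in hand, criticality of $G$ is checked edge by edge. For a new edge $xu$ with $u \in N_{G_1}(v)$ rewired to $x$, the critical edge of $G_1$ to exploit is $vu$: a maximum independent set of $G_1 - vu$ of size $\alpha(G_1) + 1$ containing $v$ and $u$ combines with $T_0 \setminus \{x\}$ (a maximum independent set of $G_0$ of size $\alpha(G_0)$ containing $y$ but not $x$) to give an independent set of $G - xu$ of size $\alpha(G) + 1$. For an edge $e \in E(G_0) \setminus \{xy\}$, the graph $G - e$ is itself a gluing, so one replays the lower-bound construction using a maximum independent set $S_0$ of $G_0 - e$ of size $\alpha(G_0) + 1$; a case split on whether $S_0$ meets $\{x, y\}$ handles the easy situation directly, and the harder subcase requires producing a maximum independent set of $G_1 - v$ of size $\alpha(G_1)$ disjoint from $N_x$ (or $N_y$), obtained by further exploitation of the criticality of $G_1$ at edges of $v$ of the appropriate side. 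Edges of $G_1 - v$ are handled symmetrically.

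For the converse, let $\{x, y\}$ be a $2$-cutset of the critical graph $G$. Lemma~\ref{le:nocliquecutset} gives $xy \notin E(G)$. I would first show that $G - \{x, y\}$ has exactly two components $C_0, C_1$ and that one of them, say $C_1$, has every vertex adjacent to at most one of $x, y$: in any other configuration, one produces an edge of $G$ whose removal does not increase $\alpha(G)$, contradicting criticality. Defining $G_0$ as $G[V(C_0) \cup \{x, y\}]$ together with the edge $xy$, and $G_1$ by identifying $x$ and $y$ in $G[V(C_1) \cup \{x, y\}]$, their connectedness and distinctness from $K_1, K_2$ follow from the hypotheses on $G$, and their criticality is transported from that of $G$ via the synthesis identity applied edge by edge. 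The main obstacle I anticipate is twofold: the delicate subcase of the criticality verification for edges of $G_0 - xy$ where the maximum independent set of $G_0 - e$ meets $\{x, y\}$, and the structural step in the converse forcing both the exact number of components and the ``at most one neighbor'' condition on $C_1$; both amount to a somewhat intricate case analysis on the interaction between the cut and the critical structures on the two sides, which is presumably what makes the proof tedious as noted in the excerpt.
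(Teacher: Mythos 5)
A preliminary remark: the paper does not prove this lemma at all --- it is imported from Lov\'asz's textbook with the proof explicitly omitted as ``a bit tedious'' --- so there is no in-paper argument to compare against, and I can only assess your proposal on its own terms. Your proof of $\alpha(G)=\alpha(G_0)+\alpha(G_1)$ and your treatment of the edges of $G_0-xy$ are correct. Two concrete problems remain in the forward direction. First, for a new edge $xu$: any independent set of $G-xu$ of size $\alpha(G)+1$ must contain both $x$ and $u$, so no construction built on $T_0\setminus\{x\}$ can work; moreover, once $v$ (not a vertex of $G$) is discarded from the witness in $G_1-vu$, your count is $\alpha(G_1)+\alpha(G_0)=\alpha(G)$, not $\alpha(G)+1$. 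The correct witness is $T_0\cup(S_1\setminus\{v\})$ with $S_1$ the maximum independent set of $G_1-vu$ containing $v$ and $u$: the only conflicting pair is $\{x,u\}$, which is exactly the deleted edge. Second, ``edges of $G_1-v$ are handled symmetrically'' hides a genuinely asymmetric subcase: for $e\in E(G_1-v)$ the witness $S_1$ in $G_1-e$ may avoid $v$ while containing neighbors of $v$ from \emph{both} $N_x$ and $N_y$, in which case you need a maximum independent set of $G_0$ avoiding $x$ and $y$ simultaneously, which $T_0$ does not supply. Such a set exists (since $G_0$ is $2$-connected by Lemma~\ref{le:twoconnected1}, $x$ has a neighbor $z\neq y$, and the witness for the critical edge $xz$, minus $x$, avoids both $x$ and $y$), but this has to be stated and proved; nothing symmetric to the $G_0$-side argument produces it.

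The converse is where the real content lies, and your proposal leaves it essentially unaddressed. That $G-\{x,y\}$ has exactly two components, and that one of them has every vertex adjacent to at most one of $x,y$, are asserted with only the remark that otherwise one finds a non-critical edge; no such edge is identified, and finding one is not routine. More seriously, deducing the criticality of $G_0$ and $G_1$ ``via the synthesis identity'' is circular: that identity was proved under the hypothesis that $G_0$ and $G_1$ are already critical. What is needed instead is a direct argument that $\alpha(G)=\alpha(G_0)+\alpha(G_1)$ for the $G_0,G_1$ as defined (note $G_0$ contains the artificial edge $xy$ and $G_1$ an identified vertex, so their maximum independent sets do not simply restrict from those of $G$), followed by a per-edge transfer of witnesses from $G-e$ to $G_0$ and $G_1$, including for the edges $xy$ and those incident to the identified vertex, which have no literal counterpart in $G$. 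As written, the forward direction is repairable with the fixes above, but the converse is a plan rather than a proof.
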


\begin{figure}
\centering
\includegraphics[width=0.85\textwidth]{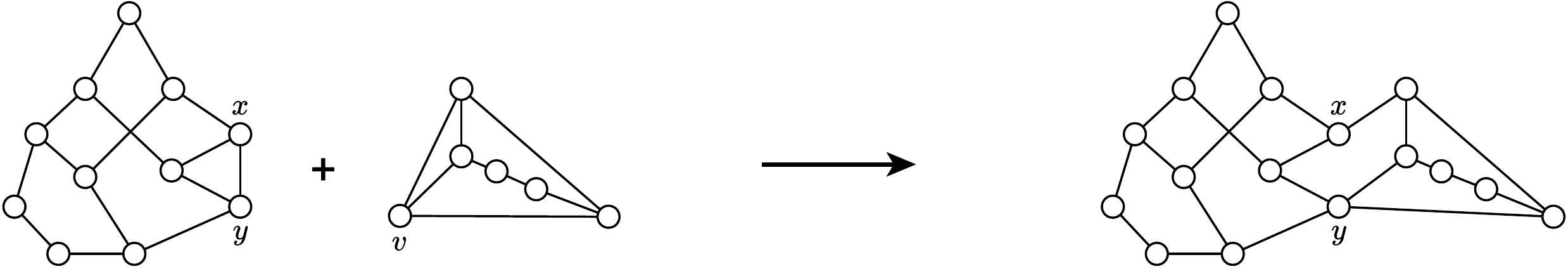}
\caption{Gluing two critical graphs.}
\label{fig:gluing}
\end{figure}

Regarding the second part of Lemma~\ref{le:gluing_critical}, we remark that if some component of $G - \{x, y\}$ has a vertex adjacent to both $x$ and $y$, then that component must be $C_0$ (and the other $C_1$). 
For instance, this happens for the graph in Figure~\ref{fig:gluing} (right). 
Thus, in such a case we know in advance which decomposition will be given to us by the lemma. 
This observation will be used in the proofs.

We conclude this section with an easy observation about $4$-cycles in critical graphs. 

\begin{lemma}\label{le:nodegreetwoinfourcycle}
In a critical graph $G$, no vertex of degree $2$ lies on a $4$-cycle.
\end{lemma}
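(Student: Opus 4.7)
The plan is to argue by contradiction: suppose $v$ is a vertex of degree~$2$ in a critical graph $G$ and that $v$ lies on a $4$-cycle. Writing the two neighbors of $v$ as $a$ and $b$, the fact that $v$ lies on a $4$-cycle gives a common neighbor $w \neq v$ of $a$ and $b$, so the $4$-cycle can be written $v\,a\,w\,b\,v$.

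The key step is to choose the right edge along which to apply criticality. I would apply it to the ``opposite'' edge $aw$ of the $4$-cycle. Since $G$ is critical, $\alpha(G - aw) > \alpha(G)$, so there is an independent set $I$ of $G - aw$ with $|I| = \alpha(G) + 1$. Such an $I$ must contain both $a$ and $w$, for otherwise $I$ would already be independent in $G$, contradicting the definition of $\alpha(G)$. The edges $va$ and $wb$ still exist in $G - aw$, so $a \in I$ forces $v \notin I$, and $w \in I$ forces $b \notin I$.

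The finishing step is a simple swap. Consider $I' := (I \setminus \{a\}) \cup \{v\}$. The only neighbors of $v$ in $G$ are $a$ and $b$, and neither belongs to $I \setminus \{a\}$, so $I'$ is independent in $G$. Since $|I'| = |I| = \alpha(G) + 1$, this contradicts the definition of $\alpha(G)$, completing the proof.

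There is no serious obstacle in this argument; the one choice worth highlighting is the edge to remove. Using $aw$ (rather than one of the edges incident to $v$) is what forces both endpoints of the removed edge into $I$, which in turn evicts $v$ and $b$ from $I$ and thereby allows the degree-$2$ swap to go through with no interference from any further neighbors of $a$ or $b$.
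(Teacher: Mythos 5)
Your proof is correct and is essentially identical to the paper's: both delete one of the two $4$-cycle edges not incident to the degree-$2$ vertex, note that a maximum independent set of the resulting graph must contain both endpoints of the deleted edge and hence avoid the degree-$2$ vertex and its other cycle-neighbor, and then perform the same swap to get an independent set of size $\alpha(G)+1$ in $G$. No differences worth noting.
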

\begin{proof}
Suppose that $abcd$ is a $4$-cycle and $a$ has degree two in $G$. Since in particular the edge $bc$ is critical, every maximum independent set $S$ of $G-bc$ contains both $b$ and $c$ and therefore contains neither $a$ nor $d$. But then $S \cup \left\{a\right\} - \left\{b\right\}$ is an independent set of $G$ of size $|S|=\alpha(G)+1$, which is a contradiction. 
\end{proof}

\section{Bad graphs and dangerous graphs}
\label{sec:bad_dangerous}

In this section we list some useful properties of bad graphs and dangerous graphs. 
We begin with a lemma about $8-$augmentations. 

\begin{lemma}\label{le:8augmentation} 
Let $G$ be an $8-$augmentation of a subcubic graph $G'$. 
Then, $\alpha(G)=\alpha(G')+3$, $\lb(G)=\lb(G')+3$, and $G$ is $2$-connected if and only if $G'$ is. 
\end{lemma}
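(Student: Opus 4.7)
The plan is to treat the three assertions separately, but all via a careful inspection of the fixed $11$-vertex gadget $\Gamma$ depicted in Figure~\ref{fig:8-augmentation}, together with elementary bookkeeping.

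First, I would do the vertex and edge counting. Since the $8$-augmentation removes the three corner vertices $a,b,c$ and inserts $\Gamma$, we have $|V(G)|=|V(G')|+8$. The edges of $G'$ that disappear are the two internal edges $ab$ and $bc$ together with the three edges from $\{a,c\}$ to the interface, a total of $5$ edges; the edges of $G$ that are new are the internal edges of $\Gamma$ together with the three new matching edges, and a direct check of Figure~\ref{fig:8-augmentation} shows that $\Gamma$ contains exactly $14$ internal edges. Hence $|E(G)|=|E(G')|+12$. Granting that the number of components is preserved (a byproduct of the 2-connectivity argument below, since $\Gamma$ is connected and is attached to the rest of the graph through the same set of interface edges as the corner was), we conclude $\lb(G)-\lb(G')=(6\cdot 8-12-0)/12=3$.

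Next, for $\alpha(G)=\alpha(G')+3$ I would prove both inequalities by a case analysis parametrized by the three interface vertices. Let $I$ denote the set of interface vertices and let $p_1,p_2,p_3$ denote the three ``port'' vertices of $\Gamma$ matched to $I$. The key local input, which I would establish once and for all by inspection of $\Gamma$, is a menu: for every subset $T\subseteq I$ and the corresponding set of ports $P\subseteq\{p_1,p_2,p_3\}$ forbidden by their neighbors in $T$, the maximum independent set of $\Gamma$ disjoint from $P$ exceeds by exactly $3$ the maximum independent set of the induced subgraph $G'[\{a,b,c\}]$ disjoint from the analogous forbidden subset of $\{a,b,c\}$. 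The inequality $\alpha(G)\geq\alpha(G')+3$ then follows by taking a maximum independent set $S'$ of $G'$, setting $T:=S'\cap I$, deleting $S'\cap\{a,b,c\}$, and splicing in a maximum $T$-compatible independent set of $\Gamma$. The reverse inequality is symmetric: given a maximum independent set $S$ of $G$, one substitutes $S\cap V(\Gamma)$ by an appropriate $T$-compatible independent set of $\{a,b,c\}$, losing exactly $3$.

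For the 2-connectivity equivalence, I would verify from Figure~\ref{fig:8-augmentation} that $\Gamma$ is itself $2$-connected and that between any two of its three ports there exist two internally disjoint paths inside $\Gamma$. Since both the corner and the gadget attach to the rest of the graph through precisely the same three interface edges, any cut vertex of $G$ outside $V(\Gamma)$ coincides with a cut vertex of $G'$ outside $\{a,b,c\}$, and conversely; a cut vertex internal to $\Gamma$ would violate the $2$-connectivity of $\Gamma$ combined with its interface attachments, while a cut vertex in $\{a,b,c\}$ would transfer to a corresponding internal failure inside $\Gamma$. The hard part will be the ``menu'' property of $\Gamma$: one must verify that, regardless of the boundary pattern imposed by the interface, the independence number of the gadget is always exactly $3$ more than that of the replaced corner. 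This is a finite check over the handful of boundary patterns $T\subseteq I$, but it is the only nontrivial step; every other claim reduces either to counting or to a direct inspection of the gadget's internal structure.
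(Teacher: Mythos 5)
Your proposal is correct and follows essentially the same route as the paper: the heart of both arguments is a finite inspection of the gadget (its independence number is $5$, every size-$5$ independent set contains all three port vertices, and there is a size-$4$ independent set avoiding all three ports), followed by a splicing argument in both directions. The paper organizes the splice by the size of the intersection of the independent set with the gadget (resp.\ with $\{a,b,c\}$) rather than by the boundary pattern $T\subseteq I$, but this is only a cosmetic difference, and your vertex/edge/component bookkeeping and $2$-connectivity remarks cover exactly what the paper dismisses as immediate.
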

\begin{proof}
It is immediate that $\lb(G)=\lb(G')+3$ and that $G$ is $2$-connected if and only if $G'$ is.  
Let us show that $\alpha(G)=\alpha(G')+3$.  
Let $(a, b, c)$ denote the corner of $G'$ used in the $8-$augmentation operation when producing $G$. 
Consider the $11$-vertex {\em gadget} that replaced these three vertices in $G$, as drawn in Figure~\ref{fig:8-augmentation}. 
We classify its vertices into three {\em leftmost vertices}, three {\em middle vertices}, and five {\em rightmost vertices} (the latter inducing $C_5$), in the obvious way. 
The gadget has independence number $5$ and every size-$5$ independent set must contain the three leftmost vertices. 

Let $S$ be a maximum independent set of $G$. 
If $S$ contains five vertices of the gadget, then it contains the three leftmost vertices, and we can replace the five vertices from the gadget with $a$ and $c$ to obtain an independent set of $G'$. 
If $S$ contains at most four vertices of the gadget, then we can replace them with vertex $b$ and obtain an independent set of $G'$. 
In both cases we deduce that $\alpha(G') \geq \alpha(G) - 3$. 

Now, let $S'$ be a maximum independent set of $G'$.  
Observe that $S'$ contains either one or two vertices from $\{a,b,c\}$. 
If $S'$ contains only one, then removing that vertex and adding the three middle vertices plus one of the rightmost degree-$2$ vertices of the gadget gives an independent set of $G$. 
If $S'$ contains two vertices from $\{a,b,c\}$ then these vertices are $a$ and $c$.  
Replacing them with the three leftmost vertices plus two independent rightmost vertices of the gadget gives an independent set of $G$. 
In both cases we deduce that $\alpha(G) \geq \alpha(G') + 3$. 

Combining the two inequalities it follows that $\alpha(G) = \alpha(G') + 3$, as claimed. 
\end{proof}

There are two bad graphs on $16$ vertices, $B_{16}^{(1)}$ and $B_{16}^{(2)}$; see Figure~\ref{fig:bad_graphs}.  Even though these two graphs look almost the same, only $B_{16}^{(1)}$ behaves similarly to $B_8$, in the sense of property~\ref{le:badproperties_5} of the following lemma.

\begin{figure}
\captionsetup[subfigure]{labelformat=empty}
\centering
   \subfloat{\label{fig:B24_a}\includegraphics[width=0.4\textwidth]{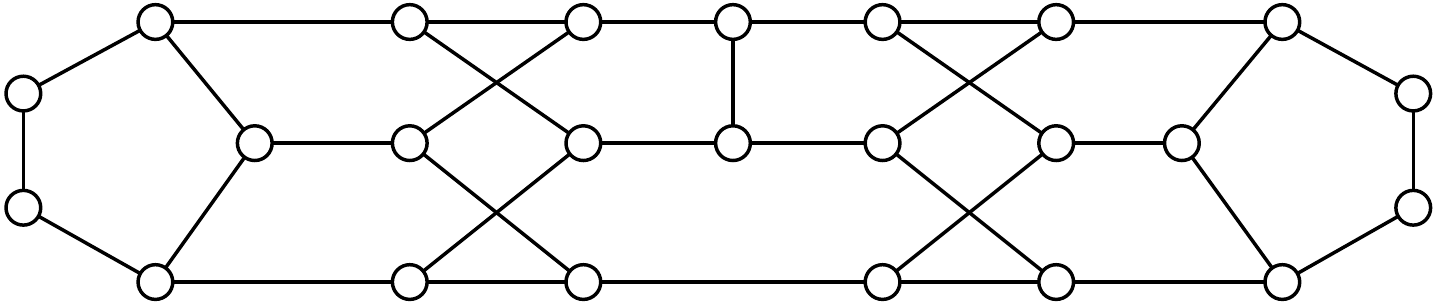}} \qquad
   \subfloat{\label{fig:B24_b}\includegraphics[width=0.4\textwidth]{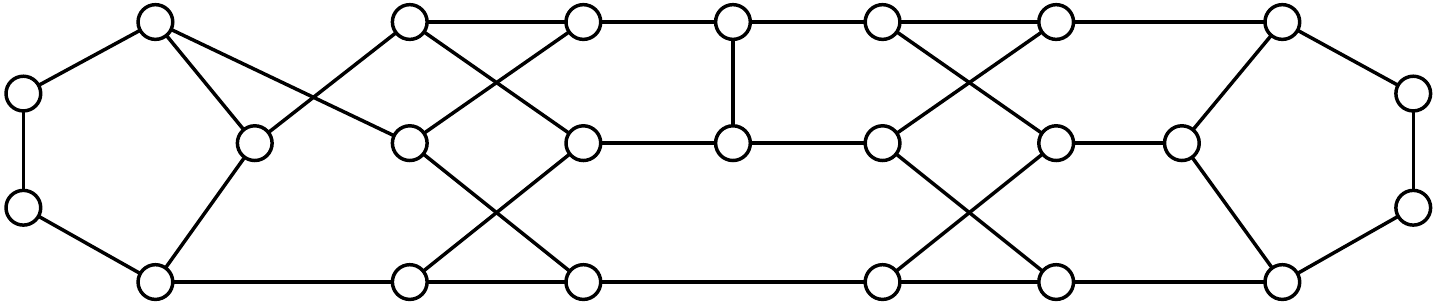}}

   \subfloat{\label{fig:B24_c}\includegraphics[width=0.4\textwidth]{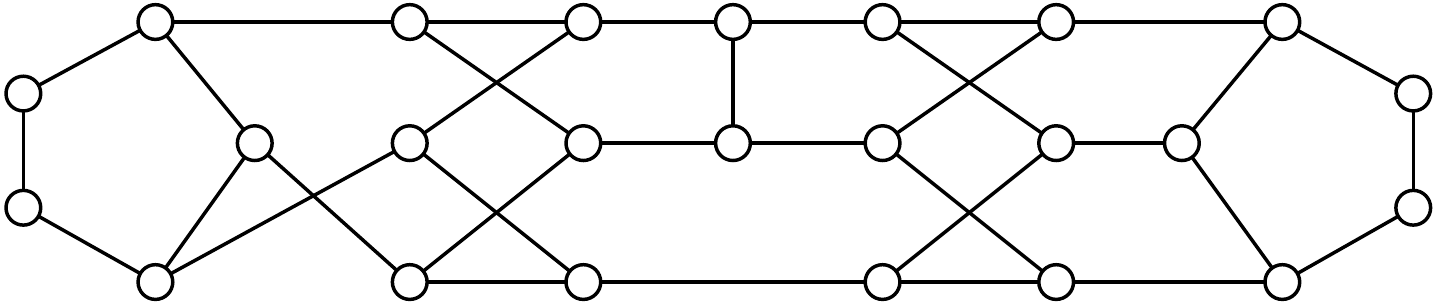}} \qquad
   \subfloat{\label{fig:B24_d}\includegraphics[width=0.4\textwidth]{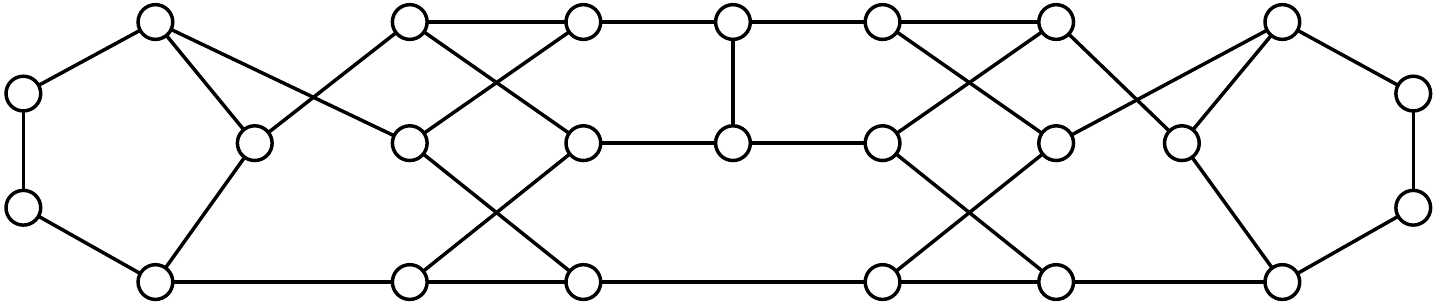}}
\caption{All four bad graphs on $24$ vertices.} 
   \label{fig:bad_24}
\end{figure}

\begin{lemma}\label{le:badproperties} 
The following properties hold for every bad graph $B$.  
\begin{enumerate}
\item \label{le:badproperties_1} $B$ is triangle-free, subcubic, and $2$-connected;  
\item \label{le:badproperties_2} $B$ has minimum degree $2$ and has exactly four degree-$2$ vertices;
\item \label{le:badproperties_3} The degree-$2$ vertices of $B$ induce a size two matching;
\item \label{le:badproperties_alpha_mu} $\alpha(B)=\lb(B) - \frac{1}{12}$; 
\item \label{le:badproperties_5} Let $T\subseteq V(B)$ denote a set of three degree-$2$ vertices. Then $B$ has a maximum independent set that avoids $T$.  Furthermore, $B$ has a maximum independent set avoiding all of its degree-$2$ vertices if and only if $B$ is neither $B_8$ nor $B_{16}^{(1)}$; 
\item \label{le:badproperties_6} If $B$ is not isomorphic to $B_8$ nor to $B_{16}^{(1)}$ then no edge of $B$ is contained in all $6$-cycles of $B$. 
\end{enumerate}
\end{lemma}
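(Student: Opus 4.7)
The plan is to prove all six items simultaneously by induction on $|V(B)|$, i.e.\ on the number of $8$-augmentations needed to produce $B$ from $B_{8}$. The base case $B=B_{8}$ is handled by a direct inspection of the graph in Figure~\ref{fig:bad_graphs}: it has $8$ vertices, $4$ of them of degree $2$ forming a matching, it is triangle-free, subcubic, $2$-connected, has $\alpha=3=\mu(B_{8})-\tfrac{1}{12}$, and its obvious $6$-cycle structure gives items (v) and (vi). For item (vi) we will also need $B_{16}^{(1)}$ as a separate base case, checked directly.

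For the inductive step, assume $B$ is the $8$-augmentation of a bad graph $B'$ at a corner $(a,b,c)$ of $B'$ with interface vertices $x_a,x_b,x_c$. By induction $B'$ satisfies all properties, in particular its four degree-$2$ vertices are $b,c,b',c'$ with $bc$ and $b'c'$ forming a matching. The gadget from Figure~\ref{fig:8-augmentation} is triangle-free, subcubic, contributes exactly two degree-$2$ vertices (on its rightmost $C_{5}$) which are adjacent, and after wiring the three leftmost gadget vertices to $x_a,x_b,x_c$, the interface vertices become degree-$3$. Items (i)--(iii) follow (with $2$-connectivity coming from Lemma~\ref{le:8augmentation}), and item (iv) is immediate from Lemma~\ref{le:8augmentation} combined with the inductive hypothesis $\alpha(B')=\mu(B')-\tfrac{1}{12}$.

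For item (v), write the degree-$2$ vertices of $B$ as $\{b',c',d,e\}$ where $\{d,e\}$ is the new matching edge on the gadget's $C_{5}$, and let $T$ be any three of these. Split $T$ into $T_{B'}\subseteq\{b',c'\}$ and $T_{G}\subseteq\{d,e\}$. By induction applied to the degree-$2$ set $T_{B'}\cup\{b\}$ or $T_{B'}\cup\{c\}$ of $B'$, we obtain a maximum independent set $S'$ of $B'$ that misses $T_{B'}$ and at least one of $b,c$. The translation procedure of Lemma~\ref{le:8augmentation} then builds a maximum independent set $S$ of $B$ from $S'$; the freedom in the last step (we are allowed to pick the rightmost gadget vertices from two independent pairs of $C_{5}$) is used to avoid the at most one vertex of $T_{G}$. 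The ``furthermore'' assertion requires tracking precisely which size-$5$ independent sets of the gadget avoid both $d$ and $e$; we verify inductively that this extra slack disappears exactly when $B'$ is $B_{8}$ with the corner chosen in the unique way producing $B_{16}^{(1)}$, and is present otherwise.

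Item (vi) is proved by similar induction, now using that every edge of $B$ is either inherited from $B'$ (with one endpoint possibly relabeled among $\{x_a,x_b,x_c\}$) or lies inside the gadget. For an inherited edge one either applies induction directly to $B'$ (when $B'\ne B_{8},B_{16}^{(1)}$), or, when $B'$ equals one of these two exceptional graphs, uses the structural fact that the $8$-augmentation creates new $6$-cycles through the gadget that bypass the relevant edge. For a gadget edge we exhibit an explicit $6$-cycle in the gadget (or one using a single interface edge) that avoids it. The main obstacle will be this last item together with the ``furthermore'' clause of (v): both demand a careful case analysis that $B_{8}$ and $B_{16}^{(1)}$ are the genuine exceptions and no other bad graph is, which amounts to checking how the $C_{5}$ of the gadget interacts with the $6$-cycles and independent sets already present in the inductive predecessor $B'$, and distinguishing the two possible ways of matching the gadget to the interface that produce $B_{16}^{(1)}$ versus $B_{16}^{(2)}$.
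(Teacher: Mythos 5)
Your overall framework (induction along $8$-augmentations, with $B_8$ as base case and Lemma~\ref{le:8augmentation} supplying items \ref{le:badproperties_1}--\ref{le:badproperties_alpha_mu}) matches the paper, and those four items are fine. The genuine gap is in your treatment of item \ref{le:badproperties_5}. You split $T$ into $T_{B'}\subseteq\{b',c'\}$ and $T_G\subseteq\{d,e\}$ and claim the translation of Lemma~\ref{le:8augmentation} lets you dodge ``the at most one vertex of $T_G$''. But since $|T|=3$, the case $T_G=\{d,e\}$ (i.e.\ $T=\{b',d,e\}$ or $\{c',d,e\}$) does occur, and it is exactly the hard case. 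A maximum independent set of $B$ that avoids \emph{both} $d$ and $e$ must take five vertices from the gadget, hence the three leftmost plus two rightmost vertices; tracing this back through the translation, it forces the set $S'$ of $B'$ to contain \emph{both} $a$ and $c$. Your inductive hypothesis (a maximum independent set avoiding any prescribed three degree-$2$ vertices) gives you $S'$ avoiding $T_{B'}$ and one of $b,c$, which by maximality only guarantees that $S'$ meets $\{a,c\}$ — if $S'$ contains just one of them, the translation lands in the ``three middle vertices plus one of $d,e$'' configuration and cannot avoid both $d$ and $e$. So the induction as stated does not close. A symptom of the same issue appears in your ``furthermore'' clause: the four $24$-vertex bad graphs are $8$-augmentations of $B_{16}^{(1)}$, which by your own claim has \emph{no} maximum independent set avoiding all its degree-$2$ vertices, so no preservation argument from the predecessor can establish that these four graphs do have one; they must be checked directly.

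This is precisely why the paper organizes the proof of item \ref{le:badproperties_5} differently: it verifies the property by hand for $B_8$, $B_{16}^{(1)}$, and every single $8$-augmentation of these (i.e.\ $B_{16}^{(2)}$ and the four bad graphs on $24$ vertices), and then proves only the clean implication ``if $B'$ has a maximum independent set avoiding \emph{all} its degree-$2$ vertices, so does every $8$-augmentation of $B'$'' (there $a\in S'$ is forced, so the five-vertex gadget configuration is always available). Since every bad graph on at least $32$ vertices is an augmentation of one on at least $24$ vertices, the strong form of the property — which trivially implies the ``avoid any three'' statement — propagates from the finite base list. You should either adopt this structure or strengthen your inductive hypothesis to something that forces $a,c\in S'$. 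Finally, for item \ref{le:badproperties_6} your edge-by-edge case analysis is workable but much heavier than needed: each gadget contains a $6$-cycle on its leftmost and middle vertices, so a bad graph built with $k\geq 2$ augmentations has $k$ vertex-disjoint $6$-cycles and the claim is immediate, leaving only $B_{16}^{(2)}$ to check directly.
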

\begin{proof}
Note that the lemma is true if $B$ is the graph $B_8$. Therefore properties~\ref{le:badproperties_1} and~\ref{le:badproperties_alpha_mu} follow inductively from Lemma~\ref{le:8augmentation}, and the fact that $8-$augmentations preserve triangle-freeness.  
Properties~\ref{le:badproperties_2} and~\ref{le:badproperties_3} also follow straightforwardly, because they are invariant under taking an $8-$augmentation. 

It can be checked that property~\ref{le:badproperties_5} holds true for $B_8$ and $B_{16}^{(1)}$, as well as for each graph that can be obtained from $B_8$ or $B_{16}^{(1)}$ by a single $8-$augmentation:  
Up to isomorphism, there are exactly two graphs that are $8-$augmentations of $B_8$, namely $B_{16}^{(1)}$ and $B_{16}^{(2)}$, and exactly four graphs that are $8-$augmentations of $B_{16}^{(1)}$, see Figure~\ref{fig:bad_24}. 
It happens that every $8-$augmentation of $B_{16}^{(2)}$ is also an $8-$augmentation of $B_{16}^{(1)}$, hence the four graphs in Figure~\ref{fig:bad_24} constitute the full list of bad graphs on $24$ vertices. 

Thus it suffices to show that the property of having a maximum independent set avoiding all degree-$2$ vertices is preserved when taking an $8-$augmentation. 
Suppose that $B$ is an $8-$augmentation of a bad graph $B'$ that has a maximum independent set $S'$ avoiding all of its degree-$2$ vertices. 
Let $(a,b,c)$ denote the corner of $B'$ used in the $8-$augmentation operation. 
(Recall that $a$ has degree $3$ in $B'$, while $b$ and $c$ have degree $2$ in $B'$.) 
Then $b, c \notin S'$ since $S'$ avoids all degree-$2$ vertices of $B'$, and thus $a\in S'$ (otherwise $S'$ is not maximal). 
Reusing the same terminology as in the proof of Lemma~\ref{le:8augmentation}, select the two leftmost vertices of the gadget that together have in $B$ the same neighbors outside the gadget as vertex $a$ in $B'$. 
Then take also the two independent rightmost degree-$3$ vertices of the gadget. 
Replacing vertex $a$ in $S'$ with the four selected vertices gives an independent set of $B$ of size $|S'|+3=\alpha(B)$ that avoids all degree-$2$ vertices of $B$, as desired. 

Finally, let us consider property~\ref{le:badproperties_6}.  
If $B$ is isomorphic to $B_{16}^{(2)}$ then this property is easily seen to be true. 
Otherwise, $B$ is constructed from $B_8$ using $k\geq 2$ iterated applications of the $8-$augmentation operation, and it can be checked that $B$ then has at least $k$ vertex-disjoint $6$-cycles. 
\end{proof}

\begin{lemma}\label{le:dangerousproperties}
The following properties hold for every dangerous graph $D$.
\begin{enumerate}
\item \label{le:dangerousproperties_tfs2c} $D$ is triangle-free, subcubic, and $2$-connected; 
\item \label{le:dangerousproperties_degree2} $D$ has exactly five vertices of degree $2$, and if $D$ is not $C_5$ then the degree-$2$ vertices induce an isolated vertex plus a size two matching; 
\item \label{le:dangerousproperties_alpha_mu} $\alpha(D) \geq \lb(D)$; 
\item \label{le:dangerousproperties_T_two_vertices} If $T$ is a set of two degree-$2$ vertices of $D$ then $D$ has an independent set of size $\lb(D)$ that avoids $T$;   
\item \label{le:dangerousproperties_T} If $T$ is a set of three degree-$2$ vertices of $D$ such that the remaining two degree-$2$ vertices are not adjacent then $D$ has an independent set of size $\lb(D)$ that avoids $T$;   
\item \label{le:dangerousproperties_5cycle} $D$ contains a $5$-cycle. 
\end{enumerate}
\end{lemma}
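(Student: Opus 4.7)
The plan is to induct on the recursive definition of $D$, with base case $D = C_5$. For the base case, each property can be checked directly: $\lb(C_5) = \frac{30-5-1}{12} = 2 = \alpha(C_5)$, and avoiding any set of at most three vertices of $C_5$ always leaves room for an independent set of size~$2$. The three inductive steps correspond to the three remaining operations in the definition: $D$ is the sum of two dangerous graphs $D_1, D_2$; $D$ is an $8$-augmentation of a dangerous graph $D'$; or $D$ is the join of two bad graphs $H_1, H_2$.

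Properties~\ref{le:dangerousproperties_tfs2c}, \ref{le:dangerousproperties_degree2}, and \ref{le:dangerousproperties_5cycle} I would dispatch by a structural analysis of each construction. Triangle-freeness, subcubicity, and $2$-connectedness follow in each case from the construction (the $8$-augmentation case being Lemma~\ref{le:8augmentation}). The count of exactly five degree-$2$ vertices with the stated matching structure works out as follows: in the sum, the three added vertices $u, v_1, v_2$ promote three of the five degree-$2$ vertices of each $D_i$ to degree~$3$, leaving an adjacent pair from each $D_i$ together with the isolated $u$; in the $8$-augmentation, the corner removes the degree-$2$ matching edge $bc$ of $D'$ and the gadget supplies a new one on its rightmost $C_5$; in the join, the survivors are the isolated vertex $c_1$ and the two matching edges $p_1q_1, p_2q_2$ of $H_1, H_2$ that lie outside the corners. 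A $5$-cycle is inherited from a building block in each case: $D_1 \subseteq D$ in the sum, the gadget itself in the $8$-augmentation, and in the join a $5$-cycle of $H_1$ avoiding $b_1$ (or of $H_2$ avoiding $\{b_2, c_2\}$), which exists because $B_8$ contains four $5$-cycles and every further $8$-augmentation brings in another inside its gadget.

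Property~\ref{le:dangerousproperties_alpha_mu} is the quantitative core. A short computation gives $\lb(D) = \lb(D_1) + \lb(D_2) + 1$ in the sum case, $\lb(D) = \lb(D') + 3$ in the $8$-augmentation case, and $\lb(D) = \alpha(H_1) + \alpha(H_2) - 1$ in the join case (using $\lb(H_i) = \alpha(H_i) + \frac{1}{12}$ from Lemma~\ref{le:badproperties}). I would build the required independent set of size $\lb(D)$ by combining independent sets from the building blocks: in the sum, use the inductive property~\ref{le:dangerousproperties_T_two_vertices} on each $D_i$ to pick $S_i$ of size $\lb(D_i)$ avoiding the cut-edge endpoint $x_i$, then take $S_1 \cup S_2 \cup \{u\}$; in the $8$-augmentation, Lemma~\ref{le:8augmentation} directly yields $\alpha(D) = \alpha(D') + 3 \geq \lb(D)$; in the join, use property~\ref{le:badproperties_5} of bad graphs to pick a maximum independent set $S_i$ of $H_i$ avoiding $\{b_i, c_i, p_i\}$, where $p_i$ is either of the two degree-$2$ vertices of $H_i$ outside the corner. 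This forces $c_1 \notin S_1$, so the new edge $c_1 a_2$ is automatically respected; dropping at most one further vertex fixes the other new edge $a_1 y_2$, yielding an independent set of size $\alpha(H_1) + \alpha(H_2) - 1 = \lb(D)$.

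The hard part will be properties~\ref{le:dangerousproperties_T_two_vertices} and \ref{le:dangerousproperties_T}, where the above construction must be refined to avoid a prescribed set $T$ of degree-$2$ vertices of $D$. I would handle these by case analysis on the intersection of $T$ with each building block: one enlarges the set of vertices the $S_i$'s must avoid by $T \cap V(D_i)$ or $T \cap V(H_i)$ and, in the sum case, chooses the bridging vertex among $\{u, v_1, v_2\}$ opportunistically so that its neighbors in $D_1, D_2$ can also be dodged. The delicate point is that each application of the inductive property~\ref{le:dangerousproperties_T} (respectively of property~\ref{le:badproperties_5}) can force only three prescribed degree-$2$ vertices of a building block to be missed, so the various constraints coming from $T$, from the cut-edge, and from the bridging vertex or the added edges of the join must fit within that budget of three. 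The non-adjacency hypothesis in property~\ref{le:dangerousproperties_T} is precisely what guarantees that the recursive application of that same property to a building block is valid. Finally, the exceptional bad graphs $B_8$ and $B_{16}^{(1)}$ in the join case (for which property~\ref{le:badproperties_5} does not allow avoiding all four degree-$2$ vertices at once) are handled separately by letting the fourth degree-$2$ vertex remain in $S_i$, which is harmless since it is not incident to any of the new edges.
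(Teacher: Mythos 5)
Your proposal is correct and follows essentially the same route as the paper: induction over the recursive definition of dangerous graphs, the same $\lb$-bookkeeping ($\lb(D)=\lb(D_1)+\lb(D_2)+1$ for sums, $+3$ for $8$-augmentations, $\alpha(H_1)+\alpha(H_2)-1$ for joins via Lemma~\ref{le:badproperties}.\ref{le:badproperties_alpha_mu}), and the same use of Lemma~\ref{le:badproperties}.\ref{le:badproperties_5} to assemble independent sets avoiding prescribed degree-$2$ vertices, with the case analysis for property~\ref{le:dangerousproperties_T} organized exactly as in the paper. One small simplification you could borrow: the paper gets property~\ref{le:dangerousproperties_T_two_vertices} for free from property~\ref{le:dangerousproperties_T} by adding a third degree-$2$ vertex to $T$ so that the remaining two are non-adjacent (always possible by property~\ref{le:dangerousproperties_degree2}), which spares you a separate case analysis for sets of size two.
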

\begin{proof}
Properties~\ref{le:dangerousproperties_tfs2c} and~\ref{le:dangerousproperties_degree2} follow from the inductive definition of dangerous graphs. 
Let us show property~\ref{le:dangerousproperties_alpha_mu}, by induction. 
This property is true if $D$ is $C_5$. 
If $D$ is the sum of two smaller dangerous graphs $D_1$ and $D_2$, then $\lb(D) = \lb(D_1) + \lb(D_2) + 1$, and it is easily seen that $\alpha(D) \geq \alpha(D_1) + \alpha(D_2) + 1$, implying $\alpha(D) \geq \lb(D)$ using the induction hypothesis on $D_1$ and $D_2$. 
If $D$ is an $8-$augmentation of a smaller dangerous graph $D'$, then $\alpha(D) =  \alpha(D') + 3 \geq \lb(D') + 3 = \lb(D)$ by Lemma~\ref{le:8augmentation} and the induction hypothesis. 
If $D$ is a join of two bad graphs $H_1$ and $H_2$, then $\lb(D) = \lb(H_1) + \lb(H_2) - 1 - \frac{1}{6} = \alpha(H_1) + \alpha(H_2) -1$ by Lemma~\ref{le:badproperties}.\ref{le:badproperties_alpha_mu}.  
Thus it only remains to observe that $\alpha(D) \geq \alpha(H_1) + \alpha(H_2) -1$, which follows easily from the definition of the join operation using Lemma~\ref{le:badproperties}.\ref{le:badproperties_5}. 

Note that property~\ref{le:dangerousproperties_T_two_vertices} follows directly from property~\ref{le:dangerousproperties_T} by adding an extra degree-$2$ vertex to $T$ in such a way that the remaining two degree-$2$ vertices are not adjacent (which is always possible by property~\ref{le:dangerousproperties_degree2}).  

Let us prove property~\ref{le:dangerousproperties_T}, by induction.  
The property is true for $C_5$. 
Suppose that $D$ is the sum of two dangerous graphs $D_1$ and $D_2$. 
By the definition of sum, there is a degree two vertex $u$ with degree three neighbors $v_1,v_2$ such that each of $v_1,v_2$ has two neighbors in $D_1 \cup D_2$. 
Let $i\in \left\{1,2\right\}$ and let $a_i,b_i,c_i,d_i,e_i$ denote the degree-$2$ vertices of $D_i$, such that $a_i$ and $b_i$ each have a neighbor in $\left\{v_1,v_2\right\}$, and $c_1c_2, d_1e_1, d_2e_2 \in E(D)$. 

By symmetry, we may assume that either $T=\left\{u,d_1,d_2\right\}$ or $T=\left\{d_1,e_1,d_2\right\}$. 
Suppose first that  $T= \left\{ u,d_1,d_2\right\}$. By induction there exists an independent set $S_i$ of $D_i$ of size $\lb(D_i)$ avoiding $\left\{a_i,b_i,d_i \right\}$, for $i=1,2$. 
Then  $(S_1 \cup S_2 \cup \left\{v_1,v_2\right\}) - \left\{c_1\right\}$ is an independent set of $D$ avoiding $T$ and with size $\lb(D_1) + \lb(D_2) + 1 = \lb(D)$.  
Next, suppose that $T=\left\{d_1,e_1,d_2\right\}$. 
By induction, there is an independent set $S_2$ of $D_2$ of size $\lb(D_2)$ avoiding $\left\{c_2,d_2\right\}$. 
Similarly, there is an independent set $S_1$ of $D_1$ of size $\lb(D_1)$ that avoids $\left\{d_1,e_1\right\}$. 
Then $S_1 \cup S_2 \cup \left\{u\right\}$ is an independent set of $D$ avoiding $T$ and with size $\lb(D)$. 

Next, suppose that $D$ is a join of two bad graphs $H_1$ and $H_2$, say $D$ is obtained from the disjoint union of $H_1$ and $H_2$ by choosing a corner $(a_i, b_i, c_i)$ of $H_i$ for $i=1,2$, and then removing the vertices $b_1, b_2, c_2$, and adding the edges $c_1a_2$ and $a_1y_2$, where $y_2$ is the neighbor $c_2$ in $H_2$ that is distinct from $b_2$, as in the definition. 
Let $d_i, e_i$ denote the two degree-$2$ vertices of $H_i$ distinct from $b_i, c_i$, for $i=1,2$ (thus $d_ie_i \in E(H_i)$). 
First suppose that $c_1 \in T$. 
Then $T$ contains exactly one of $d_i, e_i$ for $i=1,2$, say without loss of generality $d_i$. 
Let $S_i$ ($i=1,2$) be a maximum independent set of $H_i$ avoiding $b_i, c_i, d_i$, which exists by Lemma~\ref{le:badproperties}.\ref{le:badproperties_5}. 
Let $S$ be obtained from $S_1 \cup S_2$ by removing $y_2$ in case $y_2 \in S_2$. 
Then $S$ is an independent set of $D$ avoiding $T$ and of size at least $\alpha(H_1) + \alpha(H_2) - 1 = \lb(D)$, as desired. 
Now assume that $c_1 \notin T$. 
Then relabeling if necessary we may assume that either $T=\{ d_1,d_2,e_1\}$ or $T=\{d_1, d_2, e_2\}$. 
First, if $T=\{ d_1,d_2,e_1\}$, then let $S_1$ be a maximum independent set of $H_1$ avoiding $c_1,d_1,e_1$, and let $S_2$ be a maximum independent set of $H_2$ avoiding $b_2,c_2,d_2$. 
Let $S$ be obtained from $S_1\cup S_2$ by removing $a_1$ in case $a_1 \in S_1$ and removing $b_1$ in case $b_1 \in S_1$. Note that we have removed at most one vertex since $a_1b_1\in E(H_1)$. Then $S$ is an independent set of $D$ avoiding $T$ and of size at least $\alpha(H_1)+\alpha(H_2)-1=\mu(D)$, as desired.
If on the other hand $T=\{d_1, d_2, e_2\}$, then let $S_1$ be a maximum independent set of $H_1$ avoiding $b_1, c_1, d_1$, and let $S_2$ be a maximum independent set of $H_2$ avoiding $b_2, d_2, e_2$. 
Let $S$ be obtained from $S_1 \cup S_2$ by removing $c_2$ in case $c_2 \in S_2$ and removing $y_2$ in case $y_2 \in S_2$.  
Note that we removed at most one vertex since $c_2y_2 \in E(H_2)$. 
Thus again $S$ is an independent set of $D$ avoiding $T$ and of size at least $\alpha(H_1) + \alpha(H_2) - 1 = \lb(D)$, as desired. 

The remaining case where $D$ is an $8-$augmentation of a smaller dangerous graph is easier.  
We leave it to the reader. 

Finally, property~\ref{le:dangerousproperties_5cycle} is easily seen to be true from the inductive definition of dangerous graphs.  
(The only case requiring some thoughts is when $D$ is a join of two bad graphs. By an induction argument, every bad graph $B$ contains two $5$-cycles that do not have any degree-$2$ vertex of $B$ in common. The join operation preserves at least one of these $5$-cycles.)
\end{proof}

\begin{lemma}\label{le:outdegree4bad} 
Let $G$ be a connected critical subcubic graph containing a bad graph $B$ as a proper subgraph (that is, $B \neq G$). 
Then $V(B) \subsetneq V(G)$, $B$ is an induced subgraph of $G$, $B$ is isomorphic to $B_8$ or $B_{16}^{(1)}$, and there are exactly four edges between $V(B)$ and $V(G)-V(B)$. 
\end{lemma}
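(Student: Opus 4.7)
The plan is to prove the four conclusions in turn, using Lemma~\ref{le:badproperties}.\ref{le:badproperties_5} as the principal tool: every bad graph has a maximum independent set avoiding any three prescribed degree-$2$ vertices, and $B_8,B_{16}^{(1)}$ are precisely the bad graphs which lack such a maximum independent set avoiding all four. I will also repeatedly use that, since $G$ is subcubic, any edge of $G$ meeting $V(B)$ that is not in $E(B)$ must be incident to a degree-$2$ vertex of $B$, and such a vertex admits at most one extra edge.

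For $(a)$, I suppose toward contradiction that $V(B)=V(G)$. Every edge of $G$ outside $B$ then joins two degree-$2$ vertices of $B$. A maximum independent set $S$ of $B$ avoiding three of the four degree-$2$ vertices contains at most one vertex from that set, hence contains at most one endpoint of each extra edge, and so remains independent in $G$. Combined with the trivial bound $\alpha(G)\leq\alpha(B)$, this gives $\alpha(G)=\alpha(B)$. But any extra edge $e$ then satisfies $\alpha(G-e)\leq\alpha(B)=\alpha(G)$ (as $G-e$ still contains $B$ on the same vertex set), contradicting criticality of $e$.

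Having established $(a)$, I set $H:=G-V(B)$ (non-empty, with at least one edge to $V(B)$ by connectedness of $G$), let $I$ denote the set of degree-$2$ vertices of $B$ sending an edge to $V(H)$, and let $M$ denote those sending an extra edge within $V(B)$; by subcubicity, $I$ and $M$ are disjoint and the internal extra edges form a matching on $M$. The essential uniform bound, obtained by restricting any independent set to $V(B)$ and $V(H)$, reads $\alpha(G),\alpha(G-e)\leq\alpha(B)+\alpha(H)$ for every external edge $e$. To prove $(c)$, I suppose $B\notin\{B_8,B_{16}^{(1)}\}$. Then $B$ has a maximum independent set $S_B$ avoiding all four degree-$2$ vertices, hence avoiding $I\cup M$. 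Pairing $S_B$ with a maximum independent set of $H$ produces an independent set of $G$ of size $\alpha(B)+\alpha(H)$, forcing equality in the uniform bound. An external edge exists by connectedness, and its criticality then contradicts the same bound for $G-e$.

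To prove $(b)$ and $(d)$, I show $M=\emptyset$ and $|I|=4$. If $|I\cup M|\leq 3$, Lemma~\ref{le:badproperties}.\ref{le:badproperties_5} supplies a maximum independent set $S_B$ of $B$ avoiding $I\cup M$; since $S_B$ avoids $M$ it is independent in $G[V(B)]$, and it pairs with a maximum independent set of $H$ to repeat the previous criticality contradiction. Hence $|I|+|M|=4$, and since $|M|$ is even the remaining cases are $(|I|,|M|)=(4,0),(2,2),(0,4)$. The case $(0,4)$ is impossible because it leaves no edge between $V(B)$ and $V(H)$, contradicting connectedness of $G$. For $(2,2)$, the unique internal extra edge joins one vertex from each pair of the size-$2$ matching of degree-$2$ vertices of $B$ (the two pairs are already matched inside $B$), so after relabeling it is $d_1d_3$ with $I=\{d_2,d_4\}$. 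Applying Lemma~\ref{le:badproperties}.\ref{le:badproperties_5} again, I choose a maximum independent set of $B$ avoiding $\{d_2,d_3,d_4\}$; this set avoids $I$ and also avoids the endpoint $d_3$ of the extra edge, so it is independent in $G[V(B)]$, and the same criticality contradiction arises. The main obstacle is this last $(|I|,|M|)=(2,2)$ case, where one must choose which three degree-$2$ vertices to avoid so that the resulting independent set simultaneously avoids $I$ and breaks the internal extra edge; the matching structure on the four degree-$2$ vertices of $B$, together with the flexibility of Lemma~\ref{le:badproperties}.\ref{le:badproperties_5}, is exactly what makes such a choice possible.
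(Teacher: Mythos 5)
Your proposal is correct and takes essentially the same approach as the paper: both arguments rest on Lemma~\ref{le:badproperties}.\ref{le:badproperties_5} (choosing a maximum independent set of $B$ that avoids a prescribed set of its degree-$2$ vertices) combined with the criticality of a suitable edge of $G$ to reach the contradiction $\alpha(G-e)\leq\alpha(B)+\alpha(G-V(B))\leq\alpha(G)$. The only difference is organizational — you run a single case analysis on $(|I|,|M|)$ where the paper eliminates internal extra edges and missing external edges sequentially — which is a repackaging rather than a different method.
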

\begin{proof}

If $V(B)=V(G)$ then $G$ is the graph $B$ plus one or two edges linking degree-$2$ vertices of $B$. 
However, by Lemma~\ref{le:badproperties}.\ref{le:badproperties_5} there is a maximum independent set of $B$ avoiding three of its degree-$2$ vertices, which is thus also an independent set of $G$, and hence $\alpha(B)=\alpha(G)$. 
This implies that these one or two extra edges in $G$ cannot be critical, a contradiction. 
Therefore, $V(B) \subsetneq V(G)$. 


Since $G$ is connected, there exists an edge $ua$ with $u\in V(G) - V(B)$ and $a\in V(B)$.
 If $B$ is not induced in $G$, then there is exactly one edge $bc$ in $G$ linking two degree-$2$ vertices of $B$.
 By Lemma~\ref{le:badproperties}.\ref{le:badproperties_5}, there is a maximum independent set $S_B$ of $B$ that avoids each of its degree-$2$ vertices that are distinct from $c$.
 By criticality of $G$, each maximum independent set $S$ of $G-bc$ contains both $b$ and $c$. 
 However, since $|S\cap V(B)| \leq  |S_B|$, we can replace $S\cap V(B)$ with $S_B$ to obtain a new independent set of $G-bc$ of size at least $|S|$ that avoids $b$. Contradiction.
 Therefore, $B$ is an induced subgraph of $G$. 

Suppose that $B$ is not isomorphic to $B_8$ or $B_{16}^{(1)}$. 
Then by Lemma~\ref{le:badproperties}.\ref{le:badproperties_5}, $B$ has a maximum independent set $S_B$ that avoids each of its degree-$2$ vertices. 
In particular, $S_B$ avoids $a$. 
By criticality of $G$, each maximum independent set $S$ of $G-ua$ contains both $u$ and $a$. 
However, since $|S\cap V(B)|\leq |S_B|$, we can replace $S\cap V(B)$ with $S_B$ to obtain a new independent set of $G-ua$ of size at least $|S|$ that avoids $a$. Contradiction. 
This proves that $B$ is isomorphic to $B_8$ or $B_{16}^{(1)}$. 

It remains to show that there are exactly four edges between $V(B)$ and $V(G)-V(B)$.  
Let $T=\left\{a,b,c,d\right\} \subseteq V(B)$ denote the set of vertices that have degree $2$ in $B$. 
Suppose for a contradiction that some vertex of $T$, say $b$, has no edge to $V(G)-V(B)$. 
Recall that $a$ does have an edge to $V(G)-V(B)$, namely $ua$. 
Since $G$ is critical, every maximum independent set $S$ of $G-ua$
 must contain $u$ and $a$. 
By Lemma~\ref{le:badproperties}.\ref{le:badproperties_5}, $B$ contains a maximum independent set $S_B$ that avoids $\left\{a,c,d\right\}$. 
This means that we can locally modify $S$ by replacing $S \cap V(B)$ with $S_B$, thus obtaining an independent set of $G$ of size at least $|S|=\alpha(G)+1$. Contradiction.
We have shown that each vertex of $T$ has an edge to $V(G)-V(B)$, and so $B$ has exactly four edges to $V(G)-V(B)$. 
\end{proof}

\section{Proof of Theorem~\ref{th:main}}
\label{sec:proof}

The following Table~\ref{table:parameters} lists the relevant parameters for the graphs $F_{11}$, $F_{14}^{(1)}$, $F_{14}^{(2)}$, $F_{19}^{(1)}$, $F_{19}^{(2)}$, $F_{22}$. 
Observe in particular that the three cubic graphs $F$ satisfy $\alpha(F) = \lb(F)-\tfrac{1}{6}$ while the three noncubic graphs $F$ satisfy $\alpha(F)= \lb(F)-\tfrac{1}{12}$. 
These values will be used in various places in the proof. 

  \begin{table}[htb!]
  \centering
  \begin{tabular}{ |c || c | c | c | c | c } 
    \hline 
    $ $ & $n_3$ & $n_2$ & $\alpha$  & $\lb$ \\[.5ex] 
    \hline \hline 
    \rule{0pt}{1.1\normalbaselineskip} $F_{11}$ & $10$ & $1$ & $4$  &  $4 + \tfrac{1}{12}$ \\[.7ex]  
    $F_{14}^{(1)},F_{14}^{(2)}$ & $14$ & $0$ & $5$ &  $5 + \tfrac{1}{6}$ \\[.7ex]   
    $F_{19}^{(1)},F_{19}^{(2)}$ & $18$ & $1$ & $7$  & $7 + \tfrac{1}{12}$ \\[.7ex]  
        $F_{22}$ & $22$ & $0$ & $8$  & $8 + \tfrac{1}{6}$ \\[.7ex]   
    \hline
  \end{tabular}
  \caption{The values of some relevant parameters for the graphs $F_{11}$, $F_{14}^{(1)}$, $F_{14}^{(2)}$, $F_{19}^{(1)}$, $F_{19}^{(2)}$, $F_{22}$. (Recall that $n_i$ denotes the number of vertices of degree~$i$).}
  \label{table:parameters}
  \end{table}

Let $G$ be a graph. An \emph{independence packing} for $G$ is a collection of vertex-disjoint subgraphs $G_1,G_2,\ldots, G_k$ of $G$ such that
\begin{enumerate}
\item Each $G_i$ is connected and critical;
\item $V(G)=\bigcup_{1=1}^{k} V(G_i)$;
\item $\alpha(G)=\sum_{i=1}^{k} \alpha(G_i)$. 
\end{enumerate}

\begin{lemma}
Every graph $G$ has an independence packing.
\end{lemma}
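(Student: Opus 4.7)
The plan is to construct an independence packing by taking a minimal spanning subgraph $G'$ of $G$ with $\alpha(G')=\alpha(G)$, and then letting $G_1,\ldots,G_k$ be the connected components of $G'$.

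More precisely, I would start from $G$ and repeatedly delete any edge whose removal does not decrease the independence number. This process terminates (the number of edges strictly decreases at each step), and yields a spanning subgraph $G'\subseteq G$ with $V(G')=V(G)$ and $\alpha(G')=\alpha(G)$. By the stopping condition, for every edge $e\in E(G')$ we have $\alpha(G'-e)>\alpha(G')$, so $G'$ is a critical graph. Let $G_1,\ldots,G_k$ be the connected components of $G'$ (isolated vertices are allowed, and each is a copy of $K_1$, which is critical by convention).

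It remains to verify the three defining properties of an independence packing. Each $G_i$ is connected by construction, and each is critical: for any edge $e\in E(G_i)$, since the other components are disjoint from $G_i$,
\[
\alpha(G'-e)=\alpha(G_i-e)+\sum_{j\neq i}\alpha(G_j),
\qquad
\alpha(G')=\sum_{j}\alpha(G_j),
\]
and the strict inequality $\alpha(G'-e)>\alpha(G')$ then forces $\alpha(G_i-e)>\alpha(G_i)$, so $e$ is critical in $G_i$. Since $G'$ is spanning, $V(G)=\bigcup_{i=1}^{k}V(G_i)$, and the $V(G_i)$ are pairwise disjoint as components of $G'$. Finally, because a maximum independent set in a disjoint union is the union of maximum independent sets in each component,
\[
\alpha(G)=\alpha(G')=\sum_{i=1}^{k}\alpha(G_i).
\]

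There is no real obstacle here: the argument is entirely standard, resting only on the elementary facts that iterated deletion of non-critical edges terminates, that criticality is inherited by components of a critical graph, and that the independence number is additive over disjoint unions.
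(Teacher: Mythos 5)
Your proof is correct and follows essentially the same route as the paper: the paper phrases it as an induction on the number of edges (deleting one noncritical edge at a time and noting an independence packing of $G-e$ is one for $G$), which is exactly the induction underlying your iterative deletion down to a critical spanning subgraph whose components form the packing. The supporting facts you use — components of a critical graph are critical, and $\alpha$ is additive over disjoint unions — are the same ones the paper relies on.
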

\begin{proof}
We proceed by induction. If $G$ itself is critical, then all of its components are critical, and they together define an independence packing. Otherwise, there is a noncritical edge $e$, and it suffices to observe that every independence packing for $G-e$ is also one for $G$. 
\end{proof}

If $G$ is connected and critical, then a trivial independence packing for $G$ is obtained by taking $G$ itself. 
Let us point out that this is also the only one: 

\begin{lemma}
\label{lem:trivial_alpha_packing}
If $G$ is a connected and critical graph, then $G$ has a unique independence packing, which is $G$ itself. 
\end{lemma}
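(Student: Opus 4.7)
My plan is to show that the conditions defining an independence packing, together with criticality of $G$, force every edge of $G$ to appear in some $G_i$, and then to use connectedness to conclude that $k=1$ and $G_1=G$.

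First, I would establish the basic inequality: whenever $G_1,\dots,G_k$ are vertex-disjoint subgraphs whose vertex sets partition $V(G)$, any independent set $T$ of $G$ satisfies $|T|=\sum_i |T\cap V(G_i)|\le \sum_i \alpha(G_i)$, since each $T\cap V(G_i)$ is independent in $G_i$ (as $G_i$ is a subgraph of $G$). This gives $\alpha(G)\le \sum_i \alpha(G_i)$ in general, and the packing condition says equality holds.

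Now suppose, toward a contradiction, that some edge $e\in E(G)$ is not an edge of any $G_i$. Then each $G_i$ is still a subgraph of $G-e$, and the same partition inequality applied to $G-e$ yields
\[
\alpha(G-e)\;\le\;\sum_{i=1}^{k}\alpha(G_i)\;=\;\alpha(G).
\]
Since $G$ is critical, $\alpha(G-e)>\alpha(G)$, a contradiction. Hence every edge of $G$ lies in some $G_i$. But the $V(G_i)$ are pairwise disjoint, so an edge of $G$ between $V(G_i)$ and $V(G_j)$ with $i\ne j$ could not lie in any $G_\ell$; therefore no such edge exists. Because $G$ is connected and the $V(G_i)$ partition $V(G)$, we must have $k=1$, which gives $V(G_1)=V(G)$, and the previous step then yields $E(G_1)=E(G)$, i.e.\ $G_1=G$.

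This is a short argument with essentially no obstacle — the only subtle point is the distinction between ``$G_i$ being a subgraph'' (which may omit edges) and ``$G_i$ being the induced subgraph on $V(G_i)$''. The packing definition only requires the former, but the inequality $|T\cap V(G_i)|\le \alpha(G_i)$ holds because $G_i$ is a subgraph of $G$, so independence in $G$ (restricted to $V(G_i)$) implies independence in $G_i$, which is all that is needed.
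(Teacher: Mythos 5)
Your proof is correct and follows essentially the same route as the paper's: the paper forms the spanning subgraph $J=G_1\cup\cdots\cup G_k$, notes $\alpha(J)=\sum_i\alpha(G_i)=\alpha(G)$, and uses criticality to conclude that no proper spanning subgraph can have independence number equal to $\alpha(G)$, whereas you localize the same criticality argument to a single missing edge $e$ via the partition inequality applied to $G-e$. The two arguments are the same in substance, and your handling of the subgraph-versus-induced-subgraph point is accurate.
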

\begin{proof}
Suppose that $G_1,\ldots, G_k$ is an independence packing of $G$. 
Then by definition $J=G_1 \cup \cdots \cup G_k$ is a spanning subgraph of $G$ with $\alpha(J)=\alpha(G)$. 
However, since every edge of $G$ is critical, every proper spanning subgraph of $G$ has independence number strictly greater than $\alpha(G)$, and hence we must have $J=G$.  
\end{proof}

Now we turn to the proof of Theorem~\ref{th:main}. 
Arguing by contradiction, we fix for the rest of this section a hypothetical counterexample $G$ to Theorem~\ref{th:main} that minimizes $|V(G)|+|E(G)|$. 
We know from Lemmas~\ref{le:badproperties}.\ref{le:badproperties_alpha_mu} and~\ref{le:dangerousproperties}.\ref{le:dangerousproperties_alpha_mu} that $G$ is neither bad nor dangerous. 
Thus, in order to arrive at a contradiction we need to show that $\alpha(G)\geq \lb(G)$, and moreover that $\alpha(G)\geq \lb(G) + \tfrac{1}{12}$ in case $G$ has at least three vertices of degree $2$ and contains no bad subgraph. 

Since $G$ is connected, critical and not isomorphic to $K_1$ or $K_2$, we already know from Lemma~\ref{le:twoconnected1} that $G$ is $2$-connected. 
Our first goal is to prove that $G$ is $3$-connected.

\subsection{Reduction to the $3$-connected case}

\begin{lemma}\label{le:twoedgecutset} Suppose that $G$ has a $2$-edge cutset $\left\{e_1,e_2\right\}$. 
Then exactly one component of $G-\left\{e_1,e_2\right\}$ is isomorphic to $K_1$ or $K_2$.
\end{lemma}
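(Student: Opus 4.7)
The plan is to derive a contradiction by assuming both components of $G - \{e_1,e_2\}$ are nontrivial, then using Lemmas~\ref{le:oddsubdivision} and~\ref{le:gluing_critical} to split $G$ into smaller critical pieces that contradict the minimality of $G$.

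First I dispose of the easy sub-configurations. Since $G$ is $2$-connected, removing $\{e_1,e_2\}$ yields exactly two components $C_1$ and $C_2$; write $e_i = x_iy_i$ with $x_i \in V(C_1)$ and $y_i \in V(C_2)$. If $x_1 = x_2$ then this common vertex would be a cut-vertex of $G$ unless $V(C_1)=\{x_1\}$; $2$-connectivity therefore forces $C_1 \cong K_1$, and we are done. The case $y_1=y_2$ is symmetric, so I may assume $x_1 \ne x_2$ and $y_1 \ne y_2$. Next I argue that $C_1$ and $C_2$ cannot both be small. Two $K_1$'s would create parallel edges (not simple); a $K_1$ together with a $K_2$ would force a triangle; and two $K_2$'s would make $G \cong C_4$, which is not critical because $\alpha(C_4)=\alpha(P_4)=2$. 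Hence at most one of $C_1,C_2$ belongs to $\{K_1,K_2\}$, and it remains to derive a contradiction under the assumption $|V(C_1)|,|V(C_2)| \geq 3$.

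The core of the argument is to convert the $2$-edge cutset into a $2$-vertex cutset via double subdivision. Concretely, I subdivide each of $e_1$ and $e_2$ twice, replacing $e_i$ by a path $x_i\,p_i\,q_i\,y_i$; two applications of Lemma~\ref{le:oddsubdivision} show that the new graph $G^{**}$ is critical, triangle-free, and subcubic, with $\alpha(G^{**}) = \alpha(G) + 2$. Since $y_1 \ne y_2$, the pair $\{q_1,q_2\}$ is a non-adjacent $2$-vertex cutset of $G^{**}$. Applying Lemma~\ref{le:gluing_critical} to this cutset decomposes $G^{**}$ into two smaller critical subcubic graphs $H_0$ and $H_1$: $H_1$ has vertex set $V(C_2)\cup\{q\}$ (where $q$ arises by identifying $q_1,q_2$ and is adjacent to $y_1$ and $y_2$), while $H_0$ has vertex set $V(C_1)\cup\{p_1,p_2,q_1,q_2\}$ together with the extra edge $q_1q_2$. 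Both $H_0$ and $H_1$ are strictly smaller than $G$ in the $|V|+|E|$ measure (when $C_1,C_2$ are large enough), so the minimality of $G$ combined with Theorem~\ref{th:main} gives $\alpha(H_i) \ge \lb(H_i)$, up to the corrections of Theorem~\ref{th:main} when $H_i$ is bad, dangerous, or a forbidden $F$-graph. Combining these bounds with $\alpha(G) = \alpha(G^{**}) - 2 \ge \alpha(H_0)+\alpha(H_1)-2$ and computing $\lb(H_0)+\lb(H_1)$ in terms of $\lb(G)$ yields $\alpha(G) \ge \lb(G)$, contradicting the assumption that $G$ is a counterexample.

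The main obstacle is the sub-case where $x_1x_2 \in E(G)$ or $y_1y_2 \in E(G)$: the corresponding $H_i$ then contains a triangle and is inadmissible for the inductive hypothesis. This requires a tailored local surgery — for instance, when $y_1y_2 \in E(G)$ one can additionally subdivide this edge before applying the cutset argument, or directly exploit the path $x_1\,y_1\,y_2\,x_2$ to build an independent set realising the bound. A secondary difficulty is that when one component is very small (e.g.\ $|V(C_2)|=3$, in which case $C_2$ must be the path $y_1\,z\,y_2$ with $z$ of degree $2$ in $G$), the graph $H_0$ need not be strictly smaller than $G$ in the $|V|+|E|$ measure. In that regime the reduction is replaced by a direct one: un-subdividing the path $x_1\,y_1\,z\,y_2\,x_2$ into the edge $x_1x_2$ (when $x_1x_2 \notin E(G)$) produces a smaller critical graph whose independence number is controlled by the usual subdivision accounting of Lemma~\ref{le:oddsubdivision}; while the auxiliary $H_1$ in this regime collapses to a small graph such as $C_5$, for which Lemma~\ref{le:dangerousproperties} supplies the required independent set. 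Carefully threading the bookkeeping (including the $\tfrac{1}{12}$ slack of the stronger clause of Theorem~\ref{th:main} when many degree-$2$ vertices are present) through all these degenerate sub-cases is the crux of the proof.
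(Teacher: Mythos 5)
Your overall strategy is the same as the paper's (convert the $2$-edge cutset into a non-adjacent $2$-vertex cutset, split $G$ with Lemma~\ref{le:gluing_critical}, apply induction to the two pieces and do the $\lb$-accounting), but your particular construction has a quantitative defect that the argument cannot recover from. Subdividing \emph{both} cut edges creates four new degree-$2$ vertices before you even split, and one more appears in $H_1$ after the identification; a direct count gives $\lb(H_0)+\lb(H_1)=\lb(G)+2$ exactly, while $\alpha(G)=\alpha(H_0)+\alpha(H_1)-2$. So your inequality chain has \emph{zero slack}: the moment $H_1$ is bad or isomorphic to one of $F_{11}$, $F_{19}^{(1)}$, $F_{19}^{(2)}$ (cases you cannot exclude, and which cost $\tfrac{1}{12}$ each by Theorem~\ref{th:main}), you only get $\alpha(G)\geq \lb(G)-\tfrac{1}{12}$, which does not suffice — and no integrality trick is available here since $G$ need not be cubic. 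The paper avoids this by applying Lemma~\ref{le:gluing_critical} directly to the cutset $\{b,c\}$ consisting of one endpoint of each cut edge on \emph{opposite} sides, and subdividing only the single edge $bd$ that might create a triangle; that introduces just three new degree-$2$ vertices, yielding $\lb(G_0)+\lb(G_2)=\lb(G)+1+\tfrac{1}{6}$ against a cost of $1$, i.e.\ exactly the $\tfrac{1}{6}$ of slack needed to absorb a $-\tfrac{1}{12}$ correction from each piece.

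Two further points. First, several sub-cases you flag are left as sketches rather than proofs: the small-component regime where $H_0$ is not smaller than $G$ in the $|V|+|E|$ measure (which genuinely occurs for your construction when $|V(C_2)|=3$, but never occurs for the paper's more economical pieces), and the stronger clause requiring $\alpha(G)\geq\lb(G)+\tfrac{1}{12}$ when $G$ has at least three degree-$2$ vertices, is not dangerous, and has no bad subgraph. The paper settles the latter by showing that failure forces both pieces to be bad, whence $G$ is a join of two bad graphs and hence dangerous — a contradiction; with your zero-slack accounting it is not clear how to extract the extra $\tfrac{1}{12}$ at all. Second, the "triangle obstacle" you worry about for $y_1y_2\in E(G)$ is actually a non-issue: $\{y_1,y_2\}$ is a cutset of $G$ (every $C_1$--$C_2$ edge meets it), so $y_1y_2\in E(G)$ would give a clique cutset, contradicting Lemma~\ref{le:nocliquecutset}; this is exactly how the paper rules out the edge $ac$ before adding the vertex $w$.
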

\begin{proof}
Since $G-e_1$ is connected, $G-\left\{e_1,e_2\right\}$ has precisely two components. 
It is checked that $G$ is not a counterexample to Theorem~\ref{th:main} if both components are isomorphic to $K_1$ or $K_2$. 
Arguing by contradiction, let us assume that neither of them are isomorphic to $K_1$ or $K_2$. 
Let $C_0,C_1$ denote the two components of $G-\left\{e_1,e_2\right\}$.  
Thus, each of these two components has at least three vertices. 

If $e_1$ and $e_2$ have a vertex in common, then this vertex is a cutvertex of $G$, contradicting the fact that $G$ is $2$-connected. 
So $e_1$ and $e_2$ must form a matching.
Say $e_1=ab$ and $e_2=cd$, with $a,c \in V(C_0)$ and $b,d \in V(C_1)$. 
Note that $ac$ is not an edge of $G$, for otherwise $\left\{a,c\right\}$ would be a clique cutset of $G$, contradicting the fact that $G$ is critical (see Lemma~\ref{le:nocliquecutset}). 
Also, $ad$ cannot be an edge, for otherwise $\left\{ab,cd\right\}$ would not be an edge cutset of $G$. 
By symmetry, it follows that $ab$ and $cd$ are the only two edges in $G[\left\{a,b,c,d\right\}]$.

We now apply Lemma~\ref{le:gluing_critical} to $G$ with the $2$-cutset $\left\{b,c\right\}$. 
Exchanging $C_0$ and $C_1$ if necessary, we may assume that the outcome of the lemma consists of two connected critical graphs $G_0, G_1$ such that
\begin{itemize}
\item $G_0$ is $C_0$ plus an extra vertex $w$ adjacent to $a$ and $c$ (and no other vertices);
\item $G_1$ is $C_1$ plus the edge $bd$;
\item $\alpha(G)=\alpha(G_0)+\alpha(G_1)$.
\end{itemize}

Note that $G_0$ is triangle-free, because $C_0$ was triangle-free and $ac\notin E(G_0)$. 
On the other hand, $G_1$ is not necessarily triangle-free, because the addition of the edge $bd$ to $C_1$ could create a triangle. 
For this reason, we define a new triangle-free graph $G_2$ from $G_1$, by subdividing twice the edge $bd$. 
By Lemma~\ref{le:oddsubdivision}, $G_2$ is still critical, and $\alpha(G_2)=\alpha(G_1)+1$. 
Hence, $\alpha(G)=\alpha(G_0)+\alpha(G_2)-1$. 

The graph $G_0 \cup G_2$ has three more vertices than $G$, and these extra vertices have degree $2$, while the remaining vertices have the same degrees as in $G$.  
Thus, $\lb(G) = \lb(G_0)+\lb(G_2) + \tfrac{1}{12} - 3\cdot \tfrac{5}{12}= \lb(G_0)+\lb(G_2) - 1 - \tfrac{1}{6}$.

Let us show that $\alpha(G_i)\geq \lb(G_i)-\tfrac{1}{12}$ holds for $i\in \left\{0,2\right\}$.  
Both graphs $G_0$ and $G_2$ are connected, critical, and strictly smaller than $G$. 
(Note that $|V(G_2)| + |E(G_2)| < |V(G)| + |E(G)|$ because $C_0$ has at least three vertices.) 
Thus they are not counterexamples to Theorem~\ref{th:main}. 
Hence, if $G_i$ ($i\in \left\{0,2\right\}$) has none of $F_{11}$, $F_{14}^{(1)}$, $F_{14}^{(2)}$, $F_{19}^{(1)}$, $F_{19}^{(2)}$, $F_{22}$ as a subgraph, then $\alpha(G_i)\geq \lb(G_i)-\tfrac{1}{12}$. 
If, on the other hand, $G_i$ contains one graph $F$ from these six graphs as a subgraph then $G_i=F$ (because $G_i$ is $2$-connected) and $F$ is one the three noncubic graphs (because $G_i$ has a degree-$2$ vertex), and $i=0$ (because $G_2$ has two degree-$2$ vertices). 
Therefore, $\alpha(G_0)=\lb(G_0)-\tfrac{1}{12}$, as indicated in Table~\ref{table:parameters}. 

In conclusion, 
\[
\alpha(G)=\alpha(G_0)+\alpha(G_2)-1 \geq \lb(G_0)+\lb(G_2) - 1 - 2\cdot \tfrac{1}{12} = \lb(G).
\]
This is the desired contradiction, unless $G$ has at least three degree-$2$ vertices, is not dangerous, and has no bad subgraph, in which case we need to show that $\alpha(G) \geq \lb(G) + \tfrac{1}{12}$. 
So let us consider that case. 

As argued above, $G_2$ contains none of $F_{11}$, $F_{14}^{(1)}$, $F_{14}^{(2)}$, $F_{19}^{(1)}$, $F_{19}^{(2)}$, $F_{22}$ as a subgraph. 
If $G_2$ is not bad then $\alpha(G_2)\geq \lb(G_2)$, implying that $\alpha(G)\geq \lb(G)+\tfrac{1}{12}$; contradiction. 
So $G_2$ must be bad. 

If $G_0$ contains one graph $F$ among $F_{11}$, $F_{14}^{(1)}$, $F_{14}^{(2)}$, $F_{19}^{(1)}$, $F_{19}^{(2)}$, $F_{22}$ as a subgraph then $G_0=F$ and $F$ has exactly one degree-$2$ vertex, as already mentioned. 
However, this is not possible because $G$ would then have only two degree-$2$ vertices. 

If $G_0$ is not bad then we obtain $\alpha(G_0)\geq \lb(G_0)$, and so $\alpha(G)\geq \lb(G)+\tfrac{1}{12}$; contradiction.  
Hence, $G_0$ must also be bad.  
But then $G$ is a join of the two bad graphs $G_0$ and $G_2$, and hence $G$ is dangerous, a contradiction. 
\end{proof}

\begin{lemma}\label{le:twovertexcutset}
Suppose that $G$ has a $2$-cutset $\left\{v, w\right\}$. 
Then $G-\left\{v,w\right\}$ has exactly two components, and one of the two components is isomorphic to $K_1$ or $K_2$. 
\end{lemma}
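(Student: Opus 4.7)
The plan is to adapt the argument of Lemma~\ref{le:twoedgecutset}, this time applying Lemma~\ref{le:gluing_critical} directly to the $2$-cutset $\{v,w\}$ and deriving a contradiction through the minimality of $G$. Assume for contradiction that $G-\{v,w\}$ has two components $C_0,C_1$ with $|V(C_0)|,|V(C_1)|\geq 3$. By Lemma~\ref{le:gluing_critical}, after ordering so that every vertex of $C_1$ has at most one neighbor in $\{v,w\}$, we obtain connected critical graphs $G_0$ and $G_1$, where $G_0$ is $G[V(C_0)\cup\{v,w\}]$ with the edge $vw$ added and $G_1$ is $G[V(C_1)\cup\{v,w\}]$ with $v$ and $w$ identified into a single vertex $z$, satisfying $\alpha(G)=\alpha(G_0)+\alpha(G_1)$. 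The goal is to turn $G_0,G_1$ into connected critical triangle-free subcubic graphs strictly smaller than $G$, so that the induction hypothesis of Theorem~\ref{th:main} can be applied to each.

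Since $G$ is $2$-connected and subcubic and $vw\notin E(G)$ (Lemma~\ref{le:nocliquecutset}), each of $v,w$ has at least one neighbor in each component, forcing $\deg_{C_i}(v),\deg_{C_i}(w)\in\{1,2\}$. Hence $G_0$ is automatically subcubic, while $G_1$ is subcubic if and only if $d:=\deg_{C_1}(v)+\deg_{C_1}(w)\leq 3$. If $d=4$ then $\deg_{C_0}(v)=\deg_{C_0}(w)=1$, and a common $C_0$-neighbor of $v$ and $w$ would be a cutvertex of $G$ (using $|V(C_0)|\geq 3$), contradicting $2$-connectedness; hence in this case the labels of $C_0,C_1$ can be swapped, reducing us to $d\leq 3$. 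So we may assume both $G_0$ and $G_1$ are subcubic.

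Next we will make both graphs triangle-free by double-subdivisions, which preserve criticality and add exactly $1$ to $\alpha$ (Lemma~\ref{le:oddsubdivision}). A triangle of $G_0$ must use the edge $vw$ together with a common neighbor of $v,w$ in $C_0$, so a single double-subdivision of $vw$ destroys them all. A triangle of $G_1$ must involve $z$ and an edge $ab$ in $C_1$ with $a\in N_{C_1}(v)$ and $b\in N_{C_1}(w)$; since $|N_{C_1}(v)|+|N_{C_1}(w)|\leq 3$, one of these two neighborhoods is a singleton, and a single double-subdivision of the corresponding edge $za$ (or $zb$) destroys them all. Let $G_0',G_1'$ be the resulting graphs and let $s_0,s_1\in\{0,1\}$ be the number of double-subdivisions applied to each; then $\alpha(G_0')+\alpha(G_1')=\alpha(G)+s_0+s_1$, and both $G_0',G_1'$ are connected, critical, triangle-free, subcubic, and strictly smaller than $G$.

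A routine count of vertices, edges and components gives $\mu(G_0')+\mu(G_1')=\mu(G)+\tfrac{1}{3}+(s_0+s_1)\cdot\tfrac{5}{6}$. In the generic case, applying Theorem~\ref{th:main} to $G_0',G_1'$ via minimality yields $\alpha(G_i')\geq\mu(G_i')$, which combined with $s_0+s_1\leq 2$ gives $\alpha(G)\geq\mu(G)+\tfrac{2-(s_0+s_1)}{6}\geq\mu(G)$, contradicting the counterexample assumption. The main obstacle will be the exceptional outcomes in which $G_0'$ or $G_1'$ is bad (so $\alpha=\mu-\tfrac{1}{12}$) or isomorphic to one of $F_{11},F_{14}^{(1)},F_{14}^{(2)},F_{19}^{(1)},F_{19}^{(2)},F_{22}$ (with $\alpha-\mu$ read off from Table~\ref{table:parameters}): in each such case one needs to trace the construction back and show that $G$ itself contains a bad subgraph, contains a forbidden $F$-subgraph, or is dangerous, contradicting the choice of $G$. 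A parallel but more delicate analysis will handle the ``furthermore'' clause of Theorem~\ref{th:main}, which demands the stronger bound $\alpha(G)\geq\mu(G)+\tfrac{1}{12}$ when $G$ has at least three degree-$2$ vertices, is not dangerous, and contains no bad subgraph.
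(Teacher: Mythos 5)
Your plan sets up the right objects, and the bookkeeping you do carry out ($\lb(G_0')+\lb(G_1')=\lb(G)+\tfrac13+(s_0+s_1)\cdot\tfrac56$, hence $\alpha(G)\geq\lb(G)+\tfrac{2-(s_0+s_1)}{6}$ in the generic case) is correct. But the proof has a genuine gap: the ``exceptional outcomes'' you defer to a final sentence are precisely where the difficulty lives, and your route makes them strictly harder than in Lemma~\ref{le:twoedgecutset}. When $s_0=s_1=1$ your generic bound has \emph{zero} slack, so a single $G_i'$ that is bad or isomorphic to $F_{11}$, $F_{19}^{(1)}$, $F_{19}^{(2)}$ already yields only $\alpha(G)\geq\lb(G)-\tfrac1{12}$, which is not a contradiction. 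You would then have to convert that into structural information about $G$ itself, but $G_1'$ is obtained by \emph{identifying} $v$ and $w$ (and possibly subdividing), so it is not a subgraph of $G$: a bad or forbidden $G_1'$ does not translate into a bad or forbidden subgraph of $G$, nor obviously into $G$ being dangerous (the join and sum operations in the definition of dangerous graphs do not match a vertex identification). Note also that at this point of the paper ``$G$ contains a bad subgraph'' is not yet a contradiction for the main clause of Theorem~\ref{th:main}. A second, smaller issue: in the case $d=4$ you ``swap the labels'' of $C_0$ and $C_1$, but Lemma~\ref{le:gluing_critical} only guarantees the existence of \emph{one} labelling for which $G_0$ and $G_1$ are critical; showing that the labelling with $G_1$ built on $C_0$ is also valid requires an extra argument.

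All of this is avoidable: the paper proves the lemma in a few lines by reducing to Lemma~\ref{le:twoedgecutset}, which is already established. Since $v$ is subcubic and has a neighbour in each component, it has exactly one neighbour $v^*$ in some component, say $C_1$. If $w$ also has a unique neighbour $w^*$ in $C_1$, then $\{vv^*,ww^*\}$ is a $2$-edge cutset whose component $C_1$ must be $K_1$ or $K_2$ (the other component contains the non-adjacent pair $v,w$). Otherwise $w$ has a unique neighbour $w^*$ in $C_2$, and $\{vv^*,ww^*\}$ is again a $2$-edge cutset, both of whose components have at least two vertices, forcing one of them to be $K_2$ and hence one of $C_1,C_2$ to be $K_1$. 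I would strongly recommend replacing your construction by this reduction.
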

\begin{proof}
We already know from Lemma~\ref{le:gluing_critical} that $G - \{v, w\}$ has exactly two components, say $C_1$ and $C_2$, and from Lemma~\ref{le:nocliquecutset} that $v$ and $w$ are not adjacent.  
We may assume that $v$ has only one neighbor in $C_1$, say $v^*$. 
If $w$ also has only one neighbor in $C_1$, say $w^*$, then $C_1$ is one of the two components of $G-\left\{vv^*,ww^*\right\}$ and that component must be isomorphic to $K_1$ or $K_2$ by Lemma~\ref{le:twoedgecutset}, since the other component contains $v$ and $w$, which are not adjacent. 
If, on the other hand, $w$ has two neighbors in $C_1$, then $w$ has exactly one neighbor in $C_2$, say $w^*$, and $\left\{vv^*,ww^*\right\}$ is again a $2$-edge cutset of $G$ (since $vw\notin E(G)$). 
Both components of $G-\left\{vv^*,ww^*\right\}$ contain at least two vertices, thus one is isomorphic to $K_2$ by Lemma~\ref{le:twoedgecutset}, and it follows that one of $C_1, C_2$ is isomorphic to $K_1$. 
\end{proof}

Our next goal is to show that $G$ does not have a bad subgraph.

\begin{lemma}\label{le:nobadsubgraph}
$G$ does not have a bad subgraph.
\end{lemma}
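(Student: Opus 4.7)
The plan is to assume for contradiction that $G$ contains a bad subgraph $B$ and to derive that $G$ must then be isomorphic to one of $F_{11}, F_{14}^{(1)}, F_{14}^{(2)}, F_{19}^{(1)}, F_{19}^{(2)}, F_{22}$, contradicting the hypothesis. This reduction is legitimate because, as recorded in the paragraph following Conjecture~\ref{conj:FL}, any $2$-connected subcubic graph that merely \emph{contains} one of the six forbidden graphs as a subgraph is already isomorphic to it, so it is enough to locate a copy of some $F_i$ inside $G$.

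I would begin by invoking Lemma~\ref{le:outdegree4bad} to pin down the structure: $V(B) \subsetneq V(G)$, $B$ is induced in $G$, $B$ is isomorphic to $B_8$ or $B_{16}^{(1)}$, and there are exactly four edges between $V(B)$ and $V(H) := V(G) \setminus V(B)$, one from each of the four degree-$2$ vertices $b_1, b_2, b_3, b_4$ of $B$. By Lemma~\ref{le:badproperties}.\ref{le:badproperties_3} we may label so that $b_1 b_2, b_3 b_4 \in E(B)$; write $a_i$ for the unique neighbor of $b_i$ in $V(H)$. Two preliminaries follow: first, $H$ must be connected (otherwise the four edges leaving $V(B)$ would yield a cut in $G$ smaller than what is allowed by Lemmas~\ref{le:twoedgecutset} and~\ref{le:twovertexcutset}); second, $a_1, a_2, a_3, a_4$ must be pairwise distinct (otherwise a coincidence at some $a_i$ would again produce a cut incompatible with the almost $3$-connectivity established by those lemmas, or create a clique cutset excluded by Lemma~\ref{le:nocliquecutset}).

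The core of the proof is then a finite case analysis around the attachment of $H$ to $B$. For each $a_i$ I would bound its degree in $H$ (at most two, since one edge goes into $B$) and analyze its adjacencies to the other $a_j$. Criticality of each edge $a_i b_i$ together with Lemma~\ref{le:badproperties}.\ref{le:badproperties_5} restricts which maximum independent sets of $B$ can extend to $G$, and Lemma~\ref{le:nodegreetwoinfourcycle} controls $4$-cycles through degree-$2$ vertices; combined with triangle-freeness and the subcubic bound, this should enumerate a small list of possible structures for $H$ and of adjacencies among $a_1, a_2, a_3, a_4$. Each structure must then either yield a direct contradiction (with criticality, triangle-freeness, or connectivity) or identify $G$ as exactly one of the six forbidden graphs. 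The main obstacle is precisely this enumeration: the case $B \cong B_{16}^{(1)}$ forces $|V(G)| \geq 17$ so candidates reduce to $F_{19}^{(1)}, F_{19}^{(2)}, F_{22}$, but the case $B \cong B_8$ is richer and potentially gives rise to any of the six forbidden graphs, demanding a careful split on the pairwise adjacencies of the $a_i$'s and on the at most $|V(G)|-8$ remaining vertices of $H$, all while avoiding the spurious triangles or multi-edges that can arise when reattaching $B$ to $H$.
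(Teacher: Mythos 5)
There is a genuine gap here, on two counts. First, your preliminary claim that the four attachment vertices $a_1,a_2,a_3,a_4$ are pairwise distinct is not justified and is in fact not provable at this stage: if, say, $a_1=a_2$ and $a_3=a_4$, then $\{a_1,a_3\}$ is a $2$-cutset, but Lemma~\ref{le:twovertexcutset} does not forbid this --- it only forces the other component of $G-\{a_1,a_3\}$ to be $K_1$ or $K_2$, and the $K_1$ outcome genuinely occurs (it is exactly how $F_{11}$, $F_{19}^{(1)}$, $F_{19}^{(2)}$ arise). If only two of the $a_i$ coincide, the resulting separator has size three and no connectivity lemma applies at all. The paper therefore splits on $|\{a_1,a_2,a_3,a_4\}|\in\{2,3,4\}$ and treats each size as a substantial case. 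Second, and more seriously, your overall strategy --- a ``finite case analysis'' terminating in an identification of $G$ with one of the six forbidden graphs --- cannot work, because $H=G-V(B)$ can be arbitrarily large; there is no bound on $|V(G)|$ forcing $G$ into a finite list. The actual contradiction in most cases is not ``$G$ is a forbidden graph'' but ``$\alpha(G)\geq\lb(G)$, so $G$ is not a counterexample,'' obtained by induction on smaller graphs.

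Concretely, the missing ingredients are: (i) the key observation that, by criticality of $G$ together with Lemma~\ref{le:badproperties}.\ref{le:badproperties_5}, \emph{every} maximum independent set of $G-V(B)$ must contain all four attachment vertices (hence in particular they form an independent set); and (ii) the reduction machinery that exploits this. When the attachment set has size $3$, the paper replaces $B$ by a triangle, verifies the resulting graph is (essentially) critical, subdivides if necessary, and applies Theorem~\ref{th:main} inductively; when it has size $4$, the paper adds an edge $e^*$ between two attachment vertices with no common neighbor, takes an independence packing of the resulting graph, and runs a weighted counting argument over the pieces (using Lemmas~\ref{le:outdegree4bad} and~\ref{le:twoedgecutset} to lower-bound the number of edges leaving each piece, plus a parity argument via the handshaking lemma). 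Your proposal contains none of this quantitative bookkeeping, which is where the real work of the lemma lies.
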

\begin{proof}
Arguing by contradiction, suppose that $G_0$ is a bad subgraph of $G$. 
By Lemma~\ref{le:outdegree4bad}, $G_0$ is induced in $G$ and $G_0$ is isomorphic to $B_8$ or $B_{16}^{(1)}$. 
Moreover, $V(G)-V(G_0) \neq \emptyset$, and there are exactly four edges between $V(G_0)$ and $V(G)-V(G_0)$. 
Let $v_1,v_2,v_3,v_4$ denote the degree-$2$ vertices of $G_0$, in such a way that $v_1v_3$ and $v_2v_4$ are edges of $G_0$. 
For each $i \in \{1, \dots, 4\}$, let $w_i$ denote the neighbor of $v_i$ in $V(G)-V(G_0)$. 
Note that $w_1,w_2,w_3,w_4$ are not necessarily pairwise distinct. 
However, we do know that $w_1\neq w_3$ and $w_2\neq w_4$ since $G$ is triangle-free. 
Therefore, $|\left\{w_1,w_2,w_3,w_4\right\}| \in \left\{2,3,4\right\}$. 

First, we show:
\begin{equation}\label{eq:surroundingtheindependentset}
\left\{w_1,w_2,w_3,w_4\right\} \subseteq S \text{ for every maximum independent set } S \text{ of } G-V(G_0).
\end{equation}
Suppose for a contradiction that there exists a maximum independent set $S$ of $G-V(G_0)$ that avoids $w_i$ for some $i\in \left\{1,2,3,4\right\}$.  
By Lemma~\ref{le:badproperties}.\ref{le:badproperties_5}, $G_0$ has a maximum independent set $S_0$ avoiding all three vertices $v_j$ with $j\neq i$, and thus $S \cup S_0$ is an independent set of $G$. 
Hence, $\alpha(G) \geq \alpha(G-V(G_0))+\alpha(G_0)$. 
However, this contradicts the fact that $\alpha(G) < \alpha(J)$ holds for every spanning subgraph $J$ of $G$ with $J \neq G$ (since $G$ is critical). 
Therefore, \eqref{eq:surroundingtheindependentset} holds. 

Note in particular that $\left\{w_1,w_2,w_3,w_4\right\}$ is an independent set of $G$, by \eqref{eq:surroundingtheindependentset}. 
We now distinguish three cases, depending on the size of $\left\{w_1,w_2,w_3,w_4\right\}$. 

\textbf{Case $1$}: $|\left\{w_1,w_2,w_3,w_4\right\}|=2.$\\
In this case either $w_1=w_2$ and $w_3=w_4$, or $w_1=w_4$ and $w_2=w_3$.  
Recalling that $G_0$ is isomorphic to $B_8$ or $B_{16}^{(1)}$, it is easily seen that $V(G) \neq V(G_0) \cup \left\{w_1,w_3\right\}$, because the graph induced by $V(G_0) \cup \left\{w_1,w_3\right\}$ cannot be a counterexample to Theorem~\ref{th:main}. 
Thus, $\left\{w_1,w_3\right\}$ is a cutset of $G$. 
Hence, by Lemma~\ref{le:twovertexcutset}  $G-\left\{w_1,w_3\right\}$ has exactly two components, $G_0$ and another component $C$ which must be isomorphic to $K_1$ or $K_2$.  

Suppose first that $C$ is isomporphic to $K_2$. 
Then the graph $G-V(G_0)$ is isomorphic to a $4$-vertex path with endpoints $w_1, w_3$, and in particular has a maximum independent set avoiding one of its endpoints, contradicting \eqref{eq:surroundingtheindependentset}. 
Next suppose that $C$ is isomorphic to $K_1$. 
Then $G$ is either isomorphic to $F_{11}$ (if $G_0$ is isomorphic to $B_8$), or to one of $F_{19}^{(1)}, F_{19}^{(2)}$ (if $G_0$ is isomorphic to $B_{16}^{(1)}$); contradiction.

\textbf{Case $2$}: $|\left\{w_1,w_2,w_3,w_4\right\}|=3.$\\
Without loss of generality $w_1=w_2$ and $w_1,w_3,w_4$ are pairwise distinct. 
Let $H$ denote the graph obtained from $G$ by removing all vertices of $V(G_0)$ except the three vertices $v_1,v_3,v_4$, and adding the two edges $v_1v_4, v_3v_4$. 
In other words: we replace $G_0$ with the triangle $v_1v_3v_4$. This is illustrated in Figure~\ref{fig:no_bad_subgraph_case2}.

\begin{figure}
\centering
\includegraphics[width=0.95\textwidth]{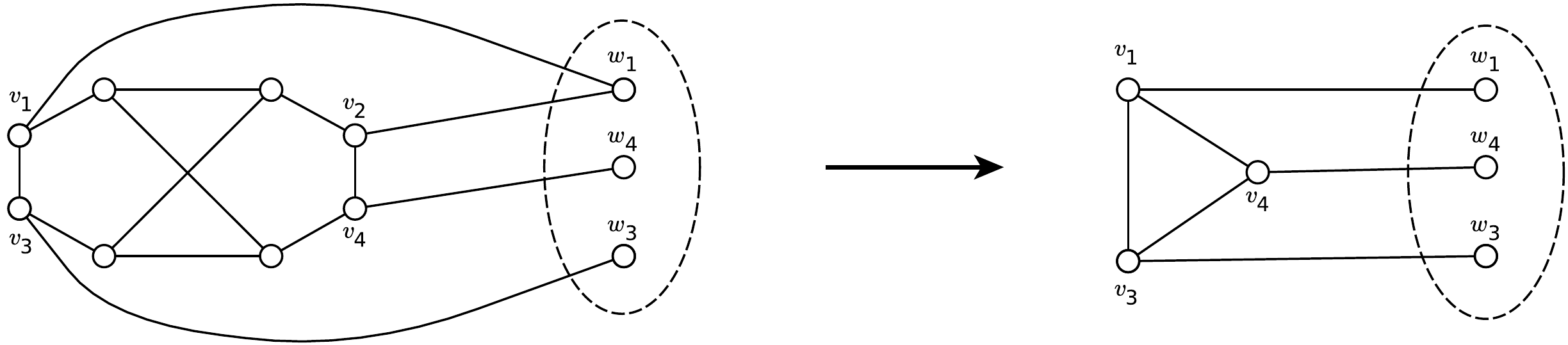}
\caption{Replacing $G_0$ with a triangle in Case $2$ of the proof of Lemma~\ref{le:nobadsubgraph} illustrated for $G_0=B_8$.}
\label{fig:no_bad_subgraph_case2}
\end{figure}

Recall that every maximum independent set of $G-V(G_0)=H-\{v_1,v_3,v_4\}$ contains all three vertices $w_1,w_3,w_4$, by \eqref{eq:surroundingtheindependentset}. 
Thus, $\alpha(H) = \alpha(G-V(G_0))=\alpha(G) - \alpha(G_0)+1$.  

Now, consider an edge $e$ of $H$ distinct from $v_1v_4$ and $v_3v_4$. 
Thus $e$ also exists in $G$. 
Let $S_e$ be an independent set of $G-e$ of size $\alpha(G)+1$. 
The set $S_e$ can be mapped to an independent set $T_e$ of $H-e$ of size $\alpha(H)+1$ in the following way. 
First, suppose that $e$ has no endpoint in $\{v_1,v_3,v_4\}$.  
If $S_e$ contains all three vertices $w_1, w_3, w_4$, then we know that $|S_e \cap V(G_0)| = \alpha(G_0)-1$, and it suffices to take $T_e=S_e - V(G_0)$. 
If, on the other hand, $S_e$ avoids some vertex $w_i$ with $i \in \{1,3,4\}$, then $|S_e \cap V(G_0)| = \alpha(G_0)$, and we take the set $T_e=(S_e - V(G_0)) \cup \{v_i\}$. 
Next, suppose that $e=v_iw_i$ for some $i \in \{1,3,4\}$. 
Here, it suffices to take a maximum independent set of $G-V(G_0)$ plus the vertex $v_i$ for the set $T_e$. 
Finally, assume that $e = v_1v_3$. 
Since $S_e$ avoids $w_1$ and $w_3$, we know from \eqref{eq:surroundingtheindependentset} that $|S_e - V(G_0)| \leq \alpha(G-V(G_0)) -1$. 
It then follows that $|S_e \cap V(G_0)| \geq \alpha(G_0) + 1$, and thus these two inequalities hold with equality. 
In this case, it suffices to take $T_e = (S_e - V(G_0)) \cup \{v_1,v_3\}$. 

This shows that all edges $e$ considered above are critical in $H$. 
Note that, since $v_1v_3$ is critical in $H$, so is the edge $v_1v_4$, by symmetry. 
(Indeed, exchanging $v_1$ and $v_2$ in the definition of $H$ results in the same graph $H$, except that $v_1$ is relabeled $v_2$, and the above argument then shows that the edge $v_2v_4$ is critical in $H$.) 
In conclusion, all edges of $H$ are critical in $H$, except perhaps the edge $v_3v_4$. 
We now distinguish two cases, depending on whether that last edge is critical or not. 

First suppose that $v_3v_4$ is not critical in $H$. 
Then the graph $H-v_3v_4$ is connected and critical, and thus $2$-connected, and moreover triangle-free. 
If $H-v_3v_4$ contains one of $F_{11}$, $F_{14}^{(1)}$, $F_{14}^{(2)}$, $F_{19}^{(1)}$, $F_{19}^{(2)}$, $F_{22}$ as a subgraph, then it is isomorphic to that graph (by $2$-connectivity). 
However, this is not possible since $H-v_3v_4$ has two degree-$2$ vertices ($v_3$ and $v_4$). 
Since $H-v_3v_4$ is smaller than $G$, we may apply Theorem~\ref{th:main} to $H-v_3v_4$, giving that $\alpha(H-v_3v_4) \geq \lb(H-v_3v_4) - \frac{1}{12}$.  
Recalling once more that $G_0$ is isomorphic to $B_8$ or $B_{16}^{(1)}$, it is easily checked that $\lb(G) = \lb(H-v_3v_4) + \alpha(G_0)-1 - \frac{1}{4}$. 
Thus 
\[
\alpha(G) = \alpha(H) + \alpha(G_0)-1 = \alpha(H-v_3v_4) + \alpha(G_0)-1 \geq \lb(H-v_3v_4) + \alpha(G_0)-1 - \frac{1}{12} \geq \lb(G),  
\]
showing that $G$ is not a counterexample, a contradiction.  

Next, assume that $v_3v_4$ is critical in $H$. 
Thus $H$ is critical. 
Let $H'$ be the graph obtained from $H$ by subdividing twice the edge $v_3v_4$. 
Then $H'$ is also critical, and moreover triangle-free. 
Also, $H'$ does not contain any of $F_{11}$, $F_{14}^{(1)}$, $F_{14}^{(2)}$, $F_{19}^{(1)}$, $F_{19}^{(2)}$, $F_{22}$ as a subgraph, similarly as before.  
Moreover, $H'$ cannot be bad, since $w_1$ does not have degree three in $H'$ (see the construction of $H$, during which a neighbor of $w_1$ is deleted) and $w_3\neq w_4$. 
Since $H'$ is smaller than $G$, we may apply Theorem~\ref{th:main} to $H'$, giving $\alpha(H') \geq \lb(H')$.  
It follows 
\[ 
\alpha(G) 
= \alpha(H) + \alpha(G_0) - 1 
= \alpha(H') + \alpha(G_0) - 2
\geq \lb(H') + \alpha(G_0) - 2 
= \lb(G), 
\]
a contradiction.

\textbf{Case $3$}: $|\left\{w_1,w_2,w_3,w_4\right\}|=4.$\\ 
Recall that $\left\{w_1,w_2,w_3,w_4\right\}$ is an independent set, by \eqref{eq:surroundingtheindependentset}. 
First we show that there is at least one pair of vertices from that set that has no common neighbor. 
Suppose not, and consider the set $X$ of vertices of $G$ that see at least two vertices in $\left\{w_1,w_2,w_3,w_4\right\}$.  
Clearly, $X \subseteq V(G) - (V(G_0) \cup \left\{w_1,w_2,w_3,w_4\right\})$. 
One can check that $X$ must consist of exactly three vertices $x_1,x_2,x_3$, the first two having three neighbors in $\left\{w_1,w_2,w_3,w_4\right\}$ and the last one having two neighbors in $\left\{w_1,w_2,w_3,w_4\right\}$, as depicted in Figure~\ref{fig:x1x2x3}. 
Since $G$ is $2$-connected, it then follows that $x_3$ has no other neighbor in $G$, and that $G$ is precisely the graph induced by $V(G_0) \cup \left\{w_1,w_2,w_3,w_4, x_1,x_2,x_3\right\}$. 
However, this is a contradiction because this graph is not critical. 
This can be seen as follows: $\left\{w_1,w_2,w_3,w_4\right\}$ is the unique maximum independent set of $G - V(G_0)$, and this remains true even if we remove an edge incident to $x_1$. 
It follows that $w_jx_1$ is not critical in $G$.

\begin{figure}
\centering
\includegraphics[width=0.4\textwidth]{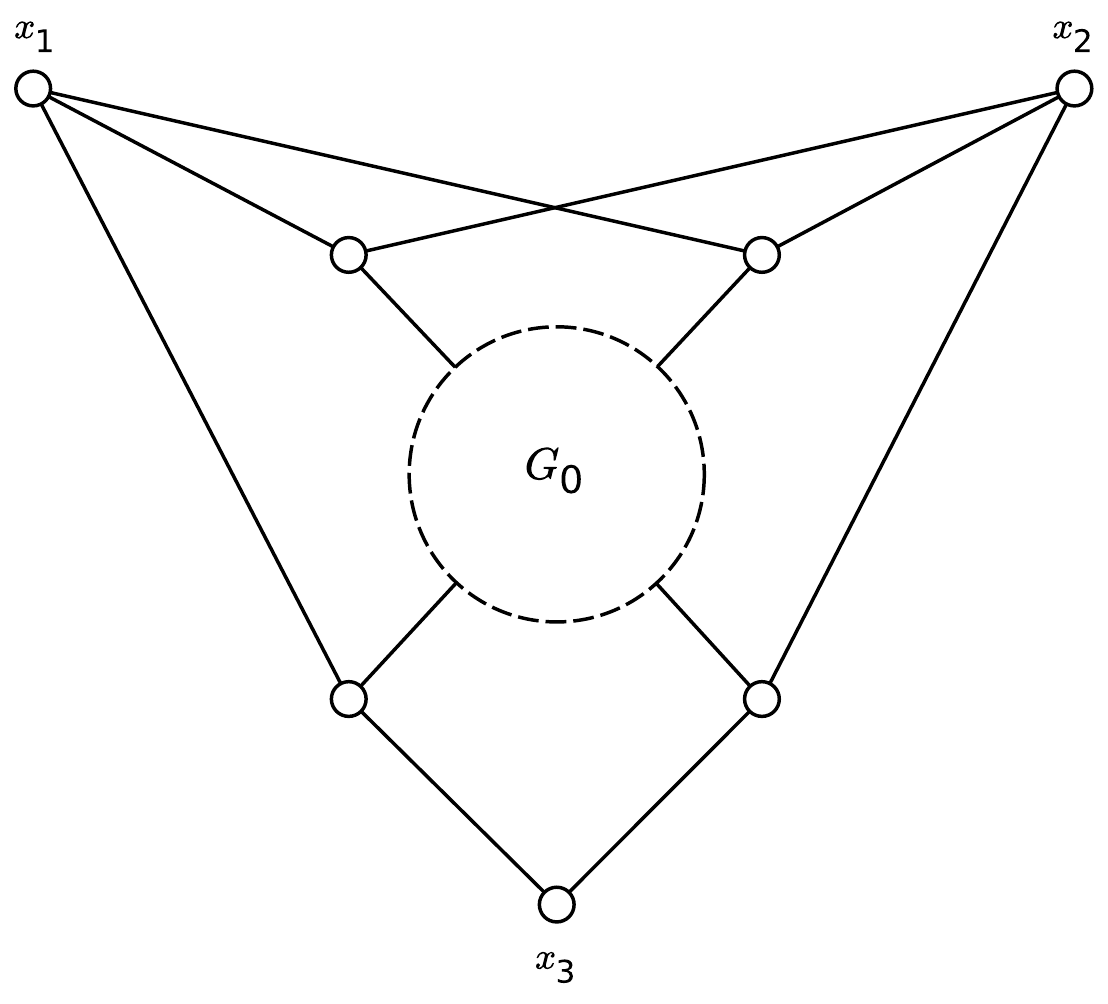}
\caption{Situation if every two vertices in $\left\{w_1,w_2,w_3,w_4\right\}$ have a common neighbor in Case $3$ of the proof of Lemma~\ref{le:nobadsubgraph}. 
The four unlabeled vertices are $w_1,w_2,w_3,w_4$ (in some order).}
\label{fig:x1x2x3}
\end{figure}

Therefore, we conclude that there is a pair of vertices in $\left\{w_1,w_2,w_3,w_4\right\}$ with no common neighbor. 
Choose such a pair and let $H$ denote the graph obtained from $G-V(G_0)$ by adding an edge $e^*$ connecting these two vertices. 
Thus $H$ is subcubic and triangle-free. 
Since every maximum independent of $G - V(G_0)$ includes $w_1,w_2,w_3,w_4$, it follows that $\alpha(H) = \alpha(G - V(G_0))-1$.  

Let $G_1,G_2,\ldots,G_k$ be an independence packing of $H$. 
Then, 
\[
\alpha(G) = \alpha(G - V(G_0)) + \alpha(G_0) - 1 = \alpha(H) + \alpha(G_0) = \sum_{i=0}^k \alpha(G_i).
\]
Since $e^*$ is a critical edge of $H$, it is included in some $G_i$ with $i\geq 1$, say without loss of generality in $G_k$. 

Let $m_i$ ($i \in \{0, 1, \dots, k\}$) denote the numbers of edges of $G$ that have exactly one endpoint in $V(G_i)$. 
For each $i\in \{0, 1, \dots, k-1\}$, the graph $G_i$ is a subgraph of $G$.  
Using Lemmas~\ref{le:outdegree4bad} and~\ref{le:twoedgecutset}, and that $G$ is $2$-connected, we obtain 
\[
m_i \geq \left\{ 
\begin{array}{ll}
4 & \textrm{ if } G_i \textrm{ is bad} \\
2 & \textrm{ if } G_i \textrm{ is isomorphic to } K_1 \textrm{ or } K_2 \\
3 & \textrm{ if } G_i \textrm{ is dangerous} \\
3 & \textrm{ if } |G_i| \geq 3 \textrm{ and } G_i \textrm{ is neither bad nor dangerous.} \\
\end{array}
\right.
\]
Let $\beta, \sigma, \delta, \nu$ (for {\em bad}, {\em small}, {\em dangerous}, {\em not} bad or dangerous) denote the number of graphs $G_i$ ($i\in \{0, 1, \dots, k-1\}$) in the first, second, third and fourth category above. 
Thus $\beta+\sigma+\delta+\nu = k$. 
Let $m :=|E(G)|-\sum_{i=0}^{k} |E(G_i)|$.  
By the previous discussion
\begin{equation}\label{eq:bound_on_2m}
2m \geq \sum_{i=0}^k m_i - 2 
\geq 4\beta + 2\sigma + 3\delta+3\nu + m_k - 2 
= 2k + 2\beta + \delta+\nu + m_k - 2. 
\end{equation}
(The $-2$ is due to the edge $e^*$.) 
Observe also that 
\[
\lb(G) = \sum_{i=0}^{k} \lb(G_i) +  \tfrac{1}{12} (k-m). 
\]
Now let us consider the graph $G_k$. 
Note that this graph could be isomorphic to one of $F_{11}$, $F_{14}^{(1)}$, $F_{14}^{(2)}$, $F_{19}^{(1)}$, $F_{19}^{(2)}$, $F_{22}$, since $G_k$ is not a subgraph of $G$. 
However, if this is the case then $G_k - e^*$ is connected, and thus $m_k \geq 3$ by Lemma~\ref{le:twoedgecutset}, implying that $G_k$ must be isomorphic to one of $F_{11}$, $F_{19}^{(1)}$, $F_{19}^{(2)}$, and $m_k = 3$. 

Note also that if $G_k$ is bad then we cannot apply Lemma~\ref{le:outdegree4bad} and deduce that $m_k = 4$ as above (again because $G_k$ is not a subgraph of $G$) but at least we know that $m_k \geq 3$ by Lemma~\ref{le:twoedgecutset}, since $G_k - e^*$ is connected.

Applying Theorem~\ref{th:main} to each graph $G_i$ ($i\in \{0, 1, \dots, k\}$), we obtain that 
\[
\alpha(G_i) \geq \left\{ 
\begin{array}{ll}
\lb(G_i) - \frac{1}{12} & \textrm{ if } G_i \textrm{ is bad or isomorphic to one of } F_{11}, F_{19}^{(1)}, F_{19}^{(2)}  \\[.5ex]
\lb(G_i) + \frac{1}{6} & \textrm{ if } G_i \textrm{ is isomorphic to } K_1 \textrm{ or } K_2 \\[.5ex]
\lb(G_i) & \textrm{ otherwise }
\end{array}
\right.
\]

For convenience, let $I$ be the indicator variable being equal to $1$ if $G_k$ is bad or isomorphic to one of $F_{11}$, $F_{19}^{(1)}$, $F_{19}^{(2)}$, and equal to $0$ otherwise. 
Combining the previous observations together, we obtain that 
\[
\alpha(G) = \sum_{i=0}^k \alpha(G_i)  
\geq \sum_{i=0}^k \lb(G_i) + \frac{1}{12}(2\sigma - \beta - I) 
= \lb(G) + \frac{1}{12}(m - k + 2\sigma - \beta - I).  
\] 
Thus, in order to deduce that $\alpha(G) \geq \lb(G)$ (as desired), it suffices to show that $m - k + 2\sigma - \beta - I \geq 0$. This can be seen as follows. Substituting inequality~(\ref{eq:bound_on_2m}) yields 
\[
2(m - k + 2\sigma - \beta - I) 
\geq 2k + 2\beta + \delta+\nu + m_k - 2 - 2k + 4\sigma - 2\beta - 2I
= 4\sigma + \delta+\nu + m_k - 2 - 2I,   
\]
which is at least $0$ if $I=0$ (because $m_k \geq 2$) or if $I=1$ and $m_k \geq 4$. 
If $I=1$ but $m_k=3$ then all we need is that $\sigma+\delta+\nu \geq 1$. 
But this is true by the handshaking lemma: 
Since $m_k$ is odd, at least one $m_i$ with $i < k$ must be odd as well, and the graph $G_i$ is not bad (since we would have $m_i=4$ if $G_i$ were bad). 
This concludes the proof of the lemma. 
\end{proof}

In the following lemmas we rule out various local structures around a degree-$2$ vertex of $G$. 

\begin{lemma}\label{le:nothreeconsecutivedegreetwo}
Every degree-$2$ vertex of $G$ has at least one neighbor of degree $3$. 
\end{lemma}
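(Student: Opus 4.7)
The plan is to argue by contradiction: suppose there is a degree-$2$ vertex $u$ with both neighbors $v_1, v_2$ also of degree $2$, and let $x_i$ denote the unique neighbor of $v_i$ other than $u$, for $i=1,2$. The proof splits into two cases according to whether $x_1 = x_2$. If $x_1 = x_2$, then $u v_1 x_1 v_2$ is a $4$-cycle through the degree-$2$ vertex $u$, which is forbidden by Lemma~\ref{le:nodegreetwoinfourcycle}.

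So assume $x_1 \neq x_2$. Since $v_1, u, v_2$ all have degree $2$, the only edges of $G$ leaving the set $\{v_1, u, v_2\}$ are $v_1 x_1$ and $v_2 x_2$. Thus $\{v_1 x_1,\, v_2 x_2\}$ is a $2$-edge cutset of $G$, and its removal leaves the $3$-vertex path $v_1 u v_2$ as one component and the rest of $G$ as the other. By Lemma~\ref{le:twoedgecutset}, exactly one of these two components is isomorphic to $K_1$ or $K_2$; the $3$-vertex path is neither, so the other component must be, and since it already contains the two distinct vertices $x_1, x_2$, it must be the $K_2$ on the edge $x_1 x_2$. Triangle-freeness (which rules out $x_1 = v_2$ and $x_2 = v_1$) together with $x_1 \neq x_2$ ensures that the five vertices $v_1, u, v_2, x_1, x_2$ are pairwise distinct. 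We conclude that $V(G) = \{v_1, u, v_2, x_1, x_2\}$ and $G$ is isomorphic to the $5$-cycle $x_1 v_1 u v_2 x_2$.

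It then only remains to observe that $C_5$ is not a counterexample to Theorem~\ref{th:main}: it is connected, critical, triangle-free, subcubic, and not isomorphic to any $F_i$; a direct computation gives $\alpha(C_5) = 2 = \lb(C_5)$; and since $C_5$ is dangerous, the ``furthermore'' clause of Theorem~\ref{th:main} does not apply. Hence $C_5$ satisfies the conclusion of the theorem, contradicting the choice of $G$ as a minimum counterexample. I do not expect any genuine obstacle: the whole argument is a one-shot application of the structural lemmas already available here (Lemmas~\ref{le:nodegreetwoinfourcycle} and~\ref{le:twoedgecutset}), followed by a trivial verification on $C_5$.
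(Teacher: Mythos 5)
Your proposal is correct and follows essentially the same route as the paper: the paper also observes that the two edges leaving $\{v_1,u,v_2\}$ form a $2$-edge cutset, applies Lemma~\ref{le:twoedgecutset} to conclude that $G$ is a $4$-cycle or a $5$-cycle, and checks that neither is a counterexample. The only cosmetic difference is that you dispatch the $x_1=x_2$ (i.e.\ $4$-cycle) case separately via Lemma~\ref{le:nodegreetwoinfourcycle} rather than absorbing it into the cutset analysis.
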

\begin{proof} 
Arguing by contradiction, suppose that $x$ is a degree-$2$ vertex with two degree-$2$ neighbors $y$ and $z$. 
Then, the two edges going out of $\{x,y,z\}$ form a $2$-edge cutset of $G$, and Lemma~\ref{le:twoedgecutset} implies that $G-\left\{x,y,z\right\}$ is isomorphic $K_1$ or $K_2$. 
That is, $G$ is a $4$-cycle or a $5$-cycle, neither of which is a counterexample to Theorem~\ref{th:main}. 
\end{proof}

In subsequent lemmas, we will often remove a well-chosen set of vertices from $G$ and then study the properties of an independence packing of the remaining graph. 
For this reason, it will be convenient to define a {\em near independence packing} of $G$ as a sequence $\mathcal{P}=(G_0, G_1, \dots, G_k)$ of vertex-disjoint connected subgraphs of $G$ covering all vertices of $G$ such that $G_0$ is a proper induced subgraph of $G$, and $G_1, \dots, G_k$ form an independence packing of $G- V(G_0)$. 
Given a near independence packing $\mathcal{P}=(G_0, G_1, \dots, G_k)$, we let $m_i(\mathcal{P})$ ($i \in \{0,1,\dots, k\}$) denote the numbers of edges of $G$ that have exactly one endpoint in $V(G_i)$. 
The graphs $G_1, \dots, G_k$ of the independence packing of $G-V(G_0)$ are naturally classified into three categories: 
\begin{itemize} 
	\item at least three vertices and dangerous; 
	\item at least three vertices but not dangerous; 
	\item at most two vertices. 	
\end{itemize} 
We let $\delta(\mathcal{P})$, $\nu(\mathcal{P})$, and $\sigma(\mathcal{P})$ denote respectively the number of graphs in the first, second, and third categories.\footnote{We remark that, while the notations $\delta, \nu, \sigma$ already appeared in Case~3 of the proof of Lemma~\ref{le:nobadsubgraph}, there $G_0, G_1, \dots, G_{k}$ was not exactly a near independence packing of $G$ because of the extra edge $e^*$, and moreover $\delta, \nu, \sigma$ were defined w.r.t.\ $G_0, G_1, \dots, G_{k-1}$ instead of $G_1, G_2, \dots, G_{k}$.}  
Also, let us define $\gamma(\mathcal{P})$ as the difference $\alpha(G) - \sum_{i=1}^{k}\alpha(G_i)$, that is, the gap between the independence numbers of $G$ and that of $G-V(G_0)$.  
(Since these notations will be used often, let us suggest the following mnemonic device: $\delta$ {\em dangerous}, $\nu$ {\em not} dangerous,  $\sigma$ {\em small}, and $\gamma$ {\em gap}.)  
We will drop $\mathcal{P}$ from the notations defined above when $\mathcal{P}$ is clear from the context. 

The following technical lemma will be useful in ruling out the remaining types of degree-$2$ vertices in $G$. 

\begin{lemma}\label{le:contract}
Fix some near independence packing $\mathcal{P}=(G_0, G_1, \dots, G_k)$ of $G$.  
If $m_0 \geq 3$, then 
\[
	\alpha(G) \geq \lb(G) +  \gamma  -\lb(G_0) +  \left(m_0 - \delta + \sigma \right) \cdot \tfrac{1}{12} 
\]
and
\[
\alpha(G) \geq \lb(G) +  \gamma  -\lb(G_0) + \left\lceil \frac{\sum_{i=0}^k m_i}{2} -\delta + \sigma \right\rceil  \cdot \tfrac{1}{12}
\geq \lb(G) +  \gamma  -\lb(G_0) + \left\lceil \frac{m_0 + \delta + 3\nu + 4\sigma}{2} \right\rceil  \cdot \tfrac{1}{12}.  
\]
Moreover, if $G$ is cubic then 
\[
\alpha(G) \geq \lb(G) +  \gamma -\lb(G_0) + \left\lceil \frac{m_0 + 3\delta + 3\nu + 4\sigma}{2} \right\rceil \cdot \tfrac{1}{12}.  
\]
\end{lemma}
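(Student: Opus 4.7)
The plan is to apply Theorem~\ref{th:main} inductively to each graph $G_i$ with $i \ge 1$ and then combine the resulting lower bounds via an algebraic identity. Each $G_i$ is a connected critical triangle-free subcubic proper subgraph of $G$, hence strictly smaller than $G$; by Lemma~\ref{le:nobadsubgraph} no $G_i$ is bad, and since $G$ is $2$-connected and not itself isomorphic to any of $F_{11}, \ldots, F_{22}$ it cannot contain such a graph as a subgraph, so no $G_i$ is one of them either. Thus Theorem~\ref{th:main} gives $\alpha(G_i) \ge \lb(G_i)$, improved to $\lb(G_i) + \tfrac{1}{12}$ whenever $G_i$ has at least three degree-$2$ vertices and is non-dangerous; for the small cases direct computation yields $\alpha(K_1) - \lb(K_1) = \tfrac{7}{12}$ and $\alpha(K_2) - \lb(K_2) = \tfrac{2}{12}$.

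For the $\lb$-bookkeeping I will let $\epsilon' \ge 0$ denote the number of edges of $H := G - V(G_0)$ that do not appear in any $G_i$ (i.e.\ the noncritical edges removed when forming the independence packing of $H$). Counting edges gives $|E(G)| = |E(G_0)| + m_0 + \sum_{i=1}^k |E(G_i)| + \epsilon'$, and plugging into the definition of $\lb$ yields the identity $\sum_{i=1}^k \lb(G_i) = \lb(G) - \lb(G_0) + (m_0 + \epsilon' - k)/12$. Substituting this into $\alpha(G) = \gamma + \sum_{i=1}^k \alpha(G_i)$, using $k = \sigma + \delta + \nu$ and the bonuses above, produces a lower bound of the shape $\alpha(G) \ge \lb(G) + \gamma - \lb(G_0) + (m_0 + \epsilon' + \sigma - \delta - \nu + \text{bonus excess})/12$. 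The first inequality of the lemma then reduces to verifying that the $\epsilon'$ term and the excess bonuses (from small graphs and from $\nu$-graphs with many degree-$2$ vertices) cover the $-\nu$ term; this uses Theorem~\ref{th:main}'s $+\tfrac{1}{12}$ improvement for $\nu$-graphs having at least three degree-$2$ vertices, combined with a handshaking argument showing that a $\nu$-graph with few degree-$2$ vertices forces extra noncritical slack $\epsilon_i$ or a stronger $m_i$ bound.

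To obtain the chained ceiling form I invoke the structural bounds on $m_i$: $2$-connectivity of $G$ together with Lemma~\ref{le:twoedgecutset} gives $m_i \ge 3$ for every $G_i$ with $|V(G_i)| \ge 3$ (the hypothesis $m_0 \ge 3$ handles the corner case $|V(G) - V(G_i)| \le 2$, since $V(G_0) \subseteq V(G) - V(G_i)$ would otherwise force $|V(G_0)| \le 2$ and a direct analysis) and $m_i \ge 2$ for each $K_1$ or $K_2$. Summing yields $\sum_{i=1}^k m_i \ge 3\delta + 3\nu + 2\sigma$, hence $2m_{\mathrm{cross}} \ge m_0 + 3\delta + 3\nu + 2\sigma$, which delivers the relaxation $\lceil \tfrac{1}{2}\sum_{i=0}^k m_i - \delta + \sigma \rceil \ge \lceil (m_0 + \delta + 3\nu + 4\sigma)/2 \rceil$. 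For the cubic case, the handshake $m_i + 2\epsilon_i = 3|V(G_i)| - 2|E(G_i)|$ combined with Lemma~\ref{le:dangerousproperties}.\ref{le:dangerousproperties_degree2} (dangerous graphs have exactly five degree-$2$ vertices) upgrades these bounds to $m_i \ge 5$ (and odd) for dangerous $G_i$, $m_i = 3$ for $K_1$, and $m_i = 4$ for $K_2$; the parity slack from the odd right-hand side gets absorbed by the ceiling to produce the sharper coefficient $3\delta$ in place of $\delta$.

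I expect the most delicate step to be the bookkeeping around $\nu$-graphs with fewer than three degree-$2$ vertices, where Theorem~\ref{th:main} yields no improvement and the missing $\tfrac{1}{12}$ per such $G_i$ must be recovered either from the noncritical-edge count $\epsilon_i$ in the $\lb$-identity or from a strictly tighter $m_i$ lower bound forced by the subcubic handshake; keeping the ceilings and parity conditions aligned across every subcase (and across the cubic vs.\ general-$G$ strengthening) will be the main technical burden of the proof.
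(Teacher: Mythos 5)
Your proposal follows essentially the same route as the paper: apply Theorem~\ref{th:main} to each $G_i$ with $i\geq 1$, use the additivity identity $\lb(G)=\sum_{i=0}^k\lb(G_i)+(k-m)\cdot\tfrac{1}{12}$ with $m=m_0+\epsilon'$, and bound $m$ via $2m\geq\sum_i m_i$ using $2$-connectivity and Lemma~\ref{le:twoedgecutset}, with the cubic refinement coming from dangerous graphs having five degree-$2$ vertices. Two small corrections. First, the step you flag as most delicate is vacuous: since each $G_i$ with at least three vertices is connected and critical, it is $2$-connected (Lemma~\ref{le:twoconnected1}) and hence has minimum degree $2$, so in a subcubic $G$ each of its vertices sends at most one edge out of $V(G_i)$; thus $m_i\geq 3$ already forces at least three degree-$2$ vertices in $G_i$, every $\nu$-graph earns the $+\tfrac{1}{12}$ bonus, and no fallback via $\epsilon_i$ is needed. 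Second, in the cubic case the claim ``$m_i\geq 5$ for dangerous $G_i$'' is false as stated: your own handshake gives $m_i+2\epsilon_i=5$, so $m_i$ could be $3$ with one edge of $G$ inside $V(G_i)$ not belonging to $G_i$ (this is exactly the dichotomy the paper uses). The final bound survives because $2m=\sum_i(m_i+2\epsilon_i)\geq m_0+5\delta+3\nu+2\sigma$, but you must run the count through $m_i+2\epsilon_i$ rather than through $\sum_i m_i$ alone; also note that the improvement from $\delta$ to $3\delta$ is just $5\delta-2\delta$ after absorbing the $-\delta+\sigma$ term into the fraction, not a parity gain from the ceiling.
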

\begin{proof}
Since $G$ is $2$-connected and since $m_0 \geq 3$, Lemma~\ref{le:twoedgecutset} implies that, for each $i \in \{1, \dots, k\}$, 
\[
m_i \geq \left\{ 
\begin{array}{ll}
3 & \textrm{ if } |G_i| \geq 3 \\
2 & \textrm{ otherwise.}
\end{array}
\right.
\]
Let $m:=|E(G)|-\sum_{i=0}^{k}|E(G_i)|$. 
Then
\begin{equation}\label{eq:estimating_m_one}
2m \geq \sum_{i=0}^k m_i 
\geq m_0 + 3\delta + 3\nu + 2\sigma.
\end{equation}
Observe also that 
\begin{equation}\label{eq:notwocubeneighbors2a}
\lb(G) = \sum_{i=0}^{k} \lb(G_i) + (k-m) \frac{1}{12}.  
\end{equation}

For each $i\in \{1, \dots, k\}$, the graph $G_i$ is not a counterexample to our theorem. 
Thus, 
\[
\alpha(G_i) \geq \left\{ 
\begin{array}{ll}
\lb(G_i) & \textrm{ if $|G_i|\geq 3$ and $G_i$ dangerous } \\
\lb(G_i) + \frac{1}{12} & \textrm{ if $|G_i|\geq 3$ and $G_i$ not dangerous } \\
\lb(G_i) + \frac{1}{6} & \textrm{ if $|G_i|\leq 2$}
\end{array}
\right.
\]
(In the second case, we used that $G_i$ has at least three degree-$2$ vertices since $m_i \geq 3$.)  
It follows that 
\[
\alpha(G) = \gamma + \sum_{i=1}^{k} \alpha(G_i) 
\geq \gamma + \sum_{i=1}^{k} \lb(G_i) + (\nu + 2\sigma) \frac{1}{12}
= \gamma + \lb(G) - \lb(G_0)  + (\nu + 2\sigma + m - k) \frac{1}{12}.  
\]
The first part of the lemma then follows by using $k=\delta+\nu+\sigma$ and the following two lower bounds: $m\geq m_0$ 
and $m \geq \left\lceil \frac{\sum_{i=0}^k m_i}{2}\right\rceil \geq \left\lceil \frac{m_0 + 3\delta + 3\nu + 2\sigma}{2}\right\rceil$. 

For the second part of the lemma, when $G$ is cubic, it suffices to notice that for each dangerous graph $G_i$ ($i\in \{1, \dots, k\}$), either $G_i$ is induced in $G$, in which case we know $m_i \geq 5$ by Lemma~\ref{le:dangerousproperties} (instead of just $m_i \geq 3$), or we find an edge of $G$ with both endpoints in $V(G_i)$ but not in $G_i$. 
Thus, we deduce 
\[
2m \geq m_0 + 5\delta + 3\nu + 2\sigma, 
\]
and plugging in this inequality in the previous proof gives the desired result. 
\end{proof}

\begin{lemma}\label{le:notwocubeneighbors}
Every degree-$2$ vertex of $G$ has at most one neighbor of degree $3$. 
\end{lemma}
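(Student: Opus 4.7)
The plan is to assume for contradiction that some degree-$2$ vertex $v$ of $G$ has both neighbors $a, b$ of degree $3$. Since $G$ is triangle-free, $ab \notin E(G)$, so $G_0 := G[\{v,a,b\}]$ is the path $a$-$v$-$b$. As $G$ is triangle-free and $2$-connected, $|V(G)| \geq 4$, so $G_0$ is a \emph{proper} induced subgraph. I then intend to apply Lemma~\ref{le:contract} to the near independence packing $\mathcal{P} = (G_0, G_1, \ldots, G_k)$ obtained by extending $G_0$ with an independence packing of $G - V(G_0)$.

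First I compute the parameters. Direct computation gives $\lb(G_0) = \tfrac{6\cdot 3 - 2 - 1}{12} = \tfrac{5}{4}$, and $m_0 = 4$ since each of $a, b$ has exactly two neighbors outside $G_0$. The gap $\gamma = \alpha(G) - \alpha(G - V(G_0)) = 1$ is established in two directions: the lower bound $\gamma \geq 1$ follows by adjoining $v$ to any maximum independent set of $G - V(G_0)$; the upper bound uses criticality of the edge $va$, which supplies a maximum independent set $S$ of $G - va$ of size $\alpha(G)+1$ containing both $v$ and $a$ (and hence excluding the neighbor $b$ of $v$), so that $S \setminus \{v, a\}$ is independent in $G - V(G_0)$ and has size $\alpha(G) - 1$.

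Since $m_0 = 4 \geq 3$, the second bound of Lemma~\ref{le:contract} yields
\[
\alpha(G) \;\geq\; \lb(G) + \tfrac{1}{12}\left( \left\lceil \tfrac{4 + \delta + 3\nu + 4\sigma}{2}\right\rceil - 3 \right).
\]
As $G_0$ is proper, $k \geq 1$ and hence $\delta + 3\nu + 4\sigma \geq 1$, already giving $\alpha(G) \geq \lb(G)$. The stronger inequality $\alpha(G) \geq \lb(G) + \tfrac{1}{12}$ follows as soon as $\delta + 3\nu + 4\sigma \geq 3$, which would contradict $G$ being a counterexample to Theorem~\ref{th:main} (using Lemma~\ref{le:nobadsubgraph}).

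The main obstacle is the two remaining configurations $(\delta, \nu, \sigma) \in \{(1,0,0),(2,0,0)\}$. I rule out $(2, 0, 0)$: then $G - V(G_0)$ has two dangerous components $D_1, D_2$, each with at least $5$ vertices by Lemma~\ref{le:dangerousproperties}.\ref{le:dangerousproperties_degree2}; since $G$ is $2$-connected with more than two vertices and hence bridgeless, the four edges from $\{a, b\}$ into $V(D_1) \cup V(D_2)$ must split $2$-$2$, producing a $2$-edge cutset with both sides of size at least $5$, contradicting Lemma~\ref{le:twoedgecutset}. In the case $(1, 0, 0)$, with $D := G - V(G_0)$ connected dangerous, I will show $G$ has exactly two degree-$2$ vertices, so the ``$+\tfrac{1}{12}$'' clause of Theorem~\ref{th:main} does not apply and the already-established inequality $\alpha(G) \geq \lb(G)$ suffices: since $D$ has minimum degree $2$, no vertex of $D$ can be adjacent to both $a$ and $b$, so the four edges from $\{a,b\}$ reach four distinct vertices of $D$, each of which is degree-$2$ in $D$ and degree-$3$ in $G$; by Lemma~\ref{le:dangerousproperties}.\ref{le:dangerousproperties_degree2}, $D$ has precisely five degree-$2$ vertices, leaving exactly one as a degree-$2$ vertex of $G$, which together with $v$ gives the two degree-$2$ vertices of $G$.
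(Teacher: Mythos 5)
Your setup is the same as the paper's: take the near independence packing with $G_0$ the path $a$-$v$-$b$, compute $\gamma\geq 1$, $\lb(G_0)=\tfrac54$, $m_0=4$, and use Lemma~\ref{le:contract} to reduce to $(\delta,\nu,\sigma)\in\{(1,0,0),(2,0,0)\}$. Your treatment of $(1,0,0)$ (exactly two degree-$2$ vertices in $G$, so only $\alpha(G)\geq\lb(G)$ is required) is also the paper's argument and is fine.

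The gap is in the case $(\delta,\nu,\sigma)=(2,0,0)$. You treat $D_1,D_2$ as the connected \emph{components} of $G-V(G_0)$, but they are only the pieces of an independence packing of $G-V(G_0)$: there may be edges of $G$ between $V(D_1)$ and $V(D_2)$, and indeed the arithmetic forces exactly one such edge in the surviving subcase. (To avoid $\alpha(G)\geq\lb(G)+\tfrac1{12}$ one needs $m_1=m_2=3$; with $m_0=4$ this forces two edges from $\{a,b\}$ to each $V(D_i)$ plus one edge joining $V(D_1)$ to $V(D_2)$.) Consequently the edge cut around each $V(D_i)$ has size $3$, not $2$, so no $2$-edge cutset appears and Lemma~\ref{le:twoedgecutset} gives no contradiction. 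This is not a repairable oversight by a sharper connectivity argument: the configuration you are trying to kill is realizable --- it is exactly the \emph{sum} of the two dangerous graphs $D_1$ and $D_2$, which is dangerous by definition. The paper's proof instead shows that in this configuration $G$ would itself be dangerous, contradicting the standing assumption on the minimum counterexample; this requires an extra step (using Lemma~\ref{le:dangerousproperties}.\ref{le:dangerousproperties_T} to show that in each $D_i$ the two degree-$2$ vertices not joined to $\{a,b,D_{3-i}\}$ must be adjacent, since otherwise one builds an independent set of size $\lb(G)+1$). Your proposal is missing this entire branch, which is the crux of the lemma.
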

\begin{proof}
Arguing by contradiction, suppose that there exists a degree-$2$ vertex $u$ with degree-$3$ neighbors $v_1$ and $v_2$. 
Let $\mathcal{P}=(G_0, G_1, \dots, G_k)$ be a near independence packing of $G$ with $G_0 = G[\{u,v_1,v_2\}]$. 
Since every maximum independent set of $G-V(G_0)$ can be extended to an independent set of $G$ by adding $u$, it follows that $\gamma \geq 1$.  
We also have $\lb(G_0)=\tfrac{5}{4}$ and $m_0=4$. 
Using Lemma~\ref{le:contract}, we obtain 
\[
\alpha(G) 
\geq \lb(G) + \gamma - \frac{5}{4} + \left\lceil \frac{4 + \sum_{i=1}^k m_i}{2} - \delta + \sigma \right\rceil  \cdot \tfrac{1}{12}
\geq \lb(G) - \frac{1}{4} + \left\lceil \frac{4 + \delta + 3\nu + 4\sigma}{2} \right\rceil  \cdot \tfrac{1}{12}.  
\]
Recall that, since $G$ is a counterexample, we have $\alpha(G) < \lb(G) + \frac{1}{12}$. 
Thus, it follows $\gamma=1$, $\nu=0$, $\sigma = 0$, and $\delta \leq 2$.  
Note that $\delta \geq 1$ since $G \neq G_0$. 
The above inequality then implies $\alpha(G) \geq \lb(G)$. 
It follows that $G$ has at least three degree-$2$ vertices (since otherwise $G$ is not a counterexample). 
This in turn implies $\delta \neq 1$ (since otherwise $G$ would consist of $G_0$ and a dangerous graph $G_1$ joined by $m_0=4$ edges; every dangerous graph has five degree-$2$ vertices, so whis would force $G$ to have only two degree-$2$ vertices). 
Thus $\delta = k = 2$.  
Furthermore, $m_1 + m_2 \leq 6$ (since otherwise $\alpha(G)\geq \lb(G)+ \frac{1}{12}$ by the above inequality).

Since $m_i \geq 3$ for $i=1,2$ by Lemma \ref{le:twoedgecutset}, it follows that $m_1=m_2=3$. 
Recalling that $m_0=4$, we deduce that there is precisely one edge $e$ between $G_1$ and $G_2$ and that there are precisely two edges between $G_i$ and $\left\{v_1,v_2\right\}$, for each $i \in \left\{1,2\right\}$. 
Let $a_i,b_i \in V(G_i)$ denote the two vertices having a neighbor in $\left\{v_1,v_2\right\}$ and let $c_i \in V(G_i)$ denote the vertex incident to $e$.  
Because $G_i$ has minimum degree $2$, it follows that $a_i,b_i,c_i$ have degree $2$ in $G_i$ and are pairwise distinct. Let $d_i,e_i$ denote the two remaining degree-$2$ vertices of $G_i$. 

We claim that $d_i$ and $e_i$ are adjacent for $i=1,2$. 
Suppose not, say without loss of generality $d_1$ and $e_1$ are not adjacent. Then by Lemma~\ref{le:dangerousproperties}.\ref{le:dangerousproperties_T}, there is an independent set $S_1$ of $G_1$ of size $\lb(D_1)$ avoiding $\{a_1,b_1,c_1\}$. 
From Lemma~\ref{le:dangerousproperties}.\ref{le:dangerousproperties_T} we also obtain an independent set $S_2$ of $G_2$ of size $\lb(D_2)$ that avoids $\left\{a_2,b_2\right\}$ as well as some vertex of $\left\{c_2,d_2,e_2\right\}$. Thus $S_1 \cup S_2 \cup \left\{v_1,v_2\right\}$ is an independent set of $G$ of size $\lb(D_1) + \lb(D_2) + 2$, which implies that $\alpha(G) \geq \lb(G) + 1$, a contradiction. 

Hence, $d_i$ and $e_i$ are adjacent for $i=1,2$, as claimed, and it follows in turn that $G$ is a sum of the dangerous graphs $G_1$ and $G_2$, and therefore $G$ is dangerous; contradiction.
\end{proof}

\begin{lemma}\label{le:evennumberofdegreetwos}
$G$ has an even number of degree-$2$ vertices. 
\end{lemma}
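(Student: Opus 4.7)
The plan is to observe that the two preceding lemmas already pin down the local structure around a degree-$2$ vertex completely, so the conclusion follows essentially for free. Specifically, since $G$ is connected, critical and distinct from $K_1, K_2$, Lemma~\ref{le:twoconnected1} gives that $G$ is $2$-connected, and in particular has minimum degree at least $2$.

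Now consider any vertex $v$ of degree $2$ in $G$, with neighbors $v_1, v_2$. By Lemma~\ref{le:nothreeconsecutivedegreetwo}, at least one of $v_1, v_2$ has degree $3$, while by Lemma~\ref{le:notwocubeneighbors}, at most one of them has degree $3$. Hence exactly one of $v_1, v_2$ has degree $3$, and the other has degree in $\{1,2\}$; combined with $\delta(G) \geq 2$, the other neighbor has degree exactly $2$.

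Consequently, the subgraph of $G$ induced by the degree-$2$ vertices is $1$-regular, i.e.\ a perfect matching on the set of degree-$2$ vertices. In particular, the number of degree-$2$ vertices of $G$ is even, which is what we wanted to prove.

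Since this argument is a direct consequence of the two preceding lemmas together with $2$-connectivity, there is no real obstacle; the only thing to be careful about is ensuring that the other neighbor cannot have degree $1$, which is handled by the $2$-connectivity of $G$.
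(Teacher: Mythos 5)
Your proof is correct and follows essentially the same route as the paper: both arguments use Lemmas~\ref{le:nothreeconsecutivedegreetwo} and~\ref{le:notwocubeneighbors} (together with minimum degree $2$ from $2$-connectivity) to conclude that every degree-$2$ vertex has exactly one degree-$2$ neighbor, so the degree-$2$ vertices pair up into a perfect matching. Your write-up just makes explicit the $1$-regularity observation that the paper leaves implicit.
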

\begin{proof}
If not, then there is a degree-$2$ vertex with two degree-$2$ neighbors, which is forbidden by Lemma~\ref{le:nothreeconsecutivedegreetwo}, or with two degree-$3$ neighbors, which is forbidden by Lemma~\ref{le:notwocubeneighbors}. 
\end{proof}

\begin{lemma}\label{le:notwoadjacentdegreetwo}
$G$ has no two adjacent degree-$2$ vertices.
\end{lemma}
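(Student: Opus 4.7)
The plan is to argue by contradiction: suppose $G$ has an edge $u_1 u_2$ with both endpoints of degree $2$. By Lemmas~\ref{le:nothreeconsecutivedegreetwo} and~\ref{le:notwocubeneighbors}, the other neighbor $v_i$ of each $u_i$ has degree $3$, and triangle-freeness forces $v_1 \neq v_2$. The central move is to suppress the length-$3$ path $v_1 u_1 u_2 v_2$ into a single edge $v_1 v_2$ via Lemma~\ref{le:oddsubdivision}; this produces a smaller critical graph $G'$ with $\alpha(G) = \alpha(G') + 1$ and $\lb(G') = \lb(G) - \tfrac{5}{6}$. If induction on $G'$ yields $\alpha(G') \geq \lb(G')$, one obtains $\alpha(G) \geq \lb(G) + \tfrac{1}{6}$, contradicting that $G$ is a counterexample. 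When $G'$ happens to be one of the exceptional graphs in Theorem~\ref{th:main} (bad, or isomorphic to some $F_{11}, F_{14}^{(i)}, F_{19}^{(i)}, F_{22}$), I would use Table~\ref{table:parameters} and Lemma~\ref{le:badproperties}.\ref{le:badproperties_alpha_mu} to compute $\alpha(G)$ exactly, and combine this with the very restricted number of degree-$2$ vertices inherited by $G$ from $G'$ to check directly that $G$ still cannot be a counterexample.

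This suppression works cleanly only when $v_1 v_2 \notin E(G)$ and $v_1, v_2$ share no common neighbor outside $\{u_1, u_2\}$, so that $G'$ is simple and triangle-free. The two obstructions are treated separately. If $v_1 v_2 \in E(G)$, let $w_i$ denote the third neighbor of $v_i$; Lemma~\ref{le:notwocubeneighbors} applied to $w_1$ in the hypothetical case $w_1 = w_2$ forces $w_1 \neq w_2$, and then $\{v_1 w_1, v_2 w_2\}$ is a $2$-edge cutset. Applying Lemma~\ref{le:twoedgecutset} pins down $G$ as one explicit $6$-vertex graph, whose parameters one verifies by hand to show it is not a counterexample.

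The harder case is when $v_1, v_2$ share a common neighbor $w \in V(G) \setminus \{u_1, u_2\}$. Then $w$ must have degree $3$ (by Lemma~\ref{le:notwocubeneighbors} applied to $w$, since otherwise $w$ would be a degree-$2$ vertex with two degree-$3$ neighbors), and $\{u_1, u_2, v_1, v_2, w\}$ induces a $5$-cycle. I would apply Lemma~\ref{le:contract} with $G_0$ equal to this $5$-cycle, for which $\lb(G_0) = 2$ and $m_0 = 3$. The heart of the argument will be establishing the gap bound $\gamma \geq 2$: after listing the size-$2$ independent sets of the $5$-cycle, this reduces to showing that no maximum independent set of $G - V(G_0)$ simultaneously contains all three external neighbors of $v_1, v_2, w$, which I plan to prove by local modification arguments exploiting the criticality of $G$ and the absence of bad subgraphs (Lemma~\ref{le:nobadsubgraph}). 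Once $\gamma \geq 2$ is in hand, the bounds from Lemma~\ref{le:contract} force the independence packing of $G - V(G_0)$ into a narrow set of configurations, and in each residual case either $G$ is seen not to be a counterexample, or the sum and join operations on dangerous and bad graphs (via Lemma~\ref{le:dangerousproperties}) reassemble $G$ into a dangerous graph, contradicting our choice of $G$. This last case analysis is the main obstacle.
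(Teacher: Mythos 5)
Your opening moves match the paper: the contraction of the path $v_1u_1u_2v_2$ via Lemma~\ref{le:oddsubdivision} when $v_1,v_2$ have no common neighbor is exactly the paper's first step, and your separate treatment of $v_1v_2\in E(G)$ is harmless but unnecessary (a $4$-cycle $v_1u_1u_2v_2$ through the degree-$2$ vertex $u_1$ is already excluded by Lemma~\ref{le:nodegreetwoinfourcycle}). The problem is the case where $v_1,v_2$ have a common neighbor $w$, which is where almost all the work lies.

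There your plan has a fatal flaw: the gap bound $\gamma\geq 2$ for $G_0$ equal to the induced $5$-cycle $u_1u_2v_2wv_1$ is not merely hard to prove, it is provably false. Since $G$ is critical and $G\neq G_0$, deleting the $m_0=3$ edges between $V(G_0)$ and the rest yields a proper spanning subgraph $J=G_0\sqcup(G-V(G_0))$ with $\alpha(J)\geq\alpha(G)+1$; hence $\alpha(G)<\alpha(G_0)+\alpha(G-V(G_0))=2+\alpha(G-V(G_0))$, i.e.\ $\gamma\leq 1$. Consequently the statement you intend to prove by local modifications --- that no maximum independent set of $G-V(G_0)$ contains all three external neighbors $w_1,w_2,w_3$ --- is exactly backwards: criticality forces \emph{every} maximum independent set of $G-V(G_0)$ to contain all of $w_1,w_2,w_3$ (this is the analogue of \eqref{eq:surroundingtheindependentset}). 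With $\gamma=1$ the inequalities of Lemma~\ref{le:contract} give nothing useful, so the whole residual case analysis collapses. The paper instead exploits the forced structure: it first shows $\{w_1,w_2,w_3\}$ is independent, then splits into three cases. If two of the $w_i$ have no common neighbor, it adds the edge $e^*=w_iw_j$ to $G-V(G_0)$, which manufactures the gap of $2$ relative to the modified graph $H$ (not relative to $G-V(G_0)$), at the price of having to handle the packing component $G_k\ni e^*$, which is no longer a subgraph of $G$ and may be bad or forbidden. If the $w_i$ pairwise share private common neighbors, $G$ is recognized as an $8$-augmentation of a smaller critical graph. If they share a single common neighbor $x_{123}$, one must enlarge $G_0$ to the $11$-vertex closed neighborhood of a $6$-cycle (so that $\gamma\geq 4$ is compatible with $\gamma\leq\alpha(G_0)-1$) and run a separate analysis on $\delta\in\{1,2,3\}$. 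None of these steps is recoverable from your outline, so the proposal has a genuine gap.
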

\begin{proof}
Arguing by contradiction, suppose $u_1,u_2 \in V(G)$ are two adjacent vertices of degree $2$. 
By Lemma~\ref{le:nothreeconsecutivedegreetwo}, the neighbor $v_1$ of $u_1$ and the neighbor $v_2$ of $u_2$ have degree $3$. 
By Lemma~\ref{le:nodegreetwoinfourcycle}, $v_1v_2$ is not an edge.

First suppose that $v_1$ and $v_2$ have no common neighbor. Then the graph $G'$ obtained by contracting the path $v_1u_1u_2v_2$ to an edge $v_1v_2$ is triangle-free. By Lemmas~\ref{le:oddsubdivision} and~\ref{le:twoconnected1}, $G'$ is critical and $2$-connected and $\alpha(G)=\alpha(G')+1$. Note that $G$ has two extra vertices of degree $2$ compared to $G'$, so $\lb(G)=\lb(G')+\tfrac{5}{6}$. 

If $G'$ contains a forbidden graph $F$ as a subgraph then $G'=F$, since $G'$ is $2$-connected, and thus $\alpha(G')\geq \lb(G')-\tfrac{1}{6}$. 
If $G'$ contains no forbidden graph as a subgraph, then $\alpha(G')\geq \lb(G')$ since $G'$ is smaller than $G$, and thus not a counterexample to the theorem. 
Thus in both cases we deduce that 
\[
\alpha(G) = \alpha(G') + 1 \geq  \lb(G') - \tfrac{1}{6} + 1 =\lb(G).   
\] 
Moreover, if $G$ has at least three vertices of degree $2$, then $G'$ has at least one vertex of degree $2$, in which case we know that $G'$ is not one of the forbidden cubic graphs, implying 
$\alpha(G')\geq \lb(G')-\tfrac{1}{12}$, and 
$\alpha(G)\geq \lb(G) +  \tfrac{1}{12}$. 
Therefore, $G$ is not a counterexample, a contradiction.  

We may thus assume from now on that $v_1$ and $v_2$ have at least one common neighbor; let $v_3$ denote one such common neighbor. 
The following summarizes our progress so far.
\begin{equation}\label{eq:fivecycleobservation}
 u_1 \text{ and } u_2 \text{ are contained in an induced $5$-cycle }u_1u_2v_2v_3v_1.
\end{equation}

Next, let us show that $v_3$ is the only common neighbor of $v_1$ and $v_2$. 
Indeed suppose that they have a second common neighbor, say $v_4$. 
By Lemma~\ref{le:nodegreetwoinfourcycle} applied to the $4$-cycle $v_4v_1v_3v_2$, both $v_3$ and $v_4$ have degree $3$. Therefore $\left\{v_3,v_4\right\}$ is a cutset of size $2$. By Lemma~\ref{le:twovertexcutset}, this is only possible if $G-\left\{u_1,u_2,v_1,v_2,v_3,v_4\right\}$ is isomorphic to $K_1$ or $K_2$. 
In the first case $\alpha(G)=3$ and $\lb(G)= 3- \tfrac{1}{3}$. 
In the second case $G$ is isomorphic to $B_8$. 
Thus we obtain a contradiction in both cases. 
Therefore, $v_3$ is the only common neighbor of $v_1$ and $v_2$, as claimed.

Vertices $v_1$ and $v_2$ each have a neighbor outside $\left\{u_1,u_2,v_1,v_2,v_3\right\}$, let us denote them respectively $w_1$ and $w_2$. 
Note that $w_1 \neq w_2$, since $v_3$ is the only common neighbor of $v_1$ and $v_2$. 
Since $v_1$ and $v_2$ have degree $3$, so does $v_3$ by Lemma~\ref{le:notwocubeneighbors}, thus $v_3$ has a neighbor $w_3 \notin\left\{v_1,v_2,w_1,w_2 \right\}$.

Let $G'$ be the critical graph obtained by contracting the path $v_1u_1u_2v_2$ into the edge $v_1v_2$. By Lemma~\ref{le:oddsubdivision}, $\alpha(G)=\alpha(G')+1$. Furthermore, $\lb(G)=\lb(G')+\tfrac{5}{6}$. 

Let us show that $\{w_1,w_2,w_3\}$ is an independent set. 
Suppose not. 
We will derive a contradiction using the criticality of $G'$. 
Since $v_1,v_2,v_3$ form a triangle in $G'$, by symmetry, we may assume without loss of generality that $w_1w_3$ is an edge. 
Using Lemma~\ref{le:nodegreetwoinfourcycle} with the $4$-cycle $w_1w_3v_3v_1$ in $G'$, we deduce that the vertices $w_1$ and $w_3$ have degree $3$. 
Since $G$ has no triangle, at least one of $w_1,w_3$, say without loss of generality $w_3$, has a neighbor $x$ in $V(G') - \left\{v_1,v_2,v_3,w_1,w_2,w_3 \right\}$. 
Now consider a maximum independent set $S$ in $G'-w_3v_3$. 
Since $w_3v_3$ is a critical edge of $G'$, the set $S$ must contain both $w_3$ and $v_3$. 
This implies that $w_1,v_1,v_2\notin S$, and thus  $(S \cup \left\{ v_1\right\}) - \left\{v_3 \right\}$ is an independent set of $G'$ of size $|S|=\alpha(G'-w_3v_3)=\alpha(G')+1$, a contradiction. 
Thus, $\{w_1,w_2,w_3\}$ must be an independent set, as claimed.

In the remainder of the proof, we distinguish three cases depending on the local structure around the vertices $w_1,w_2,w_3$. 

{\bf Case 1: There are two vertices in $\left\{w_1,w_2,w_3\right\}$ that have no common neighbor.}\\
Let $w_i,w_j$ be two such vertices. 

Let $G_0 := G[\left\{u_1,u_2,v_1,v_2,v_3 \right\}]$ and let $H$ be the graph obtained from $G-V(G_0)$ by adding the edge $e^*=w_iw_j$. 
Note that $H$ is subcubic and triangle-free, because $w_i$ and $w_j$ have no common neighbor. 

Since $G_0$ is a $5$-cycle, we have $\lb(G_0)= \tfrac{ 5\cdot 10-2}{24}= 2$. 
Next, consider a maximum independent set $S$ of $G - V(G_0)$. 
Because of the edge $w_iw_j$, the set $S$ avoids at least one of $w_i,w_j$, say $w_i$. 
Thus $S$ can be extended to an independent set of $G$ by adding $v_i$ and a vertex from $\{u_1, u_2\}$ not adjacent to $v_i$. 
This shows that $\alpha(G)- \alpha(H) \geq 2$.  

Let $G_1,G_2,\ldots,G_k$ be an independence packing of $H$. 
If the edge $e^*$ is included in one of the graphs $G_1, \dots, G_k$, let us assume without loss of generality that it is in $G_k$. 

First, let us deal quickly with the case that $e^*$ is {\em not} in $G_k$. 
Then $\mathcal{P}=(G_0, G_1, \dots, G_k)$ is a near independence packing of $G$. 
Using the corresponding notations and Lemma~\ref{le:contract} combined with $\gamma \geq 2$ and $m_0=3$, we obtain 
\[
\alpha(G) 
\geq \lb(G) + \gamma - \lb(G_0) + \left\lceil \frac{m_0 + \delta + 3\nu + 4\sigma}{2} \right\rceil  \cdot \tfrac{1}{12}
\geq \lb(G) + \tfrac{1}{12}, 
\]
implying that $G$ is not a counterexample. 

Thus, the edge $e^*$ must be in $G_k$. 
Our analysis of this case will go along the same lines as that of Case~3 in the proof of Lemma~\ref{le:nobadsubgraph}. 
Let $m_i$ ($i \in \{0, 1, \dots, k\}$) denote the numbers of edges of $G$ that have exactly one endpoint in $V(G_i)$. 
Let $m:=|E(G)|-\sum_{i=0}^{k} |E(G_i)|$. 

Note that $G_k$ could be bad or isomorphic to one of $F_{11}$, $F_{14}^{(1)}$, $F_{14}^{(2)}$, $F_{19}^{(1)}$, $F_{19}^{(2)}$, $F_{22}$, since $G_k$ is not a subgraph of $G$. 
However, if this is the case then $G_k - e^*$ is connected and not isomorphic to $K_1$ or $K_2$, and thus $m_k \geq 3$ by Lemma~\ref{le:twoedgecutset}, implying that $m_k = 3$ and that $G_k$ cannot be isomorphic to one of $F_{14}^{(1)}$, $F_{14}^{(2)}$, $F_{22}$. 
Let $I$ be the indicator variable being equal to $1$ if $G_k$ is bad or isomorphic to one of $F_{11}$, $F_{19}^{(1)}$, $F_{19}^{(2)}$, and equal to $0$ otherwise. 

Applying Theorem~\ref{th:main} to each graph $G_i$ ($i\in \{0, 1, \dots, k\}$), we obtain that 
\[
\alpha(G_i) \geq \left\{ 
\begin{array}{ll}
\lb(G_i) - \frac{1}{12} & \textrm{ if } G_i \textrm{ is bad or isomorphic to one of } F_{11}, F_{19}^{(1)}, F_{19}^{(2)}  \\[.5ex]
\lb(G_i) + \frac{1}{6} & \textrm{ if } G_i \textrm{ is isomorphic to } K_1 \textrm{ or } K_2 \\[.5ex]
\lb(G_i) & \textrm{ otherwise }
\end{array}
\right.
\]

Note that the first outcome is only possible for $G_k$. 
Recall that $G_0$ is a $5$-cycle, and thus $\lb(G_0)=2=\alpha(G_0)$. 
Let $\sigma$ denote the number of indices $i\in \{1, \dots, k\}$ such that $G_i$ is isomorphic to $K_1$ or $K_2$. 
We obtain
\[
\alpha(G) = \sum_{i=0}^k \alpha(G_i)  
\geq \sum_{i=0}^k \lb(G_i) + \frac{1}{12}(2\sigma - I) 
= \lb(G) + \frac{1}{12}(m - k + 2\sigma - I).  
\] 
We will show that $m - k + 2\sigma - I \geq 0$, and moreover $m - k + 2\sigma - I \geq 1$ in case $G$ has at least three degree-$2$ vertices, implying that $G$ is not a counterexample, as desired. 

Recall that $m_0=3$. 
Observe that 
\[
2m 
\geq  \sum_{i=0}^{k} m_i - 2
=  \sum_{i=1}^{k} m_i + 1 
\geq 2\sigma + 3(k-\sigma) + 1. 
\]
Above, the $-2$ comes from the edge $e^*$, and we used the fact that $m_i \geq 2$ for each $i\in \{1, \dots, k\}$, and furthermore $m_i \geq 3$ when $G_i$ is not isomorphic to $K_1$ or $K_2$. 
Note that this implies $m \geq k+1$, since $m$ is an integer. 
Thus if $\sigma \geq 1$ or $I=0$ we directly obtain $m - k + 2\sigma - I \geq 1$. 
Hence, we may assume $\sigma=0$ and $I=1$. 
Now, if $k\geq 2$, the above inequality implies $2m \geq 3k + 1 \geq 2k + 3$ and thus $m \geq k+2$, and hence $m - k + 2\sigma - I \geq 1$. 
So we may assume $k=1$. 
In this case we know $m \geq 2$, which already implies $m - k + 2\sigma - I \geq 0$. 
Thus $G$ must have at least three degree-$2$ vertices. 
Moreover, we must have $m=2$, that is, $G$ is obtained from the union of $G_0$ and $G_1$ by removing $e^*$ and adding three edges each having one endpoint in $V(G_0)$ and the other in $V(G_1)$. 
If $G_1$ is isomorphic to one of $F_{11}$, $F_{19}^{(1)}$, $F_{19}^{(2)}$, then it follows that the only degree-$2$ vertices in $G$ are $u_1, u_2$, and thus $G$ has less than three degree-$2$ vertices, a contradiction. 
If $G_1$ is bad, then $G$ has exactly five degree-$2$ vertices, namely $u_1, u_2$ and three others in $V(G_1)$. 
However, this contradicts the fact that $G$ has an even number of degree-$2$ vertices (c.f.\ Lemma~\ref{le:evennumberofdegreetwos}). 

This finishes the proof that every two vertices in $\left\{w_1,w_2,w_3\right\}$ have a common neighbor. Note that therefore either $w_1,w_2,w_3$ have a common neighbor $x_{123}$, or: each two vertices $w_i,w_j \in \left\{w_1,w_2,w_3\right\}$ have a \emph{private} common neighbor $x_{ij}$, meaning that $x_{12},x_{23},x_{13}$ are pairwise distinct.\\

{\bf Case 2: Every two vertices in $\left\{w_1,w_2,w_3\right\}$ have a common neighbor but no vertex is a common neighbor of all three vertices.} \\
By Lemma~\ref{le:notwocubeneighbors}, each private common neighbor $x_{ij}$ has degree $3$, and therefore has a neighbor $y_{ij}$ in  $V(G') \backslash\left\{v_1,v_2,v_3,w_1,w_2,w_3, x_{12},x_{23},x_{13}\right\}$. Using Lemma~\ref{le:twovertexcutset}, it is not difficult to check that $y_{12},y_{23},y_{13}$ must be pairwise distinct.
 
Next, we construct a new graph as follows. 
Starting from $G$, we first contract $\{u_1,u_2,v_1,v_2,v_3,w_1,w_2,w_3, x_{12},x_{23},x_{13}\}$ into a single vertex $x$. 
Let $G^*_2$ be the resulting graph. 
Note that $x$ might be contained in some triangles. 
We choose an edge $e\in \left\{xy_{12},xy_{23},xy_{13}\right\}$ that is contained in all triangles (if any). 
It is easily checked that such an edge exists using the fact that every triangle contains $x$.  
We subdivide the edge $e$ into a $3$-edge path and call the resulting graph $G_2$.   
Note that $G_2$ is triangle-free and that by construction, $G$ is an $8-$augmentation of $G_2$ (see Figure~\ref{fig:8-augmentation}).  
Hence, we can apply Lemma~\ref{le:8augmentation}, yielding that $G_2$ is $2$-connected, and $\alpha(G)=\alpha(G_2) + 3$ and $\lb(G)=\lb(G_2) + 3$. 

The graph $G_2$ cannot be bad nor dangerous since $G$ is neither bad nor dangerous.  
Furthermore, $G_2$ has at least two vertices of degree $2$, which implies that $G_2$ has no forbidden graph as a subgraph, since $G_2$ is $2$-connected. 

Finally, $G_2$ must be critical, for the following reason. 
First, note that $\alpha(G^*_2) = \alpha(G_2)-1 = \alpha(G)-4$, and that $G_2$ is critical if and only if $G^*_2$ is, by Lemma~\ref{le:oddsubdivision}. 
Thus, let us show that $G^*_2$ is critical. 
Consider an edge $ab$ of $G^*_2$.  

First suppose $a,b \neq x$. 
We consider the edge $ab$ in the graph $G'$. 
Recall that $\alpha(G') = \alpha(G) - 1 = \alpha(G^*_2) + 3$. 
Since $G'$ is critical, there is an independent set $S$ of $G'-ab$ of size $\alpha(G')+1$ (and containing both $a$ and $b$). 
If $S$ avoids $y_{12},y_{23},y_{13}$ then $S$ contains at most four vertices of $X:=\left\{v_1,v_2,v_3,w_1,w_2,w_3, x_{12},x_{23},x_{13}\right\}$, as is easily checked, and thus $(S-X)\cup\{x\}$ is an independent set of $G^*_2-ab$ of size at least $\alpha(G')+1 -4 +1= \alpha(G^*_2)+1$. 
If $S$ contains some vertex $y_{ij}$ then $S$ avoids $x_{ij}$ and then one can check that $S$ may only contain up to three vertices of $X$. 
Thus $S-X$ is then an independent set of $G^*_2-ab$ of size at least $\alpha(G')+1 -3= \alpha(G^*_2)+1$.  

Next assume $ab$ is of the form $xy_{ij}$. 
Let $S$ be an independent set of $G'-x_{ij}y_{ij}$ of size $\alpha(G')+1$, thus $S$ contains $x_{ij}$ and $y_{ij}$. 
If $S\cap \left\{y_{12},y_{23},y_{13}\right\} = \{y_{ij}\}$, then noting again that $S$ contains at most four vertices of $X$ (counting $x_{ij}$), we see that $(S-X)\cup\{x\}$ is an independent set of $G^*_2-ab$ of size at least $\alpha(G')+1 -4 +1= \alpha(G^*_2)+1$. 
If $S\cap \left\{y_{12},y_{23},y_{13}\right\} = \{y_{ij}, y_{i'j'}\}$, then it can be checked that $S$ contains up to three vertices of $X$. 
Thus $(S-(X \cup \{y_{i'j'}\}) \cup\{x\}$ is an independent set of $G^*_2-ab$ of size at least $\alpha(G')+1 -4 +1= \alpha(G^*_2)+1$. 
Finally, the case where $S$ contains all three vertices $y_{12},y_{23},y_{13}$ cannot happen, because $S$ avoids $v_k$ for some $k\in \{i,j\}$ (since the edge $v_iv_j$ is always there in $G'$), and it follows that $(S-\{x_{ij}\}) \cup \{w_k\}$ is an independent set of $G'$ of size $|S|=\alpha(G')+1$, a contradiction. 

In summary, in all possible cases we found an independent set of $G^*_2-ab$ of size $\alpha(G^*_2)+1$. 
Hence, $G^*$ is critical, as claimed. 
We conclude that $G_2$ satisfies the assumptions of Theorem~\ref{th:main}, yielding $\alpha(G_2)\geq \lb(G_2)$, since $G_2$ is smaller than $G$ and thus not a counterexample.  
Hence, $\alpha(G)=\alpha(G_2) +3 \geq  \lb(G_2) + 3 = \lb(G)$. 

Furthermore, if $G$ has at least three vertices of degree $2$, then $G_2$ also has at least three degree vertices of degree $2$, and then Theorem~\ref{th:main} gives  $\alpha(G_2)\geq \lb(G_2) + \tfrac{1}{12}$ since $G_2$ is not dangerous, implying $\alpha(G)\geq \lb(G) + \frac{1}{12}$, as desired. \\

{\bf Case 3: There exists a common neighbor $x_{123}$ of $w_1,w_2$ and $w_3$. } \\
Each of $w_1,w_2,w_3$ has two neighbors of degree $3$ in $G$, so by Lemma~\ref{le:notwocubeneighbors} it follows that $w_1,w_2$ and $w_3$ each have degree $3$. 
Let $x_i$  denote the neighbor of $w_i$ outside $\left\{v_1,v_2,v_3,w_1,w_2,w_3,x_{123} \right\}$, for $i=1,2,3$. Recall that $G'$ is the critical graph obtained by contracting the path $v_1u_1u_2v_2$ into the edge $v_1v_2$; the benefit of working with $G'$ rather than $G$ is that we can treat the vertices in the triangle $v_1v_2v_3$ symmetrically.

First, we argue that $x_1, x_2, x_3$ are pairwise distinct. 
Arguing by contradiction, suppose $x_i=x_j$ for two distinct indices $i,j$, and let $k$ denote the remaining third index. 
Let $S$ be a maximum independent set of $G'-\left\{w_ix_{123}\right\}$.  
By criticality of the edge $w_ix_{123}$ of $G'$, the set $S$ contains $w_i$ and $x_{123}$. 
Thus $S$ avoids $v_i, w_j, x_j, w_k$. 
Note that $S$ contains at most one of $v_j, v_k$ because of the edge $v_jv_k$ of $G'$. 
Furthermore, if $v_j \in S$, we can replace $v_j$ with $v_k$ in $S$, since $v_i, w_k \notin S$. 
Thus we may assume $v_j \notin S$. 
It follows that $(S - \{x_{123}\})\cup \{w_j\}$ is an independent set of $G'$ of size $|S|=\alpha(G')+1$, a contradiction. 

A similar argument shows that $x_1, x_2, x_3$ form an independent set, as we now explain. 
Suppose for a contradiction that $x_i$ and $x_j$ are adjacent, and let $k$ denote the remaining third index. 
Let $S$ be a maximum independent set of $G'-\left\{w_ix_{123}\right\}$.  
By criticality of the edge $w_kx_{123}$ of $G'$, the set $S$ contains $w_k$ and $x_{123}$. 
Thus $S$ avoids $v_k, w_i, w_j$. 
The set $S$ contains at most one of $x_i, x_j$ because of the edge $x_ix_j$. 
Exchanging $i$ and $j$ if necessary, we may assume that $x_i \notin S$. 
Also, $S$ contains at most one of $v_i, v_j$ because of the edge $v_iv_j$ of $G'$. 
Furthermore, if $v_i \in S$, we can replace $v_i$ with $v_j$ in $S$, since $w_j \notin S$. 
Thus we may assume $v_i \notin S$. 
It follows that $(S - \{x_{123}\})\cup \{w_i\}$ is an independent set of $G'$ of size $|S|=\alpha(G')+1$, a contradiction. 	 
	 
Next, we argue that $x_1, x_2, x_3$ all have degree $3$. 
Arguing by contradiction, suppose some $x_i$ has degree $2$, and let $y_i$ denote its neighbor distinct from $w_i$.  
Since $w_i$ has degree $3$, Lemma~\ref{le:notwocubeneighbors} implies that $y_i$ has degree $2$. 
Thus $x_i$ and $y_i$ are two adjacent vertices of degree $2$. 
By our observation~\eqref{eq:fivecycleobservation} at the beginning of this proof, we know that there is a $5$-cycle of $G$ containing the edge $x_iy_i$. 
Such a cycle necessarily contains $w_i$, and it follows that $y_i=x_j$ for some $j\in \{1,2,3\}-\{i\}$, contradicting the fact that $x_ix_j \notin E(G)$.  

Having gathered all necessary structural information about $G$, we are ready to finish the proof. 
Let $Z:=\left\{u_1,u_2,v_1,v_2,v_3,w_1,w_2,w_3,x_{123}, x_1,x_2\right\}$.\footnote{At first sight it might seem odd that $Z$ does not include $x_3$. Note however that $Z$ consists of the closed neighborhood of the $6$-cycle $v_1v_3v_2w_2x_{123}w_1$. In Section~\ref{subsection:3connected} we will use closed neighborhoods of even cycles more often.}	 
Let $\mathcal{P}=(G_0, G_1, \dots, G_k)$ be a near independence packing of $G$ with $G_0 = G[Z]$. 
Since every maximum independent set of $G-Z$ can be extended to an independent set of $G$ by adding $\left\{w_1,w_2,v_3,u_1\right\}$, it follows that $\gamma \geq 4$.  
We also have $\lb(G_0)=\tfrac{6\cdot 9+3\cdot 10+ 2 \cdot 11 -2}{24}=4 + \tfrac{1}{3}$ and $m_0=5$, as follows from the properties of $G$ established in the previous paragraphs. 
Using Lemma~\ref{le:contract}, we obtain 
\begin{equation}
\label{eq:case3_delta}
\alpha(G) \geq \lb(G) +  \gamma  -\lb(G_0) +  \left(m_0 - \delta + \sigma \right) \cdot \tfrac{1}{12} 
= \lb(G)- \frac{1}{3} +  \left(5 - \delta + \sigma \right) \cdot \tfrac{1}{12}. 
\end{equation}
Recall that, since $G$ is a counterexample, we have $\alpha(G) < \lb(G) + \frac{1}{12}$. 
Thus, we must have $\delta\geq 1$. 
By Lemma~\ref{le:contract}, we also have 
\[
\alpha(G) 
\geq \lb(G) +  \gamma  -\lb(G_0) + \left\lceil \frac{m_0 + \delta + 3\nu + 4\sigma}{2} \right\rceil  \cdot \tfrac{1}{12}   
\geq \lb(G) - \frac{1}{3} + \left\lceil \frac{5 + \delta + 3\nu + 4\sigma}{2} \right\rceil  \cdot \tfrac{1}{12}.  
\]
Thus, it follows that $\delta\in \{1,2,3\}$, $\nu = 0$ and $\sigma = 0$.  
That is, $G_1, \dots, G_k$ are all dangerous graphs, and $k\in \{1,2,3\}$. 

If $\delta=1$, then because $G_1$ has exactly five degree-$2$ vertices and $m_0=5$, we deduce that $u_1, u_2$ are the only two vertices of $G$ with degree $2$.  
Thus in this case we know that $\alpha(G) < \lb(G)$ since $G$ is a counterexample. 
However, this contradicts~\eqref{eq:case3_delta} when $\delta=1$. 

If $\delta=2$, then we reach a contradiction as follows. 
In total, there are exactly ten degree-$2$ vertices in $G_1$ and $G_2$ (five in each), exactly five of which are incident in $G$ to an edge with an endpoint in $G_0$ (since $m_0=5$). 
Consider the remaining five vertices. 
Every edge $e$ of $G$ having both endpoints in $V(G_1) \cup V(G_2)$ but which is not in $G_1$ nor in $G_2$ connects two of these vertices. 
We deduce that an odd number of these five vertices have degree $2$ in $G$. 
Since only two vertices of $V(G_0)$ have degree $2$ in $G$, namely $u_1$ and $u_2$, it follows that $G$ has an odd number of degree-$2$ vertices, contradicting Lemma~\ref{le:evennumberofdegreetwos}.

Thus we may assume that $\delta=3$. 
By Lemma~\ref{le:twoedgecutset} (and also using that $G$ is $2$-connected and that no dangerous graph is isomorphic to $K_1$ or $K_2$), we know that for all $i\in \left\{1,2,3\right\}$, there are at least three edges with precisely one endpoint in $V(G_i)$. 
Suppose that for some $i \in \left\{1,2,3\right\}$, either $G[V(G_i)]$ contains an edge that is not in $G_i$ or: $G$ has \emph{more} than three edges with precisely one endpoint in $V(G_i)$. Then (also using that $\sigma=\nu=0$, $\delta=3$ and $m_0=5$) we gain at least an additive term of $2$ in inequality (\ref{eq:estimating_m_one}), which propagates to an extra additive term of $\tfrac{1}{12}$ in the lower bound of Lemma~\ref{le:contract}. Plugging this into the argument given just below inequality~(\ref{eq:case3_delta}) yields $\alpha(G) \geq \lb(G) + \frac{1}{12}$; contradiction.

So we may assume that for each $i\in \left\{1,2,3\right\}$, $G_i$ is an induced subgraph of $G$ and there are precisely $m_i=3$ edges with one endpoint in $G_i$. This means that $V(G_1),V(G_2),V(G_3)$ each contain precisely two vertices that have degree $2$ in $G$. 
Moreover, the number of edges in $E(G)- \bigcup _{0 \leq i \leq 3} E(G_i)$ is $\frac{1}{2}\sum_{i=0}^{3} m_i = \frac{5+ 3+3+3}{2}= 7$, of which $m_0=5$ edges have an endpoint in $G_0$. Thus $G$ has precisely two edges which have their endpoints in distinct $V(G_i)$, $V(G_j)$ with $i, j\in \left\{1,2,3\right\}$, which we will call the two \emph{heterogeneous edges}. 
Note that these two edges are independent since $G$ has maximum degree $3$. 

Without loss of generality, $x_3 \in V(G_3)$. Let $y_1,y_2$ denote the neighbors of $x_1$ in $V(G_1)\cup V(G_2)\cup V(G_3)$. Note that $y_1$ and $y_2$ cannot both be in $V(G_3)$, for else $w_2x_2$ would be a bridge (since $m_3=3$).  
Let us call $x_3,y_1$ and $y_2$ the \emph{pre-blocked vertices}. 

For each $i\in \left\{1,2,3\right\}$, we have $m_i=3$, and the graph $G_i$ has at most two pre-blocked vertices. 
Using these facts, a quick case analysis shows that it is possible to select a vertex $y_3$ incident to one heterogenerous edge and another vertex $y_4$ incident to the other heterogeneous edge such that for all $i\in \left\{1,2,3\right\}$,  the set $\left\{x_3,y_1,y_2,y_3,y_4\right\} \cap V(G_i)$ has size at most $2$. Let us  call $\left\{x_3,y_1,y_2,y_3,y_4\right\}$ the set of \emph{blocked vertices}.

Since each of $G_1,G_2,G_3$ contains at most two blocked vertices, it follows from property \ref{le:dangerousproperties_T_two_vertices} of Lemma~\ref{le:dangerousproperties} that for each $i \in \left\{1,2,3\right\}$, there exists an independent set $S_i$ of $G_i$ that avoids all blocked vertices and satisfies $\alpha(G_i)\geq \lb(G_i)$. Because $S:=S_1\cup S_2 \cup S_3$ avoids all neighbors of $\left\{x_1,w_3\right\}$ and also avoids a vertex of every heterogeneous edge, it follows that $S\cup \left\{x_1,w_3,w_2,v_1,u_2\right\}$ is an independent set of $G$. From this we obtain
 $$\alpha(G) \geq 5 + \sum_{i=1}^{3} \lb(G_i).$$  
Since furthermore $m= |E(G)|-\sum_{i=0}^{3}|E(G_i)|=7$ and $\lb(G)= \sum_{i=0}^{3} \lb(G_i) + (3-m)\frac{1}{12}= 4 + \sum_{i=1}^{3} \lb(G_i)$, we conclude that $\alpha(G) \geq \lb(G)+1$; contradiction.
\end{proof}

Our progress so far can be summarized in the following lemma.
\begin{lemma}
$G$ is cubic and moreover $3$-connected.
\end{lemma}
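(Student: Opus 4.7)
The cubicness follows almost immediately from the previous three lemmas. Consider any hypothetical degree-$2$ vertex $u$ of $G$. By Lemma~\ref{le:nothreeconsecutivedegreetwo}, $u$ has at least one neighbor of degree $3$, while by Lemma~\ref{le:notwocubeneighbors} it has at most one; so $u$ has exactly one degree-$3$ neighbor and its other neighbor has degree at most $2$. Since $G$ is $2$-connected (and hence has minimum degree at least~$2$), that other neighbor has degree exactly $2$, contradicting Lemma~\ref{le:notwoadjacentdegreetwo}. Hence $G$ has no degree-$2$ vertex, i.e., $G$ is cubic.

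For $3$-connectedness I would argue by contradiction. Assume $\{v,w\}$ is a $2$-cutset of $G$. By Lemma~\ref{le:twovertexcutset}, $G - \{v,w\}$ has exactly two components, one of which is isomorphic to $K_1$ or $K_2$. A $K_1$ component would consist of a single vertex with at most two neighbors in $G$, contradicting cubicness. So the small component is a $K_2$, say on $\{u_1,u_2\}$ with $u_1u_2 \in E(G)$. Cubicness then forces each $u_i$ to send its two remaining edges into $\{v,w\}$, so both $v$ and $w$ are adjacent to both $u_1$ and $u_2$; triangle-freeness implies $vw \notin E(G)$ (else $vwu_1$ would be a triangle).

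Let $v'$ and $w'$ denote the third neighbors of $v$ and $w$ respectively, both of which lie outside $\{u_1,u_2,v,w\}$. The two-element edge set $\{vv', ww'\}$ is a $2$-edge cut of $G$ separating $\{u_1,u_2,v,w\}$ from everything else (this remains true even if $v' = w'$, since $vv'$ and $ww'$ are then still two distinct edges sharing only the endpoint $v'$). By Lemma~\ref{le:twoedgecutset}, the opposite side of this cut must itself be isomorphic to $K_1$ or $K_2$. But each possibility yields a vertex of degree less than $3$ in $G$ by a direct count: a $K_1$ component is a single vertex whose only neighbors in $G$ are $v$ and $w$; a $K_2$ component $\{v',w'\}$ with $v' \neq w'$ leaves $v'$ with only the two neighbors $w'$ and $v$; and a $K_2$ component in the subcase $v' = w'$ leaves the second vertex of the $K_2$ with no neighbor in $G$ besides its $K_2$-partner. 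Each case contradicts cubicness, so no $2$-cutset exists and $G$ is $3$-connected.

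The whole argument is essentially mechanical and does not seem to have any genuine obstacle; the only step requiring minor care is the subcase $v' = w'$ in the $2$-edge-cut analysis, which is handled by noting that $vv'$ and $ww'$ are still two distinct edges and so still constitute a proper $2$-edge cut to which Lemma~\ref{le:twoedgecutset} applies.
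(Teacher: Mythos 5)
Your proof is correct and follows the same route as the paper, whose own proof is just a two-sentence appeal to Lemmas~\ref{le:nothreeconsecutivedegreetwo}, \ref{le:notwocubeneighbors}, \ref{le:notwoadjacentdegreetwo} (for cubicness) and Lemma~\ref{le:twovertexcutset} (for $3$-connectivity); you simply fill in the details left implicit. One small simplification: in the $K_2$ case, once cubicness forces $u_1$ and $u_2$ to both be adjacent to $v$, the edge $u_1u_2$ already gives the triangle $u_1u_2v$, so the entire detour through the $2$-edge cut $\{vv',ww'\}$ and Lemma~\ref{le:twoedgecutset} is valid but unnecessary.
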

\begin{proof}
Recall that $G$ is $2$-connected. 
Lemmas~\ref{le:nothreeconsecutivedegreetwo},~\ref{le:notwocubeneighbors} and~\ref{le:notwoadjacentdegreetwo} together imply that $G$ has no vertex of degree $2$.  
Combined with Lemma~\ref{le:twovertexcutset}, this yields that $G$ is $3$-connected.
\end{proof}

\subsection{The $3$-connected case}\label{subsection:3connected}

Let us first remark that from now on, in order to obtain the desired contradiction, it suffices to show that  
\[
\alpha(G) \geq \lb(G) -\tfrac{1}{12}.
\] 
Indeed, because $G$ is cubic, we know that $|V(G)|$ is even and $\lb(G) = \tfrac{3}{8}|V(G)| - \tfrac{1}{12}$. 
Thus, $\alpha(G) \geq \lb(G) -\tfrac{1}{12}$ implies $8\alpha(G) \geq 3|V(G)| -\tfrac{16}{12}$, and hence $8\alpha(G) \geq 3|V(G)|$, that is, $\alpha(G) \geq \lb(G) + \tfrac{1}{12}$, since $8\alpha(G)$ and $3|V(G)|$ are both even integers. 

Using the fact that $G$ is cubic, we obtain the following corollary from Lemma~\ref{le:contract}. 

\begin{lemma}\label{le:removeForcubic}
Fix some near independence packing $\mathcal{P}=(G_0, G_1, \dots, G_k)$ of $G$ such that $G_0$ is connected and has at least three vertices. 
Then 
\[
\alpha(G) \geq \lb(G) + \gamma  + \tfrac{2  - 9 \cdot |V(G_0)|}{24} + \left(m_0 - 2\delta + 2 \sigma \right)\cdot \tfrac{1}{24}, 
\]
and
\[
\alpha(G) \geq \lb(G) + \gamma  + \tfrac{2  - 9 \cdot |V(G_0)|}{24} + \left(3\delta + 3\nu + 4\sigma \right)\cdot \tfrac{1}{24}. 
\]
\end{lemma}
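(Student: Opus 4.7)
The plan is to derive this lemma by direct substitution into Lemma~\ref{le:contract}, exploiting the cubicity of $G$ to express $\lb(G_0)$ cleanly in terms of $|V(G_0)|$ and $m_0$.

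First I would check that the hypothesis $m_0 \geq 3$ of Lemma~\ref{le:contract} holds. Since $G$ is cubic and $3$-connected (and hence $3$-edge-connected), every proper edge cut of $G$ contains at least three edges, so $m_0 \geq 3$.

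Next I would compute $\lb(G_0)$ explicitly. Because $G_0$ is an induced subgraph of the cubic graph $G$, counting edges via degrees gives
\[
2|E(G_0)| + m_0 \;=\; \sum_{v \in V(G_0)} \deg_G(v) \;=\; 3|V(G_0)|,
\]
so $|E(G_0)| = (3|V(G_0)| - m_0)/2$. Plugging this and the fact that $G_0$ is connected (so $\lambda = 1$) into the definition of $\lb$ yields
\[
\lb(G_0) \;=\; \frac{6|V(G_0)| - |E(G_0)| - 1}{12} \;=\; \frac{9|V(G_0)| + m_0 - 2}{24},
\]
and in particular $-\lb(G_0) = (2 - 9|V(G_0)| - m_0)/24$.

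Finally I would substitute this into the two relevant inequalities of Lemma~\ref{le:contract}. For the first target bound I would apply the first inequality of Lemma~\ref{le:contract}, rewriting $(m_0 - \delta + \sigma)\cdot \tfrac{1}{12}$ as $(2m_0 - 2\delta + 2\sigma)\cdot \tfrac{1}{24}$; one copy of $m_0$ cancels against the $-m_0/24$ coming from $-\lb(G_0)$, leaving exactly $(m_0 - 2\delta + 2\sigma)\cdot\tfrac{1}{24}$ as required. For the second target bound I would use the cubic version of Lemma~\ref{le:contract} and trivially lower-bound the ceiling by $(m_0 + 3\delta + 3\nu + 4\sigma)/24$; once more the $m_0$-terms cancel, leaving $(3\delta + 3\nu + 4\sigma)\cdot\tfrac{1}{24}$. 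The whole argument is pure bookkeeping and I do not foresee any genuine obstacle.
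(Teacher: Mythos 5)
Your proposal is correct and is essentially identical to the paper's proof: both compute $\lb(G_0)=\frac{9|V(G_0)|+m_0-2}{24}$ from cubicity and connectedness of the induced subgraph $G_0$, and then substitute into the first and third inequalities of Lemma~\ref{le:contract}. Your explicit check that $m_0\geq 3$ (via $3$-edge-connectivity, noting $G_0$ is a \emph{proper} subgraph by the definition of a near independence packing) is a detail the paper leaves implicit, and it is right.
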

\begin{proof}
Because $G$ is cubic, it follows that $\lb(G_0)=\frac{|V(G_0)|\cdot 9 + m_0-2}{24}$. 
The two inequalities follows then respectively from the first and the third inequality from Lemma~\ref{le:contract}. 
\end{proof}

\begin{corollary}\label{cor:removeForcubic}
Fix some near independence packing $\mathcal{P}=(G_0, G_1, \dots, G_k)$ of $G$ such that $G_0$ is connected, $3 \leq |V(G_0)| \leq 14$, and $\gamma \geq 5$. 
Then $G=G_0$, $|V(G_0)| = 14$, and $\alpha(G)=\gamma=5$. 
\end{corollary}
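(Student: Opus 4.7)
The plan is to apply the second inequality of Lemma~\ref{le:removeForcubic} and combine it with the parity observation from the opening of Section~\ref{subsection:3connected}: since $G$ is cubic, any bound $\alpha(G) \ge \lb(G) - \tfrac{1}{12}$ already yields a contradiction with $G$ being a counterexample (equivalently, a cubic counterexample must satisfy $\alpha(G) \le \lb(G) - \tfrac{1}{6}$).

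Substituting the hypotheses $\gamma \ge 5$ and $|V(G_0)| \le 14$ into the second inequality of Lemma~\ref{le:removeForcubic} yields
\[
\alpha(G) \ge \lb(G) - \tfrac{1}{6} + \tfrac{3\delta + 3\nu + 4\sigma}{24}.
\]
I first show $G = G_0$. Suppose not; then $k \ge 1$ and $\delta + \nu + \sigma \ge 1$, whence $3\delta + 3\nu + 4\sigma \ge 3$ and the display gives $\alpha(G) \ge \lb(G) - \tfrac{1}{24} > \lb(G) - \tfrac{1}{12}$, contradicting that $G$ is a counterexample. Hence $G = G_0$.

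Once $G = G_0$, the near independence packing is trivial, so $\gamma = \alpha(G)$ and thus $\alpha(G) \ge 5$ by hypothesis. Writing $n := |V(G_0)| \le 14$ and using that $\lb(G) = \tfrac{9n - 2}{24}$ for a cubic connected graph, the parity bound $\alpha(G) \le \lb(G) - \tfrac{1}{6}$ rearranges to $\alpha(G) \le \tfrac{3n-2}{8}$. Combined with $\alpha(G) \ge 5$ this forces $n \ge 14$, hence $n = 14$; plugging back, $\alpha(G) \le 5$, and so $\alpha(G) = 5 = \gamma$, as required.

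There is no significant obstacle here. The one ingredient that makes the deduction work is the parity gap for cubic graphs, which promotes the fractional lower bound of Lemma~\ref{le:removeForcubic} into a sharp integer constraint; the only bookkeeping is verifying that the slack afforded by $\gamma \ge 5$ is exactly enough to eliminate the case $G \ne G_0$ and to pin down $n$ to $14$.
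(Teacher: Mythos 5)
Your proof is correct and follows essentially the same route as the paper: the second inequality of Lemma~\ref{le:removeForcubic} with $\gamma\ge 5$ and $|V(G_0)|\le 14$ rules out $G\ne G_0$, and the integrality/parity refinement for cubic graphs (stated at the start of Section~\ref{subsection:3connected}) then pins down $|V(G_0)|=14$ and $\alpha(G)=\gamma=5$. The only cosmetic difference is that you package the last two steps via the sharp bound $\alpha(G)\le\lb(G)-\tfrac16$, whereas the paper checks the cases $|V(G_0)|<14$ and $\alpha(G)>5$ separately; the computations are equivalent.
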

\begin{proof}
If $G \neq G_0$ then $\delta + \nu + \sigma \geq 1$, and Lemma~\ref{le:removeForcubic} yields 
\[
\alpha(G) \geq \lb(G) + \gamma  + \tfrac{2  - 9 \cdot 14}{24} + \left(3\delta + 3\nu + 4\sigma\right)\cdot \tfrac{1}{24}
\geq \lb(G) -\frac{1}{12}, 
\]
a contradiction. 
Thus $G = G_0$. 
If $|V(G_0)| < 14$ then $\alpha(G) \geq \lb(G) + \gamma  + \tfrac{2  - 9 \cdot 13}{24} \geq \lb(G) -\frac{1}{12}$, 
a contradiction.  Thus $|V(G_0)|=14$. Finally, if $\alpha(G) > 5$, then $\alpha(G) >  \frac{9\cdot 14-4}{24} = \lb(G) -\frac{1}{12}$, a contradiction. 
\end{proof}

\begin{lemma}\label{le:nofourcycle}
$G$ has no $4$-cycle. 
\end{lemma}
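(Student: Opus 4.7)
Suppose for contradiction that $G$ contains a $4$-cycle $C=v_1v_2v_3v_4$. Since $G$ is cubic and triangle-free, each $v_i$ has a unique neighbor $w_i$ outside $C$, and triangle-freeness forces $w_i\neq w_{i\pm 1}$. I first rule out the opposite-vertex coincidences. If, say, $w_1=w_3$, then setting $G_0:=G[\{v_1,v_2,v_3,v_4,w_1\}]$, both $v_1$ and $v_3$ have all their neighbors inside $V(G_0)$, so $\{v_1,v_3\}$ extends any independent set of $G-V(G_0)$, forcing $\gamma\geq 2$. But the criticality of the edge $v_1v_2$ yields an independent set of $G-v_1v_2$ of size $\alpha(G)+1$ meeting $V(G_0)$ only in $\{v_1,v_2\}$, whence $\gamma\leq 1$, a contradiction. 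Hence $w_1\neq w_3$, and symmetrically $w_2\neq w_4$, so the four $w_i$ are pairwise distinct.

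Next, I take $G_0:=G[V(C)]\cong C_4$ and consider a near-independence packing $\mathcal{P}=(G_0,G_1,\dots,G_k)$ of $G$. Criticality of $v_1v_2$ gives $\gamma\leq 1$ as above, and an extension argument (combined with another application of criticality to dispose of the edge case $\gamma=0$) yields $\gamma=1$. Substituting $|V(G_0)|=4$ and $m_0=4$ into the second inequality of Lemma~\ref{le:removeForcubic} produces
\[
\alpha(G)\;\geq\;\lb(G)+\frac{-6+3\delta+3\nu+4\sigma}{24},
\]
which gives the required $\alpha(G)\geq\lb(G)-\tfrac{1}{12}$ whenever $3\delta+3\nu+4\sigma\geq 4$. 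All cases are resolved except $(\delta,\nu,\sigma)\in\{(1,0,0),(0,1,0)\}$, for which $G-V(C)$ is a single connected critical graph $H$.

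Since $G$ is cubic and only the $w_i$'s lose an edge when passing from $G$ to $H$, the graph $H$ has exactly four degree-$2$ vertices, namely $w_1,w_2,w_3,w_4$. The case $(1,0,0)$ is immediately impossible, since a dangerous graph has exactly five degree-$2$ vertices by Lemma~\ref{le:dangerousproperties}, not four. The remaining case $(0,1,0)$ is the principal obstacle: here $H$ is a connected critical non-dangerous graph with exactly four degree-$2$ vertices, and the generic packing estimate falls short of the target by $\tfrac{1}{6}$. To close this gap, I plan to extract additional information from the criticality of each of the four cycle-edges $v_iv_{i+1}$: every such edge yields an independent set of $H$ of size $\alpha(H)$ avoiding the consecutive pair $\{w_i,w_{i+1}\}$. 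The central step will be to combine these four avoidance patterns---exploiting the rigidity imposed by $3$-connectivity of $G$ and the absence of forbidden or bad subgraphs---to produce a maximum independent set of $H$ avoiding some diagonal pair $\{w_1,w_3\}$ or $\{w_2,w_4\}$. Such a set extends in $G$ by the two opposite vertices of $C$, which upgrades $\gamma$ to at least $2$ and closes the deficit. The main obstacle is precisely this subcase $(0,1,0)$: the packing bookkeeping alone is too loose, and the missing $\tfrac{1}{6}$ must be recovered through a finer structural analysis of $H$ near the $w_i$'s.
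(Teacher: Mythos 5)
Your proposal has two genuine gaps, both in the second half. First, the arithmetic in the main estimate is wrong: with $\gamma=1$ and $|V(G_0)|=4$, the second inequality of Lemma~\ref{le:removeForcubic} gives
$\alpha(G)\geq \lb(G)+1+\frac{2-36}{24}+\frac{3\delta+3\nu+4\sigma}{24}=\lb(G)+\frac{-10+3\delta+3\nu+4\sigma}{24}$,
not $\lb(G)+\frac{-6+3\delta+3\nu+4\sigma}{24}$. To reach the target $\alpha(G)\geq\lb(G)-\frac{2}{24}$ you therefore need $3\delta+3\nu+4\sigma\geq 8$, not $\geq 4$, so the surviving configurations are far more numerous than $(1,0,0)$ and $(0,1,0)$: for instance $(0,0,1)$, $(2,0,0)$, $(1,1,0)$, $(0,2,0)$, $(1,0,1)$ and $(0,1,1)$ all survive. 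Second, even the one hard case you single out, $(0,1,0)$, is left explicitly unproven: you only \emph{plan} to combine the four avoidance patterns coming from the criticality of the cycle edges into a maximum independent set of $H$ avoiding a diagonal pair, and no such argument is supplied. A smaller issue is that $\gamma\geq 1$ for $G_0=C_4$ also needs justification: a maximum independent set of $G-V(C)$ could a priori contain all four vertices $w_1,\dots,w_4$, which blocks the naive extension by a cycle vertex. (Your preliminary step ruling out $w_1=w_3$ and $w_2=w_4$ is correct, and your dismissal of $(1,0,0)$ is essentially right, though it should be phrased via the parity of $\sum_v(3-\deg_{G_1}(v))=m_1+2j$, since $G_1$ need not be the induced graph $G-V(C)$.)

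The paper sidesteps all of this with a different choice of $G_0$. After reducing to the case $w_1\neq w_3$ (via the $2$-cutset Lemma~\ref{le:twovertexcutset} applied when $w_1=w_3$ and $w_2=w_4$), it takes $G_0=G[\{v_1,v_2,v_3,v_4,w_1,w_3\}]$. Then $v_1$ and $v_3$ have \emph{all} their neighbors inside $V(G_0)$, so every maximum independent set of $G-V(G_0)$ extends by the independent pair $\{v_1,v_3\}$, giving $\gamma\geq 2$ with no criticality argument at all. With $|V(G_0)|=6$ the same inequality yields $\alpha(G)\geq\lb(G)+\frac{-4+3\delta+3\nu+4\sigma}{24}\geq\lb(G)-\frac{1}{24}$, since $G\neq G_0$ forces $\delta+\nu+\sigma\geq 1$, and there is no case analysis. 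The idea to take away is to enlarge $G_0$ just enough that an independent pair of cycle vertices becomes saturated; that extra unit of $\gamma$ is exactly what your bookkeeping is missing.
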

\begin{proof}
Suppose that $G$ has a $4$-cycle $v_1v_2v_3v_4$. For each $i\in \left\{1,2,3,4\right\}$, let $w_i \notin \left\{v_1,v_2,v_3,v_4\right\}$ denote the third neighbor of $v_i$. If $w_1=w_3$ and $w_2=w_4$, then $\left\{w_1,w_2\right\}$ is a $2$-cutset separating $G$ into two components, neither of which is isomorphic to $K_1$ or $K_2$, contradicting Lemma~\ref{le:twovertexcutset}. 
Thus, without loss of generality we may assume that $w_1\neq w_3$. 

Consider a near independence packing $\mathcal{P}=(G_0, G_1, \dots, G_k)$ of $G$ with $G_0 = G[\left\{v_1,v_2,v_3,v_4,w_1,w_3\right\}]$. 
Since each maximum independent set $S$ of $G-V(G_0)$ can be extended to the independent set $S \cup \left\{v_1,v_3\right\}$ of $G$, we have $\gamma\geq 2$.  
Clearly, $G \neq G_0$, since $G_0$ is not a counterexample to Theorem~\ref{th:main}. 
Then, from Lemma~\ref{le:removeForcubic} it follows that
\[
\alpha(G) \geq \lb(G) + 2  + \tfrac{2  - 9 \cdot 6}{24} + \left(3\delta + 3\nu + 4\sigma\right)\cdot \tfrac{1}{24}
\geq \lb(G) + \left(-4 + 3\delta + 3\nu + 4\sigma\right)\cdot \tfrac{1}{24}
\geq \lb(G) -\frac{1}{12}, 
\]
a contradiction. 
\end{proof}

\begin{lemma}\label{le:nosixcycle}
$G$ has no $6$-cycle. 
\end{lemma}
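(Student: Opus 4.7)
The plan is to argue by contradiction: suppose $G$ contains a $6$-cycle $C=v_1v_2v_3v_4v_5v_6$, and denote by $w_i$ the third neighbor of $v_i$, which is unique since $G$ is cubic. By triangle-freeness $w_i\neq w_{i\pm 1}$, and by Lemma~\ref{le:nofourcycle} $w_i\neq w_{i\pm 2}$ (indices read mod $6$); so the only coincidences possible among the $w_i$ are of antipodal pairs $w_i=w_{i+3}$. I take $G_0:=G[V(C)\cup\{w_1,\dots,w_6\}]$ (the closed neighborhood of $C$) and fix a near independence packing $\mathcal{P}=(G_0,G_1,\dots,G_k)$ of $G$. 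Writing $p\in\{3,4,5,6\}$ for the number of distinct $w_i$'s, I have $|V(G_0)|=6+p$. Since each $v_i$ has all its neighbors in $V(G_0)$, both $\{v_1,v_3,v_5\}$ and $\{v_2,v_4,v_6\}$ are independent sets of $G_0$ that lie outside $N(S)$ for any maximum independent set $S$ of $G-V(G_0)$, which immediately yields $\gamma\geq 3$.

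Next, I split cases on $p$ and on whether $G=G_0$, applying Lemma~\ref{le:removeForcubic}. The small-$p$ subcases are the easiest, as $\tfrac{2-9|V(G_0)|}{24}$ is less punishing. Once one shows $G\neq G_0$ (forcing $\delta+\nu+\sigma\geq 1$), or resolves the few small graphs that arise when $G=G_0$ by checking them against Theorem~\ref{th:main} and the list of forbidden graphs in Table~\ref{table:parameters}, the inequality of Lemma~\ref{le:removeForcubic} closes using $\gamma\geq 3$ together with the $3$-edge-connectivity bound $m_0\geq 3$. When edges among the $w_i$'s are present they reduce $m_0$, but they also create new short cycles whose structure can be used to invoke $3$-connectivity or Lemma~\ref{le:nobadsubgraph}.

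The delicate case is $p=6$ (all $w_i$ distinct), where $|V(G_0)|=12$ and the bound $\gamma\geq 3$ alone is insufficient. Here my goal is to push $\gamma$ up to $5$, at which point Corollary~\ref{cor:removeForcubic} yields the contradiction at once since $|V(G_0)|=12<14$. To do so, I attempt to extend the triple $\{v_1,v_3,v_5\}$ (or $\{v_2,v_4,v_6\}$) by a pair of non-adjacent $w_j$'s lying outside $N(S)$ for an appropriate maximum independent set $S$ of $G-V(G_0)$. Each such $w_j$ has exactly two neighbors in $V(G)-V(G_0)$, so the obstruction to such an extension is that, for every choice of $S$, the set $S$ meets the neighborhood of all candidate $w_j$'s simultaneously; this imposes a very rigid structure on how the boundary $\{w_1,\dots,w_6\}$ attaches to $V(G)-V(G_0)$. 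The remaining configurations can then be argued to produce either a bad subgraph of $G$ (contradicting Lemma~\ref{le:nobadsubgraph}) or a copy of one of the forbidden graphs $F_{11},F_{14}^{(1)},F_{14}^{(2)},F_{19}^{(1)},F_{19}^{(2)},F_{22}$, in the spirit of Case~$3$ of the proof of Lemma~\ref{le:notwoadjacentdegreetwo}.

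The main obstacle is therefore in this last case: carefully extracting, from the failure to boost $\gamma$ beyond $3$, enough structural information about the level-$2$ neighborhood of $C$ to identify a bad or forbidden subgraph. Once this is handled, the remaining verifications reduce to routine applications of Lemma~\ref{le:removeForcubic} together with the parity argument of Lemma~\ref{le:evennumberofdegreetwos}.
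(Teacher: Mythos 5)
Your overall framework (a near independence packing built around the $6$-cycle, case analysis on coincidences among the $w_i$, Lemma~\ref{le:removeForcubic}/Corollary~\ref{cor:removeForcubic}, and the forbidden graphs $F_{14}^{(1)},F_{14}^{(2)}$ appearing in the tight cases) matches the paper's, but the decisive quantitative step does not go through with your choice of $G_0$. Taking $G_0$ to be the full closed neighborhood $V(C)\cup\{w_1,\dots,w_6\}$ gives $|V(G_0)|=12$ in the generic case, and then Lemma~\ref{le:removeForcubic} with $\gamma\geq 3$ yields at best $\alpha(G)\geq \lb(G)+\tfrac{72-106+\max(m_0-2\delta+2\sigma,\,3\delta+3\nu+4\sigma)}{24}$ with $m_0\leq 12$, which is far below the needed $\lb(G)-\tfrac{1}{12}$. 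You recognize this and propose to boost $\gamma$ to $5$, but that boost is exactly the unproved content: there is no reason why, for every maximum independent set $S$ of $G-V(G_0)$, two non-adjacent vertices among $\{w_2,w_4,w_6\}$ should avoid $N(S)$ — each $w_{2j}$ has two neighbors outside $V(G_0)$, and $S$ may well dominate all six $w_i$ simultaneously without this forcing any bad or forbidden subgraph. The paper sidesteps this entirely by taking $G_0=G[V(C)\cup\{w_1,w_3,w_5\}]$, i.e.\ the cycle plus only \emph{one parity class} of outside neighbors: then $|V(G_0)|=9$, $\gamma\geq 3$ is automatic (all neighbors of $\{v_1,v_3,v_5\}$ lie in $V(G_0)$), and $m_0\geq 7$ in the generic case, which closes the inequality. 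In the cases where you do want $\gamma\geq 5$, the paper chooses a $14$-vertex $G_0$ for which a specific $5$-element independent set has \emph{all} its neighbors inside $V(G_0)$, so the extension works for every $S$ rather than depending on $S$.

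A second, smaller issue: your claim that the small-$p$ subcases are "the easiest" is backwards. When there are two or three antipodal coincidences $w_i=w_{i+3}$, the boundary $m_0$ of any natural choice of $G_0$ drops (to as low as $3$ or $5$), and the vertex-count saving does not compensate; the paper's Case~$1.2$ needs a differently tailored $G_0$ (containing the third neighbor $x_2$ of a sneaky vertex), a sub-analysis of whether the unique remaining packing component is dangerous, an application of Lemma~\ref{le:dangerousproperties}, and a final $14$-vertex configuration check. So both ends of your case split require substantive arguments that the proposal does not supply.
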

\begin{proof}
Suppose that $G$ has a $6$-cycle $C:=v_1v_2v_3v_4v_5v_6$. This is an induced $6$-cycle, because $G$ has girth at least $5$. Let $w_i$ denote the neighbor of $v_i$ in $V(G)-V(C)$, for each $i\in \{1, \dots, 6\}$. Since the girth of $G$ is at least $5$, we have $w_i\notin \left\{ w_{i+1}, w_{i+2} \right\}$ (where the indices are taken cyclically). We may however have $w_{i}=w_{i+3}$ for some $i\in \{1, \dots, 6\}$, in which case we call $w_i$ a \emph{sneaky vertex}. Similarly, each edge of the form $w_iw_{i+2}$ is called a \emph{sneaky edge}. 

We define two \emph{parity classes}, $C_1:=\left\{v_1,v_3,v_5\right\}$ and $C_2:=\left\{v_2,v_4,v_6\right\}$ and . If $w_iw_{i+2}$ is a sneaky edge then we say that $w_iw_{i+2}$ \emph{belongs to} the parity class $C_{i \text{ mod } 2}$.

\textbf{Case $1.1$: Some parity class has no sneaky edge and there is at most one sneaky vertex.} \\
Without loss of generality, we may assume that $C_1$ has no sneaky edge. 
If there is a sneaky vertex, we assume without loss of generality that it is $w_2=w_5$. 

Consider a near independence packing $\mathcal{P}=(G_0, G_1, \dots, G_k)$ of $G$ with $G_0 = G[\left\{v_1,v_2,v_3,v_4,v_5,v_6, w_1,w_3,w_5\right\}]$. 
Note that $w_1,w_3,w_5$ are pairwise distinct, thus $|V(G_0)|=9$. 
Every maximum independent set of $G-V(G_0)$ can be extended to an independent set of $G$ by adding $\left\{v_1,v_3,v_5 \right\}$, hence $\gamma \geq 3$.  
Since there are no sneaky edges in the parity class $C_1$, we either have $m_0=9$ if there is no sneaky vertex, or $m_0=7$ if there is one (namely, $w_2$). 
Thus, $m_0 \geq 7$. 
Applying Lemma~\ref{le:removeForcubic}, we obtain
\[
\alpha(G) \geq \lb(G) + \gamma  + \tfrac{2  - 9 \cdot |V(G_0)|}{24} + \max\left(m_0 - 2\delta + 2\sigma, 3\delta + 3\nu + 4\sigma\right)\cdot \tfrac{1}{24} 
\geq \lb(G) -\frac{1}{12}, 
\]
a contradiction. 

\textbf{Case $1.2$: Some parity class has no sneaky edge and there are two or three sneaky vertices.} \\
Without loss of generality, we may assume that $C_1$ has no sneaky edge. 
Also, without loss of generality $w_2=w_5$ and $w_3=w_6$ are two sneaky vertices, and possibly $w_1=w_4$ in case there is a third one. 
Let $x_2 \notin \left\{v_2,v_5\right\}$ denote the third neighbor of $w_2$. 

Consider a near independence packing $\mathcal{P}=(G_0, G_1, \dots, G_k)$ of $G$ with $G_0 = G[\left\{v_1,v_2,v_3,v_4,v_5,v_6, w_2,w_3,x_2\right\}]$. 
Note that $|V(G_0)|=9$. 
Every maximum independent set of $G-V(G_0)$ can be extended to an independent set of $G$ by adding $\left\{v_3,w_2,v_6 \right\}$, hence $\gamma \geq 3$.  
Since $G$ has girth at least $5$, the vertices $v_1,v_4,w_3$ are pairwise non-adjacent, and so are $v_1,v_4,x_2$. 
Note however that we could possibly have the edge $x_2w_3$. 
It follows that either $m_0=3$ (in case $x_2w_3 \in E(G)$) or $m_0=5$ (in case $x_2w_3 \notin E(G)$). 
Applying Lemma~\ref{le:removeForcubic}, we obtain
\[
\alpha(G) \geq \lb(G) + \gamma  + \frac{2  - 9 \cdot |V(G_0)|}{24} + \max\left(m_0 - 2\delta + 2\sigma, 3\delta + 3\nu + 4\sigma\right)\cdot \tfrac{1}{24}. 
\]
Since $m_0 \geq 3$, this is at least $\lb(G) -\frac{1}{12}$ (yielding a contradiction) unless $\delta+\nu=1$ and $\sigma=0$. 
Let us analyze that last possibility. 
First note that $k=\delta+\nu+\sigma=1$. 
Moreover, $w_1\neq w_4$ (that is, there are precisely two sneaky vertices), since $\sigma=0$ and thus $G_1$ has minimum degree $2$. 

Suppose first that $\delta=1$. 
Consider the dangerous graph $G_1$, which has five degree-$2$ vertices. 
Let us show that $\alpha(G) \geq \lb(G_1) + 4$. 
If $G_1$ is an induced subgraph of $G$, then we must have $m_0=m_1=5$.  
The endpoints in $V(G_1)$ of the edges between $V(G_0)$ and $V(G_1)$ are $x_3,w_1,w_4,y_2,y_2^*$, where $x_3$ is a neighbor of $w_3$ and $y_2,y_2^* \neq w_2$ are two neighbors of $x_2$. 
Since $y_2y_2^*$ is not an edge (otherwise $x_2y_2y_2^*$ is a triangle), it follows from Lemma~\ref{le:dangerousproperties}.\ref{le:dangerousproperties_T} that there is an independent set $S$ of $G_1$ of size at least $\lb(G_1)$ that avoids $\left\{w_1,w_4,x_3\right\}$. 
Hence, $S \cup \left\{v_1,w_2,v_4,w_3\right\}$ is an independent set of $G$ of size at least $\lb(G_1) + 4$. 
If $G_1$ is not an induced subgraph of $G$, then since $m_1 \geq 3$ there is exactly one edge $z_1z_2$ of $G$ that links two degree-$2$ vertices of $G_1$, and $m_0=m_1=3$. 
It follows that $x_2w_3 \in E(G)$ (since $m_0=3$). 
Let $y_2$ be the neighbor of $x_2$ in $V(G_1)$. 
Then $w_1,w_4,y_2,z_1,z_2$ are the five degree-$2$ vertices of $G_1$, and by Lemma~\ref{le:dangerousproperties}.\ref{le:dangerousproperties_T_two_vertices} there is an independent set $S$ of $G_1$ of size at least $\lb(G_1)$ that avoids $\left\{w_1,w_4\right\}$. 
Hence, $S \cup \left\{v_1,w_2,v_4,w_3\right\}$ is again an independent set of $G$ of size at least $\lb(G_1) + 4$. 
Therefore, in both cases we obtain
\[
\alpha(G) \geq \lb(G_1) + 4 = \lb(G) - \frac{9 \cdot |V(G_0)|}{24} + \frac{5}{24} + 4 \geq \lb(G) -\frac{1}{12}, 
\]
a contradiction. 

Next, suppose that $\nu=1$. If $m_0=5$ then we directly obtain $\alpha(G)\geq \lb(G)-\frac{1}{12}$. 
So we may assume that $m_0=3$, which implies that $x_2$ and $w_3$ are adjacent. 
Let $y_2 \notin \left\{w_2,w_3\right\}$ denote the third neighbor of $x_2$ and let $z_2,z_2^*$ denote the two neighbors of $y_2$ distinct from $x_2$.  
Consider a near independence packing $\mathcal{P'}=(G'_0, G'_1, \dots, G'_k)$ of $G$ with $G'_0 = G[\left\{v_1,v_2,v_3,v_4,v_5,v_6,w_1,w_2,w_3,w_4,x_2,y_2,z_2,z_2^* \right\}]$. 
It could be that not all vertices in the latter set are pairwise distinct, but we do know that $|V(G'_0)| \leq 14$, and that $S_0:=\left\{v_1,w_2,v_4,w_3,y_2  \right\}$ is an independent set consisting of five pairwise distinct vertices.  
Every independent set of $G-V(G'_0)$ can be extended to an independent set of $G$ by adding $S_0$. 
Hence, $\alpha(G)-\alpha(G-V(G'_0))\geq 5$. 
By Corollary~\ref{cor:removeForcubic}, we find that $G=G'_0$, and that all vertices in the set $\left\{v_1,v_2,v_3,v_4,v_5,v_6,w_1,w_2,w_3,w_4,x_2,y_2,z_2,z_2^* \right\}$ are pairwise distinct. 
However, in the subgraph of $G'_0$ that we identified so far (see Figure~\ref{fig:nosixcycle_case1point2}), $z_2,z_2^*,w_1$ and $w_4$ have degree $1$ and the remaining vertices have degree $3$. 
It is impossible to complete this graph to a cubic graph without creating a $4$-cycle. Contradiction.

 \begin{figure}
\centering
\includegraphics[width=0.6\textwidth]{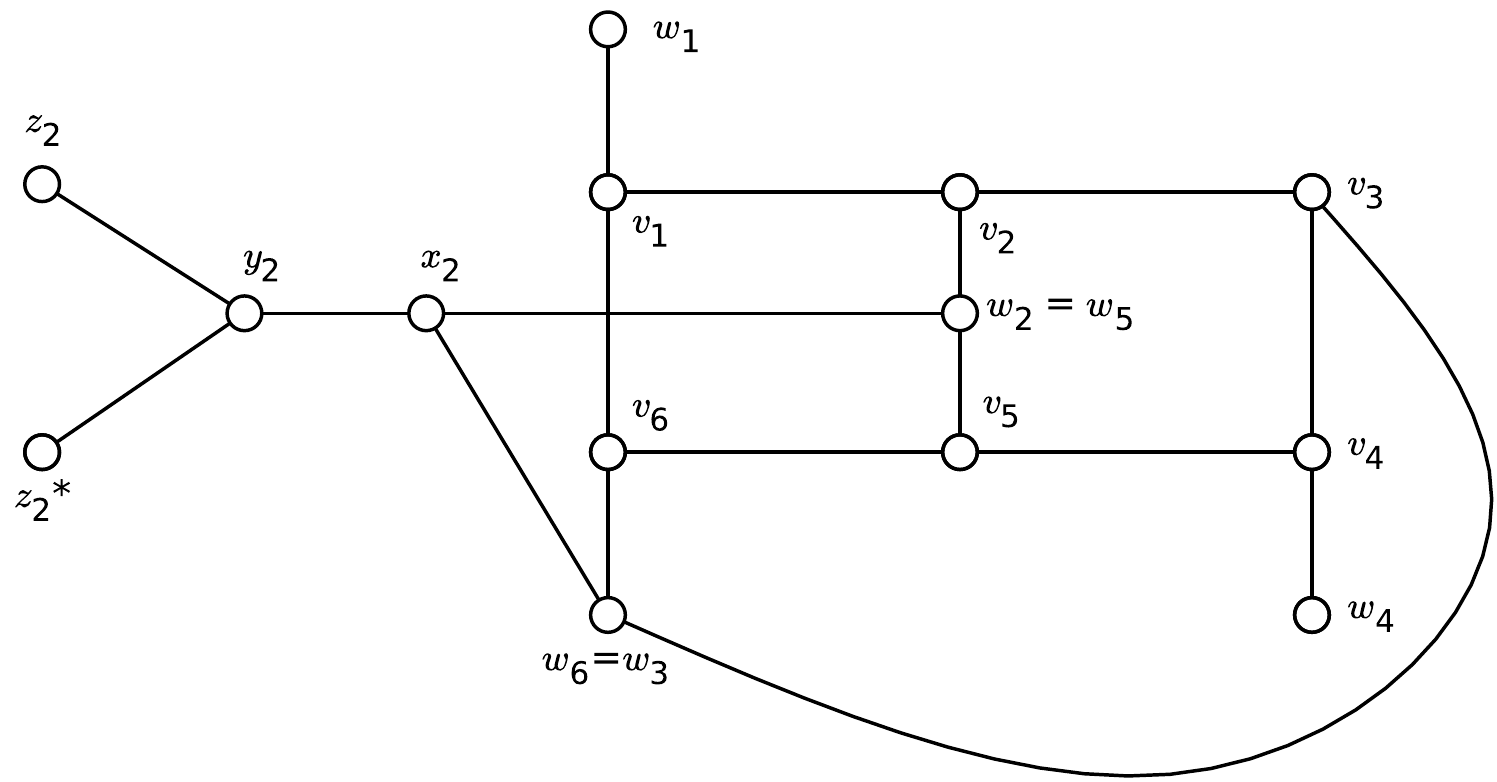}
\caption{Illustration of Case $1.2$ of Lemma~\ref{le:nosixcycle}, when $\nu=1$ and $G=G'_0$.  \label{fig:nosixcycle_case1point2}}
\end{figure}

\textbf{Case $2$: Both parity classes have at least one sneaky edge, and one has exactly one sneaky edge.} \\
Without loss of generality, $C_1$ has exactly one sneaky edge, say $w_1w_3$.  
If $w_1=w_4$ or $w_3=w_6$, then there is a $4$-cycle, so $w_1\neq w_4$ and $w_3\neq w_6$. Thus, either there is exactly one sneaky vertex (if $w_2=w_5$), or none (if $w_2\neq w_5$). 

Consider a near independence packing $\mathcal{P}=(G_0, G_1, \dots, G_k)$ of $G$ with $G_0 = G[\left\{v_1,v_2,v_3,v_4,v_5,v_6, w_1,w_3,w_5\right\}]$. 
Note that $|V(G_0)|=9$ and $m_0 \leq 7$. 
Suppose first that $w_2\neq w_5$. If $m_0 < 7$ then $G[\left\{ w_1,w_3,w_5,v_2,v_4,v_6 \right\}]$ must contain an edge different from $w_1w_3$. But then $w_2=w_5$ or there is a triangle or a $4$-cycle; contradiction. So $m_0=7$. Furthermore, exactly as in case $1.1$ we obtain that $\gamma \geq 3$. Applying Lemma~\ref{le:removeForcubic} yields 
\begin{align*}
\alpha(G) &\geq \lb(G) + \frac{-7 + \max\left(m_0 - 2 \delta + 2\sigma, 3\delta + 3\nu + 4\sigma  \right)}{24} \\
&\geq \lb(G) + \frac{\max(-2\delta,3\delta -7)}{24} \\
&\geq \lb(G) -\frac{1}{12}, 
\end{align*}
a contradiction. 

Next, suppose that $w_2 = w_5$. Here we will use that there is at least one sneaky edge in the parity class $C_2$. If $w_2w_6$ is an edge then $w_2w_6v_6v_5$ is a $4$-cycle, so $w_2w_6$ cannot be a sneaky edge. By symmetry, $w_2w_4$ is not a sneaky edge either. Therefore $w_4w_6$ must be a sneaky edge. 
At least one of $w_1w_4, w_3w_6$ is not an edge, for otherwise $w_1w_3w_6w_4$ would be a $4$-cycle. Without loss of generality (by symmetry of the graph structure derived so far) $w_1w_4 \notin E(G)$. 

Using these assumptions and the fact that $G$ has no triangle nor $4$-cycle, we see that for $i\in \left\{1,2,4 \right\}$, the vertex $w_i$ has its third neighbor outside the set $\left\{  v_1,\ldots, v_6,w_1,\ldots,w_6\right\}$; we let $x_i$ denote that neighbor.  

Consider a near independence packing $\mathcal{P}=(G_0, G_1, \dots, G_k)$ of $G$ with $G_0 = G[\left\{v_1,v_2,v_3,v_4,v_5,v_6,w_1,w_2,w_3,w_4,w_6,x_1,x_2,x_4\right\}]$. Observe that $|V(G_0)|\leq 14$ (possibly $x_1,x_2,x_4$ are not all distinct). Also, by the discussion above and absence of triangles and $4$-cycles, $S_0:= \left\{ v_3,v_6,w_1,w_2,w_4 \right\}$ is an independent set that has no neighbors in $V(G) - V(G_0)$. Therefore every independent set of $G-Z$ can be extended to an independent set of $G$ by adding $S_0$. This means that $\gamma \geq 5$. Applying Corollary~\ref{cor:removeForcubic}, we find that $G=G_0$ and $|V(G)|=14$. In particular, $x_1,x_2,x_4$ are pairwise distinct.
The subgraph of $G$ that we have identified so far (see Figure~\ref{fig:nosixcycle_case2_a}) has three vertices of degree $1$  ($x_1,x_2$ and $x_4$) and two vertices of degree $2$ ($w_3$ and $w_6$). We need to complete this to the cubic graph $G$. 
If $x_1$ (resp.\ $x_4$) is adjacent to both $x_2$ and $x_4$ (resp.\ both $x_2$ and $x_1$) then every extra edge incident to $x_4$ (resp.\ $x_1$) will create a triangle or $4$-cycle; contradiction. On the other hand, if $x_2$ is adjacent to both $x_1$ and $x_4$, then we need to add the edges $x_1w_6, x_4w_3$ to obey $3$-regularity and triangle-freeness. But then $G$ is the forbidden graph $F_{14}^{(2)}$ (see Figure~\ref{fig:nosixcycle_case2_b}); contradiction. Thus, in $G$ each vertex in $\left\{x_1,x_2,x_4\right\}$ is incident to at most one other vertex in $\left\{x_1,x_2,x_4\right\}$. This means that we cannot complete the subgraph in Figure~\ref{fig:nosixcycle_case2_a} to a cubic graph; contradiction.


\begin{figure}
\centering
\begin{minipage}[c]{.47\textwidth}
\centering   
   \subfloat[]{\label{fig:nosixcycle_case2_a}\includegraphics[width=0.7\textwidth]{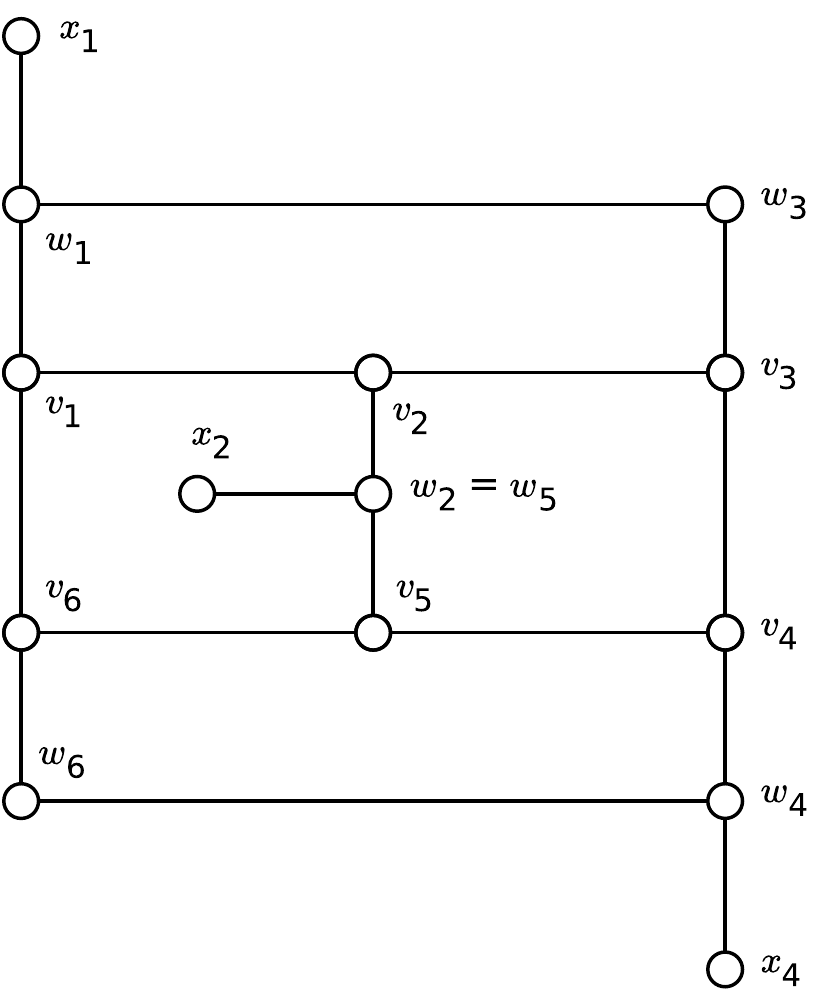}} 
\end{minipage}      
\begin{minipage}[c]{.47\textwidth}
\centering   
   \subfloat[]{\label{fig:nosixcycle_case2_b}\includegraphics[width=0.6\textwidth]{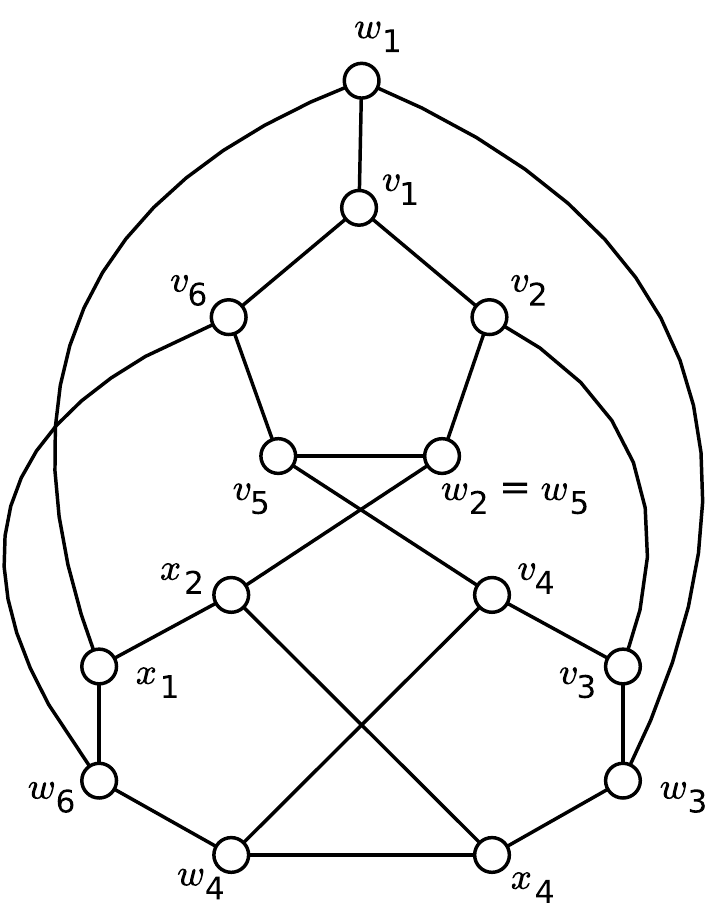}}   
\end{minipage} 
\caption{Illustration of Case $2$ of Lemma~\ref{le:nosixcycle}, when $w_2=w_5$ and $G=G_0$. If additionally $x_1x_2,x_4x_2,x_1w_6,x_4w_3$ are edges, then we obtain the forbidden graph $F_{14}^{(2)}$ (see Figure~\ref{fig:nosixcycle_case2_b}). Otherwise the subgraph depicted in Figure~\ref{fig:nosixcycle_case2_a} cannot be completed to a cubic graph with girth at least $5$ (on the same vertex set). \label{fig:nosixcycle_case2}}
\end{figure}

\textbf{Case $3$: Both parity classes have at least two sneaky edges.}\\
Three sneaky edges in one parity class create a triangle, so we may assume that both parity classes have exactly two sneaky edges. Without loss of generality, let $w_1w_3$ and $w_3w_5$ be the two sneaky edges for $C_1$. 
Consider a near independence packing $\mathcal{P}=(G_0, G_1, \dots, G_k)$ of $G$ with $G_0 = G[\left\{v_1,v_2,v_3,v_4,v_5,v_6,w_1,w_2,w_3,w_4,w_5,w_6, x_1, x_5\right\}]$, where $x_1 \notin \left\{ v_1,w_3 \right\}$ and $x_5 \notin \left\{v_5,w_3  \right\}$ are the third neighbors of respectively $w_1$ and $w_5$. Since $w_1w_5$ is not an edge, it follows that $S_0:= \left\{v_2,v_4,v_6,w_1,w_5 \right\}$ is an independent set. Note that $S_0$ has no neighbors in $V(G)-V(G_0)$. Thus, every independent set of $G-V(G_0)$ can be extended to an independent set of $G$ by adding $S_0$. Hence, $\gamma\geq 5$. Since $|V(G_0)|\leq 14$, Corollary~\ref{cor:removeForcubic} yields that $G=G_0$, $|V(G)|=14$ and $\alpha(G)=5$.

Since the parity class $C_2$ also has two sneaky edges, exactly two of the pairs $\left\{w_2,w_4\right\},\left\{w_4,w_6\right\}, \left\{w_2,w_6\right\}$ induce an edge. In particular, at least one of $w_2w_6,w_4w_6$ is an edge of $G$, and by symmetry of the structure identified so far, we may assume that $w_4w_6 \in E(G)$. 

\begin{figure}
\captionsetup[subfigure]{labelformat=empty}
\centering

   \subfloat[$F_{14}^{(1)}$]{\label{fig:nosixcycle_case3_F14_1}\includegraphics[width=0.45\textwidth]{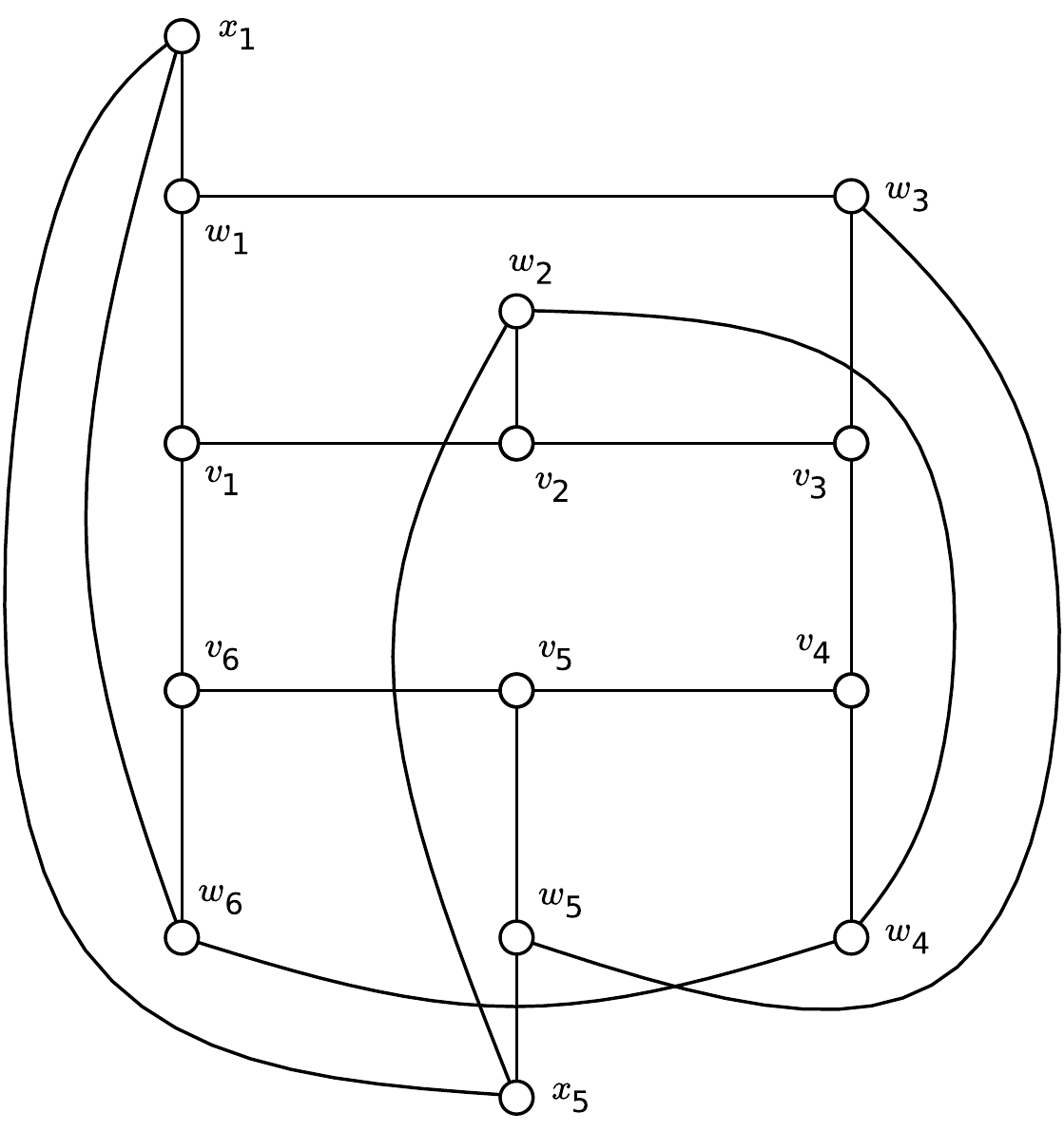}} \qquad
   \subfloat[$F_{14}^{(2)}$]{\label{fig:nosixcycle_case3_F14_2}\includegraphics[width=0.45\textwidth]{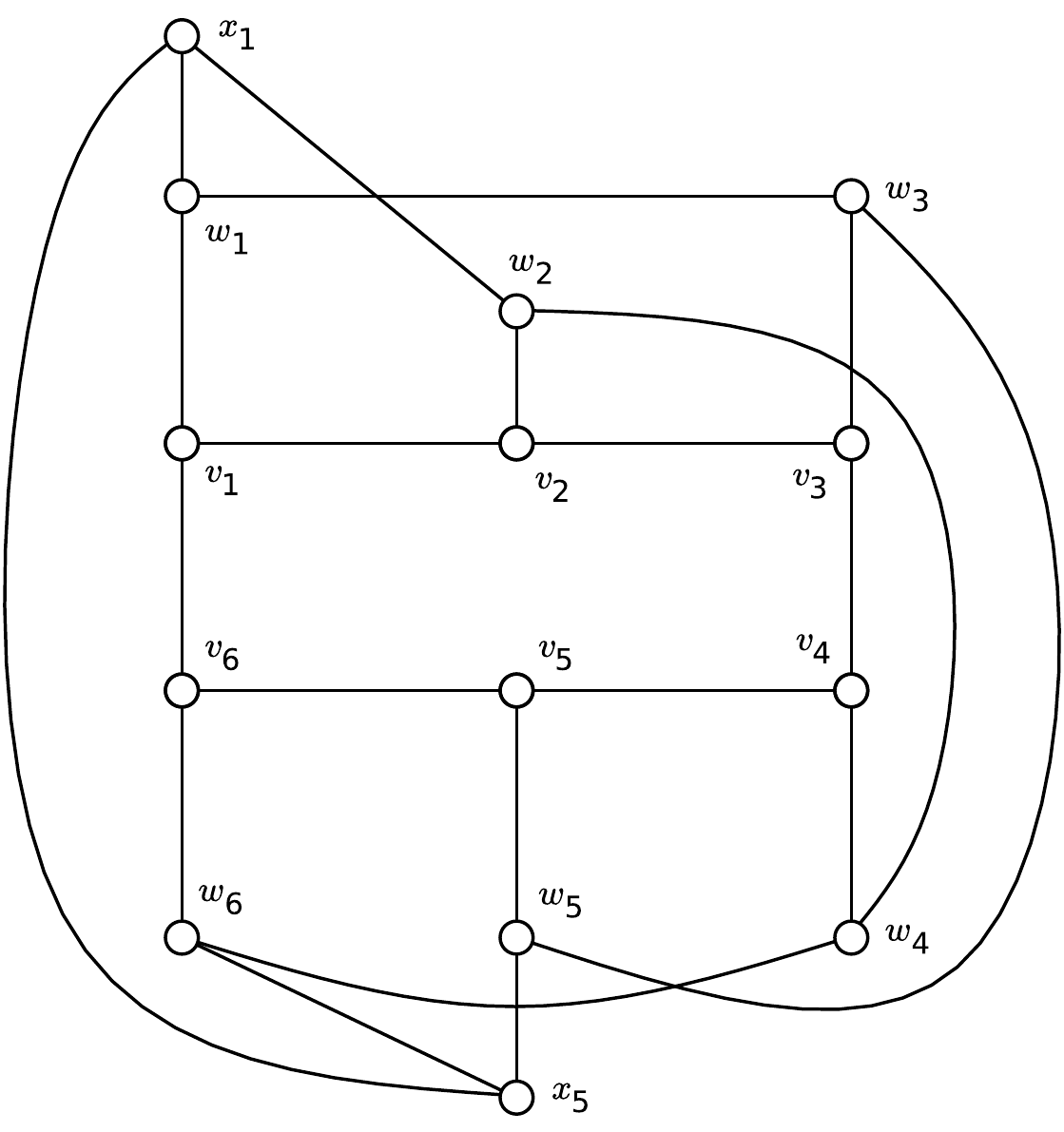}}

\caption{Illustration of the end of Case $3$ of Lemma~\ref{le:nosixcycle}, when both parity classes have two sneaky edges and $G=G_0$ is a graph on $14$ vertices and $\alpha(G)=5$. 
The two graphs are isomorphic to the forbidden graphs $F_{14}^{(1)}$ and $F_{14}^{(2)}$, respectively. \label{fig:nosixcycle_case3}}
\end{figure}

See Figure~\ref{fig:nosixcycle_case3} for an illustration of the following discussion.  
If $w_2w_6 \in E(G)$, then $\left\{w_1,w_2,w_4,w_5,v_3,v_6 \right\}$ is an independent set, contradicting that $\alpha(G)=5$. 
Thus $w_2w_6 \notin E(G)$, and hence $w_2w_4\in E(G)$. In that case, we must have $x_1x_5\in E(G)$, so that $G$ is either the forbidden graph $F_{14}^{(1)}$ (if $x_1w_6$ and $x_5w_2$ are edges) or the forbidden graph $F_{14}^{(2)}$ (if $x_1w_2$ and $x_5w_6$ are edges). Contradiction.
\end{proof}

\begin{lemma}\label{le:adjacentC5s}
If $C$ is a $5$-cycle of $G$, then for every edge $e$ of $C$ there is another $5$-cycle $C_e$ such that $E(C)\cap E(C_e)= \left\{e\right\}$.
\end{lemma}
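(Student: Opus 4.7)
By cyclic symmetry it is enough to prove the statement for one specific edge of $C$. Write $C=v_1v_2v_3v_4v_5$, fix $e=v_1v_2$, and for each $i$ let $w_i$ denote the neighbor of $v_i$ outside $V(C)$. Since $G$ has girth at least~$5$ (Lemma~\ref{le:nofourcycle} and the 5-cycle hypothesis), the $w_i$ are pairwise distinct and disjoint from $V(C)$. The first step is a shape analysis: any 5-cycle $C_e$ with $E(C_e)\cap E(C)=\{e\}$ is forced to have the form $v_1v_2w_2xw_1v_1$, where $x\in V(G)\setminus V(C)$ is a common neighbor of $w_1$ and $w_2$. Indeed, the neighbor of $v_2$ on $C_e$ distinct from $v_1$ cannot be $v_3$ without dragging $v_2v_3$ into $C_e\cap C$, so it must be $w_2$; symmetrically for $v_1$. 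Checking the three potential positions $x\in V(C)$ each contradicts the girth or uniqueness of the $w_i$. Thus the lemma reduces to showing $w_1$ and $w_2$ share a neighbor.

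Suppose, for contradiction, that $w_1$ and $w_2$ have no common neighbor. Using Lemma~\ref{le:nosixcycle} I first rigidify the local structure around $C$. A sneaky edge $w_iw_{i+2}$ would combine with the length-$3$ $C$-path $v_iv_{i-1}v_{i+3}v_{i+2}$ to produce a $6$-cycle, so no sneaky edges exist; writing $\{a_1,a_2\}:=N(w_1)\setminus\{v_1\}$ and $\{b_1,b_2\}:=N(w_2)\setminus\{v_2\}$, any edge $a_ib_j$ would produce the $6$-cycle $w_1a_ib_jw_2v_2v_1w_1$; and each coincidence $a_i=w_j$ or $b_i=w_j$ for $j\in\{3,4,5\}$ similarly gives a $6$-cycle together with part of $C$. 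Hence the $14$ vertices $v_1,\dots,v_5,w_1,\dots,w_5,a_1,a_2,b_1,b_2$ are pairwise distinct, and the only edges among $\{w_1,\dots,w_5,a_1,a_2,b_1,b_2\}$ are $w_1a_1,w_1a_2,w_2b_1,w_2b_2$.

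I then set up a near independence packing $\mathcal{P}=(G_0,G_1,\dots,G_k)$ of $G$ with $G_0:=G[V(C)\cup\{w_1,\dots,w_5,a_1,a_2,b_1,b_2\}]$, so $|V(G_0)|=14$ and $m_0=14$ (the only edges leaving $G_0$ are the two outside neighbors of each of $w_3,w_4,w_5,a_1,a_2,b_1,b_2$). The set $\{v_3,v_5,w_1,w_2\}$ is an independent set of $G_0$ whose $G$-neighbors all lie in $V(G_0)$, hence it can be added to any maximum independent set of $G-V(G_0)$, giving $\gamma\geq 4$. I now split according to whether $\gamma\geq 5$ or $\gamma=4$. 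If $\gamma\geq 5$, Corollary~\ref{cor:removeForcubic} forces $G=G_0$ and $\alpha(G)=5$; the seven boundary vertices $w_3,w_4,w_5,a_1,a_2,b_1,b_2$ then have to have their deficiency completed by a $2$-regular subgraph on them, but the constraints (no sneaky edges, no $a_ia_j$, $b_ib_j$ or $a_ib_j$, and no new triangles or $4$-cycles) force that subgraph to be bipartite between the sides $\{w_3,w_4,w_5\}$ and $\{a_1,a_2,b_1,b_2\}$, of sizes $3$ and $4$; the degree-sum parity $3\cdot 2\neq 4\cdot 2$ yields a contradiction.

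The remaining case $\gamma=4$ is the main obstacle: I must rule out the possibility that a maximum independent set $S$ of $G-V(G_0)$ blocks every size-$5$ independent set of $G_0$ through $V(G_0)$. The size-$5$ independent sets $\{a_1,a_2,v_3,v_5,w_2\}$, $\{b_1,b_2,v_3,v_5,w_1\}$, and cyclic variants such as $\{v_2,v_4,w_1,w_3,w_5\}$ or $\{v_1,v_3,w_2,w_4,w_5\}$ each have explicit and pairwise different sets of blocking vertices in $V(G)\setminus V(G_0)$; simultaneous blocking of all of them pins down enough of $S$ to expose, via the criticality of the edges of $G_0$ and a second application of Lemma~\ref{le:removeForcubic} to a suitably enlarged or locally modified near packing, either a short cycle forbidden by Lemma~\ref{le:nosixcycle} or a bad subgraph forbidden by Lemma~\ref{le:nobadsubgraph}, producing the desired contradiction. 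I expect this last step to require a careful case split on how the blocking vertices of $S$ are distributed among the four "sides" $\{a_1,a_2\}$, $\{b_1,b_2\}$, the outside neighbors of $w_3,w_4,w_5$, and to be by far the most delicate part of the argument.
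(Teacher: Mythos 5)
Your reduction of the lemma to the claim that $w_1$ and $w_2$ have a common neighbor is correct and matches the paper's starting point, but after that the argument does not go through. First, a structural slip: you assert that the only edges among $\{w_1,\dots,w_5,a_1,a_2,b_1,b_2\}$ are $w_1a_1,w_1a_2,w_2b_1,w_2b_2$, but an edge $w_5a_i$ only yields the $5$-cycle $v_1w_1a_iw_5v_5$, and $w_3b_j$ only yields the $5$-cycle $v_2w_2b_jw_3v_3$; neither is excluded by girth or by Lemma~\ref{le:nosixcycle}. So $m_0$ need not be $14$ (it can be $10$ or $12$), and your later degree-completion argument in the $\gamma\geq 5$ branch is built on an inconsistent premise (though that branch still dies for other reasons, since at most two edges can be added among the seven boundary vertices while seven are needed).

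The fatal problem is the case $\gamma=4$, which you explicitly leave open. With $|V(G_0)|=14$, $\gamma=4$ and $m_0\leq 14$, Lemma~\ref{le:removeForcubic} gives at best $\alpha(G)\geq \lb(G)-\tfrac{7}{12}-\tfrac{\delta-\sigma}{12}$, which is nowhere near the required $\lb(G)-\tfrac{1}{12}$; the deficit cannot be recovered by any plausible count of components, so this is not a technicality but the entire content of the lemma. Your sketch ("blocking vertices... pin down enough of $S$ to expose a short cycle or a bad subgraph") is a hope, not an argument. The paper resolves exactly this difficulty by a different device: it removes only the six vertices $\{v_1,\dots,v_5,w_4\}$ and \emph{adds the auxiliary edge $w_1w_2$} to the remainder, so that every independent set of the modified graph $H$ misses one of $w_1,w_2$ and hence $\alpha(G)\geq\alpha(H)+2$. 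A careful packing count on $H$ (with indicator variables for whether the component containing $w_1w_2$ is bad or one of $F_{11},F_{19}^{(1)},F_{19}^{(2)}$) then reduces to a handful of exceptional components, which are eliminated using the $6$-cycle structure of bad graphs (Lemma~\ref{le:badproperties}.\ref{le:badproperties_6}) and by exhibiting $F_{22}$ as the unique surviving completion. None of that machinery is present in your proposal, so the proof is incomplete.
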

\begin{proof}
Let $C:=v_1v_2v_3v_4v_5$ be a $5$-cycle.  For each $i\in\{1,\dots,5\}$, let $w_i \notin V(C)$ denote the third neighbor of $v_i$. Observe that $v_1,\ldots, v_5, w_1,\ldots,w_5$ must be pairwise distinct and $w_iw_{i+1} \notin E(G)$ for each $i$ (where indices are taken cyclically), since $G$ has no triangle, and no $4$-cycle or $6$-cycle either by Lemmas~\ref{le:nofourcycle} and~\ref{le:nosixcycle}. 

It suffices to prove the lemma for the edge $e=v_1v_2$. Arguing by contradiction, we assume from now on that $w_1$ and $w_2$ do not have a common neighbor. Let $G_0:= G[\left\{v_1,v_2,v_3,v_4,v_5,w_4 \right\}]$ and let $H$ denote the graph obtained from $G-V(G_0)$ by adding the edge $w_1w_2$. Note that $H$ is subcubic and triangle-free, by our assumption on $w_1$ and $w_2$.

Let $S$ be a maximum independent set of $H$. At least one of $w_1,w_2$, say $w_1$, is not in $S$. Then $S\cup \left\{ v_1,v_4 \right\}$ is an independent set of $G$. Thus $\alpha(G)\geq \alpha(H) + 2$. Also, $\lb(G_0)=\frac{1\cdot 9+ 4 \cdot 10 + 1 \cdot 11-2}{24}=2+ \frac{5}{12}$. Hence, 
\begin{equation}\label{eq:adjacentC5s_1}
\alpha(G) \geq \alpha(H) + 2 = \alpha(H) + \lb(G_0) - \frac{5}{12}.
\end{equation}

Let $G_1,G_2,\ldots,G_k$ be an independence packing of $H$. 
If the edge $w_1w_2$ is included in one of the graphs $G_1, \dots, G_k$, let us assume without loss of generality that it is in $G_k$. 

First, let us deal quickly with the case that $w_1w_2$ is {\em not} in $G_k$. 
Then $\mathcal{P}=(G_0, G_1, \dots, G_k)$ is a near independence packing of $G$. 
Since $\alpha(H)= \sum_{i=1}^k \alpha(G_i)$ and $\alpha(G) - \alpha(H) \geq 2$, we deduce that $\gamma \geq 2$. 
Using Lemma~\ref{le:contract} combined with $m_0=6$, we then obtain 
\[
\alpha(G) 
\geq \lb(G) + \gamma - \lb(G_0) + \left\lceil \frac{m_0 + 3\delta + 3\nu + 4\sigma}{2} \right\rceil  \cdot \tfrac{1}{12}
\geq \lb(G) - \tfrac{1}{12}, 
\]
implying that $G$ is not a counterexample. 
Thus, the edge $w_1w_2$ must be in $G_k$. 

Let $m:=|E(G)|- \sum_{i=0}^k |E(G_i)|$. 
Let $m_i$ ($i \in \{0, 1, \dots, k\}$) denote the numbers of edges of $G$ that have exactly one endpoint in $V(G_i)$. 

Since $m_0 = 6$, it follows that $m\geq 6-1=5$ (the $-1$ is due to the edge $w_1w_2$ which is in $G_k$ but not in $G$).  
Also, by $3$-connectivity of $G$ we know that $m_i \geq 3$ for each $i\in \{1, \dots, k\}$. 

Let us consider $G_k$. 
If $G_k$ is isomorphic to one of $F_{11}$, $F_{14}^{(1)}$, $F_{14}^{(2)}$, $F_{19}^{(1)}$, $F_{19}^{(2)}$, $F_{22}$, then $G_k$ has at most one degree-$2$ vertex, and in fact it must have one because otherwise $\{w_1,w_2\}$ would be a cutset of $G$, so $G_k$ is isomorphic to one of $F_{11}$, $F_{19}^{(1)}$, $F_{19}^{(2)}$. 
In particular, $m_k=3$ in this case. 
If $G_k$ is isomorphic to a bad graph then $G_k$ has four vertices of degree $2$. Since $w_1w_2 \in E(G_k)$, this implies that either $m_k=6$, or $m_k=4$ and there is one edge of $G$ with both endpoints in $V(G_k)$ but not in $G_k$. 

Similarly but not quite exactly as when considering independence packings, let us write $\delta, \sigma, \nu$ for the number of indices $i\in \{1, \dots, k\}$ such that $G_i$ is respectively dangerous, isomorphic to $K_1$ or $K_2$, and a graph on at least three vertices that is neither dangerous nor bad nor isomorphic to $F_{11}$, $F_{19}^{(1)}$ or $F_{19}^{(2)}$. 
Let also $I_F$ ($I_B$) be the indicator variable which is equal to $1$ if $G_k$ is isomorphic to one of $F_{11}$, $F_{19}^{(1)}$, $F_{19}^{(2)}$ (respectively, to a bad graph), and $0$ otherwise. 

Combining the previous observations, we obtain 
\begin{equation}\label{eq:adjacentC5s_2}
2m \geq 6-2 + 3 I_F + 6 I_B + 5 \delta + 3(\sigma + \nu).  
\end{equation}
The $-2$ accounts for the edge $w_1w_2$ in $G_k$ but not in $G$, and the $5\delta$ comes from the fact that every dangerous graph has five degree-$2$ vertices.

Applying Theorem~\ref{th:main} to each graph $G_i$ with $i\in \{1, \dots, k-1\}$, we obtain that 
\[
\alpha(G_i) \geq \left\{ 
\begin{array}{ll}
\lb(G_i) & \textrm{ if } G_i \textrm{ is dangerous }\\[.5ex]
\lb(G_i) + \frac{1}{6} & \textrm{ if } G_i \textrm{ is isomorphic to } K_1 \textrm{ or } K_2 \\[.5ex]
\lb(G_i) + \frac{1}{12} & \textrm{ otherwise. }
\end{array}
\right.
\]
In the third case, the lower bound comes from the fact that $G_i$ has at least three degree-$2$ vertices, contains no bad subgraph, and is not dangerous. 

As for the graph $G_k$, since $G_k$ is not a subgraph of $G$, we have less control on its structure. Applying Theorem~\ref{th:main} gives the following (weaker) lower bounds:
\[
\alpha(G_k) \geq \left\{ 
\begin{array}{ll}
\lb(G_k) - \frac{1}{12} & \textrm{ if } G_k \textrm{ is bad or isomorphic to one of } F_{11}, F_{19}^{(1)}, F_{19}^{(2)}  \\[.5ex]
\lb(G_k) + \frac{1}{6} & \textrm{ if } G_k \textrm{ is isomorphic to } K_1 \textrm{ or } K_2 \\[.5ex]
\lb(G_k) & \textrm{ otherwise. }
\end{array}
\right.
\]
Note that $G_k$ cannot in fact be isomorphic to $K_1$ since it contains the edge $w_1w_2$, but we will not use this fact. 
Let $I_N$ be the indicator variable which is equal to $1$ if $G_k$ falls in the third category and $G_k$ is not dangerous, and $0$ otherwise.  
Note that in the case $I_N=1$ we do not necessarily obtain a stronger lower bound of $\lb(G_k) + \frac{1}{12}$ for $G_k$ as $G_k$ could have a bad subgraph or less than three degree-$2$ vertices. 

Using \eqref{eq:adjacentC5s_1}, we obtain 
\begin{equation}\label{eq:adjacentC5s_4}
\alpha(G) \geq \lb(G_0) - \frac{5}{12} + \alpha(H)
= \lb(G_0) - \frac{5}{12} + \sum_{i=1}^{k} \alpha(G_i)
= \lb(G) + \frac{2(m-k)-10}{24} + \sum_{i=1}^{k} (\alpha(G_i)-\lb(G_i)).
\end{equation}

Combining the previous inequalities, we obtain

\begin{equation}\label{eq:adjacentC5s_5}
\alpha(G) \geq \lb(G) + \frac{1}{24} \left( -6 - I_F + 2 I_B + 3\delta + 5\sigma + 3\nu - 2I_N \right).
\end{equation}

If instead of using inequality (\ref{eq:adjacentC5s_2}) we use that $2m\geq 10$, then we find
\begin{equation}\label{eq:adjacentC5s_6}
\alpha(G) \geq \lb(G) + \frac{1}{24}  \left(-2\delta +2\sigma -4I_F -4I_B - 2I_N \right).
\end{equation}

From (\ref{eq:adjacentC5s_5}) and (\ref{eq:adjacentC5s_6}) we see that $\alpha(G)-\lb(G)\geq -\tfrac{1}{12}$, unless $\sigma=0$, $\delta + \nu\leq 1$, and $I_F+I_B=1$. Thus $k \in \{1,2\}$. 

Suppose first that $G_k$ is bad. By inequality (\ref{eq:adjacentC5s_5}), we may assume that $k=1$. 
If $G_k$ is not isomorphic to $B_8$ or $B_{16}^{(1)}$, then by Lemma~\ref{le:badproperties}.\ref{le:badproperties_6}, no edge of $G_k$ is contained in every $6$-cycle of $G_k$. 
Thus $G_k - w_1w_2$ contains a $6$-cycle, and so does $G$; contradiction. 
If $G_k$ is isomorphic to $B_8$ then $w_1w_2$ must be an edge of the $4$-cycle of $B_8$. But then in $G$ there is a $6$-cycle consisting of the path $w_1v_1v_2w_2$ and two other vertices of $G_k$. 
Contradiction. 
Finally, if $G_k$ is isomorphic to $B_{16}^{(1)}$, then observe that there is only one edge of $B_{16}^{(1)}$ contained in all $6$-cycles of $B_{16}^{(1)}$, and thus $w_1w_2$ must be that edge. 
There are eight ways to add the remaining edges in $G$ between $G_0$ and $G_k-w_1w_2$ while respecting $3$-regularity and triangle-freeness, see Figure~\ref{fig:obtainingF22}. This reveals that $G$ is either the forbidden graph $F_{22}$ (contradiction) or $G$ contains a $6$-cycle (contradiction). 

\begin{figure}
\centering
\includegraphics[width=0.5\textwidth]{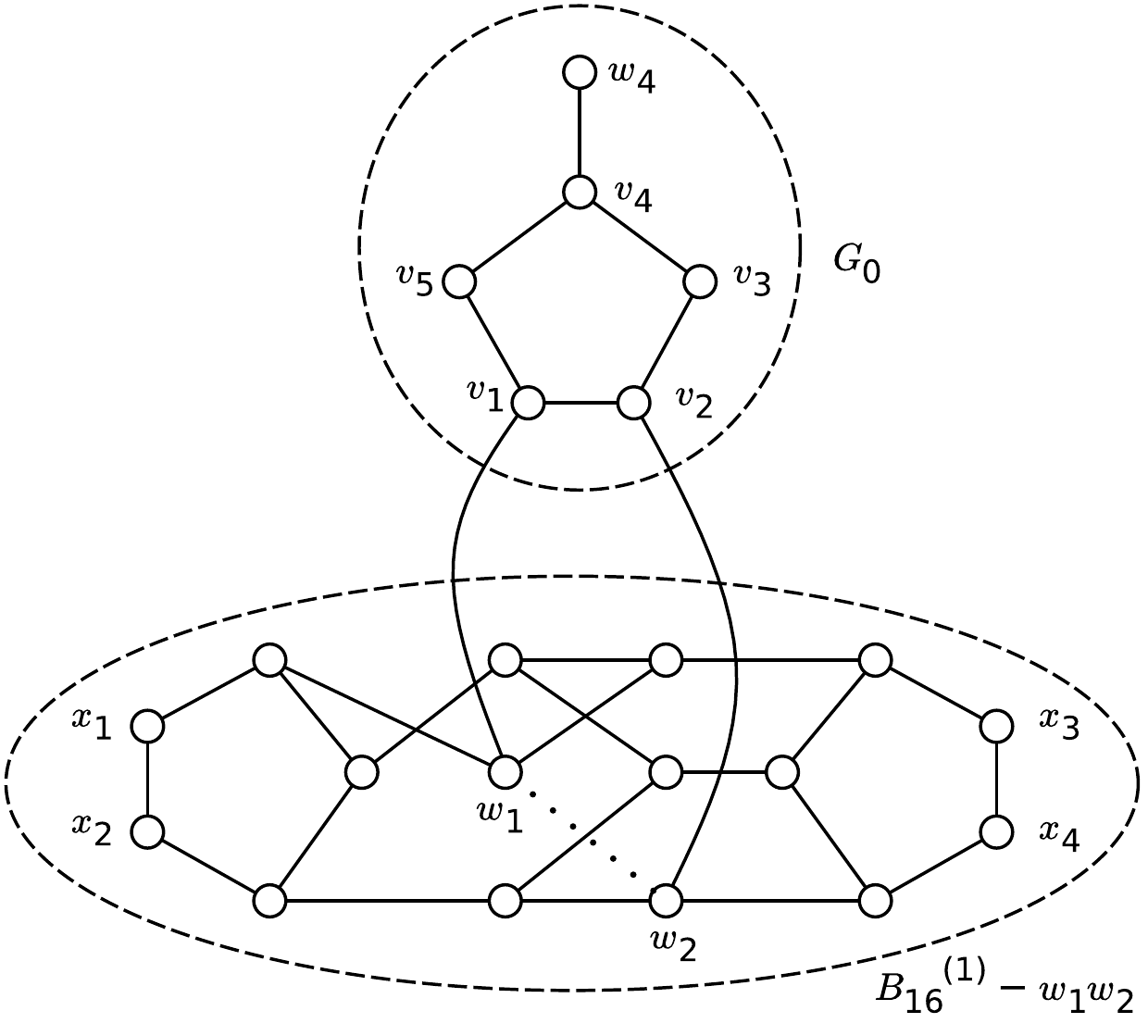}
\caption{Reduction to $F_{22}$, in Lemma~\ref{le:adjacentC5s}. There are $2^3$ ways of adding extra edges between $G_0$ and $G_k-w_1w_2$, such that the resulting graph is triangle-free and cubic. Indeed: $w_4$ must have one neighbor in $\left\{x_1,x_2\right\}$ and one neighbor in $\left\{x_3,x_4\right\}$, and $v_5$ and $v_3$ cannot be adjacent. One of the eight resulting graphs is $F_{22}$, obtained by adding the edges $w_4x_2, w_4x_3, v_5x_1$ and $v_3x_4$. The seven other graphs all contain $6$-cycles. 
} \label{fig:obtainingF22}
\end{figure}

It remains to consider the case where $G_k$ is isomorphic to one of $F_{11}, F_{19}^{(1)}, F_{19}^{(2)}$. 
Note that $k=1$ is not possible here, since $m_0=6$ but $m_1=3$. 
Thus $k=2$. 
Recall that $G_1$ is then either dangerous, or not dangerous with at least three degree-$2$ vertices. 
Let $y$ denote the unique degree-$2$ vertex of $G_k$. 
If $yv_3$ is an edge of $G$ then $\left\{v_3v_4,v_1v_5 \right\}$ is a $2$-edge cutset of $G$; contradiction. 
Thus $yv_3 \notin E(G)$. 
Symmetrically we know that $yv_5 \notin E(G)$. 
Below we distinguish two cases, namely $y w_4 \in E(G)$ and $y w_4\notin E(G)$.

\textbf{Case $1$: $yw_4 \notin E(G)$.}\\
In this case $\left\{y,v_3,v_5,w_4 \right\}$ is an independent set. Therefore there must be precisely \emph{five} edges in $G$ that have one endpoint in $V(G_1)$ (and the other endpoint in $\left\{y,v_3,v_5,w_4\right\}$). If $G_1$ is not dangerous then this implies that in inequality (\ref{eq:adjacentC5s_2}), we can replace the factor $3 \nu$ with $5 \nu$. This propagates to an extra additive term $\tfrac{1}{12}$ in inequality (\ref{eq:adjacentC5s_5}), yielding $\alpha(G)\geq \lb(G) -\frac{1}{12}$; contradiction. So we may assume that $G_1$ is dangerous. 
Let $x_4,x_4^*$ denote the neighbors of $w_4$ in $V(G_1)$ and let $y^*$ denote the neighbor of $y$ in $V(G_1)$. Observe that $\left\{w_3,w_5,x_4,x_4^*,y^*\right\}$ are the degree-$2$ vertices of $G_1$.

Because $x_4x_4^*$ is not an edge (otherwise $w_4x_4x_4^*$ is a triangle), it follows from Lemma~\ref{le:dangerousproperties}.\ref{le:dangerousproperties_T} that $G_1$ has an independent set $S_1$ of size $\lb(G_1)$ that avoids $\left\{w_3,w_5,y^*\right\}$. 
Let $S_2$ be a maximum independent set of $G_2-w_1w_2$. Since $G_2$ is critical, we know that $|S_2| = \alpha(G_2) + 1 \geq \lb(G_2) - \frac{1}{12} + 1$. 
Now, let 
\[
S := \left\{v_3,v_5\right\} \cup S_1 \cup S_2. 
\]
Note that $S$ is an independent set of $G$. 
Since $\lb(G) = \sum_{i=0}^2 \lb(G_i) - \frac{4}{12}$, and recalling that $\lb(G_0) = 2 + \frac{5}{12}$, we obtain that
\[
\alpha(G) \geq |S| 
\geq 2 + \lb(G_1) + \left(\lb(G_2) - \frac{1}{12} + 1\right)
= \sum_{i=0}^2 \lb(G_i) + \frac{6}{12}
= \lb(G) + \frac{10}{12},
\]
a contradiction.

\textbf{Case $2$: $yw_4 \in E(G)$.}\\
First note that $G-\left\{v_1,v_2,v_3,v_4,v_5,w_4\right\}$ is disconnected. Observe furthermore that $y\neq w_1$, for otherwise $\left\{w_1,w_2\right\}$ would be a $2$-cutset. Due to the edge $yw_4$ and the facts that $y \neq w_1$ and $G_2-w_1$ is connected, it follows that $G-\left\{v_1,v_2,v_3,v_4,v_5,w_1\right\}$ is connected. 

We started out the proof of this lemma from the perspective of an arbitrary edge $v_1v_2$ of the $5$-cycle and, arguing by contradiction, we assumed that $w_1$ and $w_2$ do not have a common neighbor.  
In case $w_4$ and $w_5$ do not have a common neighbor either, we can apply the same arguments instead to the edge $v_4v_5$, and deduce that the derived local graph structure from the perspectives of $v_1v_2$ respectively $v_4v_5$ are the same. That is, we end up in this last case as well when considering the edge $v_4v_5$. 
However, this implies that $G-\left\{v_1,v_2,v_3,v_4,v_5,w_1\right\}$ must be disconnected, a contradiction. 
Hence, $w_4$ and $w_5$ must have a common neighbor $x_4\neq y$. Symmetrically $w_4$ and $w_3$ must have $x_4$ as their common neighbor. But then $x_4w_3v_3v_4v_5w_5$ is a $6$-cycle; contradiction.
\end{proof}

\begin{lemma}\label{le:noC5s}
$G$ has no $5$-cycle.
\end{lemma}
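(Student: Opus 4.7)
The plan is to assume for contradiction that $G$ contains a $5$-cycle $C = v_1v_2v_3v_4v_5$ and, by iteratively applying Lemma~\ref{le:adjacentC5s} together with the absence of triangles, $4$-cycles (Lemma~\ref{le:nofourcycle}), and $6$-cycles (Lemma~\ref{le:nosixcycle}), force $G$ to be isomorphic to the dodecahedral graph. Since the dodecahedron has $n=20$ and $\alpha=8$, so that $\alpha \geq \lb = \tfrac{89}{12}$, it is not a counterexample to Theorem~\ref{th:main}, and the contradiction follows.

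For each $i \in \{1,\ldots,5\}$, let $w_i$ denote the third neighbor of $v_i$. The girth conditions ensure that $v_1,\ldots,v_5,w_1,\ldots,w_5$ are pairwise distinct. Applying Lemma~\ref{le:adjacentC5s} to $C$ and each edge $v_iv_{i+1}$ (indices mod~$5$) yields a $5$-cycle sharing only $v_iv_{i+1}$ with $C$; this cycle must take the form $v_iv_{i+1}w_{i+1}x_{i,i+1}w_i$ for some common neighbor $x_{i,i+1}$ of $w_i$ and $w_{i+1}$. A case analysis using the absence of short cycles (e.g.\ $x_{i,i+1}=w_j$ produces a $6$-cycle of the form $w_j v_j\cdots v_i w_i$, and $x_{i,i+1}=x_{j,j+1}$ for non-adjacent edges produces a vertex of degree $\geq 4$) shows that the five vertices $x_{i,i+1}$ are pairwise distinct and disjoint from $\{v_j,w_j\}$. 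Thus $w_i$ has neighborhood exactly $\{v_i,\,x_{i-1,i},\,x_{i,i+1}\}$.

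Let $y_{i,i+1}$ denote the third neighbor of $x_{i,i+1}$. Once more, the girth constraints rule out $y_{i,i+1}\in V(C)\cup\{w_j\}\cup\{x_{j,j+1}\}$ as well as any identification $y_{i,i+1}=y_{j,j+1}$ (since, e.g., $y_{12}=x_{34}$ produces the $6$-cycle $x_{12}x_{34}w_3v_3v_2w_2x_{12}$, and $y_{12}=y_{34}$ produces the $6$-cycle $y_{12}x_{12}w_2x_{23}w_3x_{34}y_{34}$). Now apply Lemma~\ref{le:adjacentC5s} to $C_{v_1v_2}=v_1v_2w_2x_{12}w_1$ and the edge $x_{12}w_2$: a new $5$-cycle must take the form $x_{12}{-}w_2{-}a{-}b{-}c{-}x_{12}$ with $a\in\{v_2,x_{23}\}$ and $c\in\{w_1,y_{12}\}$. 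The choice $(a,c)=(v_2,w_1)$ returns $C_{v_1v_2}$ itself; the other three combinations force $b$ to be a vertex already having three prescribed neighbors, except for $(a,c)=(x_{23},y_{12})$, where $b=y_{23}$ works and produces the $5$-cycle $x_{12}w_2x_{23}y_{23}y_{12}$. Hence $y_{12}y_{23}\in E(G)$; by symmetry over all edges $x_{i,i+1}w_{i+1}$, every consecutive pair $y_{i,i+1}y_{i+1,i+2}$ is an edge, so the $y$'s form an inner $5$-cycle.

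At this point the full neighborhood of every one of the $20$ vertices $\{v_i,w_i,x_{i,i+1},y_{i,i+1}\}_i$ has been determined, so $G$ is exactly this graph, which one verifies is the dodecahedral graph. Since the dodecahedron is not a counterexample to Theorem~\ref{th:main}, this contradicts the choice of $G$, completing the proof. The principal difficulty will be the two girth-based case analyses: first to rule out every identification among the $v$'s, $w$'s, $x$'s, and $y$'s, and second to show that the alternative $5$-cycle produced by the second application of Lemma~\ref{le:adjacentC5s} must be exactly $x_{12}w_2x_{23}y_{23}y_{12}$, so that the $y$-pentagon edges are forced.
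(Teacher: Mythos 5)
Your proposal is correct, but it follows a genuinely different route from the paper. The paper's proof is very short: it uses Lemma~\ref{le:adjacentC5s} only to show that every edge of $G$ lies in exactly two $5$-cycles (a third $5$-cycle through an edge would force a $4$- or $6$-cycle), and then invokes a theorem of Bondy and Locke stating that the dodecahedron is the only connected triangle-free cubic graph without $6$-cycles admitting an edge-$2$-covering by $5$-cycles. You instead reconstruct the dodecahedron by hand, layer by layer, which makes the argument self-contained at the price of the two case analyses you flag. I checked the delicate points and they go through: every potential edge $w_iw_j$ yields a $4$-cycle or (going the long way around $C$) a $6$-cycle, so no $x_{i,i+1}$ can coincide with a $w_j$; the identifications among the $x$'s and $y$'s are excluded exactly as you indicate; and in the second application of Lemma~\ref{le:adjacentC5s} to $C_{v_1v_2}$ and $x_{12}w_2$, the new cycle cannot reuse the edges $w_2v_2$ or $x_{12}w_1$ (the lemma guarantees the edge intersection is exactly $\{x_{12}w_2\}$), so in fact only the combination $(a,c)=(x_{23},y_{12})$ survives, forcing $b=y_{23}$ and hence the inner pentagon; your remark that $(a,c)=(v_2,w_1)$ ``returns $C_{v_1v_2}$ itself'' should really say that this choice is excluded outright, but this does not affect the conclusion. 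Once all $20$ vertices have their three neighbours determined, connectivity gives $G$ equal to the dodecahedron, which satisfies $\alpha=8\geq\lb=\tfrac{89}{12}$ and is neither bad, dangerous, nor forbidden, so it is not a counterexample — the same terminal contradiction as in the paper.
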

\begin{proof}
Suppose $G$ has a $5$-cycle. 
By repeatedly applying Lemma~\ref{le:adjacentC5s}, we see that each edge of $G$ is incident to a $5$-cycle and (therefore) in fact contained in some $5$-cycle. Let $C:=v_1v_2v_3v_4v_5$ be a $5$-cycle. By Lemma~\ref{le:adjacentC5s}, there is a second $5$-cycle $v_1v_2w_2x_1w_1$ containing $v_1v_2$, with $w_1,w_2,x_1 \notin V(C)$. There cannot be a third $5$-cycle containing $v_1v_2$, for otherwise $G$ must have a $4$-cycle or $6$-cycle. We conclude that $v_1v_2$ (and therefore each edge of $G$) is contained in exactly two $5$-cycles.
As proved by Bondy and Locke~\cite[Theorem~8.2]{BL86}, the only connected triangle-free cubic graph having no $6$-cycle and admitting an edge-$2$-covering by $5$-cycles is the dodecahedron. The dodecahedron is not a counterexample to Theorem~\ref{th:main}. 
\end{proof}

In summary, we know that $G$ is $3$-connected and has girth at least $7$. In particular, by Lemma~\ref{le:dangerousproperties}.\ref{le:dangerousproperties_5cycle} and absence of $5$-cycles, no subgraph of $G$ is dangerous. Recall that $G$ also has no bad or forbidden subgraph.

We are now in a position to finish the proof of Theorem~\ref{th:main}.  
Because $G$ is cubic, $G$ contains an even cycle. Let $C:=v_1v_2\ldots v_{2t}$ be a shortest even cycle. 
We will derive a contradiction by constructing a shorter even cycle from $C$, using the properties of our minimum counterexample $G$. 

Note that the cycle $C$ has at most one chord, for otherwise there is a shorter even cycle. 
Let $w_i \notin V(C)$ denote the third neighbor of $v_i$, for $1\leq i \leq 2t$. If $w_i=w_j$ for two distinct $i,j$ then we call $w_i$ a \emph{sneaky vertex}. Observe that $i-j$ must be odd. Furthermore, there is at most one sneaky vertex, for otherwise there is a shorter even cycle (here we use that $G$ has girth at least $7$). 

Similarly, an edge of the form $w_iw_{j}$ is called a \emph{sneaky edge}. Observe that there are no edges of the form $w_iw_{i+2}$ (otherwise there is a $5$-cycle) or $w_iw_{i+2l+1}$ (otherwise there is a shorter even cycle). Therefore all sneaky edges are of the form $w_iw_{i+2l}$, for some integers $i$ and $l\geq 2$. In particular, this means that every sneaky edge is a `shortcut' for $C$, yielding an \emph{odd} cycle that is shorter than $C$. Later we will use the chord, the sneaky vertex, and sneaky edges, if any exists, to build contradictory shorter \emph{even} cycles.

We define two \emph{parity classes}, $Z_1:=V(C) \cup \left\{w_1,w_3,\ldots, w_{2t-1}\right\}$ and $Z_0:=V(C) \cup \left\{w_2,w_4,\ldots, w_{2t}\right\}$. If $w_iw_{i+2l}$ is a sneaky edge then we say it belongs to the parity class $Z_{i \text{ mod } 2}$. 
 
Let $V_1:=\left\{v_1,v_3,\ldots, v_{2t-1}\right\}$ and $V_0:=\left\{v_2,v_4,\ldots, v_{2t}\right\}$. If $C$ has no chord then both $V_0$ and $V_1$ are independent sets. If $C$ has a  chord then it must be of the form $v_{g}v_{g+2h}$, for some integers $g,h$ (otherwise there is a shorter even cycle). Without loss of generality we assume that $g$ is even, so that $V_1$ is an independent set. (Note that $V_0$ then is not an independent set.)

We consider a near independence packing $\mathcal{P}=(G_0, G_1, \dots, G_k)$ of $G$ with $G_0 = G[Z_1]$. 
Before pursuing further, let us first remark that in case $C$ has no chord, then the arguments below apply equally if we chose $G_0 = G[Z_0]$ instead, and indeed we will use this freedom of switching to $G_0 = G[Z_0]$ in Case~$2$ below. 

A maximum independent set of $G-V(G_0) = G-Z_1$ can be extended to an independent set of $G$ by adding $V_{1}$. 
Using the notations associated to our near independence packing $\mathcal{P}$, this shows that $\gamma \geq |V_1|=t$. 
The vertices in $Z_1$ are pairwise distinct (otherwise there is a shorter even cycle), so $|Z_1|=3t$. 
By Lemma~\ref{le:removeForcubic}, we obtain (using that $\delta=0$ because $G$ has no dangerous subgraph):
\begin{equation}\label{eq:shortestevencycle_1}
\alpha(G) \geq \lb(G) + t + \frac{2 -9 \cdot 3t}{24} + \frac{m_0}{24} = \lb(G) + \frac{2+m_0-3t}{24}.
\end{equation}
To obtain a contradiction, it thus suffices to show that $m_0 \geq 3t-4$.

If there is no chord, no sneaky vertex, and no sneaky edge, then each vertex in $\left\{ w_1, w_3,\ldots, w_{2t-1} \right\}$ has two neighbors in $V(G)- Z_1$ and each vertex in $V_0$ has one neighbor in $V(G)- Z_1$. The remaining vertices of $Z_1$ have no neighbors in $V(G)- Z_1$. This then immediately implies that $m_0=3t$.

Each chord, sneaky vertex or sneaky edge belonging to $Z_1$ decreases our estimate $3t$ on $m_0$ by (at most) $2$. It thus suffices to show that 
\begin{equation}\label{eq:sneakystuff}
 \mathbbm{1}_{\left\{\text{there is a chord} \right\}} + \mathbbm{1}_{\left\{\text{there is a sneaky vertex} \right\}} +  \#\left\{\text{sneaky edges belonging to } Z_1 \right\} \leq 2.
 \end{equation}

\textbf{Case $1$. $C$ has a chord.}\\
It suffices to show that there is no sneaky edge belonging to $Z_1$.  Because $G$ has girth at least $7$, we must have $t\geq 6$. Without loss of generality, we may assume that $v_2v_{2+2h}$ is the chord, for some integer $h$.  The chord `separates' $C$ into two shorter odd cycles, $C_1=v_2v_{2+2h}v_{2+2h+1}\ldots v_{2t}v_1$ and $C_2=v_2v_{2+2h}v_{2+2h-1}\ldots v_3$. 

Suppose that there is a sneaky edge $w_iw_j$ belonging to $Z_1$. If $v_i$ and $v_j$ are both in $C_1$ or both in $C_2$ (say both in $C_1$) then the path $v_iw_iw_jv_j$ is a shortcut for $C_1$, yielding a shorter even cycle. If $v_i$ is in $C_1$ and $v_j$ is in $C_2$, then either $v_2v_3\ldots v_iw_iw_jv_jv_{j-1} \ldots v_{2+2h}$ or $v_2v_1 \ldots v_j w_jw_iv_i v_{i+1}\ldots v_{2+2h}$ is a shorter even cycle. Therefore there is no sneaky edge belonging to $Z_1$.

\textbf{Case $2$. $C$ has no chord.}\\

Suppose $w_a w_b$ and $w_c w_d$ are two sneaky edges, not necessarily belonging to the same parity class. Recall that this implies that $a=b \text{ mod } 2$ and $c=d \text{ mod } 2$ (otherwise there is a shorter even cycle). Using the symmetries of the cycle $C$, we may assume without loss of generality that $1=a < b < d \leq 2t$ and  $a < c < d$. We will now argue that then $a<c<b<d$.  If this is not the case, then either $a < b= c < d$ or $a<b<c<d$.

First suppose that $a < b =c < d$. In other words, $w_aw_b$ and $w_cw_d$ share the vertex $w_b=w_c$. We have $b-a\geq 4$ and $d-c \geq 4$ (since $G$ has girth at least $7$), so that $d\geq 9$. But then $C_1:=v_1w_1w_bw_dv_d v_{d+1}\ldots v_{2t}$ is an even cycle shorter than $C=v_1v_2 \ldots v_d v_{d+1}\ldots v_{2t}$. Indeed, $|C|-|C_1|= | \left\{ v_2,v_3,\ldots, v_{d-1} \right\}| - | \left\{w_a,w_b,w_d  \right\}|= d-5 \geq 4$, and $d-5$ is even because $1=a=b=c=d \text{ mod } 2$.
Next, suppose that $a<b<c<d$. Then $C_2:=v_a w_a w_b v_b v_{b+1} \ldots v_c w_c w_d v_{d} \ldots v_{2t}$ is an even cycle shorter than $C$. Indeed, $|C|-|C_2|=|\left\{v_{a+1},v_{a+2}, \ldots, v_{b-1} \right\}| +  |\left\{v_{c+1},v_{c+2},\ldots, v_{d-1} \right\}| - |\left\{ w_a,w_b,w_c,w_d \right\}| = (b-a-1) + (d-c-1) - 4 \geq 3 + 3- 4=2$.
We have now proved that $a<c<b<d$; in other words, every two sneaky edges \emph{cross}.

We are done if at most one sneaky edge belongs to $Z_1$, so we may assume that $Z_1$ has at least two sneaky edges. Likewise, at least two sneaky edges belong to $Z_0$. Thus in total there are at least four sneaky edges $w_aw_b, w_cw_d, w_ew_f,w_g w_h$. 
By the crossing property we have derived above, we may assume without loss of generality that $1=a <c < e< g< b< d < f < h \leq 2t$. In particular $b$ must be odd, since $a$ is. 

Consider the cycles $C_3:=v_aw_aw_bv_bv_{b-1} \ldots v_{g} w_g w_h v_h v_{h+1} \ldots v_{2t}$ and $C_4:=v_av_{a+1} \dots v_c w_c w_d v_d v_{d-1} \ldots v_b w_b w_a$.
 Then $|C|-|C_3|=|\left\{v_{a+1},v_{a+2},\ldots, v_{g-1} \right\}| +  |\left\{v_{b+1},v_{b+2},\ldots, v_{h-1} \right\} | - |\left\{ w_a,w_b,w_g,w_h \right\} | = (g-a-1) + (h-b-1) - 4  = g+h-b-7$, while
$|C|-|C_4|= |\left\{v_{d+1}, \ldots, v_{2t}  \right\}| + |\left\{v_{c+1}, \ldots, v_{b-1}  \right\}|  -  |\left\{ w_a,w_b,w_c,w_d \right\} | = b-c-d+2t-5= b-g-h+2t-5+(h-d)+(g-c)\geq b-g-h+2t-5+ 2+ 2=  b-g-h+2t-1$.

Note that $|C|-|C_3|$ and $|C|-|C_4|$ are even because $b$ is odd and $g=h \text{ mod } 2$  and $c=d \text{ mod } 2$. Therefore $C_3$ and $C_4$ are even cycles.
Furthermore, the minimum of $|C|-|C_3|$ and $|C|-|C_4|$ is at least $-7+ \min(g+h-b,  2t+6 -( g+h-b)) \leq -7 +(t+3)$. This implies that there is a shorter even cycle unless $t\leq 4$. Because $G$ has girth at least $7$, we know that $t\geq 4$. Thus $t=4$ (i.e.\ $C$ is an $8$-cycle). 
Then, there is no sneaky vertex, because this would create a triangle, a $5$-cycle or a shorter even cycle. 
Since $Z_1$ clearly cannot contain more than two sneaky edges, inequality~\eqref{eq:sneakystuff} holds, as desired.

This concludes the proof of Theorem~\ref{th:main}. 
\qed

We end this section with a discussion of some aspects of the proof. 
These remarks can be freely skipped.

\subsubsection*{Forbidden graphs and how they emerge in the proof}

Here we briefly recap where the forbidden graphs arose as obstructions in the proof. 
The three noncubic graphs appear as exceptional cases in the proofs of Lemma~\ref{le:twoedgecutset} (about $2$-edge cutsets) and Lemma~\ref{le:nobadsubgraph} (no bad subgraph). This makes sense because all noncubic forbidden graphs have a bad subgraph (namely $B_8$ or $B_{16}^{(1)}$) as well as a $2$-edge cutset whose removal leaves two components, one of which is isomorphic to $K_1$ (cf.\ Figure~\ref{fig:forbidden_graphs}).
 
The three cubic forbidden graphs arise much later, when we already know that our minimum counterexample is cubic. More specifically, $F_{14}^{(1)}$ and $F_{14}^{(2)}$ occur in the proof of Lemma~\ref{le:nosixcycle} (no $6$-cycle) and $F_{22}$ emerges in the proof of Lemma~\ref{le:adjacentC5s}. This makes sense because $F_{14}^{(1)}$ and $F_{14}^{(2)}$ contain a $6$-cycle, whereas $F_{22}$ does not. As for $F_{22}$: In the proof of Lemma~\ref{le:adjacentC5s}, we remove a $5$-cycle and one of its neighbors from $G$, and then add a specific edge, yielding an auxiliary graph $H$ to which we apply induction. The graph $F_{22}$ then emerges in the special case that $H$ is isomorphic to $B_{16}^{(1)}$ (cf.\ Figure~\ref{fig:forbidden_graphs}).

\subsubsection*{Why is Lemma~\ref{le:nobadsubgraph} needed?}

Lemma~\ref{le:nobadsubgraph} shows that the minimum counterexample $G$ has no bad subgraph. 
Since its proof takes a few pages, it is natural to wonder whether we could simply avoid introducing this lemma.  
At first sight, such a strategy could be borrowed from Heckman and Thomas~\cite{HT06}. 
They first prove a technical theorem similar to our Theorem~\ref{th:main} (in the planar case) but only for graphs that do not have a bad subgraph. As a result they (almost) only need to deal with bad subgraphs when they derive $\alpha(G)\geq 3n/8$ from their technical theorem, only requiring a short and elegant induction argument there.

We have come to the conclusion that we cannot use such a strategy in our case. It \emph{is} possible to obtain a relatively short derivation of Theorem~\ref{th:goal} from such a hypothetical weakening of Theorem~\ref{th:main} (only stated for graphs that do not have a bad subgraph). 
However, the proof of such a weaker statement is the real problem: It appears we really need the stronger `induction hypothesis' in Theorem~\ref{th:main}. For example, we need this in Lemma~\ref{le:adjacentC5s}, where we remove some vertices from $G$ and add an edge, creating a new graph $H$. In the independence packing of that new graph $H$, it could be that the component $G_k$ containing the new edge has a bad subgraph. In such a situation $G_k$ would not satisfy the conditions of the theorem, so we would not know anything about how $\alpha(G_k)$ and $\lb(G_k)$ compare. 
When faced with a similar situation, Heckman and Thomas~\cite{HT06} were able to dodge such problems by considering a planar embedding of $G$, and deducing that $G_k$ in fact cannot contain any bad subgraph as this would create a $K_5$ minor. In our situation, where we do not know that $G$ is planar, it seems we cannot hope to use such a short argument. 

\section{Independence ratio $11/30$ for connected triangle-free subcubic graphs}\label{se:fraughnaughlocke}
In this section we prove Theorem~\ref{th:trianglefreeconnected_extended} (see below), which is a more precise version of Theorem~\ref{th:trianglefreeconnected}. We show that, up to a few explicit exceptions, all connected triangle-free subcubic graphs on $n$ vertices have independence number at least $\frac{11}{30}n$. Before stating the theorem, we will now describe the exceptional families $\mathcal{T}$ and $\mathcal{T}^{-}$.

Let $\mathcal{T}$ denote the family of graphs that can be obtained from an order $\geq 2$ tree whose internal nodes have degree $4$, in the following way: internal nodes (if there are any) are replaced with a copy of $B_8$ and leaves are replaced with a copy of $F_{11}$. Furthermore, we let $\mathcal{T}^{-}$ denote the family of graphs that can be obtained from some graph in $\mathcal{T}$ by deleting (the vertices of) one copy of $F_{11}$. 

For the purpose of proving Theorem~\ref{th:trianglefreeconnected_extended}, it is convenient to provide the following alternative description of $\mathcal{T}$ and $\mathcal{T}^{-}$.
Let $G_{41}$ denote the connected graph on $41$ vertices  that consists of a copy of $B_8$ and three copies of $F_{11}$ that each are joined to the $B_8$ by an edge. Observe that $G_{41}$ and $F_{11}$ share the property of having exactly one vertex of degree $2$, the other vertices being of degree $3$.
If a graph $G$ can be obtained from a graph $G'$ by replacing a subgraph $F_{11}$ with a copy of $G_{41}$ (mapping the degree-$2$ vertex of $F_{11}$ to the degree two vertex of $G_{41}$), then we say that $G$ is a \emph{$30$-augmentation} of $G'$. A graph $G$ is said to be \emph{reducible to} $G'$  if $G=G'$ or $G$ can be obtained from $G'$ by repeated applications of $30$-augmentations. 
Observe that $\mathcal{T}$ equals the family of graphs that are reducible to the graph consisting of two copies of $F_{11}$ joined by an edge, and $\mathcal{T}^{-}$ equals the family of graphs that are reducible to $F_{11}$. 

\begin{theorem}\label{th:trianglefreeconnected_extended}
Let $G$ be a connected triangle-free subcubic graph on $n$ vertices. Then
\[
\alpha(G)= \frac{11}{30}n - \frac{1}{30} \cdot \begin{cases}  4 & \mbox{ if } G \cong F_{14}^{(1)} \mbox{ or } G\cong F_{14}^{(2)} \\
2 & \mbox{ if } G\cong F_{22} \mbox{ or } G \in \mathcal{T}  \\
1 & \mbox{ if } G \in \mathcal{T}^{-}.
\end{cases}
\]
Furthermore, in the remaining cases
\[  \alpha(G) \geq \frac{11}{30}n. \]
\end{theorem}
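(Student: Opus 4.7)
The plan is to run an independence packing argument anchored in Theorem~\ref{th:main} and Corollary~\ref{cor:twoconnected}, tracking the additive invariant $d(H) := 30\alpha(H) - 11|V(H)|$. A direct calculation yields a complete contribution table for every connected critical triangle-free subcubic graph $H$: $d(K_1)=19$; $d(K_2)=8$; $d(H)=|V(H)|/4\ge 2$ if $H$ is bad; the six forbidden values $d(F_{11})=-1$, $d(F_{14}^{(i)})=-4$, $d(F_{19}^{(i)})=1$, $d(F_{22})=-2$ from Table~\ref{table:parameters}; and $d(H)\ge |V(H)|/4\ge 1$ otherwise (since $H$ is $2$-connected by Lemma~\ref{le:twoconnected1}, Corollary~\ref{cor:twoconnected} gives $\alpha(H)\ge 3|V(H)|/8$).

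Fix an independence packing $G_1,\dots,G_k$ of $G$ and let $\mathcal{H}$ denote the connection multigraph with nodes $\{G_1,\dots,G_k\}$ and edges the non-critical edges of $G$; it is connected since $G$ is. Set $\mathrm{def}(G_i):=\deg_{\mathcal{H}}(G_i)$; subcubicity forces $\mathrm{def}(F_{11}),\mathrm{def}(F_{19})\le 1$, $\mathrm{def}(F_{14}^{(i)})=\mathrm{def}(F_{22})=0$, $\mathrm{def}(K_1)\le 3$, $\mathrm{def}(K_2),\mathrm{def}(B)\le 4$ for bad $B$, and $\mathrm{def}(H)\le n_2(H)$ otherwise. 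If $k=1$, then $G$ is itself critical, and the table above matches Theorem~\ref{th:trianglefreeconnected_extended} exactly ($F_{11}$ being the base case $I=0$ of $\mathcal{T}^-$; $F_{14}^{(i)}$ and $F_{22}$ giving their stated exceptional $d$-values; and no other critical $H$ yielding $d(H)<0$).

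Now suppose $k\ge 2$. Then each $\mathrm{def}(G_i)\ge 1$, which rules out every cubic critical component (in particular $F_{14}^{(i)}$ and $F_{22}$), so each $d(G_i)\ge -1$ with equality iff $G_i\cong F_{11}$. A short inspection using the integrality of $d$ shows that every non-$F_{11}$ critical component $G_j$ satisfies $d(G_j)\ge \mathrm{def}(G_j)/2$, with equality iff $G_j\cong B_8$. Writing $k_F$ for the number of $F_{11}$-components and using $\sum_i \mathrm{def}(G_i)\ge 2(k-1)$, we obtain
\[
d(G)\;=\;-k_F+\sum_{\text{non-}F_{11}}d(G_j)\;\ge\;-k_F+\frac{2(k-1)-k_F}{2}\;=\;(k-1)-\frac{3k_F}{2}.
\]
Each $F_{11}$ is a leaf of $\mathcal{H}$. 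When every non-$F_{11}$ component has deficit at most $4$—which holds for $B_8$, $F_{19}$, $K_1$, $K_2$, and all bad graphs—a handshake count in a spanning tree of $\mathcal{H}$ gives $k_F\le L\le 2I+2$, where $L$ and $I$ are the numbers of leaves and non-leaves respectively, so $3k_F\le 2k+2$ and $d(G)\ge -2$. Equality forces $\mathcal{H}$ to be a tree with all $F_{11}$ as leaves, all non-leaves isomorphic to $B_8$ and used to full degree~$4$, and no unused deficit: this is exactly the inductive definition of $\mathcal{T}$. Allowing one unit of unused deficit (equivalently, one degree-$2$ vertex surviving in $G$) in the same otherwise-tight structure gives $d(G)=-1$ and $G\in\mathcal{T}^-$.

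The remaining subcases—some non-$F_{11}$ component other than $B_8$, or some internal node of $\mathcal{H}$ with deficit exceeding $4$ (forcing that component to be of generic $O$-type with at least five degree-$2$ vertices)—each yield at least $+1/2$ of additional slack in the estimate above, and a routine accounting (verifying that the additional leaf capacity of any high-deficit $O$-type internal is strictly dominated by the corresponding gain in $d(G_j)-\mathrm{def}(G_j)/2$) shows $d(G)\ge 0$ in each such subcase. Combining this with the $k=1$ analysis yields exactly the exceptional-value table of Theorem~\ref{th:trianglefreeconnected_extended}, completing the proof.
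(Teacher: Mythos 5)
Your strategy—an exact deficit count over the connection multigraph of the independence packing, with an extremal analysis identifying $\mathcal{T}$ and $\mathcal{T}^-$—is genuinely different from the paper's proof, which first shows that $30\alpha(G)-11n$ is invariant under $30$-augmentations (thereby disposing of $\mathcal{T}$ and $\mathcal{T}^-$ up front and reducing to graphs with no $G_{41}$ subgraph) and then groups each non-$F_{11}$ component with its attached copies of $F_{11}$ and checks each group locally. Your route can be made to work, but as written it has a concrete gap at its central inequality. The claim $d(G_j)\geq \mathrm{def}(G_j)/2$ for a generic component does \emph{not} follow from the bound you record in your table, namely $d(G_j)\geq |V(G_j)|/4$ via Corollary~\ref{cor:twoconnected}: the deficit can be as large as $n_2(G_j)$, and $|V(G_j)|/4\geq n_2(G_j)/2$ fails whenever $n_2>|V|/2$. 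For instance $C_5$ has $d=5$ and possible deficit $5$, while your table only certifies $d\geq 5/4$; dangerous components and long odd cycles raise the same issue. To prove the inequality you must invoke the degree-sensitive bound $\alpha(G_j)\geq\lb(G_j)\geq\frac{9|V(G_j)|+n_2(G_j)-2}{24}$ from Theorem~\ref{th:main}, which yields $d(G_j)\geq\frac{|V(G_j)|+5n_2(G_j)-10}{4}\geq n_2(G_j)/2$ once $n_2\geq 3$ (the cases $n_2\leq 2$ then follow from integrality). The same refinement is needed to justify that equality forces $B_8$, and again in your $d(G)=-1$ step, where a generic tree node contributing total slack exactly $1$ must be excluded; that exclusion amounts to $|V(G_j)|+n_2(G_j)\leq 6$, impossible for a critical triangle-free component on at least five vertices, but none of this computation appears in your text, which dismisses it as ``a short inspection'' and ``a routine accounting.''

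A second, smaller omission: you establish only one direction of the characterization, namely that $d(G)\in\{-2,-1\}$ forces the rigid tree structure and hence $G\in\mathcal{T}\cup\{F_{22}\}$, respectively $G\in\mathcal{T}^-$. The theorem asserts equalities, so you also need that every $G\in\mathcal{T}$ (resp.\ $\mathcal{T}^-$) attains $d=-2$ (resp.\ $-1$). This follows from the trivial upper bound $\alpha(G)\leq 3\beta+4\phi$ (where $\beta$ and $\phi$ count the $B_8$ and $F_{11}$ pieces) combined with $\phi=2\beta+2$, resp.\ $\phi=2\beta+1$, and your lower bound, but it must be stated. With these two repairs your argument goes through; the trade-off against the paper's proof is that you avoid the $30$-augmentation reduction entirely, at the cost of a more delicate global extremal analysis where the paper needs only a local check on each group $H_i$.
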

\begin{proof}
First we show that the number $\kappa(G):=30 \cdot \alpha(G)-11 \cdot |V(G)|$ is invariant under taking $30$-augmentations. Indeed, suppose that $G$ is a $30$-augmentation of $G'$. Because both $F_{11}$ and $G_{41}$ have a maximum independent set avoiding their unique degree-$2$ vertex, we have $\alpha(G)= \alpha(G')+\alpha(G_{41})-\alpha(F_{11})$. Furthermore, $30 \cdot (\alpha(G_{41})- \alpha(F_{11}) )= 30 \cdot 11 = 11 \cdot (|V(G_{41})|- |V(F_{11})|)$. It follows that $\kappa(G)=\kappa(G')$, as desired.

Thus, we may assume from now on that $G$ does not have $G_{41}$ as a subgraph. In particular: to prove the first part of the theorem it suffices to verify it for the graphs $F_{14}^{(1)}, F_{14}^{(2)}, F_{22}, F_{11}$ as well as the graph consisting of two copies of $F_{11}$ joined by an edge, which can be done by straightforward computation. It remains to prove that $\alpha(G)\geq \frac{11}{30}n$ if $G$ is not one of these five graphs (and does not have $G_{41}$ as a subgraph).

Let $\mathcal{P}=(G_1,\ldots, G_k)$ be an independence packing of $G$. Let $I$ denote the set of indices $i \in \left\{1,\ldots, k\right\}$ such that $G_i$ is not a copy of $F_{11}$. Because $G$ is not $F_{11}$, nor two copies of $F_{11}$ joined by an edge, we know that $I$ is nonempty. For each $i \in I$, let $H_i$ denote the graph that is the disjoint union of $G_i$ and those components of $\mathcal{P}$ that are a copy of $F_{11}$ and have an edge to $G_i$ in $G$. Also, let $f_i$ denote the number of those copies of $F_{11}$. Observe that $V(G)$ is the disjoint union $\bigsqcup_{i \in I} V(H_i)$ and that
$\alpha(G)= \sum_{i=1}^{k} \alpha(G_i)=\sum_{i \in I} \alpha(H_i)$, by the definition of independence packing. It therefore suffices to show that $\alpha(H_i)\geq \frac{11}{30}|V(H_i)|$ for each $i \in I$. We will do so now. 

Let $i \in I$. 
If $G_i$ is isomorphic to a forbidden graph, then by the discussion above $G_i$ must be isomorphic to $F_{19}^{(1)}$ or $F_{19}^{(2)}$. Then $H_i$ is either $G_i$ or: $G_i$ and a copy of $F_{11}$ joined by an edge. In the former case $\alpha(H_i)=7= \frac{11}{30}|V(H_i)| + \frac{1}{30}$ and in the latter case $\alpha(H_i) = 4+7 = \frac{11}{30}|V(H_i)|$, as desired.

If $G_i$ is a bad graph then $\alpha(G_i)=\lb(G_i)-\frac{1}{12}=\frac{3}{8}|V(G_i)|$, so 
\begin{eqnarray*}
\alpha(H_i) &=& \alpha(G_i) + f_i \cdot \alpha(F_{11})\\
&=&  \frac{11  \cdot |V(G_i)|}{30}+ \frac{|V(G_i)|}{120} + f_i \cdot \frac{11 \cdot |V(F_{11})|-1}{30}\\
&=& \frac{11 \cdot |V(H_i)|}{30}+ \frac{|V(G_i)|-4 f_i}{120}.
\end{eqnarray*}
Since $G_i$ has only four vertices of degree $2$, we must have $f_i\leq 4$. If $G_i$ is not isomorphic to $B_8$ then $|V(G_i)| \geq 16$.  If $G_i \cong B_8$ then $f_i \leq 2$ (for otherwise $G_{41}$ would be a subgraph of $G$). It follows that always $|V(G_i)|-4f_i\geq 0$, so that $\alpha(H_i) \geq \frac{11}{30}|V(H_i)|$.

Finally, we discuss the case that $G_i$ is neither a forbidden nor a bad graph. By Theorem~\ref{th:main}, we know that $\alpha(G_i) \geq \lb(G_i)$.  Therefore
\begin{eqnarray*}
\alpha(H_i) &=& \alpha(G_i)+ f_i \cdot \alpha(F_{11})\\
&\geq&  \frac{9 \cdot |V(G_i)| + f_i - 2 }{24} + f_i \cdot \frac{11 \cdot |V(F_{11})|-1}{30}\\
&=&  \frac{9 \cdot |V(G_i)| + f_i - 2}{24} + 4 f_i.
\end{eqnarray*}
On the other hand, we have $\frac{11}{30} \cdot |V(H_i)| = \frac{11}{30} \cdot \left( |V(G_i)| + 11 f_i \right) =  \frac{ 11 |V(G_i)| +  f_i }{30} + 4  f_i$, so 

\[ \alpha(H_i) \geq  \frac{11}{30} \cdot |V(H_i)|  + \left\lceil\frac{9 \cdot |V(G_i)| + f_i - 2}{24} \right\rceil - \frac{11 \cdot |V(G_i)| + f_i}{30}. \]

If $G_i$ is not isomorphic to $K_1$ or $K_2$ then $G_i$ has minimum degree at least $2$, and hence $f_i \leq |V(G_i)|$. 
It thus suffices to verify that $\left\lceil \frac{9g+f-2}{24} \right\rceil - \frac{11g+f}{30} \geq 0$ for all integers $f,g $ such that either  $0 \leq f \leq g$  or $1 \leq g \leq 2 \leq f \leq 4$. 
For $f+g\leq 9$ there is only a small number of integer pairs to be checked, while for $f+g \geq 10$ it directly follows from the fact that $\frac{9g+f-2}{24} = \frac{11g+f}{30} + \frac{f+g-10}{120}$.


\end{proof}

\section{If triangles are allowed}
\label{sec:triangles}

In this section we generalize our main theorem by relaxing the triangle-freeness condition, in the spirit of~\cite{HHRS08}. We first prove a generalization of Theorem~\ref{th:main} (for connected critical graphs) by a reduction to the triangle-free case. We then apply this result to every component of an independence packing.

To describe the result, we first need to define a class of graphs that is closely related to bad graphs. From Lemma~\ref{le:badproperties}.\ref{le:badproperties_3} it follows that a bad graph contains precisely two $4$-vertex paths whose interior vertices have degree $2$; let us call them the \emph{nice} paths. A graph which is not isomorphic to the complete graph $K_4$ is called \emph{almost bad} if it can be obtained from a bad graph by contracting one or both of its nice paths to an edge. We remark that each such contraction creates a triangle, so almost bad graphs are not triangle-free. Furthermore, almost bad graphs are critical by Lemma~\ref{le:oddsubdivision}.

For a graph $G$, let $T(G)$ denote the maximum number of vertex-disjoint triangles in $G$. 
Note that if $G$ is subcubic, then two triangles are vertex-disjoint if and only if they are edge-disjoint. 
We define the following refinement of the measure $\lb(G)$: 
$$\lb_T(G):= \frac{6 \cdot|V(G)| -|E(G)| -2 \cdot T(G) -1}{12}.$$

With the new terminology in hand, we can now state our (partial) generalization of Theorem~\ref{th:main} for critical graphs. Note that it is sharp due to e.g.\ the triangle and dangerous graphs. 

\begin{theorem}\label{th:mainwithtriangles}
Let $G$ be a connected critical  subcubic graph which is not isomorphic to any of $K_4, F_{11}$, $F_{14}^{(1)}$, $F_{14}^{(2)}$, $F_{19}^{(1)}$, $F_{19}^{(2)}$, $F_{22}$. Then
\begin{itemize}
	\item $\alpha(G) = \lb_T(G) - \frac{1}{12}$ if $G$ is bad or almost bad\\[0.1ex]
	\item $\alpha(G) \geq \lb_T(G)$ otherwise.  
\end{itemize}
\end{theorem}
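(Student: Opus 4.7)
The strategy is to reduce Theorem~\ref{th:mainwithtriangles} to its triangle-free analog Theorem~\ref{th:main} by destroying all triangles of $G$ through double edge subdivisions. Let $\mathcal{T}^* = \{T_1,\dots,T_k\}$ be a maximum family of vertex-disjoint triangles of $G$, so that $k = T(G)$. In a subcubic graph two distinct triangles cannot share exactly one vertex (that vertex would have degree at least $4$), so any two distinct triangles are either vertex-disjoint or share exactly one edge and form a $K_4 - e$; a short maximality argument then shows that every triangle of $G$ outside $\mathcal{T}^*$ is a ``companion'' of a unique $T_i$, and that no $T_i$ has more than one companion. Let $G^*$ be obtained from $G$ by subdividing twice, for each $T_i$, either the edge shared with its companion (when one exists) or else an arbitrary edge of $T_i$. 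Each such double subdivision annihilates $T_i$ together with its companion (if any), so $G^*$ is triangle-free; it is also subcubic, connected and, by Lemma~\ref{le:oddsubdivision}, critical.

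A direct computation now gives
\[
\alpha(G^*) = \alpha(G) + k,\qquad \lb(G^*) = \lb(G) + \tfrac{5k}{6} = \lb_T(G) + k,
\]
and in particular $\alpha(G^*) - \lb(G^*) = \alpha(G) - \lb_T(G)$. Since double subdivision creates two new vertices of degree $2$ and leaves all other degrees unchanged, $G^*$ has at least $2k$ degree-$2$ vertices, so when $k \geq 1$ it cannot be isomorphic to any of $F_{11}$, $F_{14}^{(1)}$, $F_{14}^{(2)}$, $F_{19}^{(1)}$, $F_{19}^{(2)}$, $F_{22}$ (all of which have at most one degree-$2$ vertex). Thus Theorem~\ref{th:main} applies to $G^*$: if $G^*$ is not bad then $\alpha(G^*) \geq \lb(G^*)$, giving $\alpha(G) \geq \lb_T(G)$; if $G^*$ is bad then $\alpha(G^*) = \lb(G^*) - \tfrac{1}{12}$, giving $\alpha(G) = \lb_T(G) - \tfrac{1}{12}$.

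To match the statement of the theorem it remains to check that $G^*$ is bad only when $G$ is bad or almost bad, and to supply the matching upper bound in the latter case. For the first point, a bad graph has precisely four degree-$2$ vertices lying in its two nice paths, and the $k$ paths created by subdivision are $4$-vertex paths with degree-$2$ interior, so $G^*$ bad forces $2k + n_2(G) = 4$ and the $k$ subdivided paths are $k$ of the nice paths of $G^*$; contracting them back recovers $G$, which is therefore obtained from the bad graph $G^*$ by contracting $k \in \{0,1,2\}$ nice paths, and so is bad or almost bad (the borderline case $G^* \cong B_8$ with $k=2$ would give $G \cong K_4$, which is excluded by hypothesis). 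For the upper bound when $G = H/P$ is almost bad, with $H$ bad and $|P| \in \{1,2\}$ nice paths contracted, the identities $|V(H)| - |V(G)| = |E(H)| - |E(G)| = 2|P|$ and $T(G) = |P|$ (each contracted path contributes one vertex-disjoint triangle to $G$), together with $\alpha(G) = \alpha(H) - |P|$ and $\alpha(H) = \lb(H) - \tfrac{1}{12}$, yield $\alpha(G) = \lb_T(G) - \tfrac{1}{12}$ by a direct computation; and when $G$ itself is bad then $k=0$, $G^* = G$, and the previous paragraph already gives equality.

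The main obstacle is to confirm rigorously that the subdivision rule above truly destroys every triangle of $G$, which rests on the subcubic dichotomy ``vertex-disjoint or $K_4 - e$'' from the first paragraph, and to carry out the degree-$2$ bookkeeping of the third paragraph correctly so as to transfer the bad/almost-bad classification between $G^*$ and $G$.
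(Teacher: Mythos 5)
Your proposal is correct and follows essentially the same route as the paper: the paper also reduces to Theorem~\ref{th:main} by double-subdividing, for each triangle, either the edge shared with a second triangle (unique since $G\not\cong K_4$) or an arbitrary edge, invoking Lemma~\ref{le:oddsubdivision}; the only difference is that the paper organizes this as an induction on the number of triangles rather than performing all $T(G)$ subdivisions at once, and dispatches the bad/almost-bad equality in one line where you spell out the (correct) bookkeeping.
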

\begin{proof}
The bound for bad and almost bad graphs is immediate from Theorem~\ref{th:main} and Lemma~\ref{le:oddsubdivision}. Hence it suffices to show that $\alpha(G)\geq \lb_T(G)$ if $G$ is not bad and not almost bad. 
We apply induction on the number of triangles in $G$. The base case, no triangle, is certified by Theorem~\ref{th:main}. Let $t$ be a triangle in $G$. If $t$ is disjoint from every other triangle, then let $e$ denote an arbitrary edge of $t$. Otherwise, let $e$ denote the unique edge of $t$ that is shared with another triangle $t_2$. 
This edge is unique because $G$ is not isomorphic to $K_4$; for the same reason, every other triangle must be disjoint from $t$ and $t_2$. 
Let $G'$ be the graph obtained by subdividing the edge $e$ twice. By Lemma~\ref{le:oddsubdivision}, $G'$ is critical and $\alpha(G')=\alpha(G)+1$. Moreover: $T(G')=T(G)-1$, and $G'$ has two more vertices and two more edges compared to $G$. Hence $\lb_T(G')= \lb_T(G)+1$ and therefore $\alpha(G)-\lb_T(G) = \alpha(G')-\lb_T(G')$. By the definition of almost bad graphs, if $G'$ is bad or almost bad, then $G$ is almost bad. Thus the desired bound follows by induction.
\end{proof}

As a corollary, we obtain the following. Here $B(G)$ denotes the maximum number of vertex-disjoint subgraphs of $G$ that are bad or almost bad. 

\begin{corollary}\label{cor:connectedwithtriangles}
Let $G$ be a connected subcubic graph containing none of $K_4, F_{11}$, $F_{14}^{(1)}$, $F_{14}^{(2)}$, $F_{19}^{(1)}$, $F_{19}^{(2)}$, $F_{22}$ as a subgraph. Then
$$\alpha(G) \geq  \lb_T(G) - \frac{B(G)}{12}.$$
\end{corollary}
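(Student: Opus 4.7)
My plan is to apply Theorem~\ref{th:mainwithtriangles} componentwise to an independence packing. Take an independence packing $G_1,\ldots,G_k$ of $G$. Since each $G_i$ is a subgraph of $G$ and $G$ contains none of $K_4,F_{11},F_{14}^{(1)},F_{14}^{(2)},F_{19}^{(1)},F_{19}^{(2)},F_{22}$ as a subgraph, no $G_i$ can be isomorphic to any of these. Each $G_i$ is also connected, critical and subcubic, so Theorem~\ref{th:mainwithtriangles} yields $\alpha(G_i)\geq \lb_T(G_i)-\tfrac{1}{12}$ if $G_i$ is bad or almost bad, and $\alpha(G_i)\geq \lb_T(G_i)$ otherwise. (The case $G_i\in\{K_1,K_2\}$ is handled by the same theorem, since these graphs satisfy its hypotheses and are neither bad nor almost bad; one checks $\lb_T(K_1)=\tfrac{5}{12}\leq 1$ and $\lb_T(K_2)=\tfrac{5}{6}\leq 1$.) Letting $b$ denote the number of indices $i$ for which $G_i$ is bad or almost bad, summation yields
$$\alpha(G)=\sum_{i=1}^k \alpha(G_i)\;\geq\;\sum_{i=1}^k \lb_T(G_i)-\frac{b}{12}.$$

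The next step is to compare $\sum_i\lb_T(G_i)$ with $\lb_T(G)$. Let $m^\ast:=|E(G)|-\sum_{i=1}^k|E(G_i)|$ be the number of edges of $G$ whose endpoints lie in distinct parts of the packing. Since $G$ is connected, contracting each $G_i$ to a single vertex produces a connected multigraph on $k$ nodes, so $m^\ast\geq k-1$. Moreover, a family of vertex-disjoint triangles chosen inside the $G_i$'s remains pairwise vertex-disjoint in $G$, so $T(G)\geq\sum_{i=1}^k T(G_i)$. A direct computation from the definition of $\lb_T$ then gives
$$\lb_T(G)-\sum_{i=1}^k \lb_T(G_i)\;=\;\frac{(k-1)-m^\ast+2\bigl(\textstyle\sum_{i=1}^k T(G_i)-T(G)\bigr)}{12}\;\leq\; 0.$$

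To conclude, the bad or almost-bad $G_i$'s constitute a collection of vertex-disjoint bad/almost-bad subgraphs of $G$, hence $b\leq B(G)$. Chaining the two displayed inequalities with this bound gives $\alpha(G)\geq \lb_T(G)-B(G)/12$, as desired. The proof is essentially a packing argument once Theorem~\ref{th:mainwithtriangles} is in hand, and I do not anticipate any deep obstacle. The only subtlety to keep track of is the additive constant $-1$ in the definition of $\lb_T$: it contributes $-k/12$ in the componentwise sum, and this excess is exactly absorbed by the $-1/12$ appearing in $\lb_T(G)$ together with the $(k-1)/12$ slack coming from the cross-edges needed to keep $G$ connected.
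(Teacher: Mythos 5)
Your proof is correct and follows essentially the same route as the paper's: apply Theorem~\ref{th:mainwithtriangles} to each part of an independence packing, use connectivity to get $m^\ast\geq k-1$ (hence $\lb_T(G)\leq\sum_i\lb_T(G_i)$), and bound the number of bad or almost bad parts by $B(G)$. Your version merely spells out the triangle-count comparison and the small-component check that the paper leaves implicit.
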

\begin{proof}
Consider an independence packing $\mathcal{P}=(G_1, \ldots, G_k)$ of $G$. Since $G$ is connected, at least $k-1$ edges of $G$ are not contained in any of $G_1,\ldots, G_k$. Thus $\lb_T(G)\leq \sum_{i=1}^{k} \lb_T(G_i)$. Moreover, by Theorem~\ref{th:mainwithtriangles} and the definition of independence packing: $\alpha(G)= \sum_{i=1}^{k} \alpha(G_i) \geq  \sum_{i=1}^{k} \lb_T(G_i) - \frac{B(G)}{12}$. 
\end{proof}

Corollary~\ref{cor:connectedwithtriangles} is sharp due to e.g.\ connected graphs that have an independence packing consisting entirely of triangles. 
However, we remark that Corollary~\ref{cor:connectedwithtriangles} is not quite best possible, in the sense that we did not take advantage of any integrality trick, as e.g.\ in the proof of Theorem~\ref{th:goal}. This is because such an integrality trick is not possible for every value of the tuple $(|V(G)|,|E(G)|, T(G))$.

\section{Conclusion}
\label{sec:conclusion}

Let $\chi_f(G)$ denote the {\em fractional chromatic number} of the graph $G$. 
Among the several equivalent definitions of this parameter, one (using duality) is as follows:
\[
\chi_f(G) := \sup_{{\mathbf w} \neq 0} \frac{\sum_{v \in V(G)} {\mathbf w}(v)}{\alpha(G, {\mathbf w})}, 
\]
where ${\mathbf w}$ is a weight function associating a nonnegative real weight ${\mathbf w}(v)$ to each vertex $v\in V(G)$, and $\alpha(G, {\mathbf w})$ denotes the maximum weight of an independent set in $G$. 
From this definition, it is clear that $\frac{n}{\alpha(G)} \leq \chi_f(G)$ when $G$ is an $n$-vertex graph. 
Accordingly, it is natural to ask which upper bounds on $\frac{n}{\alpha(G)}$ extend to upper bounds on $\chi_f(G)$. 
This is the case for Staton's $\frac{15}{4}$ upper bound for triangle-free subcubic graphs (Theorem~\ref{th:Staton}):

\begin{theorem}[Dvo\v{r}\'ak, Sereni, and Volec~\cite{DSV14}]
Let $G$ be a triangle-free subcubic graph. 
Then, $\chi_f(G) \leq \frac{14}{5}$.
\end{theorem}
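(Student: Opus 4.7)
The plan is to reformulate via LP duality: $\chi_f(G) \leq 14/5$ is equivalent to showing that for every nonnegative weight function $w$ on $V(G)$, there is an independent set $I$ of $G$ with $w(I) \geq \tfrac{5}{14}\, w(V(G))$, where $w(V(G)) = \sum_{v} w(v)$. This is the natural weighted strengthening of Staton's theorem (Theorem~\ref{th:Staton}), which only treats $w \equiv 1$. Equivalently, one must exhibit a probability distribution over independent sets of $G$ in which every vertex appears with probability at least $5/14$.

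A tempting first move is a blow-up reduction: for rational $w$, clear denominators and replace each vertex $v$ by an independent set of size $w(v)$, with complete bipartite connections installed across every edge of $G$, so that weighted independent sets in $G$ correspond to ordinary independent sets in the blow-up. Sadly the blow-up is no longer subcubic, so Theorem~\ref{th:Staton} does not apply, and any sparsification that preserves triangle-freeness while restoring maximum degree three seems to cost a constant factor. Hence the weighted result must be proved directly.

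The approach I would pursue is induction, in the spirit of the proof of Theorem~\ref{th:main}. Take a minimum counterexample $(G,w)$ for some suitable complexity measure (after scaling $w$ to integers, minimize $|V(G)|+|E(G)|+\sum_v w(v)$). Standard local reductions should apply: delete weight-zero vertices; merge pairs $u,v$ whenever $N(u) \subseteq N(v) \cup \{v\}$; eliminate forced configurations near vertices of degree at most one; and handle clique cutsets and small separators much as in Section~\ref{sec:prelim}. After these reductions one may assume $G$ is $2$-connected, triangle-free and subcubic, with $w$ strictly positive. The core step is then a local analysis around degree-$2$ vertices and short cycles: for each local configuration, exhibit a short list of independent sets covering the configuration with average weighted multiplicity at least $5/14$, and splice them into an inductively-supplied fractional $(14{:}5)$-coloring of the remainder of $G$.

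The main obstacle will be the local case analysis. Unlike the situation in Theorem~\ref{th:main}, there is no integrality trick available at the end to round away an $\tfrac{1}{12}$-sized deficit: every local gadget must itself admit an honest fractional $(14{:}5)$-coloring compatible with every possible boundary behavior of the rest of $G$. I therefore expect that one must identify a complete family of ``tight'' gadgets --- the weighted analogues of the bad and dangerous graphs from Section~\ref{sec:bad_dangerous} --- and construct explicit probability distributions over independent sets that witness the $5/14$ bound for each of them, together with a discharging argument that globalizes these local witnesses. Tight configurations built from the Petersen graph (which already realizes $\chi_f = 5/2$) and its relatives are especially delicate, and carrying the weighted induction through short-cycle reductions while handling these sharp cases is the central technical difficulty.
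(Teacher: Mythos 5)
This theorem is cited in the paper as an external result of Dvo\v{r}\'ak, Sereni, and Volec; the paper contains no proof of it, so there is nothing internal to compare against. Judged on its own terms, your submission is not a proof but a research plan. The first two paragraphs are sound: the LP-dual reformulation (every nonnegative weighting admits an independent set of weight at least $\tfrac{5}{14}$ of the total, equivalently a distribution on independent sets covering each vertex with probability at least $\tfrac{5}{14}$) is the correct target, and your diagnosis of why the blow-up reduction fails (loss of the degree bound) is accurate. But from that point on, every substantive step is deferred: the ``short list of independent sets covering the configuration with average weighted multiplicity at least $5/14$'' is never exhibited for any configuration, the ``complete family of tight gadgets'' is never identified, and the ``discharging argument that globalizes these local witnesses'' is never described. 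These are not loose ends; they constitute the entire content of the theorem, which in the original paper occupies a long and delicate case analysis. As written, the argument proves nothing beyond the (easy) equivalence of the two formulations.

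Two smaller points. First, the extremal graphs for $\chi_f=\tfrac{14}{5}$ are not built from the Petersen graph, whose fractional chromatic number is $\tfrac{5}{2}<\tfrac{14}{5}$; the tight examples are the two cubic triangle-free graphs on $14$ vertices with independence number $5$ (the graphs $F_{14}^{(1)}$ and $F_{14}^{(2)}$ of Figure~\ref{fig:forbidden_graphs}), and any inductive scheme must in particular be consistent with them rather than with Petersen-like configurations. Second, be careful with the claim that the reductions of Section~\ref{sec:prelim} transfer: criticality there is defined relative to the integer independence number, whereas a weighted or fractional induction needs its own notion of a minimal counterexample, and lemmas such as Lemma~\ref{le:oddsubdivision} (subdividing an edge twice raises $\alpha$ by exactly one) have no direct weighted analogue without specifying the weights of the new vertices. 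To turn this into a proof you would need to either carry out the full configuration analysis yourself or follow the published argument of~\cite{DSV14}.
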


Heckman and Thomas~\cite{HT06} conjectured that $\chi_f(G) \leq \frac{8}{3}$ for every triangle-free subcubic planar graph $G$, which would generalize their result (Theorem~\ref{th:planar}). 
We conjecture that Theorem~\ref{th:goal} can similarly be extended to fractional colorings: 

\begin{conjecture}
Let $G$ be a triangle-free subcubic graph containing none of $F_{11}$, $F_{14}^{(1)}$, $F_{14}^{(2)}$, $F_{19}^{(1)}$, $F_{19}^{(2)}$, $F_{22}$ as a subgraph. Then, $\chi_f(G) \leq \frac{8}{3}$.  
\end{conjecture}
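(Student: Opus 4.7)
The plan is to establish a weighted analog of Theorem~\ref{th:goal} and deduce the conjecture from LP duality. By the standard duality characterization of the fractional chromatic number, $\chi_f(G) \leq 8/3$ is equivalent to the assertion that for every non-negative weight function $w \colon V(G) \to \mathbb{Q}_{\geq 0}$ there is an independent set $I$ with $w(I) \geq \tfrac{3}{8}\,w(V(G))$; equivalently, the maximum $w$-weight $\alpha_w(G)$ of an independent set satisfies $\alpha_w(G) \geq \tfrac{3}{8}\,w(V(G))$. The conjecture thereby reduces to a weighted version of Theorem~\ref{th:goal}.

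The natural route is to adapt the proof of Theorem~\ref{th:main} to this weighted setting. One would define a weighted potential $\mu_w(G)$ mirroring the formula for $\mu(G)$ (with degree-$i$ contributions $9,10,11,12$ now counted with vertex weights) and a weighted notion of criticality (an edge $e$ is $w$-critical if $\alpha_w(G-e) > \alpha_w(G)$). The task then becomes proving weighted analogs of the key lemmas in Sections~\ref{sec:prelim}--\ref{sec:bad_dangerous}, including strengthenings of Lemmas~\ref{le:badproperties} and~\ref{le:dangerousproperties} asserting that bad and dangerous graphs admit independent sets of weight $\mu_w$ avoiding prescribed degree-$2$ vertices for every $w$. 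Many of the local transformations used in the unweighted proof (double edge subdivision, gluing across $2$-cutsets, $8$-augmentations) have predictable effects on $\alpha_w$ once one assigns appropriate weights to the new vertices in the gadget or subdivision: e.g., subdividing an edge twice and giving the two new vertices a common weight $t$ increases $\alpha_w$ by exactly $t$, mirroring Lemma~\ref{le:oddsubdivision}. Note that the integrality rounding step which improves $\alpha \geq \mu - \tfrac{1}{12}$ to $\alpha \geq \tfrac{3}{8}n$ in the unweighted proof has no weighted counterpart, but it is not needed either, since the target bound $\alpha_w \geq \tfrac{3}{8}\,w(V(G))$ carries no additive correction term.

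The main obstacle is the structural case analysis in the $3$-connected cubic case (Section~\ref{subsection:3connected}). In the unweighted proof, a minimum counterexample is pinned down as $3$-connected of girth at least $7$, and a contradiction is extracted by delicate analysis of a shortest even cycle using near-independence packings. In the weighted setting, the ``minimum counterexample'' depends on $w$, and the configuration to be removed must be chosen so that the gap $\gamma_w = \alpha_w(G) - \alpha_w(G - X)$ dominates the weighted $\mu$-contribution of $X$. Controlling this gap uniformly over all weights $w$, and in particular handling the exceptional bad and dangerous subgraphs (which are tight for the $8/3$ bound), appears to be the deepest difficulty. A complementary strategy, modelled on Dvo\v{r}\'ak, Sereni, and Volec~\cite{DSV14}, would exhibit an explicit fractional coloring: identify a small reducible configuration $C$, obtain by induction a fractional $8/3$-coloring of $G - C$, and extend it using a precomputed fractional coloring of a local neighbourhood of $C$. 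Either way, the $8$-augmentation operation and its interaction with the bad-graph family is likely to be where the principal new ideas are needed.
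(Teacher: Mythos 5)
The statement you are asked to prove is stated in the paper only as a conjecture; the authors offer no proof of it, and to my knowledge it remains open. Your opening reduction is correct: by LP duality, $\chi_f(G)\leq \tfrac{8}{3}$ is equivalent to the assertion that $\alpha(G,{\mathbf w})\geq \tfrac{3}{8}\sum_v {\mathbf w}(v)$ for every nonnegative weight function ${\mathbf w}$, so the conjecture is indeed a weighted strengthening of Theorem~\ref{th:goal}. But everything after that reduction is a research programme rather than an argument: you describe what a weighted analogue of Theorem~\ref{th:main} would have to say, and then explicitly concede that the central steps --- controlling the gap $\gamma$ uniformly over all weights, the treatment of bad and dangerous subgraphs, and the entire $3$-connected girth-$\geq 7$ case analysis of Section~\ref{subsection:3connected} --- are not carried out. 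A proof proposal that names its own unresolved ``deepest difficulty'' is not a proof.

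Two concrete points where the transfer you sketch would break down as stated. First, integrality is used \emph{inside} the induction of the unweighted proof, not only in the final rounding from $\lb(G)-\tfrac{1}{12}$ to $\tfrac{3}{8}n$: the ceiling functions in Lemma~\ref{le:contract}, the parity argument of Lemma~\ref{le:evennumberofdegreetwos}, and the even/odd bookkeeping in the shortest-even-cycle analysis all exploit the fact that edge counts and independence numbers are integers. These have no weighted counterparts, so the case analysis does not simply ``carry over with predictable effects on $\alpha_w$.'' Second, bad graphs satisfy $\alpha(B)=\tfrac{3}{8}|V(B)|$ exactly, so they are tight for the target bound; the unweighted proof handles them via Lemma~\ref{le:badproperties}.\ref{le:badproperties_5} (existence of a maximum independent set avoiding three prescribed degree-$2$ vertices), and you would need, for every weight function, an independent set of weight at least $\tfrac{3}{8}{\mathbf w}(V(B))$ avoiding prescribed vertices --- a statement that is not implied by the unweighted lemma and is not established by your sketch. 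Until a weighted induction hypothesis is formulated that survives all of these steps (or an explicit fractional-colouring argument in the style of \cite{DSV14} is produced), the conjecture remains unproved.
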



\section*{Acknowledgments} 

The Stevin Supercomputer Infrastructure at Ghent University was used to compute upper bounds on $i(g)$, as well as for preliminary computations to collect evidence and to build intuition for an appropriate stronger induction hypothesis for our main theorem.

We are much grateful to the anonymous reviewers for their useful comments and suggestions, which helped us improve the paper. 

\bibliographystyle{amsplain}

\providecommand{\bysame}{\leavevmode\hbox to3em{\hrulefill}\thinspace}
\providecommand{\MR}{\relax\ifhmode\unskip\space\fi MR }
\providecommand{\MRhref}[2]{%
  \href{http://www.ams.org/mathscinet-getitem?mr=#1}{#2}
}
\providecommand{\href}[2]{#2}
\begin{thebibliography}{}

\end{thebibliography}


\begin{thebibliography}{99}
%

\bibitem{ABT76}
M.~Albertson, B.~Bollobas, and S.~Tucker.
\newblock The independence ratio and maximum degree of a graph.
\newblock pages 43--50. Congressus Numerantium, No. XVII, 1976.

\bibitem{BB98}
B.~Bajnok and G.~Brinkmann.
\newblock On the independence number of triangle free graphs with maximum
  degree three.
\newblock {\em Journal of Combinatorial Mathematics and Combinatorial
  Computing}, 26:237--254, 1998.

\bibitem{BKL17}
J.~Balogh, A.~Kostochka, and X.~Liu.
\newblock Cubic graphs with small independence ratio.
\newblock {\em The Electronic Journal of Combinatorics}, 26(1):1--22, 2019.

\bibitem{BL86}
J.~Bondy and S.~C. Locke.
\newblock Largest bipartite subgraphs in triangle-free graphs with maximum
  degree three.
\newblock {\em Journal of Graph Theory}, 10(1):477--504, 1986.

\bibitem{HouseofGraphs}
G.~Brinkmann, K.~Coolsaet, J.~Goedgebeur, and H.~M\'elot.
\newblock House of graphs: a database of interesting graphs.
\newblock {\em Discrete Applied Mathematics}, 161(1-2):311--314, 2013.
\newblock Available at \url{https://hog.grinvin.org/}.

\bibitem{brinkmann96}
G.~Brinkmann.
\newblock Fast generation of cubic graphs.
\newblock {\em Journal of Graph Theory}, 23(2):139--149, 1996.

\bibitem{C16}
E.~Cs{\'o}ka.
\newblock Independent sets and cuts in large-girth regular graphs.
\newblock \href{http://arxiv.org/abs/1602.02747}{arXiv:1602.02747}.

\bibitem{DSV14}
Z.~Dvo\v{r}\'ak, J.-S. Sereni, and J.~Volec.
\newblock Subcubic triangle-free graphs have fractional chromatic number at
  most 14/5.
\newblock {\em Journal of the London Mathematical Society}, 89(3):641--662,
  2014.

\bibitem{FL95}
K.~Fraughnaugh and S.~Locke.
\newblock 11/30 (finding large independent sets in connected triangle-free
  3-regular graphs).
\newblock {\em Journal of Combinatorial Theory, Series B}, 65(1):51--72, 1995.

\bibitem{HHRS08}
J.~Harant, M.~A. Henning, D.~Rautenbach, and I.~Schiermeyer.
\newblock The independence number in graphs of maximum degree three.
\newblock {\em Discrete Mathematics}, 308(23):5829--5833, 2008.

\bibitem{H08}
C.~C. Heckman.
\newblock On the tightness of the 5/14 independence ratio.
\newblock {\em Discrete Mathematics}, 308(15):3169--3179, 2008.

\bibitem{HT01}
C.~C. Heckman and R.~Thomas.
\newblock A new proof of the independence ratio of triangle-free cubic graphs.
\newblock {\em Discrete Mathematics}, 233(1):233--237, 2001.

\bibitem{HT06}
C.~C. Heckman and R.~Thomas.
\newblock Independent sets in triangle-free cubic planar graphs.
\newblock {\em Journal of Combinatorial Theory, Series B}, 96(2):253--275,
  2006.

\bibitem{L86}
S.~C. Locke.
\newblock Bipartite density and the independence ratio.
\newblock {\em Journal of Graph Theory}, 10(1):47--53, 1986.

\bibitem{L93}
L.~Lov\'{a}sz.
\newblock {\em Combinatorial problems and exercises}.
\newblock North-Holland Publishing Co., Amsterdam, second edition, 1993.

\bibitem{nauty-website}
B.~McKay.
\newblock {nauty User's Guide (Version 2.6)}.
\newblock Technical Report TR-CS-90-02, Department of Computer Science,
  Australian National University. See \url{http://cs.anu.edu.au/~bdm/nauty}.

\bibitem{mckay_14}
B.~McKay and A.~Piperno.
\newblock Practical graph isomorphism, {II}.
\newblock {\em Journal of Symbolic Computation}, 60:94--112, 2014.

\bibitem{PS19}
F.~Pirot and J.-S. Sereni.
\newblock Fractional chromatic number, maximum degree and girth.
\newblock \href{http://arxiv.org/abs/1904.05618}{arXiv:1904.05618}.

\bibitem{S79}
W.~Staton.
\newblock Some {R}amsey-type numbers and the independence ratio.
\newblock {\em Transactions of the American Mathematical Society},
  256:353--370, 1979.

\end{thebibliography}


\begin{aicauthors}
\begin{authorinfo}[wouter]
  Wouter Cames van Batenburg\\
  D\'epartement d'Informatique\\
  Universit\'e Libre de Bruxelles\\
  Brussels, Belgium\\
  wcamesva\imageat{}ulb\imagedot{}ac\imagedot{}be \\
  \url{http://homepages.ulb.ac.be/~wcamesva}
\end{authorinfo}
\begin{authorinfo}[jan]
  Jan Goedgebeur\\
  Department of Applied Mathematics, Computer Science and Statistics\\
  Ghent University\\
  Ghent, Belgium\\
  and\\
  Computer Science Department\\
  University of Mons\\
  Mons, Belgium\\  
  jan\imagedot{}goedgebeur\imageat{}ugent\imagedot{}be \\
  \url{https://caagt.ugent.be/jgoedgeb/}
\end{authorinfo}
\begin{authorinfo}[gwen]
  Gwena\"el Joret\\
  D\'epartement d'Informatique\\
  Universit\'e Libre de Bruxelles\\
  Brussels, Belgium\\
  gjoret\imageat{}ulb\imagedot{}ac\imagedot{}be \\
  \url{http://di.ulb.ac.be/algo/gjoret}
\end{authorinfo}
\end{aicauthors}

\end{document}